\theoremstyle{plain}
 \newtheorem{theorem}{Theorem}[section]
 \newtheorem{lemma}[theorem]{Lemma}
 \newtheorem{proposition}[theorem]{Proposition}
 \newtheorem{corollary}[theorem]{Corollary}
 \newtheorem{conjecture}[theorem]{Conjecture}
 \newtheorem{observation}[theorem]{Observation}
\theoremstyle{definition}
 \newtheorem{definition}[theorem]{Definition}
\theoremstyle{remark}
 \newtheorem{remark}[theorem]{Remark}
\newcounter{question}[section]
\newenvironment{question}[1][]{\refstepcounter{question}\par\medskip
   \noindent\textbf{Question~\thequestion. #1} \rmfamily}{\medskip}
\renewcommand*{\thequestion}{\Alph{question}}
\numberwithin{equation}{section}
\numberwithin{theorem}{section}
\newcommand\nc\newcommand
\newcommand\dmo\DeclareMathOperator
\nc{\Red}[1]{\textcolor[rgb]{0.8,0,0}{#1}}
\def \re {\mathrm{Re}}
\def \im {\mathrm{Im}}
\dmo{\sign}{sign}
\dmo{\spt}{spt}
\dmo{\supp}{supp}
\dmo{\sym}{Sym}
\nc{\R}{\mathbb{R}}
\nc{\C}{\mathbb{C}}
\nc{\N}{\mathbb{N}}
\nc{\Z}{\mathbb{Z}}
\nc{\erdos}{Erd\H os }
\nc{\er}{Erd\H os--R\'enyi } 
\dmo{\ls}{\lesssim}
\dmo{\gs}{\gtrsim}
\def \<{\langle}
\def \>{\rangle}
\def \lf {\lfloor}
\def \rf {\rfloor}
\nc{\expo}[1]{\exp \left( #1 \rule{0mm}{-10mm}\right)}
\DeclarePairedDelimiter\parentheses{\lparen}{\rparen}
\nc{\dd}{\mathrm{d}}
\dmo{\e}{\mathbb{E}}
\dmo{\var}{Var}
\dmo{\pr}{\mathbb{P}}
\dmo{\un}{\mathbbm{1}}
\nc{\eqd}{\,{\buildrel d \over =}\,}
\nc{\bad}{\mathcal{B}}
\nc{\event}{\mathcal{E}}
\nc{\good}{\mathcal{G}}
\nc{\pro}[1]{\mathbb{P}\parentheses*{#1 \rule{0mm}{0mm}}}
\nc{\set}[1]{\left\{ #1 \right\}}
\dmo{\tr}{tr}
\dmo{\rank}{rank}
\dmo{\rk}{Rank}
\dmo{\corank}{corank}
\def \tran {\mathsf{T}}
\def \HS {\mathrm{HS}}
\nc{\Span}{\operatorname{span}}
\dmo{\per}{\text{per}}
\dmo{\I}{\mathrm{I}}
\dmo{\diag}{diag}
\dmo{\id}{I}
\nc{\eps}{\varepsilon}
\nc{\ep}{\epsilon}
\nc{\mA}{\mathcal{A}}
\nc{\mB}{\mathcal{B}}
\nc{\mC}{\mathcal{C}}
\nc{\mD}{\mathcal{D}}
\nc{\mE}{\mathcal{E}}
\nc{\mF}{\mathcal{F}}
\nc{\mG}{\mathcal{G}}
\nc{\mH}{\mathcal{H}}
\nc{\mI}{\mathcal{I}}
\nc{\mJ}{\mathcal{J}}
\nc{\mK}{\mathcal{K}}
\nc{\mL}{\mathcal{L}}
\nc{\mM}{\mathcal{M}}
\nc{\mN}{\mathcal{N}}
\nc{\mO}{\mathcal{O}}
\nc{\mP}{\mathcal{P}}
\nc{\mQ}{\mathcal{Q}}
\nc{\mR}{\mathcal{R}}
\nc{\mS}{\mathcal{S}}
\nc{\mT}{\mathcal{T}}
\nc{\mU}{\mathcal{U}}
\nc{\mV}{\mathcal{V}}
\nc{\mW}{\mathcal{W}}
\nc{\mX}{\mathcal{X}}
\nc{\mY}{\mathcal{Y}}
\nc{\mZ}{\mathcal{Z}}
\dmo{\Sparse}{Sparse}
\dmo{\Comp}{Comp}
\dmo{\Incomp}{Incomp}
\dmo{\dist}{dist}
\nc{\ha}{\sigma_0}
\nc{\tM}{\widetilde{M}}
\nc{\tA}{\widetilde{A}}
\nc{\tY}{\widetilde{Y}}
\nc{\wY}{\widetilde{Y}}
\nc{\sig}{a}
\nc{\be}{b}
\nc{\M}{M}
\dmo{\MBP}{MBP}
\dmo{\pp}{\mathbf{p}}
\nc{\asp}{\theta}
\dmo{\badd}{bad}
\dmo{\nil}{nil}
\dmo{\cyc}{cyc}
\dmo{\free}{free}
\nc{\me}{m}
\begin{document}

\begin{frontmatter}

\title{Lower bounds for the smallest singular value of structured random matrices}
\runtitle{Invertibility of structured random matrices}

\author{\fnms{Nicholas} \snm{Cook}\ead[label=e1]{nickcook@math.ucla.edu}\thanksref{t1}}
 \thankstext{t1}{Partially supported by NSF postdoctoral fellowship DMS-1266164.} 
\address{Department of Mathematics\\ 
University of California\\
Los Angeles, CA 90095-1555\\ 
\printead{e1}}
\affiliation{University of California, Los Angeles}

\runauthor{N. Cook}

\begin{abstract}
We obtain lower tail estimates for the smallest singular value of random matrices with independent but non-identically distributed entries.
Specifically, we consider $n\times n$ matrices with complex entries of the form
\[
M = A\circ X + B = (a_{ij}\xi_{ij} + b_{ij})
\]
where $X=(\xi_{ij})$ has iid centered entries of unit variance and $A$ and $B$ are fixed matrices.
In our main result we obtain polynomial bounds on the smallest singular value of $M$ for the case that $A$ has bounded (possibly zero) entries, and $B= Z\sqrt{n}$ where $Z$ is a diagonal matrix with entries bounded away from zero. 
As a byproduct of our methods we can also handle general perturbations $B$ under additional hypotheses on $A$, which translate to connectivity hypotheses on an associated graph. 
In particular, we extend a result of Rudelson and Zeitouni for Gaussian matrices to allow for general entry distributions satisfying some moment hypotheses.
Our proofs make use of tools which (to our knowledge) were previously unexploited in random matrix theory, in particular Szemer\'edi's Regularity Lemma, and a version of the Restricted Invertibility Theorem due to Spielman and Srivastava.
\end{abstract}

\begin{keyword}[class=MSC]
\kwd[Primary ]{60B20}
\kwd[; secondary ]{15B52}
\end{keyword}

\begin{keyword}
\kwd{Random matrices}
\kwd{condition number}
\kwd{regularity lemma}
\kwd{metric entropy}
\end{keyword}

\end{frontmatter}



\section{Introduction}

Throughout the article we make use of the following standard asymptotic notation:
$f=O(g)$, $f\ll g$, $g\gg f$ all mean that $|f|\le Cg$ for some absolute constant $C<\infty$. 
We indicate dependence of the implied constant on parameters with subscripts, e.g.\ $f\ll_\alpha g$.
$C,c, c', c_0$, etc.\ denote unspecified constants whose value may be different at each occurence, and are understood to be absolute if no dependence on parameters is indicated.

\subsection{Background}

Recall that the singular values of an $n\times n$ matrix $M$ with complex entries are the eigenvalues of $\sqrt{M^*M}$, which we arrange in non-increasing order:
\[
\|M\|=s_1(M)\ge \cdots\ge s_n(M)\ge0.
\]
(throughout we write $\|\cdot \|$ for the $\ell_2^n\to \ell_2^n$ operator norm). 
$M$ is invertible if and only if $s_n(M)>0$, in which case $s_n(M)= \|M^{-1}\|^{-1}$.
We (informally) say that $M$ is ``well-invertible" if $s_n(M)$ is well-separated from zero. 

The largest and smallest singular values of random matrices with independent entries have been intensely studied, in part due to applications in theoretical computer science.
Motivated by their work on the first electronic computers, von Neumann and Goldstine sought upper bounds on the condition number $\kappa(M)= s_1(M)/s_n(M) $ of a large matrix $M$ with iid entries \citep{vNG:numerical}. 
More recently, bounds on the condition number of non-centered random matrices have been important in the theory of \emph{smoothed analysis of algorithms} developed by Spielman and Teng \citep{SST:smoothed}.
The smallest singular value 
has also received attention due to its connection with proving convergence of the empirical spectral distribution -- see 
\citep{TaVu:circ, BoCh:survey}.

Much is known about the largest singular value for random matrices with independent entries.
First we review the iid case: we denote by $X=X_n$ an $n\times n$ matrix whose entries $\xi_{ij}$ are iid copies of a centered complex random variable with unit variance, and refer to such $X$ as an ``iid matrix".
From the works \citep{BSY:largestsv, BKY:largestsv} it is known that $\frac{1}{\sqrt{n}}s_1(X_n)\in (2-\eps,2+\eps)$ with probability tending to one as $n\to \infty$ for any fixed $\eps>0$.
In connection with problems in computer science and the theory of Banach spaces there has been considerable interest in obtaining non-asymptotic bounds for matrices with independent but non-identically distributed entries; see the recent works \citep{BaHa} and \citep{vanHandel:norm} and references therein for an overview. 

The picture is far less complete for the smallest singular value of random matrices; however, recent years have seen much progress for the case of the iid matrix $X$.
The limiting distribution of $\sqrt{n}s_n(X)$ was obtained by Edelman for the case of Gaussian entries \citep{Edelman:condition}, and this law was shown by Tao and Vu to hold for all iid matrices with entries $\xi_{ij}$ having a sufficiently large finite moment \citep{TaVu:ssv}.

Quantitative lower tail estimates for $s_n(X)$ proved to be considerably more challenging than bounding the operator norm.
The first breakthrough was made by Rudelson \citep{Rudelson:inv}, who showed that if $X$ has iid real-valued sub-Gaussian entries, that is
\begin{equation}	\label{subgaussian}
\e \expo{ |\xi|^2/K_0}\le 2
\end{equation}
for some $K_0<\infty$,
then
\begin{equation}	\label{bd:rudelson}
\pro{ s_n(X) \le tn^{-3/2}} \ll_{K_0} t + n^{-1/2}\qquad \text{for all $t\ge 0$}.
\end{equation}

Around the same time, in \cite{TaVu:cond} Tao and Vu used methods from additive combinatorics to obtain bounds of the form
\begin{equation}	\label{tavu:arb}
\pro{ s_n(X) \le n^{-\beta}} \ll n^{-\alpha}
\end{equation}
for any fixed $\alpha>0$ and $\beta$ sufficiently large depending on $\alpha$, for the case that the entries of $X$ take values in $\{-1,0,1\}$. 
Roughly speaking, their approach was to classify potential almost-null vectors $v$ according to the amount of additive structure present in the multi-set of coordinate values $\{v_j\}_{j=1}^n$. 
They extended \eqref{tavu:arb} to uncentered matrices with general entry distributions having finite second moment in \cite{TaVu:circ} (see Theorem \ref{thm:tavu} below), which was instrumental for their proof of 
the celebrated circular law for the limiting spectral distribution of $\frac{1}{\sqrt{n}}X$. 

Motivated by these developments, in \cite{RuVe:ilo} Rudelson and Vershynin found a different way to quantify the additive structure of a vector $v$ called the \emph{essential least common denominator}, and obtained the following improvement of \eqref{bd:rudelson}, \eqref{tavu:arb} for matrices with sub-Gaussian entries:
\begin{equation}	\label{bd:ruve_exp}
\pro{ s_n(X) \le tn^{-1/2}} \ll_{K_0} t + e^{-cn}.
\end{equation}
This estimate is optimal up to the implied constant and $c=c(K_0)>0$ (with $K_0$ as in \eqref{subgaussian}). 

Finally, we mention that there has also been work on \emph{upper tail bounds} for the smallest singular value -- see in particular \cite{RuVe:uppertail, NgVu:normal} -- but we do not consider this problem further in the present work.

\subsection{A general class of non-iid matrices}

In this paper we are concerned with bounds for the smallest singular value of random matrices with independent but non-identically distributed entries.
The following definition allows us to quantify the dependence of our bounds on the distribution of the matrix entries.

\begin{definition}[Spread random variable]		\label{def:spread}
Let $\xi$ be a complex random variable and let $\kappa\ge 1$. 
We say that $\xi$ is \emph{$\kappa$-spread} if 
\begin{equation}	\label{def:kappa0}
\var \big[\,\xi\un(|\xi-\e\xi|\le \kappa)\,\big] \ge \frac1\kappa.
\end{equation}
\end{definition}

\begin{remark}	\label{rmk:kappap}
It follows from the monotone convergence theorem that any random variable $\xi$ with non-zero second moment is $\kappa$-spread for some $\kappa<\infty$.
Furthermore, if $\xi$ is centered with unit variance and finite $p$th moment $\mu_p$ for some $p>2$, then it is routine to verify that $\xi$ is $\kappa$-spread with $\kappa =3 (3\mu_p^p)^{1/(p-2)}$, say.
\end{remark}

Our results concern the following general class of matrices:

\begin{definition}[Structured random matrix]	\label{def:profile}
Let $A=(\sig_{ij})$ and $B=(\be_{ij})$ be deterministic $n\times m$ matrices with $\sig_{ij}\in [0,1]$ and $\be_{ij}\in \C$ for all $i,j$.
Let $X=(\xi_{ij})$ be an $n\times m$ matrix with independent entries, all identically distributed to a complex random variable $\xi$ with mean zero and variance one.
Put 
\begin{equation}	\label{Mdef}
\M=A\circ X+B = (\sig_{ij}\xi_{ij}+ \be_{ij})_{i,j=1}^n
\end{equation}
where $\circ$ denotes the matrix Hadamard product.
We refer to $A$, $B$ and $\xi$ as the \emph{standard deviation profile}, \emph{mean profile} and \emph{atom variable}, respectively.
We denote the $L^p$ norm of the atom variable by
\begin{equation}	\label{Lp}
\mu_p:= (\e|\xi|^p)^{1/p}.
\end{equation}
Without loss of generality, we assume throughout that $\xi$ is $\kappa_0$-spread for some fixed $\kappa_0\ge1$.
\end{definition}

(While all of our results are for square matrices, we give the definition for the general rectangular case as we will often need to consider rectangular submatrices in the proofs.)

\begin{remark}	\label{rmk:shiftscale}
The assumption that the entries of $\M$ are shifted scalings of random variables $\xi_{ij}$ having a common distribution is made for convenience, as it allows us to access some standard anti-concentration estimates (see Section \ref{sec:fourier}).
We expect the proofs can be modified to cover general matrices with independent entries having specified means and variances (possibly with additional moment hypotheses), but we do not pursue this here.
\end{remark}

As a concrete example one can consider a centered non-Hermitian band matrix, where one sets $a_{ij}\equiv 0$ for $|i-j|$ exceeding some bandwidth parameter $w\in [n-1]$ -- see Corollary \ref{cor:band}.

The singular value distributions for structured random matrices have been studied in connection with wireless MIMO networks \citep{TuVe:rmt_wireless,HLN:detequiv}.
The limiting spectral distributions and spectral radius for certain structured random matrices have been used to model the dynamical properties of neural networks \citep{RaAb:neural, ARS:block}.
In the recent work \citep{CHNR} with Hachem, Najim and Renfrew, the limiting spectral distribution was determined for a general class of centered structured random matrices.
That work required bounds on the smallest singular value for shifts of centered matrices by scalar multiples of the identity, which was the original motivation for the results in this paper (in particular, Corollary \ref{cor:scalarshift} below is a key input for the proofs in \citep{CHNR}). 

The picture for the smallest singular value of structured random matrices is far less complete than for the largest singular value. 
Here we content ourselves with identifying sufficient conditions on the matrices $A,B$ and the distribution of $\xi$ for a structured random matrix $M$ to be well-invertible with high probability. Specifically, we seek to address the following:

\begin{question}		\label{prob:main}
Let $\M$ be an $n\times n$ random matrix as in Definition \ref{def:profile}.
Under what assumptions on the standard deviation and mean profiles $A,B$ 
and the distribution of the atom variable $\xi$ 
do we have
\begin{equation}	\label{lowertail}
\pr\big( s_n(\M) \le n^{-\beta}\big) = O(n^{-\alpha})
\end{equation}
for some constants $\alpha,\beta>0$?
\end{question}

The case that $B=-z\sqrt{n} I $ for some fixed $z\in \C$ (where $I$ denotes the $n\times n$ identity matrix) is of particular interest for applications to the limiting spectral distribution of centered random matrices. 
As we shall see in the next subsection, existing results in the literature give lower tail bounds for $s_n(M)$ that are uniform in the shift $B$ under the size constraint $\|B\|=n^{O(1)}$, i.e. 
\begin{equation}	\label{lowertail-uniform}
\sup_{B\in \mM_n(\C): \;\|B\| \le  n^{C}} \pr\big( s_n(A\circ X + B) \le n^{-\beta}\,\big) = O(n^{-\alpha}).
\end{equation}
for some constant $C>0$ (results stated for centered matrices generally extend in a routine manner to allow a perturbation of size $\|B\|=O(\sqrt{n})$).
Such bounds can be viewed as matrix analogues of classical anti-concentration (or ``small ball") bounds of the form
\begin{equation}	\label{anticonc:general}
\sup_{z\in \C} \pro{ |S_n - z|\le r} \le f(r) + o(1)
\end{equation}
for a sequence of scalar random variables $S_n$ (such as the normalized partial sums of an infinite sequence of iid variables), where $f:\R_+\to \R_+$ is some continuous function such that $f(r)\to 0$ as $r\to 0$.
In fact, bounds of the form \eqref{anticonc:general} are a central ingredient in the proofs of estimates \eqref{lowertail-uniform}. 
Roughly speaking, the translation invariance of \eqref{anticonc:general} causes the uniformity in the shift $B$ in \eqref{lowertail-uniform} to come for free once one can handle the centered case $B=0$ (the assumption $\|B\|=n^{O(1)}$ is needed to have some continuity of the map $u\mapsto \|Mu\|$ on the unit sphere in order to apply a discretization argument). 
In light of this we may pose the following:

\begin{question}	\label{prob:sub}
Let $\M$ be an $n\times n$ random matrix as in Definition \ref{def:profile}, and let $\gamma>0$.
Under what assumptions on the standard deviation profile $A$ 
and the distribution of the atom variable $\xi$ do we have
\begin{equation}	\label{lowertail:sub}
\sup_{B\in \mM_n(\C): \; \|B\|\le n^\gamma}\pr\big( s_n(\M) \le n^{-\beta}\,\big) = O(n^{-\alpha})
\end{equation}
for some constants $\alpha,\beta>0$?
\end{question}

The following simple observation puts a clear limitation on the standard deviation profiles $A$ for which we can expect to have \eqref{lowertail:sub}.

\begin{observation}	\label{obs:submatrix}
Suppose that $A=(\sig_{ij})$ has a $k\times m$ submatrix of zeros for some $k,m$ with $k+m>n$. 
Then $A\circ X$ is singular with probability 1.
Thus, \eqref{lowertail:sub} fails (by taking $B=0$) for any fixed $\alpha,\beta>0$.
\end{observation}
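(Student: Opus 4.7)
The plan is to show that $A\circ X$ has a non-trivial kernel almost surely by exhibiting, in a purely deterministic way, a linear subspace of vectors on which $A\circ X$ is forced to have image lying in a strictly smaller-dimensional subspace. The large zero submatrix means many rows of $A\circ X$ vanish identically on certain coordinates, so a dimension-count forces singularity.

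First, after permuting rows and columns (which does not change singular values), I may assume the zero $k\times m$ submatrix occupies the top-left corner of $A$, i.e.\ $a_{ij}=0$ for all $(i,j)\in [k]\times [m]$. Next, consider vectors $v\in\C^n$ supported on the first $m$ coordinates, a subspace of dimension $m$. For any such $v$, the $i$th coordinate of $(A\circ X)v$ equals $\sum_{j=1}^m a_{ij}\xi_{ij}v_j$, and for $i\in[k]$ this sum vanishes identically because every $a_{ij}$ in the sum is zero. Thus $(A\circ X)v$ is supported on the last $n-k$ coordinates, regardless of $X$.

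This gives a linear map from an $m$-dimensional space to an $(n-k)$-dimensional space. Since $m+k>n$ by hypothesis, we have $m>n-k$, so this map must have a non-trivial kernel. Therefore $A\circ X$ has a non-zero null vector almost surely (indeed, surely), hence is singular with probability $1$. Taking $B=0$ (which satisfies $\|B\|=0\le n^\gamma$) gives $s_n(M)=0$ almost surely, so $\pr(s_n(M)\le n^{-\beta})=1$ for every $\beta>0$, contradicting any bound of the form $O(n^{-\alpha})$ with $\alpha>0$.

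Nothing in this argument is an obstacle; the observation is essentially a rank inequality, with no probabilistic content beyond the trivial fact that the kernel exists for every realization of $X$. The only point worth flagging is that the assumption $a_{ij}\in[0,1]$ (rather than allowing complex standard deviations) is irrelevant here; all that matters is the location of the zero entries of $A$.
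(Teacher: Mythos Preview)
Your proof is correct. The paper does not give a proof of this statement at all; it is recorded as an ``Observation'' and treated as self-evident, so there is nothing to compare against beyond noting that your dimension-counting argument is exactly the standard justification one would supply.
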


Theorem \ref{thm:broad} below (see also Theorem \ref{thm:ruze} for the Gaussian case) shows that the above is in some sense the only obstruction to obtaining \eqref{lowertail:sub}.

\subsection{Previous results}	\label{sec:previous}

Before stating our main results on Questions \ref{prob:main} and \ref{prob:sub} we give an overview of what is currently in the literature.

For the case of a constant standard deviation profile $A$ and essentially arbitrary mean profile $B$ we have the following result of Tao and Vu: 

\begin{theorem}[Shifted iid matrix \citep{TaVu:circ}]		\label{thm:tavu}
Let $X$ be an $n\times n$ matrix with iid entries $\xi_{ij}\in \C$ having mean zero and variance one. 
For any $\alpha,\gamma>0$ there exists $\beta>0$ such that for any fixed (deterministic) $n\times n$ matrix $B$ with $\|B\|\le n^\gamma$,
\begin{equation}	\label{tavu:bound}
\pr\big( s_n(X+B) \le n^{-\beta} \,\big) = O_{\alpha,\gamma}(n^{-\alpha}).
\end{equation}
\end{theorem}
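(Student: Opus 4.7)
The plan is to use the now-standard row-distance framework of Tao--Vu and Rudelson--Vershynin. Write $M = X+B$ with rows $R_1,\ldots,R_n$, and let $H_k = \Span\{R_j : j \ne k\}$. Starting from the identity
\[
s_n(M)^{-2} \;\le\; \|M^{-1}\|_{\HS}^2 \;=\; \sum_{k=1}^n \dist(R_k,H_k)^{-2},
\]
it suffices to show that for each fixed $k$,
\[
\pro{\dist(R_k,H_k) \le n^{-\beta+1/2}} = O(n^{-\alpha-1}),
\]
after which a union bound over $k$ yields the claim with exponent $\beta$ (decreased by $1/2$).

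Fix $k$ and condition on the other rows. Let $w \in \C^n$ be a unit normal to $H_k$; then $w$ is a function of $(R_j)_{j\ne k}$ and hence independent of the row $R_k = X_k + B_k$, and
\[
\dist(R_k,H_k) = |\langle R_k,w\rangle| = \Big|\sum_{j=1}^n \xi_{kj}\overline{w_j} + \langle B_k,w\rangle\Big|.
\]
Conditional on $w$, this is an affine linear form in the iid variables $\xi_{kj}$. The central observation is that if $w$ is \emph{incompressible}, i.e.\ $|\{j : |w_j| \ge \rho_0/\sqrt{n}\}|\ge \delta_0 n$ for some fixed $\rho_0,\delta_0 > 0$, then iterating Esseen's concentration inequality over the large coordinates of $w$ produces a polynomial small-ball estimate
\[
\sup_{z \in \C}\pro{|\langle X_k,w\rangle - z| \le n^{-\beta+1/2}} \le n^{-\alpha - 1}
\]
for $\beta$ taken large enough in terms of $\alpha$ and $\gamma$. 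Integrating over the event that $w$ is incompressible then gives the required tail bound, and crucially, translation invariance of small-ball probabilities means this step does not see the shift $\langle B_k,w\rangle$.

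The remaining task is to show that $w = w_k$ is incompressible with probability $1 - O(n^{-\alpha-1})$. Since $w$ spans the right kernel of the $(n-1)\times n$ matrix $\widetilde M$ obtained from $M$ by deleting row $k$, it suffices to prove
\[
\pro{\inf_{u \in \Comp(\delta_0,\rho_0) \cap S^{n-1}} \|\widetilde M u\| \le c\sqrt{n}} \le e^{-cn}.
\]
This is the standard compressible estimate: $\Comp(\delta_0,\rho_0)\cap S^{n-1}$ admits an $\eps$-net of cardinality $(C/\eps)^{C\delta_0 n}$, and for a fixed unit $u$, writing $\widetilde M u = \widetilde X u + \widetilde B u$, the vector $\widetilde X u$ has iid coordinates of mean $0$ and variance $1$, hence $\|\widetilde X u\|\ge c\sqrt{n}$ with probability $1-e^{-cn}$ by a standard moment computation. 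Since $\widetilde X u$ is isotropic, its component orthogonal to the fixed vector $\widetilde B u$ still has norm $\gtrsim \sqrt{n}$ with exponential probability, so $\|\widetilde M u\|\ge c\sqrt{n}$ uniformly in $\|B\|\le n^\gamma$. A union bound over the net, transferred to $\Comp(\delta_0,\rho_0)$ using $1$-Lipschitzness of $u \mapsto \|\widetilde M u\|$ together with the operator norm bound $\|\widetilde M\| \le n^{O(1)}$ (which holds off a negligible event), completes the compressible step.

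The main obstacle is the quantitative Littlewood--Offord estimate in the second paragraph: from incompressibility of $w$ alone one obtains only a crude small-ball bound of size $O(n^{-1/2})$, which is insufficient to produce the polynomial rate $n^{-\alpha}$ for arbitrary $\alpha > 0$. To recover a polynomial bound one must further classify normal vectors according to their \emph{arithmetic} structure, isolating those close to low-rank generalized arithmetic progressions, and then apply an inverse Littlewood--Offord theorem \`a la Tao--Vu to show that such structured approximate null vectors of $\widetilde M$ form an event of probability $O(n^{-\alpha})$. This additive-combinatorial classification of almost-null vectors is the technical heart of the argument; once it is in hand, the proof assembles from the steps above.
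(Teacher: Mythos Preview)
The paper does not prove this theorem: it is stated in Section~1.3 as a prior result of Tao and Vu, cited from \cite{TaVu:circ}, so there is no in-paper proof to compare against. Your outline is in fact a reasonable sketch of the Tao--Vu strategy itself (row-distance reduction via the negative second moment identity, compressible/incompressible dichotomy for the approximate null vector, and inverse Littlewood--Offord for the incompressible case), and you correctly identify that the additive-combinatorial step is where all the work lies. As written, however, this is an outline rather than a proof: you explicitly defer the inverse Littlewood--Offord classification, which is precisely the content of the theorem.

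Two technical points in the parts you do sketch deserve care. First, in the compressible step you claim $\|\widetilde X u\|\ge c\sqrt{n}$ with probability $1-e^{-cn}$ ``by a standard moment computation''; under only a second-moment hypothesis on $\xi$ this exponential rate is not available without a truncation argument, and Tao--Vu indeed truncate. Second, your net argument over $\Comp(\delta_0,\rho_0)$ uses the Lipschitz constant $\|\widetilde M\|$, which here is only bounded by $n^{\gamma}+n^{O(1)}$. With a net of mesh $\eps$ and cardinality $(C/\eps)^{O(\delta_0 n)}$, transferring at Lipschitz scale $n^\gamma$ forces $\eps\sim n^{-\gamma}$, and then the union bound $(Cn^\gamma)^{O(\delta_0 n)}e^{-cn}$ does not close for fixed $\delta_0$. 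One must instead take $\delta_0$ and $\rho_0$ to depend on $K$ (compare Lemma~\ref{lem:high} in this paper, where $\theta_0\asymp 1/\log K$ and $\rho_0\asymp 1/K$), or iterate as in Proposition~\ref{prop:comp}. These are fixable, but the sketch as written glosses over the dependence on $\gamma$ that the statement promises.
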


A stronger version of the above bound was established earlier by Sankar, Spielman and Teng for the case that $X$ has iid standard Gaussian entries \citep{SST:smoothed}.
For the case that $B=0$, the bound \eqref{bd:ruve_exp} of Rudelson and Vershynin gives the optimal dependence $\beta = \alpha +1/2$ for the exponents, but requires the stronger assumption that the entries are real-valued and sub-Gaussian (we remark that their proof extends in a routine manner to allow an arbitrary shift $B$ with $\|B\|=O(\sqrt{n})$). 
Recently, the sub-Gaussian assumption for \eqref{bd:ruve_exp} was relaxed by Rebrova and Tikhomirov to only assume a finite second moment \citep{ReTi}.

When the entries of $M$ have bounded density the problem is much simpler.
The following is easily obtained by the argument in \cite[Section 4.4]{BoCh:survey}.

\begin{proposition}[Matrix with entries having bounded density \citep{BoCh:survey}]	\label{prop:bdd}
Let $\M$ be an $n\times n$ random matrix with independent entries having density on $\C$ or $\R$ uniformly bounded by $\varphi> 1$.
For every $\alpha>0$ there is a $\beta=\beta(\alpha,\varphi)>0$ such that
\begin{equation}
\pr\big( s_n(\M) \le n^{-\beta}\,\big) = O(n^{-\alpha}).
\end{equation}
\end{proposition}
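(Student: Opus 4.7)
The plan is to reduce the smallest singular value to distances between columns and hyperplanes, then exploit the bounded-density hypothesis via the convolution bound on densities of linear combinations. Concretely, writing $C_i$ for the $i$th column of $M$ and $H_i$ for the span of the remaining columns, one has the classical inequality
\[
s_n(\M)^{-2} \;\le\; \sum_{i=1}^n \dist(C_i,H_i)^{-2},
\]
obtained by noting that the $i$th column of $\M^{-1}$ has norm $\dist(C_i,H_i)^{-1}$. By pigeonholing, the event $\{s_n(\M)\le t\}$ is contained in $\bigcup_i\{\dist(C_i,H_i)\le t\sqrt n\}$, so by the union bound it suffices to show that each column-distance is unlikely to be small.

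To estimate $\pr(\dist(C_i,H_i)\le s)$, I would condition on the $\sigma$-algebra generated by the other columns $\{C_j\}_{j\ne i}$. Almost surely this determines a hyperplane $H_i$ (of dimension exactly $n-1$), and we may pick a measurable unit vector $v=v(H_i)$ spanning $H_i^\perp$. Then $\dist(C_i,H_i)\ge |\langle C_i,v\rangle| = |\sum_j v_j M_{ji}|$, where the $M_{ji}$ are independent with densities bounded by $\varphi$ and independent of $v$. Since $\|v\|=1$ there is some index $j_0$ with $|v_{j_0}|\ge n^{-1/2}$, and a standard convolution argument (the density of a sum of independent variables is bounded by the density of any one summand) shows that the conditional density of $\langle C_i,v\rangle$ is bounded by $\varphi/|v_{j_0}|\le \varphi\sqrt n$ in the real case, respectively by $\varphi/|v_{j_0}|^2\le \varphi n$ in the complex case. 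Integrating the density over a ball/disk of radius $s$ therefore yields $\pr(\dist(C_i,H_i)\le s \mid \{C_j\}_{j\ne i})\ll \varphi\sqrt n \,s$ (real) or $\ll \varphi n\, s^2$ (complex).

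Putting this together with the union bound, one finds $\pr(s_n(\M)\le t)\ll \varphi n^{2}t$ in the real case and $\ll \varphi n^{3}t^{2}$ in the complex case. Choosing $\beta$ with, say, $\beta\ge 2+\alpha$ (real) or $\beta\ge (3+\alpha)/2$ (complex) completes the proof with the desired dependence of $\beta$ on $\alpha$ and $\varphi$. There is no real obstacle: the only subtlety is ensuring that $H_i$ has dimension $n-1$ almost surely so that $v$ is well-defined — this holds with probability one since the joint density of $(n-1)n$ independent continuous entries puts zero mass on the algebraic variety of degenerate configurations — and that $v$ can be chosen in a measurable way on this full-measure event, which is standard.
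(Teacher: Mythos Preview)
Your argument is correct and is precisely the standard distance-to-hyperplane computation that the paper has in mind when it says the result ``is easily obtained by the argument in \cite[Section 4.4]{BoCh:survey}''; the paper does not give its own proof. One cosmetic slip: it is the $i$th \emph{row} of $M^{-1}$ (not column) that has norm $\dist(C_i,H_i)^{-1}$, but since $\|M^{-1}\|_{\HS}^2$ equals the sum of squared row norms this does not affect anything.
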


Note that above we make no assumptions on the moments of the entries of $\M$ -- in particular, they may have heavy tails. 
The following result of Bordenave and Chafa\"i (Lemma A.1 in \citep{BoCh:survey}) relaxes the hypothesis of continuous distributions from Proposition \ref{prop:bdd} while still allowing for heavy tails, but comes at the cost of a worse probability bound.

\begin{proposition}[Heavy-tailed matrix with non-degenerate entries {\citep{BoCh:survey}}]		\label{prop:BoCh}
Let $Y$ be an $n\times n$ random matrix with independent entries $\eta_{ij}\in \C$.
Suppose that for some $p,r,\ha>0$ we have that for all $i,j\in [n]$,
\begin{equation}	\label{BoCh:nondeg}
\pro{ |\eta_{ij}|\le r} \ge p, \quad\quad \var( \eta_{ij}\un(|\eta_{ij}|\le r)) \ge \ha^2.
\end{equation}
For any $s\ge1$, $t\ge 0$, and any fixed $n\times n$ matrix $B$ we have
\begin{equation}
\pro{ s_n(Y+B) \le \frac{t}{\sqrt{n}}, \; \|Y+B\| \le s} \ll_{p,r,\ha} \sqrt{\log s}\bigg(ts^2 + \frac{1}{\sqrt{n}}\bigg).
\end{equation}
\end{proposition}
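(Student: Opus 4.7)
\noindent The plan is to reduce the smallest-singular-value bound to a determinantal small-ball estimate (using the operator-norm constraint through the factorization $|\det M|=\prod_k s_k(M)$, with $M=Y+B$), and then apply a row expansion together with a Kolmogorov--Rogozin-type anti-concentration bound that exploits the weak non-degeneracy hypothesis on the $\eta_{ij}$.

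On the event $\{\|M\|\le s\}$ one has $s_k(M)\le s$ for all $k<n$, hence $|\det M|\le s^{n-1}s_n(M)$, giving
\[
\{s_n(M)\le t/\sqrt{n},\,\|M\|\le s\}\subseteq \{|\det M|\le ts^{n-1}/\sqrt{n}\}.
\]
This reduces the problem to bounding $\pr(|\det M|\le \lambda,\,\|M\|\le s)$ for $\lambda=ts^{n-1}/\sqrt{n}$. Expanding the determinant along the last row yields $\det M=\langle R_n,c\rangle$, where $R_n$ is the random last row of $M$ and $c=(C_{n1},\dots,C_{nn})$ is the cofactor vector, depending only on the first $n-1$ rows. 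Conditional on those rows, $\langle R_n,c\rangle=\sum_j \eta_{nj}c_j+\beta$ is an affine combination of the independent scalars $\eta_{nj}$ (for some deterministic $\beta\in\C$).

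Under the hypotheses $\pr(|\eta_{nj}|\le r)\ge p$ and $\var(\eta_{nj}\un(|\eta_{nj}|\le r))\ge \ha^2$, each $\eta_{nj}$ has L\'evy concentration $Q(\eta_{nj},\epsilon_0)\le 1-p_0$ for constants $\epsilon_0,p_0$ depending on $(p,r,\ha)$. The Kolmogorov--Rogozin inequality then yields, for any unit $v\in\C^n$,
\[
\sup_{z\in\C}\pr\!\left(\left|\textstyle\sum_j \eta_{nj}v_j - z\right|\le u\right)\le C(p,r,\ha)\bigl(u+1/\sqrt{n}\bigr),\qquad u>0.
\]
Applying this with $v=c/\|c\|$ and taking expectation over $R_1,\dots,R_{n-1}$ restricted to $\{\|M^{(n)}\|\le s\}$, where $M^{(n)}$ is the $(n-1)\times n$ submatrix obtained by deleting the last row (noting $\|M^{(n)}\|\le\|M\|\le s$), gives
\[
\pr(|\det M|\le \lambda,\,\|M\|\le s)\le C\lambda\,\e\bigl[\|c\|^{-1}\un(\|M^{(n)}\|\le s)\bigr]+C/\sqrt{n}.
\]

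It remains to bound $\e[\|c\|^{-1}\un(\|M^{(n)}\|\le s)]$. By Cauchy--Binet, $\|c\|^2=\det\bigl(M^{(n)}(M^{(n)})^*\bigr)=\prod_{k=1}^{n-1}s_k(M^{(n)})^2$, and on $\{\|M^{(n)}\|\le s\}$ every $s_k(M^{(n)})\in[0,s]$. A dyadic decomposition of the spectrum over $O(\log s)$ scales $[2^{-j}s,2^{-j+1}s]$, combined with iterated anti-concentration estimates on the counts of singular values at each scale, should control this expectation by $C\sqrt{n\log s}\cdot s^{3-n}$; substituting $\lambda=ts^{n-1}/\sqrt{n}$ produces the target bound $C\sqrt{\log s}(ts^2+1/\sqrt{n})$.

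The main obstacle is this final step: controlling $\e[\|c\|^{-1}\un(\|M^{(n)}\|\le s)]$ under only the weak non-degeneracy hypothesis, without access to higher moments of the entries. The $\sqrt{\log s}$ factor emerges naturally from the $\log s$ dyadic scales of singular values in $[0,s]$, but the accounting of singular-value counts at each scale must be performed either by induction on $n$ applied to the rectangular submatrix $M^{(n)}$, or by a direct Schur-complement estimate that survives the heavy-tailed regime allowed here.
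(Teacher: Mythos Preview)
First, note that this proposition is not proved in the paper: it is quoted from \citep{BoCh:survey} (their Lemma~A.1) in the survey of prior results in Section~\ref{sec:previous}, so there is no proof here to compare against. I will instead comment on the correctness of your approach and contrast it with the argument in \citep{BoCh:survey}.

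Your anti-concentration step is incorrect as stated. The bound
\[
\sup_{z\in\C}\pr\Big(\Big|\sum_j \eta_{nj}v_j - z\Big|\le u\Big)\le C\big(u+1/\sqrt{n}\big)
\]
does \emph{not} hold for an arbitrary unit vector $v$: take $v=e_1$, so the sum collapses to $\eta_{n1}$, whose L\'evy concentration at small scales is bounded below by a constant depending only on the distribution of $\eta_{n1}$, not by $1/\sqrt{n}$. Kolmogorov--Rogozin only yields a useful bound when many coordinates $v_j$ are individually large (of order at least $u/\epsilon_0$), i.e.\ when $v$ is \emph{spread} in the sense of Lemma~\ref{lem:spread}. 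The normalized cofactor vector $c/\|c\|$ has no reason to be spread a~priori; establishing this would require precisely the compressible/incompressible analysis you are trying to bypass.

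Beyond this, the determinantal reduction trades a bound on $s_n(M)$ for control of $\e[\|c\|^{-1}\un(\|M^{(n)}\|\le s)]$, which amounts to simultaneous lower bounds on all $n-1$ singular values of the rectangular submatrix $M^{(n)}$ --- a problem at least as hard as the one you started with. You acknowledge this as the main obstacle, and the sketched inductive/dyadic closure is not substantiated; under only the truncated-variance hypothesis \eqref{BoCh:nondeg}, with no higher moments available, I do not see how to carry it out.

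The proof in \citep{BoCh:survey} instead follows the Rudelson--Vershynin template used throughout the present paper: split the sphere into compressible and incompressible vectors; handle compressible vectors by a net-plus-union-bound argument using tensorized crude anti-concentration (as in Lemmas~\ref{lem:tensorize} and~\ref{lem:fixed_crude} here); then, for incompressible vectors, reduce via the Koml\'os double-counting trick to the distance from a single row to the span of the others, where the relevant normal vector is now \emph{known} to be incompressible by the first step, so a Rogozin-type small-ball bound applies legitimately (cf.\ Section~\ref{sec:broad}). The thresholds in the compressible/incompressible split must be tuned to the operator-norm bound $s$, which is what produces the $s$-dependent factors in the final estimate.
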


The non-degeneracy conditions \eqref{BoCh:nondeg} do not allow for some entries to be deterministic.
Litvak and Rivasplata \citep{LiRi} obtained a lower tail estimate of the form \eqref{lowertail} for centered random matrices having a sufficiently small constant proportion of entries equal to zero deterministically.
Below we give new results (Theorems \ref{thm:broad} and \ref{thm:super}) allowing all but an arbitrarily small (fixed) proportion of entries to be deterministic. 

Finally, we recall a theorem of Rudelson and Zeitouni \citep{RuZe} for Gaussian matrices, showing that Observation \ref{obs:submatrix} is essentially the only obstruction to obtaining \eqref{lowertail:sub}.
To state their result we need to set up some graph theoretic notation, which will be used repeatedly throughout the paper.

To a non-negative $n\times m$ matrix $A=(\sig_{ij})$ we associate a bipartite graph $\Gamma_A = ([n],[m], E_A)$, with $(i,j)\in E_A$ if and only if $\sig_{ij}>0$. 
For a row index $i\in [n]$ we denote by
\begin{equation}	\label{def:nbhd}
\mN_A(i) = \set{j\in [m]: \sig_{ij}>0}
\end{equation}
its neighborhood in $\Gamma_A$. 
Thus, the neighborhood of a column index $j\in [m]$ is denoted $\mN_{A^\tran}(j)$.
Given sets of row and column indices $I\subset [n], J\subset[m]$, we define the associated \emph{edge count}
\begin{equation}	\label{def:edges}
e_A(I,J) := |\{(i,j)\in [n]\times [m]: \sig_{ij}>0\}|.
\end{equation}
We will generally work with the graph that only puts an edge $(i,j)$ when $\sig_{ij}$ exceeds some fixed cutoff parameter $\ha>0$.
Thus, we denote by
\begin{equation}	\label{Aha}
A(\ha) = (\sig_{ij}1_{\sig_{ij}\ge \ha})
\end{equation}
the matrix which thresholds out entries smaller than $\ha$.

Rudelson and Zeitouni work with Gaussian matrices whose matrix of standard deviations $A=(\sig_{ij})$ satisfies the following expansion-type condition.

\begin{definition}[Broad connectivity]		\label{def:broad}
Let $A=(\sig_{ij})$ be an $n\times m$ matrix with non-negative entries. 
For $I\subset [n]$ and $\delta\in (0,1)$, define the set of \emph{$\delta$-broadly connected} neighbors of $I$ as
\begin{equation}	\label{def:broadnbr}
\mN_{A}^{(\delta)}(I)= \{ j\in [m]: |\mN_{A^\tran}(j)\cap I|\ge \delta |I|\}.
\end{equation}
For $\delta,\nu\in (0,1)$, we say that $A$ is \emph{$(\delta,\nu)$-broadly connected} if 
\begin{enumerate}[(1)]
\item $|\mN_A(i)| \ge \delta m$ for all $i\in [n]$;\vspace{.2cm}
\item $|\mN_{A^\tran}(j)|\ge \delta n$ for all $j\in [m]$;\vspace{.2cm}
\item $|\mN_{A^\tran}^{(\delta)}(J)| \ge \min(n,(1+\nu)|J|)$ for all $J\subset [m]$.
\end{enumerate}
\end{definition}

\begin{theorem}[Gaussian matrix with broadly connected profile \citep{RuZe}]	\label{thm:ruze}
Let $G$ be an $n\times n$ matrix with iid standard real Gaussian entries, and let $A$ be an $n\times n$ matrix with entries $\sig_{ij}\in [0,1]$ for all $i,j$. 
With notation as in \eqref{Aha}, assume that $A(\ha)$ is $(\delta,\nu)$-broadly connected for some $\ha,\delta,\nu\in (0,1)$.
Let $K\ge1$, and let $B$ be a fixed $n\times n$ matrix with $\|B\|\le K\sqrt{n}$. 
Then for any $t\ge0$,
\begin{equation}	\label{bound:ruze}
\pr\big( s_n(A\circ G+ B) \le tn^{-1/2}\,\big) \ll_{\delta,\nu,\ha} K^{O(1)}t+ e^{-cn}
\end{equation}
for some $c=c(\delta,\nu,\ha)>0$.
\end{theorem}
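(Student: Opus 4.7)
\noindent\emph{Proof plan.}
The plan is to follow the Rudelson--Vershynin invertibility scheme, adapted to exploit the Gaussianity of $G$ and the expansion condition on $A(\ha)$. Writing $M:=A\circ G+B$, I would first decompose the sphere as $\mS^{n-1}=\Comp(\rho_0,\rho_1)\cup \Incomp(\rho_0,\rho_1)$, where $\Comp(\rho_0,\rho_1)$ denotes vectors within $\ell_2$-distance $\rho_1$ of some vector of support size at most $\rho_0 n$, with small constants $\rho_0,\rho_1$ chosen in terms of $\delta,\nu,\ha$. This reduces the theorem to the pair of bounds
\begin{equation}\label{eq:comp-goal}
\pro{\inf_{u\in\Comp(\rho_0,\rho_1)}\|Mu\|\le c\sqrt{n}}\le e^{-cn},
\end{equation}
and
\begin{equation}\label{eq:incomp-goal}
\pro{\inf_{u\in\Incomp(\rho_0,\rho_1)}\|Mu\|\le t/\sqrt{n}}\ls_{\delta,\nu,\ha}K^{O(1)}t+e^{-cn}.
\end{equation}

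For \eqref{eq:comp-goal}, I would reduce by the standard approximation argument to the statement that for each fixed flat unit vector $v$ with support $J\subset[n]$ of size exactly $\rho_0 n$, the bound $\|Mv\|\ge c\sqrt{n}$ holds with probability at least $1-e^{-cn}$. Condition (3) of broad connectivity provides at least $(1+\nu)|J|$ rows $i$ satisfying $|\mN_{A(\ha)}(i)\cap J|\ge\delta|J|$; for each such $i$ the entry $(Mv)_i$ is a real Gaussian (plus a deterministic mean) of variance at least $c\ha^2\delta$, and these entries are independent across $i$. A $\chi^2$ tail bound then gives the required lower bound for fixed $v$, and an $\eps$-net over supports and flat sparse vectors -- of total cardinality $e^{o(n)}$ -- promotes this to \eqref{eq:comp-goal} via a union bound.

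For \eqref{eq:incomp-goal}, I would invoke the invertibility-via-distance inequality
\begin{equation}
\pro{\inf_{u\in\Incomp(\rho_0,\rho_1)}\|Mu\|\le \tfrac{t\rho_1}{\sqrt{n}}}\le \frac{1}{\rho_0 n}\sum_{k=1}^n\pro{\dist(M_{\cdot k},H_k)\le t},
\end{equation}
where $H_k=\Span\{M_{\cdot j}:j\ne k\}$. Fixing $k$ and letting $w\in\mS^{n-1}$ be a unit vector normal to $H_k$, the column $M_{\cdot k}$ is independent of $(w,B_{\cdot k})$, and conditionally on $w$ the scalar $\langle M_{\cdot k},w\rangle=\sum_i\sig_{ik}G_{ik}w_i+\langle B_{\cdot k},w\rangle$ is Gaussian with standard deviation $\sigma_w\ge\ha\bigl(\sum_{i:\sig_{ik}\ge\ha}w_i^2\bigr)^{1/2}$. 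Gaussian anti-concentration then yields $\pro{\dist(M_{\cdot k},H_k)\le t\mid w}\ls t/\sigma_w$, and the proof reduces to the uniform spreadness estimate
\begin{equation}\label{eq:spread}
\pro{\sum_{i:\sig_{ik}\ge\ha}w_i^2\le c}\le e^{-cn}\qquad (k=1,\ldots,n).
\end{equation}

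The hardest step is \eqref{eq:spread}. Applying the compressible-vector analysis to $(M^{(k)})^\tran$ (where $M^{(k)}$ is $M$ with column $k$ removed) shows that $w$ itself is incompressible with probability $1-e^{-cn}$; however, this merely forbids concentration of $w$ on sets of size $\le\rho_0 n$, whereas the ``bad'' set $I_k^c:=\{i:\sig_{ik}<\ha\}$ may be as large as $(1-\delta)n$. The resolution is to run a second compressible-style argument at the coarser scale of $|I_k^c|$: the kernel identity $(M^{(k)})^\tran w=0$ combined with the hypothesis $\|w_{I_k}\|^2\le\tau$ forces $\|(M^{(k)})^\tran w_{I_k^c}\|\le\|M\|\sqrt{\tau}=O(K\sqrt{n\tau})$, so $w_{I_k^c}/\|w_{I_k^c}\|$ would be a near right-null vector for the row-restricted block $M^{(k)}$ with rows indexed by $I_k^c$. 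The broad connectivity condition (3) descends to this rectangular sub-block in a weakened form still sufficient to push through the compressible argument -- since any set $J$ of at most $\rho_0 n$ columns has the property that at least $(1+\nu)|J|-|I_k|$ rows of $I_k^c$ are $\delta$-broadly connected to $J$ -- ruling out such a near-null direction with probability $1-e^{-cn}$ and yielding $\sum_{i\in I_k}w_i^2\gs 1$. Combining the two levels of sphere analysis, and absorbing the continuity costs in the net arguments into $K^{O(1)}$ via the standard bound $\|M\|\ls K\sqrt{n}$, delivers \eqref{bound:ruze}.
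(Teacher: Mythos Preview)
This theorem is quoted from Rudelson--Zeitouni and is not proved in the present paper; the paper instead proves the non-Gaussian extension Theorem~\ref{thm:broad}, whose argument (Section~\ref{sec:broad} together with Proposition~\ref{prop:slight}) follows the Rudelson--Zeitouni scheme closely and so serves as the comparison point. Your outline tracks that scheme at the top level, but the incompressible step contains a genuine gap.

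The problem is your argument for the spreadness estimate $\sum_{i:\sig_{ik}\ge\ha}w_i^2\ge c$. You correctly note that if $\|w_{I_k}\|^2\le\tau$ then $v:=w_{I_k^c}/\|w_{I_k^c}\|$ is an approximate left null vector of $M^{(k)}$ supported on $I_k^c$. But your ``compressible-style argument at the coarser scale'' cannot rule this out: the vector $v$ may have full support on $I_k^c$, a set of size up to $(1-\delta)n$, so it is not compressible in any sense your first step covers. Your specific numerical claim---that condition~(3) yields at least $(1+\nu)|J|-|I_k|$ broadly connected rows in $I_k^c$ for $|J|\le\rho_0 n$---is vacuous, since $(1+\nu)\rho_0 n - \delta n<0$ once $\rho_0$ is small. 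To exclude \emph{all} unit $v\in S^{I_k^c}$ as near left-null vectors you would in effect need a smallest-singular-value bound for the fat rectangular block $M_{I_k^c,[n]\setminus\{k\}}$, a problem of the same type as the original one.

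The resolution, both in the Rudelson--Zeitouni proof and in Proposition~\ref{prop:slight} here, is different: one pushes the compressibility threshold $\theta$ in $\Comp(\theta,\rho)$ all the way up to $1-\delta/2$, not merely to a small $\rho_0$. This is done by an iterative bootstrap (Lemma~\ref{lem:increment_broad} in this paper): once control on $\Comp(\theta,\rho)$ is established, condition~(3) of broad connectivity guarantees that any $u\notin\Comp(\theta,\rho)$ with essential support $J$ of size $\ge\theta m$ has at least $\min(n,(1+\nu)|J|)$ rows with good anti-concentration, enough to beat the net at the next level $\Comp((1+c\nu)\theta,\rho')$. Iterating $O_\nu(\log(1/\theta_0))$ times carries $\theta$ from the highly-compressible baseline up to $1-\delta/2$. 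With this stronger bound applied to $M^{(k)}$, the normal vector $w$ lies in $\Incomp(1-\delta/2,\rho)$ and hence (Lemma~\ref{lem:spread}) has at least $(1-3\delta/4)n$ coordinates of size $\ge\rho/\sqrt{n}$; since $|I_k|\ge\delta n$ by condition~(2), the overlap with $I_k$ has size $\ge\delta n/4$, and the spreadness estimate is immediate. Your compressible step is only the base case of this iteration and cannot by itself deliver the required information about $w$.
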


Note that the assumption of broad connectivity gives us an ``epsilon of separation" from the bad example of Observation \ref{obs:submatrix}.
Thus, Theorem \ref{thm:ruze} provides a near-optimal answer to Question \ref{prob:sub} for Gaussian matrices.

\begin{remark}		\label{rmk:ruze_dense}
Since the dependence of the bound \eqref{bound:ruze} on the parameters $\delta$ and $\nu$ is not quantified, Theorem \ref{thm:ruze} only addresses Question \ref{prob:sub} for \emph{dense} standard deviation profiles, i.e. when $A$ has a non-vanishing proportion of large entries. 
While it would not be difficult to quantify the steps in \citep{RuZe}, the resulting dependence on parameters is not likely to be optimal.
\end{remark}

\subsection{New results}	\label{sec:results}

Our first result removes the Gaussian assumption from Theorem \ref{thm:ruze}, though at the cost of a worse probability bound.
Recall the parameter $\kappa_0$ from Definition \ref{def:profile}.

\begin{theorem}[General matrix with broadly connected profile]		\label{thm:broad}
Let $\M=A\circ X+B$ be an $n\times n$ matrix as in Definition \ref{def:profile}, and assume that $A(\ha)$ is $(\delta,\nu)$-broadly connected for some $\ha,\delta,\nu\in (0,1)$.
Let $K\ge 1$. 
For any $t\ge 0$,
\begin{equation}	\label{broad:poly}
\pro{ s_n(\M) \le \frac{t}{\sqrt{n}}, \, \|\M\| \le K\sqrt{n} } \ll_{K,\delta,\nu,\ha,\kappa_0} t + \frac{1}{\sqrt{n}}.
\end{equation}
\end{theorem}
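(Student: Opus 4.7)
The plan is to follow the Rudelson--Vershynin compressible/incompressible paradigm, with the broad connectivity of $A(\ha)$ playing the role of the iid structure, and with Berry--Esseen-type small-ball estimates replacing sub-Gaussian anti-concentration (which is enough to yield only the polynomial probability bound $t + 1/\sqrt n$). Decomposing the unit sphere as $S^{n-1} = \Comp(\rho,\tau) \cup \Incomp(\rho,\tau)$ for parameters to be fixed later, I would separately bound the probability that $s_n(M)\le t/\sqrt n$ is witnessed by a vector from each piece.

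For compressible vectors I would run an $\varepsilon$-net argument. For any fixed $v \in \Comp(\rho,\tau)$ the vector $Mv$ has independent coordinates, and for each row $i$ the scalar $(A\circ X)_i v = \sum_j \sigma_{ij}\xi_{ij}v_j$ has effective variance $\gtrsim \ha^2$ as soon as some $j$ with $\sigma_{ij}\ge \ha$ lies in the ``large-coordinate'' support of $v$. Broad connectivity property (2) guarantees that every column of $A(\ha)$ has at least $\delta n$ non-zero entries, so a positive proportion of rows pick up non-trivial randomness from any fixed coordinate of $v$. A standard single-coordinate anti-concentration estimate then gives $\pr(\|Mv\|\le c\sqrt n) \le e^{-c'n}$ for each fixed $v$, and combining with a net of cardinality $(C/\tau)^{\rho n}$ and the norm bound $\|M\|\le K\sqrt n$ (for net-to-sphere continuity) eliminates compressible witnesses for sufficiently small $\rho$ with exponentially small failure probability.

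For incompressible witnesses I would apply the invertibility-via-distance reduction: if $s_n(M)\le t/\sqrt n$ is realized by some $v \in \Incomp(\rho,\tau)$, then a positive fraction of indices $i$ must satisfy $\dist(R_i, H_i)\le C_{\rho,\tau}\, t$, where $R_i$ is row $i$ of $M$ and $H_i$ is the affine span of the remaining rows; by Markov's inequality this reduces matters to showing $\pr(\dist(R_i, H_i)\le Ct) \ll t + 1/\sqrt n$ for each fixed $i$. Writing $\dist(R_i, H_i) = |\langle R_i, w_i\rangle - \gamma|$ for a unit normal $w_i$ to $H_i$ and a scalar $\gamma$ independent of row $i$, and conditioning on $(w_i,\gamma)$, the inner product is a sum $\sum_j \sigma_{ij}\xi_{ij}\overline{w_i(j)}$ of independent scalars plus a deterministic constant. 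A Berry--Esseen / Kolmogorov--Rogozin small-ball bound then gives, conditionally on $w_i$, $\pr(|\cdot|\le t) \lesssim_{\kappa_0} (t+1)/\sqrt{S_i(w_i)}$, where $S_i(w_i) := |\{j : \sigma_{ij}|w_i(j)| \gtrsim 1/\sqrt n\}|$ is the ``profile-weighted'' effective support size, so we are reduced to showing $S_i(w_i)\gtrsim n$ with probability $1-o(1/n)$.

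The main obstacle is precisely this last step, which requires more than the usual conclusion that $w_i$ is incompressible: we need $w_i$ to have many large coordinates \emph{inside} the random index set $\mN_{A(\ha)}(i)$, not just somewhere in $[n]$. I would establish this by re-running the compressible-vector analysis on the $(n-1)\times n$ submatrix $M^{(i)}$ obtained by deleting row $i$, invoking the expansion hypothesis (3) of broad connectivity (through the set $\mN_{A(\ha)^\tran}^{(\delta)}$) to upgrade the standard dichotomy: on a high-probability event every unit kernel-like vector of $M^{(i)}$ has its $\ell_2$ mass genuinely spread over every large column subset, and in particular over $\mN_{A(\ha)}(i)$, yielding $\gtrsim n$ coordinates of size $\gtrsim 1/\sqrt n$ within this set. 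This ``structured'' or ``weighted'' incompressibility step, tying the profile $A$ to the distribution of the random normal $w_i$, is where the broad connectivity hypothesis does its essential work and is the delicate part of the argument.
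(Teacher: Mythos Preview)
Your overall strategy matches the paper's: split the sphere into compressible and incompressible vectors, use the distance-to-hyperplane reduction for the latter, and argue that the random unit normal $w_i$ to the span $R_{-i}$ has large overlap with $\mN_{A(\ha)}(i)$. You also correctly locate the crux: one must show that $w_i$ (equivalently, any null-like vector of $M^{(i)}$) is incompressible with threshold parameter close to $1$, so that its essential support of size $\ge (1-\tfrac{\delta}{2})n$ is forced by pigeonhole to meet the row neighbourhood $\mN_{A(\ha)}(i)$ of size $\ge \delta n$. This is exactly the content of Proposition~\ref{prop:slight} in the paper, applied to each $M^{(i)}$ (and to $M^*$ for the left null-like vector used in the double-counting step).

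What your sketch underestimates is the difficulty of that compressible step when the threshold $\theta$ is pushed from a small constant up to $1-\tfrac{\delta}{2}$. A direct $\varepsilon$-net argument fails in the complex setting: the net for $\Comp(\theta,\rho')$ has cardinality of order $(C/\rho')^{2\theta n}$, while the anti-concentration bound (Lemma~\ref{lem:fixed_improved}, applied on the at most $(1+\nu)\theta n$ rows supplied by expansion hypothesis~(3)) is only of order $p^{(1+\nu)\theta n}$; the factor $2$ in the net exponent prevents the union bound from closing. (In the real-valued Gaussian setting of Rudelson--Zeitouni this factor is absent, which is why their argument goes through directly.) The paper overcomes this via an \emph{entropy reduction} argument (Lemma~\ref{lem:entropy}): once $\Comp(\theta,\rho)$ has been controlled, the Restricted Invertibility Theorem of Spielman--Srivastava extracts a set $I$ of $\approx\theta n$ rows on which $M$ restricted to any $\theta n$ columns is well-invertible; conditioning on these rows confines candidate null-like vectors supported on $\theta' n$ columns to a subspace of dimension only $\approx(\theta'-\theta)n$, yielding a random net of size $(C/\rho'')^{2(\theta'-\theta)n}$ --- small enough to be beaten by the anti-concentration coming from rows disjoint from $I$. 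Iterating this increment (Lemma~\ref{lem:increment_broad}) from $\theta_0$ up to $1-\tfrac{\delta}{2}$ gives Proposition~\ref{prop:slight}. This device is the main new technical ingredient, and is what your phrase ``re-running the compressible-vector analysis'' would actually have to entail; your paragraph on compressible vectors, which uses only condition~(2) and a small support parameter, corresponds only to the initialization (Lemma~\ref{lem:high}) of this iteration.
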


\begin{remark}	\label{rmk:distribution_broad}
While we have stated no moment assumptions on the atom variable $\xi$ over the standing assumption of unit variance, the restriction to the event $\{\|M\|\le K\sqrt{n}\}$ requires us to assume at least four finite moments to deduce $\pr(s_n(M) \le t/\sqrt{n}) \ll t + o(1)$. 
Here we give a lower tail estimate at the optimal scale $s_n(M)\sim n^{-1/2}$; however, the arguments in this paper can be used to establish a polynomial lower bound on $s_n(M)$ of non-optimal order under larger perturbations $B$ (similar to \eqref{bd:super} below). 
\end{remark}

\begin{remark}[Improving the probability bound]
We expect that the probability bound in \eqref{broad:poly} can be improved by making use of more advanced tools of Littlewood--Offord theory introduced in \citep{TaVu:circ, RuVe:ilo}, 
though it appears these tools cannot be applied in a straightforward manner.
In the interest of keeping the paper of reasonable length we do not pursue this here.
\end{remark}

\begin{remark}[Bounds on moderately small singular values]
The methods used to prove Theorem \ref{thm:broad} together with an idea of Tao and Vu from \citep{TaVu:esd} can be used to give lower bounds of optimal order on $s_{n-k}(M)$ with $n^{\eps}\le k \le cn$ for any $\eps>0$ and a sufficiently small constant $c=c(\kappa_0,\ha,\delta,\nu,K)>0$; see \citep[Theorem 4.5.1]{Cook:thesis}. 
Such bounds are of interest for proving convergence of the empirical spectral distribution; see \citep{TaVu:esd, BoCh:survey}.
\end{remark}

In light of Observation \ref{obs:submatrix}, Theorem \ref{thm:broad} gives an essentially optimal answer to Question \ref{prob:sub} for \emph{dense} random matrices (see Remark \ref{rmk:ruze_dense}). 
It would be interesting to establish a version of this result that allows for only a proportion $o(1)$ of the entries to be random.
Indeed, we expect a version of the above theorem to hold when $A$ has density as small $(\log^{O(1)}n)/n$. (Quantifying the dependence on $\delta,\nu$ in \eqref{broad:poly} would only allow a slight polynomial decay in the density.)

We note that they broad connectivity hypothesis includes many standard deviation profiles of interest, such as band matrices:

\begin{corollary}[Shifted non-Hermitian band matrices]	\label{cor:band}
Let $M=A\circ X+ B$ be an $n\times n$ matrix as in Definition \ref{def:profile}, and assume that for some fixed $\ha,\eps\in (0,1)$, $\sig_{ij} \ge \ha$ for all $i,j$ with $\min(|i-j|,n-|i-j|)\le \eps n$. 
Let $K\ge 1$.
Then \eqref{broad:poly} holds for any $t\ge 0$ (with implied constant depending on $K,\ha,\eps$ and $\kappa_0$).
\end{corollary}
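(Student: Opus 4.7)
The plan is to reduce Corollary \ref{cor:band} to Theorem \ref{thm:broad} by verifying that the thresholded standard deviation profile $A(\ha)$ is $(\delta,\nu)$-broadly connected for constants $\delta, \nu = \Theta(\eps) > 0$; concretely, $\delta = \nu = \eps/3$ should suffice.

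Set $r := \lf \eps n \rf$ and write $B(i, r) := \{j \in [n] : \min(|i-j|, n-|i-j|) \le r\}$ for the cyclic interval of radius $r$ around $i$. By hypothesis $\mN_{A(\ha)}(i) \supseteq B(i, r)$, so each row and column of $A(\ha)$ has at least $2r + 1$ positive entries; this immediately handles items (1) and (2) of Definition \ref{def:broad}.

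The substantive step is item (3). Fix $J \subseteq [n]$ with $k := |J|$, and let $\hat f(i) := |B(i, r) \cap J|$. Since $\mN_{A(\ha)}(i) \supseteq B(i, r)$, it suffices to show that $T := \{i : \hat f(i) \ge \delta k\}$ satisfies $|T| \ge \min(n, (1+\nu) k)$. The function $\hat f$ is the cyclic convolution $\mathbf{1}_{B(0, r)} \ast \mathbf{1}_J$ on $\Z/n\Z$. In the regime $\delta k \le \min(k, 2r+1)/2$ --- which covers all $k \le n$ given our choice of $\delta$ --- I would invoke a discrete rearrangement inequality for cyclic convolutions with a symmetric kernel to show that $|T|$ is minimized, among all $J$ of fixed size $k$, when $J$ is itself a cyclic interval. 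For an interval $J$ the convolution has an explicit trapezoidal profile (plateau of height $\min(k, 2r+1)$ flanked by two ramps of slope $\pm 1$), and an elementary calculation gives
\[
|T| \ge \min\bigl(n,\ (1 - 2\delta) k + 2r\bigr).
\]
A short verification using the inequality $2\eps(1+\nu) \ge 2\delta + \nu$ then confirms that the right-hand side is at least $\min(n, (1+\nu) k)$ for every $k \le n$, establishing item (3). Theorem \ref{thm:broad} then delivers the claim.

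The principal technical obstacle is the rearrangement step. I expect this can be handled by a direct compression/sliding argument on $\Z/n\Z$: moving two elements of $J$ closer together monotonically shrinks each super-level set of $\hat f$, provided the threshold remains below roughly half the plateau height. In the regime relevant here this condition is automatic, so iterating the compression reduces the analysis to the tractable case of a cyclic interval, where the trapezoidal bound above applies directly.
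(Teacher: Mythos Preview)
Your reduction to Theorem~\ref{thm:broad} via broad connectivity is the same as the paper's, and your treatment of conditions (1) and (2) is fine.  The divergence is in condition (3), and here your argument has a genuine gap.

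The rearrangement step is the problem.  You assert that a compression/sliding move on $J$ monotonically shrinks the super-level set $\{i : \hat f(i) \ge t\}$ for small $t$, but the elementary move you describe does not have this property.  Shifting a single element $a \in J$ to $a+1$ changes $\hat f$ at exactly two points: it increases by one at $a{+}1{+}r$ and decreases by one at $a{-}r$.  Whether the super-level set at threshold $t$ shrinks, grows, or stays fixed depends on whether $\hat f(a{+}1{+}r) = t{-}1$ and whether $\hat f(a{-}r) = t$, which in turn depends on the local configuration of $J$ on both sides of $a$.  In particular, if the nearest neighbour of $a$ in $J$ lies to the left within distance $2r$ and there is nothing to the right within $2r{+}1$, then moving $a$ rightwards \emph{increases} $|\{\hat f \ge 1\}|$.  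One can try to legislate this away by always moving towards the nearer neighbour, but on the cycle this is ambiguous, and a polarization move (shifting two elements simultaneously towards each other) alters $\hat f$ at four points with no evident monotonicity either.  The inequality you want --- that an interval $J$ minimises $|\{\hat f \ge t\}|$ for low $t$ --- may well be true, but it is not a consequence of Riesz rearrangement (which goes in the wrong direction here), and your sketch does not prove it.

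The paper sidesteps rearrangement entirely.  It exploits the same structural fact you implicitly use --- that $i \mapsto |J(i)|$ is $1$-Lipschitz on $\Z/n\Z$ --- but combines it with the double-counting identity $\sum_i |J(i)| = (2r{+}1)|J|$.  Assuming for contradiction that $|I_\delta| < (1{+}\nu)|J|$, one decomposes $I_\delta$ into maximal cyclic intervals; on each such interval the Lipschitz bound and the endpoint values force a tent-function upper bound $|J(i)| \le \delta|J| + 1 + \min(i-a_l, b_l-i)$, capped at $2r{+}1$.  Summing these bounds over $I_\delta$ and using $|J(i)| < \delta|J|$ off $I_\delta$ yields an upper bound on $\sum_i |J(i)|$ that contradicts the double-counting identity once $\delta, \nu$ are small enough relative to $\eps$.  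This gives condition (3) directly, with no extremal configuration needed.
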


We defer the proof to Appendix \ref{app:band}.

\begin{remark}
It is possible to modify our argument for the above corollary to treat a band profile that does not ``wrap around", i.e.\ only enforcing $a_{ij}\ge \ha$ for $i,j$ with $|i-j|\le \eps n$. 
\end{remark}


%

Having addressed Question \ref{prob:sub}, we now ask whether we can further relax the assumptions on the standard deviation profile $A$ by assuming more about the mean profile $B$. 
In particular, can we make assumptions on $B$ that give \eqref{lowertail} while allowing $A\circ X$ to be singular deterministically?

Of course, a trivial example is to take $A=0$ and $B$ any invertible matrix.
Another easy example is to take take $B$ to be \emph{very well-invertible}, with $s_n(B) \ge K\sqrt{n}$ for a large constant $K>0$ (for instance, take $B=K\sqrt{n}I$, where $I$ is the identity matrix).
Indeed, standard estimates for the operator norm of random matrices with centered entries (cf.\ Section \ref{sec:op}) give $\|A\circ X\| = O(\sqrt{n})$ with high probability provided the atom variable $\xi$ satisfies some additional moment hypotheses. 
From the triangle inequality
\begin{align*}
s_n(M)  &= \inf_{u\in S^{n-1}} \|(A\circ X+B)u\| \ge  s_n(B) - \|A\circ X\|,
\end{align*}
so $s_n(M)\gg \sqrt{n}$ with high probability if $K$ is sufficiently large.

The problem becomes non-trivial when we allow $B$ to have singular values of size $\eps \sqrt{n}$ for small $\eps>0$ and $A$ as in Observation \ref{obs:submatrix}. 
In this case any proof of a lower tail estimate of the form \eqref{lowertail} must depart significantly from the proofs of the results in the previous section by making use of arguments which are not translation invariant.

Our main result shows that when the mean profile $B$ is a \emph{diagonal} matrix with smallest entry at least an arbitrarily small (fixed) multiple of $\sqrt{n}$, then we do not need to assume anything further about the standard deviation profile $A$.

\begin{theorem}[Main result]
\label{thm:main}
Fix arbitrary $r_0\in (0,\frac12]$, $K_0\ge1$, and let $Z$ be a (deterministic) diagonal matrix with diagonal entries $z_1,\dots,z_n\in \C$ satisfying 
\begin{equation}	\label{constraint:zi}
 |z_i|\in [r_0,K_0]\qquad \forall i\in[n].
\end{equation}
Let $M$ be an $n\times n$ random matrix as in Definition \ref{def:profile} with $B=Z\sqrt{n}$, and assume $\mu_{4+\eta}<\infty$ for some fixed $\eta>0$.
There are $\alpha(\eta)>0$ and $\beta(r_0,\eta,\mu_{4+\eta})>0$ such that
\begin{equation}	\label{bd:gen}
\pr\big( s_n(M)\le n^{-\beta} \, \big) = O_{r_0,K_0,\eta,\mu_{4+\eta}}(n^{-\alpha}).
\end{equation}
\end{theorem}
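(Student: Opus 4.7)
I will follow the compressible/incompressible framework pioneered by Rudelson and Vershynin, with both halves taking crucial advantage of the diagonal structure of $B=Z\sqrt{n}$ (where $|z_i|\in[r_0,K_0]$). A preliminary truncation of the atom variable $\xi$ at level $n^{\tau}$ (for small $\tau=\tau(\eta)$) combined with $\mu_{4+\eta}<\infty$ gives a polynomial probability bound $\|M\|\le K\sqrt{n}$ for some $K=K(K_0,\mu_{4+\eta})$, and I work on that event throughout. I then pick parameters $\delta=\delta(r_0)>0$ and $\rho=\rho(r_0,K)>0$ small enough and split the unit sphere $S^{n-1}=\Comp(\delta,\rho)\cup\Incomp(\delta,\rho)$ in the usual way.

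For the compressible part, the key observation is that when $v$ is supported on $I$ with $|I|\le\delta n$, diagonality of $B$ forces
\[
(Mv)_I = \bigl(Z_I\sqrt{n}+(A\circ X)_{I,I}\bigr)v_I,
\]
and $Z_I\sqrt{n}$ has smallest singular value at least $r_0\sqrt{n}$. A routine operator-norm estimate on the $|I|\times|I|$ random matrix $(A\circ X)_{I,I}$ (whose entries have variance at most one), combined with a union bound over supports with $|I|\le\delta n$, shows that with high probability $\max_{|I|\le\delta n}\|(A\circ X)_{I,I}\|\le (r_0/2)\sqrt{n}$ once $\delta$ is chosen small enough in terms of $r_0$. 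Hence $s_{|I|}(M_{I,I})\ge (r_0/2)\sqrt{n}$, so $\|Mv\|\ge(r_0/2)\sqrt{n}\|v\|$; a triangle inequality against $\|M\|\le K\sqrt{n}$ extends this, for $\rho$ small enough, to $\|Mu\|\gtrsim r_0\sqrt{n}$ uniformly on $\Comp(\delta,\rho)$.

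For the incompressible part I invoke the standard invertibility-via-distance reduction, which bounds $\pr(\inf_{u\in\Incomp(\delta,\rho)}\|Mu\|\le t)$ by an average over $j$ of $\pr(\dist(X_j,H_j)\le \text{const}\cdot t/\sqrt{n})$, where $X_j$ is the $j$th column of $M$ and $H_j$ is the span of the others. Choosing a unit normal $w\in H_j^\perp$ (which is independent of $X_j$), the diagonal form of $B$ gives
\[
\langle X_j,w\rangle \;=\; z_j\sqrt{n}\,w_j \;+\; \sum_{i=1}^n a_{ij}\xi_{ij}w_i.
\]
By incompressibility, $|w_j|\ge c/\sqrt{n}$ for a positive fraction of indices $j$; for such $j$ the deterministic shift $|z_j\sqrt{n}w_j|\ge cr_0$ is of unit order. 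When the random part $\sum_i a_{ij}\xi_{ij}w_i$ has small variance, this shift lower-bounds $|\langle X_j,w\rangle|$ directly, and when that variance is bounded below, an Esseen-type small-ball estimate for the $\kappa_0$-spread sum (translation-invariant in the shift) gives anti-concentration at the required scale.

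The main difficulty is the intermediate regime, where $a_{\cdot j}$ has moderate support but the coefficients $(a_{ij}w_i)_i$ lack enough arithmetic structure for a direct LCD-based Rudelson--Vershynin argument, and one must rule out normals $w$ to $H_j$ that conspire with the sparsity pattern of $A$ to produce atypically small inner products. This is exactly where I expect the more advanced tools flagged in the introduction enter: Szemer\'edi's regularity lemma to decompose the bipartite graph of a thresholded version of $A$ into blocks of nearly uniform density on which $A\circ X$ locally resembles an iid matrix, and the Spielman--Srivastava restricted invertibility theorem to extract a well-conditioned submatrix on which the incompressibility of $w$ translates to genuine arithmetic spread. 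Stratifying $\Incomp(\delta,\rho)$ by the essential LCD of $w$ restricted to the regular blocks, and union-bounding over corresponding $\epsilon$-nets, should then deliver the claimed estimate $\pr(s_n(M)\le n^{-\beta})=O(n^{-\alpha})$.
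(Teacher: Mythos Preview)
Your proposal has the right instincts but takes a route that does not match the paper and contains a genuine gap.

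\textbf{The compressible step fails under a $4+\eta$ moment assumption.} You claim that a routine operator-norm estimate plus a union bound over all $I$ with $|I|\le\delta n$ gives $\max_{|I|\le\delta n}\|(A\circ X)_{I,I}\|\le(r_0/2)\sqrt{n}$ with high probability. There are $\binom{n}{\lfloor\delta n\rfloor}\ge(e/\delta)^{\delta n}$ such sets, so you need a per-set tail bound that is exponentially small in $n$. With only $\mu_{4+\eta}<\infty$, truncation of the entries is forced at level $n^{\tau}$ with $\tau>2/(4+\eta)$ (otherwise the truncation event itself fails polynomially), and Talagrand then gives deviation probability $\exp(-c t^2/n^{2\tau})$. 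For $t\sim r_0\sqrt{n}$ this exponent is $O(n^{1-2\tau})=o(n)$, which is crushed by the combinatorial factor $(e/\delta)^{\delta n}$. This is not a technicality: it is precisely because no structural hypothesis on $A$ is assumed that one cannot run a naive net/union-bound argument over all small supports here.

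\textbf{The overall architecture is inverted relative to the paper.} The paper does not apply the compressible/incompressible split to $M$ at the top level. Instead it first applies a directed-graph version of the regularity lemma to (a thresholded) $A$ to obtain a partition $[n]=J_{\badd}\cup J_{\free}\cup J_{\cyc}$, where the reduced digraph on the parts of $J_{\cyc}$ is covered by vertex-disjoint directed cycles (so the corresponding diagonal blocks of $A$ are super-regular after cleaning), and the reduced digraph on $J_{\free}$ is cycle-free (so $A_{J_{\free}}$ is, up to small error, upper triangular after relabeling \emph{within} $J_{\free}$, which crucially preserves the diagonal $Z\sqrt{n}$). The key missing ingredient in your plan is the \emph{Schur complement bound} (Lemma~\ref{lem:schur}): the paper uses it repeatedly to lift control on the diagonal blocks $M_{J_{\free}}$, $M_{J_{\badd}}$, $M_{J_{\cyc}}$ (and their perturbations) to control on $s_n(M)$. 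On $J_{\free}$ the near-upper-triangular structure lets a dyadic Schur iteration show $s_{\min}(M_{J_{\free}})\gg_{r_0}\sqrt{n}$ directly from the diagonal shift; on $J_{\cyc}$ one applies the super-regular result (Theorem~\ref{thm:super}) iteratively along the cycle via Schur complement. The compressible/incompressible machinery and the restricted-invertibility entropy reduction you mention are used \emph{inside} Theorem~\ref{thm:super}, not at the level of $M$ itself.

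In short: your compressible bound does not go through with the stated moment hypothesis, and your incompressible sketch misses the Schur-complement scaffolding and the cycle/cycle-free dichotomy on the reduced digraph that make the regularity decomposition usable.
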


\begin{remark}[Moment assumption]	
The assumption of $4+\eta$ moments is due to our use of a result of Vershynin, Theorem \ref{thm:vershynin} below, on the operator norm of products of random matrices. 
Apart from this, at many points in our argument we use that an $m\times m$ submatrix of $M$ has operator norm $O(\sqrt{m})$ with high probability (assuming $m$ grows with $n$), which requires at least four finite moments. 
Under certain additional assumptions on the standard deviation profile we only need to assume two moments -- see Remark \ref{rmk:relaxmom}.
\end{remark}

\begin{remark}[Dependence of $\alpha,\beta$ on parameters]	\label{rmk:rdep}
The proof gives $\alpha(\eta)=\frac19\min(1,\eta)$. If we were to assume $\xi$ has finite $p$th moment for a sufficiently large constant $p$ then we could take any fixed $\alpha<1/2$ in \eqref{bd:gen}.
The dependence of $\beta$ on 
$\mu_{4+\eta}$
and $r_0$ given by our proof is very bad, of the form
\begin{equation}
\beta = \text{twr}\big(O_\eta(1)\exp((\mu_{4+\eta}/r_0)^{O(1)})\big)
\end{equation}
where $\text{twr}(x)$ is a tower exponential $2^{2^{\iddots^2}}$ of height $x$.
(The factor $O_\eta(1)$ comes from Vershynin's bound mentioned in the previous remark -- we do not know the precise dependence on $\eta$, but we expect it is relatively mild.)
This is due to our use of Szemer\'edi's regularity lemma (specifically, a version for directed graphs due to Alon and Shapira -- see Lemma \ref{lem:regularity}). 
It would be interesting to obtain a version of Theorem \ref{thm:main} with a better dependence of $\beta$ on the parameters. 
\end{remark}

As we remarked above, the case of a diagonal mean profile is of special interest for the problem of proving convergence of the empirical spectral distribution of centered random matrices with a variance profile.

\begin{corollary}[Scalar shift of a centered random matrix]	\label{cor:scalarshift}
Let $X=(\xi_{ij})$ be an $n\times n$ matrix whose entries are iid copies of a centered complex random variable $\xi$ having unit variance and $(4+\eta)$-th moment $\mu_{4+\eta}<\infty$ for some fixed $\eta>0$.
Let $A=(\sig_{ij})$ be a fixed $n\times n$ non-negative matrix with entries uniformly bounded by $\sigma_{\max}<\infty$.
Put $Y= \frac1{\sqrt{n}}A\circ X$, and fix an arbitrary $z\in \C\setminus\{0\}$.
There are constants $\alpha=\alpha(\eta)>0$ and $\beta=\beta(|z|,\eta,\mu_{4+\eta},\sigma_{\max})>0$ such that
\begin{equation}	\label{bd:scalarshift}
\pr\big( s_n(Y-zI)\le n^{-\beta} \, \big) =O_{|z|,\sigma_{\max},\mu_{4+\eta}}(n^{-\alpha}).
\end{equation}
\end{corollary}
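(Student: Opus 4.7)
The plan is to deduce the corollary from Theorem \ref{thm:main} by a simple rescaling. The only real work is to convert the normalization $Y = \frac{1}{\sqrt{n}} A \circ X$ with bounded entries of $A$ into the setup of Theorem \ref{thm:main}, where the noise matrix has entries in $[0,1]$ and the mean profile is on the scale $\sqrt{n}$.

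First, I would set $c = \max(\sigma_{\max}, 1)$ and define $\widetilde{A} = A/c$, so that every entry of $\widetilde{A}$ lies in $[0,1]$. Then
\begin{equation*}
Y - zI \;=\; \frac{1}{\sqrt{n}} \bigl(A\circ X - z\sqrt{n}\,I\bigr)
\;=\; \frac{c}{\sqrt{n}} \Bigl(\widetilde{A}\circ X + Z\sqrt{n}\Bigr),
\qquad Z := -\tfrac{z}{c}\,I.
\end{equation*}
Hence $s_n(Y - zI) = (c/\sqrt{n})\,s_n(\widetilde{M})$, where $\widetilde{M} := \widetilde{A}\circ X + Z\sqrt{n}$ is exactly a structured random matrix in the sense of Definition \ref{def:profile}, with standard deviation profile $\widetilde{A}$, atom variable $\xi$ having $(4+\eta)$ moments, and a diagonal mean profile of scale $\sqrt{n}$.

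Next, I would verify the hypotheses of Theorem \ref{thm:main} for $\widetilde{M}$. The diagonal entries of $Z$ all equal $z_0 := -z/c$, with $|z_0| = |z|/\max(\sigma_{\max},1) > 0$, so taking
\begin{equation*}
r_0 \;:=\; \min\!\left(\tfrac{|z|}{\max(\sigma_{\max},1)},\,\tfrac{1}{2}\right),
\qquad
K_0 \;:=\; \max\!\left(\tfrac{|z|}{\max(\sigma_{\max},1)},\,1\right)
\end{equation*}
ensures that $|z_i| \in [r_0, K_0]$ for every $i$. Theorem \ref{thm:main} then produces $\alpha = \alpha(\eta) > 0$ and $\beta_0 = \beta_0(r_0, \eta, \mu_{4+\eta})>0$ such that $\pr(s_n(\widetilde{M}) \le n^{-\beta_0}) = O_{r_0, K_0, \eta, \mu_{4+\eta}}(n^{-\alpha})$.

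Finally, translating back, on the complementary event we have
\begin{equation*}
s_n(Y - zI) \;=\; \tfrac{c}{\sqrt{n}}\, s_n(\widetilde{M}) \;>\; c\, n^{-\beta_0 - 1/2},
\end{equation*}
so choosing $\beta := \beta_0 + 1$ (and absorbing the constant $c$ when $c < 1$) yields the claimed bound \eqref{bd:scalarshift}, with $\beta = \beta(|z|, \eta, \mu_{4+\eta}, \sigma_{\max})$ through the dependence of $r_0$ on $|z|$ and $\sigma_{\max}$. There is no genuine obstacle here; the argument is purely a bookkeeping exercise to normalize $A$ into $[0,1]$-valued entries and to move the factor $\sqrt{n}$ from the noise onto the shift so that Theorem \ref{thm:main} applies verbatim.
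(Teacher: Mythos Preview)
Your proof is correct and is exactly the intended derivation: the paper states this corollary immediately after Theorem \ref{thm:main} without proof, and the rescaling you carry out (normalizing $A$ to have entries in $[0,1]$ and moving the factor $\sqrt{n}$ onto the diagonal shift) is the only thing to do. One small remark: since $c = \max(\sigma_{\max},1) \ge 1$ by construction, the parenthetical about ``absorbing the constant $c$ when $c<1$'' is unnecessary.
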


While our main motivation was to handle diagonal perturbations of centered random matrices, we conjecture that Theorem \ref{thm:main} extends to matrices as in Definition \ref{def:profile} with more general mean profiles $B$:

\begin{conjecture}	\label{conj:main}
Theorem \ref{thm:main} continues to hold for $B\in \mM_n(\C)$ not necessarily diagonal, where the constraint \eqref{constraint:zi} is replaced with $\frac1{\sqrt{n}}s_i(B)\in [r_0,K_0]$ for all $1\le i\le n$. 
\end{conjecture}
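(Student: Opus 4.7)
The plan is to reduce to the already-proved diagonal case by singular value decomposition. Write $B = U\Sigma V^*$ with $U,V$ unitary and $\Sigma = \diag(s_1(B),\dots,s_n(B))$ having all diagonal entries in $[r_0\sqrt n,K_0\sqrt n]$. Unitary invariance of singular values gives
\[
s_n(M) = s_n(U^*MV) = s_n(\tilde W + \Sigma), \qquad \tilde W := U^*(A\circ X)V,
\]
and $\Sigma$ now satisfies precisely the hypothesis \eqref{constraint:zi} of Theorem \ref{thm:main}. If $\tilde W$ were a Hadamard product $\tilde A\circ \tilde X$ with iid $\tilde X$, we would be done. The obstruction is that
\[
\tilde W_{kl} = \sum_{i,j} \bar U_{ik} V_{jl}\, a_{ij}\, \xi_{ij}
\]
is merely a linear function of the independent scalars $\{\xi_{ij}\}$, and its entries are highly correlated, since each $\xi_{ij}$ appears in every coordinate of $\tilde W$.

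I would then try to replay the proof of Theorem \ref{thm:main} for $\tilde W+\Sigma$, using only those properties of the noise that survive the rotation. The anti-concentration/small-ball estimates from Section \ref{sec:fourier} are based on Fourier analysis of scalar sums $\sum c_i\xi_i$ and should carry over essentially verbatim once one checks that the coefficient tensor $(\bar U_{ik}V_{jl} a_{ij})$ is suitably non-degenerate. The decomposition of the unit sphere into compressible and incompressible vectors is also insensitive to Hadamard structure: for a compressible $v$ one has the large deterministic signal $\|\Sigma v\|\ge r_0\sqrt n$ to play against $\tilde W v$, and a standard $\varepsilon$-net plus small-ball argument (together with the operator-norm bound $\|\tilde W\|=\|A\circ X\|=O(\sqrt n)$) should dispose of this regime.

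The hard part will be the incompressible/structured regime, where the proof of Theorem \ref{thm:main} invokes the graph-theoretic structure of the profile $A$ through the Szemer\'edi regularity lemma and the Spielman--Srivastava restricted invertibility theorem. After the unitary rotation the ``profile'' is a four-tensor $(\bar U_{ik}V_{jl}a_{ij})$ with complex entries, which no longer corresponds to any bipartite graph and does not fit the broad-connectivity framework of Definition \ref{def:broad}. A partial workaround is to avoid the rotation entirely: since $B$ is invertible with $\|B^{-1}\|=O(1/\sqrt n)$ one has $s_n(M)\ge r_0\sqrt n\cdot s_n(I+B^{-1}(A\circ X))$, reducing matters to the smallest singular value of an identity plus a dependent random perturbation of bounded operator norm; but one still needs Littlewood--Offord-type control of linear combinations whose coefficients carry no combinatorial structure inherited from $A$. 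Extending the regularity-lemma machinery to such arbitrary complex coefficients is, I expect, the principal technical difficulty, and presumably the reason the statement was left as a conjecture rather than a theorem.
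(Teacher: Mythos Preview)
The statement is a \emph{conjecture}, and the paper contains no proof of it; there is nothing to compare your proposal against. Your write-up is not a proof either, and you correctly recognise this in the final paragraph. As an informal exploration of why the diagonal-case argument does not extend, your discussion is reasonable, but a few of the optimistic claims deserve pushback.

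The SVD reduction $s_n(M)=s_n(\tilde W+\Sigma)$ with $\tilde W=U^*(A\circ X)V$ is valid, and you correctly identify that $\tilde W$ no longer has the Hadamard form required by Definition~\ref{def:profile}. However, the damage is worse than you suggest. The entire machinery of Sections~\ref{sec:anti}--\ref{sec:incomp}---not just the regularity-lemma step---rests on \emph{independence of the rows} of $M$: Lemma~\ref{lem:tensorize} tensorizes one-dimensional anti-concentration bounds across rows, and the incompressible argument conditions on all rows but one. Left-multiplication by $U^*$ destroys row independence, so these steps do not ``carry over essentially verbatim''. In particular, your claim that the compressible regime is handled by ``the large deterministic signal $\|\Sigma v\|\ge r_0\sqrt n$ to play against $\tilde Wv$'' does not work: $\|\tilde W\|=\|A\circ X\|$ is of order $\sqrt n$, possibly much larger than $r_0\sqrt n$, so the triangle inequality gives nothing, and without row independence one cannot invoke the small-ball/net argument of Lemma~\ref{lem:high}.

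Your alternative factorisation $M=B(I+B^{-1}(A\circ X))$, giving $s_n(M)\ge r_0\sqrt n\cdot s_n(I+B^{-1}(A\circ X))$, is correct and does avoid rotating the randomness. But now $B^{-1}(A\circ X)$ has dependent rows (through $B^{-1}$) and operator norm of order $1/r_0$, which may exceed $1$; bounding $s_n(I+W)$ for such $W$ is exactly as hard as the original problem. So neither route you sketch closes the gap, which is consistent with the paper's decision to state this as an open conjecture.
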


\subsection{Ideas of the proof}		\label{sec:ideas}

Here we give an informal discussion of the main ideas in the proof of Theorem \ref{thm:main}.

\subsubsection*{Regular partitions of graphs}

As with Theorem \ref{thm:broad}, the key is to associate the standard deviation profile $A$ with a graph. 
Since we want the diagonal of $M$ to be preserved under relabeling of vertices will will associate $A$ with a directed graph (digraph) which puts an edge $i\to j$ whenever $a_{ij}$ exceeds some small threshold $\ha>0$.
Since $A$ has no special connectivity structure \emph{a priori}, we will apply a version of Szemer\'edi's regularity lemma for digraphs (Lemma \ref{lem:regularity}) to partition the vertex set $[n]$ into a bounded number of parts of equal size $I_1,\dots, I_m$, together with a small set of ``bad" vertices $I_{\badd}$, such that for most $(k,l)\in [m]^2$ the subgraph on $I_k\cup I_l$ enjoys certain ``pseudorandomness" properties.
These properties will not be quite strong enough to control the smallest singular value of the corresponding submatrix $M_{I_k,I_l}$ of $M$, but we can apply a ``cleaning" procedure (as it is called in the extremal combinatorics literature) to remove a small number of bad vertices from each part in the partition (which we add to $I_{\badd}$), after which we will be able to control $s_{\min}(M_{I_k,I_l})$ for most $(k,l)\in [m]^2$.
We defer the precise formulation of the pseudorandomness properties and corresponding bound on the smallest singular value to Definition \ref{def:super} and Theorem \ref{thm:super} below.

\subsubsection*{Schur complement formula}

The task will then be to lift this control on the invertibility of submatrices to the whole matrix $M$. 
The key tool here is the \emph{Schur complement formula} (see Lemma \ref{lem:schur}) which allows us to control the smallest singular value of a block matrix
\begin{equation}	\label{sketch:partition}
\begin{pmatrix} M_{11} & M_{12} \\ M_{21} & M_{22} \end{pmatrix}
\end{equation}
assuming some control on the smallest singular values of (perturbations of) the diagonal block submatrices $M_{11},M_{22}$ and on the operator norm of the off-diagonal submatrices $M_{12}, M_{21}$.
The control on the smallest singular value of the whole matrix is somewhat degraded, but this is acceptable as we will only apply Lemma \ref{lem:schur} a bounded number of times.
If we can find a generalized diagonal of ``good" block submatrices that are well-invertible under additive perturbations, 
then after permuting the blocks to lie on the main diagonal we can apply the Schur complement bound along a nested sequence of submatrices partitioned as in \eqref{sketch:partition}, where $M_{11}$ is a ``good" matrix and $M_{22}$ is well-invertible by the induction hypothesis.
We remark that the strategy of leveraging properties of a small submatrix using the Schur complement formula was recently applied in a somewhat different manner in \citep{BEYY:band} to prove the universality of spectral statistics of random Hermitian band matrices.

\subsubsection*{Decomposition of the reduced digraph} 

At this point it is best to think of the regular partition $I_1,\dots, I_m$ as inducing a ``macroscopic scale" digraph $\mR = ([m],E)$ (often called the \emph{reduced digraph} in extremal combinatorics) that puts an edge $(k,l)\in E$ whenever the corresponding submatrix $A_{I_k,I_l}$ is pseudorandom and sufficiently dense. 
If we can cover the vertices of $\mR$ with vertex-disjoint directed cycles, then we will have found a generalized diagonal of submatrices of $M$ with the desired properties, and we can finish with a bounded number of applications of the Schur complement formula as described above. 

Of course, it may be the case that $\mR$ cannot be covered by disjoint cycles. 
For instance, if $A$ were to have all ones in the first $n/2$ columns and all zeros in the last $n/2$ columns then roughly half of the vertices of $\mR$ would have no incoming edges. 
This is where we make crucial use of the diagonal perturbation $Z\sqrt{n}$ (indeed, without this perturbation $M$ would be singular in this example).
The top left $n/2\times n/2$ submatrix of $M$ is dense, and we can apply Theorem \ref{thm:super} to control its smallest singular vale. 
The bottom right $n/2\times n/2$ submatrix is a diagonal matrix with diagonal entries of size at least $r_0\sqrt{n}$, and hence its smallest singular value is at least $r_0\sqrt{n}$. 
This argument even allows for the bottom right submatrix of $A$ to be nonzero but sufficiently sparse: we can use the triangle inequality and standard bounds on the operator norm of sparse random matrices to argue that the smallest singular value of the bottom right submatrix is still of order $\gg r_0\sqrt{n}$.

We handle the general case as follows. 
We greedily cover as many of the vertices of $\mR$ as we can with disjoint cycles -- call this set of vertices $U_{\cyc}\subset[m]$. 
At this point we have either covered the whole graph (and we are done) or the graph on the remaining vertices $U_{\free}$ is cycle-free. This means that the vertices of $\mR$ can be relabeled so that its adjacency matrix is upper-triangular on $U_{\free}\times U_{\free}$. 
Write $J_{\cyc} = \bigcup_{k\in U_{\cyc}} I_k$, $J_{\free}=\bigcup_{k\in U_{\free}}I_k$ and denote the corresponding submatrices of $A$ on the diagonal by $A_{\cyc}, A_{\free}$, and likewise for $M$. 
We thus have a relabeling of $[n]$ under which $A_{\free}$ is close to upper triangular (there may be some entries of $A_{\free}$ below the diagonal of size less than $\ha$, or which are contained in a small number of exceptional pairs from the regular partition).
Crucially, this relabeling has preserved the diagonal, so the submatrix $M_{\free}$ is a diagonal perturbation of an (almost) upper-triangular random matrix. 
We then show that such a matrix has smallest singular value of order $\gg_{r_0}\sqrt{n}$ with high probability. 
With another application of the Schur complement bound we can combine the control on the submatrices $M_{\cyc},M_{\free}$ (along with standard bounds on the operator norm for the off-diagonal blocks) to conclude the proof.
(Actually, the bad set $I_{\badd}$ of rows and columns requires some additional arguments, but we do not discuss these here.)

This concludes the high level description of the proof of Theorem \ref{thm:main}. 
We only remark that the above partitioning and cleaning procedures will generate various error terms and residual submatrices (such as the vertices in $I_{\badd}$, or the small proportion of pairs $(I_k,I_l)$ which are not sufficiently pseudorandom). 
As the smallest singular value is notoriously sensitive to perturbations, it will take some care to control these terms.
We will use some high-powered tools such as bounds on the operator norm of sparse random matrices and products of random matrices due to Lata\l a and Vershynin -- see Section \ref{sec:op}.

\subsubsection*{Invertibility from connectivity assumptions}

Now we state the specific pseudorandomness condition on a standard deviation profile under which we have good control on the smallest singular value.
While ``pseudorandom" generally means that the edge distribution in a graph is close to uniform on a range of scales, we will only need control from below on the edge densities (morally speaking, we want the matrix $A$ to be as far as possible from the zero matrix, the most poorly invertible matrix). 
The following one-sided condition is taken from the combinatorics literature (see \cite[Definition 1.6]{KoSi:survey}). The reader should recall the notation introduced in \eqref{def:nbhd}--\eqref{Aha}.

\begin{definition}[Super-regularity]	\label{def:super}
Let $A$ be an $n\times m$ matrix with non-negative entries.
For $\delta,\eps\in (0,1)$, we say that $A$ is \emph{$(\delta,\eps)$-super-regular} if the following hold:
\begin{enumerate}[(1)]
\item $|\mN_A(i)| \ge \delta m$ for all $i\in [n]$;
\item $|\mN_{A^\tran}(j)|\ge \delta n$ for all $j\in [m]$;
\item $e_A(I,J)\ge \delta |I||J|$ for all $I\subset[n],J\subset[m]$ with $|I|\ge \eps n$ and $|J|\ge \eps m$. 
\end{enumerate}
\end{definition}

The reader should compare this condition with Definition \ref{def:broad}. 
Conditions (1) and (2) are are the same in both definitions, while it is not hard to see that condition (3) above implies 
\begin{equation}
|\mN_{A^\tran}^{(\delta)}(J)| \ge (1-\eps)n
\end{equation}
whenever $|J|\ge \eps n$ (with notation as in \eqref{def:broadnbr}), which is stronger than condition (3) in Definition \ref{def:broad} for such $J$. 
On the other hand, conditions (1) and (2) imply that $|\mN_{A^\tran}^{(\sqrt{\delta}/2)}(J)|\ge \frac12\delta n$ for any $J\subset[m]$ (see Lemma \ref{lem:goodrows0}), so super-regularity is stronger than broad connectivity for $\eps,\eta$ sufficiently small depending on $\delta$.

\begin{theorem}[Matrix with super-regular profile]	\label{thm:super}
Let $M=A\circ X+B$ be an $n\times n$ matrix as in Definition \ref{def:profile}.
Assume that $A(\ha)$ (as defined in \eqref{Aha}) is $(\delta,\eps)$-super-regular for some $\delta,\ha\in (0,1)$ and $0<\eps<c_1\delta \ha^2$ with $c_1>0$ a sufficiently small constant.
For any $\gamma\ge1/2$ there exists $\beta = O(\gamma^2)$ such that
\begin{equation}	\label{bd:super}
\pr\big( s_n(M) \le n^{-\beta},\, \|M\|\le n^\gamma\,\big) \ll_{\gamma,\delta,\ha,\kappa_0} \sqrt{\frac{\log n}{n}}.
\end{equation}
\end{theorem}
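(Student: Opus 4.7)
The plan is to adapt the compressible/incompressible sphere decomposition of Rudelson--Vershynin \citep{RuVe:ilo} to the structured setting, with the super-regularity of $A(\ha)$ substituting for the iid/sub-Gaussian hypotheses used there. Throughout I restrict to the event $\{\|M\|\le n^\gamma\}$, fix small parameters $\rho=\rho(\delta,\ha)>0$ and $\theta=n^{-c\gamma}$, and decompose $S^{n-1} = \Comp(\rho,\theta) \cup \Incomp(\rho,\theta)$. For the compressible case, conditions (1)--(2) of super-regularity together with double-counting imply that for any fixed unit vector $u$ with $|\supp(u)|\le \rho n$, a positive fraction of the rows $i$ satisfy $\sum_j a_{ij}^2 |u_j|^2 \gs \ha^2$; the $\kappa_0$-spread hypothesis and independence across rows then yield $\|Mu\|^2\gs n$ with probability $1-e^{-cn}$ via Chernoff. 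Approximating $\Comp(\rho,\theta)$ by a net of cardinality at most $(C/\theta)^{\rho n}$ and using the continuity bound $\|Mv-Mu\|\le n^\gamma\|v-u\|$ extend this to $\inf_{v\in\Comp}\|Mv\|\gs\sqrt{n}$ with the same probability, provided $\rho$ is chosen small enough that $(C/\theta)^{\rho n}\le e^{cn/2}$.

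For incompressible vectors I invoke the invertibility-via-distance principle
\begin{equation*}
\pr\Big(\inf_{v\in\Incomp}\|Mv\|\le \tfrac{c\rho t}{\sqrt{n}}\Big) \le \frac{1}{\rho n}\sum_{k=1}^n \pr\big(\dist(M_{\cdot,k}, H_k)\le t\big),
\end{equation*}
where $H_k$ is the span of the columns other than the $k$-th. Writing $\dist(M_{\cdot,k},H_k) = |\langle M_{\cdot,k}, w_k\rangle|$ for a unit normal $w_k$ measurable in $\{M_{\cdot,j}\}_{j\ne k}$ (and hence independent of $M_{\cdot,k}$), this inner product is, conditional on $w_k$, a sum of independent random variables with variance $s_k^2 := \sum_i a_{ik}^2 |w_k(i)|^2$. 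A Berry--Esseen bound then yields $\pr(|\langle M_{\cdot,k},w_k\rangle|\le t \mid w_k)\ls (t+\|w_k\|_\infty)/s_k$. If $w_k\in\Incomp(\rho',\theta')$ at appropriately chosen parameters, then $\|w_k\|_\infty\le C/\sqrt{n}$ and the set $T_k$ of coordinates with $|w_k(i)|\gs 1/\sqrt{n}$ has size $(1-\eps)n$; condition (2) of super-regularity ensures $|\{i\in T_k : a_{ik}\ge \ha\}|\ge \delta n/2$, whence $s_k^2\gs \ha^2\delta$ and the per-column bound is $O(t+n^{-1/2})$.

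The main obstacle is the self-referential step of showing that $w_k$ is itself incompressible with probability $1-e^{-cn}$. Since $w_k$ lies in the kernel of the $(n-1)\times n$ conjugate-transpose submatrix $(M_{\cdot,-k})^*$ obtained by deleting the $k$-th column, it suffices to prove that no compressible unit vector lies in this kernel. Because super-regularity is symmetric under transposition and stable under the removal of a single column (with mildly deteriorated parameters), the compressible-case argument from the first paragraph applies directly to $(M_{\cdot,-k})^*$ and yields $\inf_{v\in\Comp}\|(M_{\cdot,-k})^* v\|\gs \sqrt{n-1}$ with probability $1-e^{-cn}$, ruling out compressible kernel vectors. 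Assembling the estimates, summing over $k$, and choosing $t$ of the appropriate polynomial scale in $n$ produces the claimed probability bound $O(\sqrt{\log n/n})$, the logarithmic factor arising from the polynomial scale $\theta=n^{-c\gamma}$ entering the net argument. The delicate tension between the smallness of $\rho$ (required for the compressible net bound $(C/\theta)^{\rho n}\le e^{cn/2}$) and the need for $w_k$ to remain incompressible under this choice is the primary source of parameter loss and forces the suboptimal exponent $\beta=O(\gamma^2)$.
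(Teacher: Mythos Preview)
Your proposal has two genuine gaps that the paper's proof specifically works to overcome.

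\textbf{Compressible vectors: the net is too large.} You take the distance parameter $\theta=n^{-c\gamma}$ (forced by $\|M\|\le n^\gamma$) but claim the support fraction $\rho$ can be a constant $\rho(\delta,\ha)$. The net for $\Comp(\rho,\theta)$ then has cardinality on the order of $\binom{n}{\rho n}(C/\theta)^{2\rho n}\ge (Cn^{c\gamma})^{2\rho n}$, and for this to be beaten by the single-vector bound $e^{-cn}$ you need $\rho\ll 1/(\gamma\log n)$, not a constant. This is exactly the obstruction the paper highlights at the start of Section~\ref{sec:comp}: a naive net only gives control on $\Comp(\theta_0,\rho_0)$ with $\theta_0\asymp 1/\log(K/\sigma_0^2)=o(1)$ (Lemma~\ref{lem:high}). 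To push the support fraction up to a constant $c_0\sigma_0^2$, the paper uses an \emph{entropy reduction} step (Lemma~\ref{lem:entropy}) based on the Spielman--Srivastava Restricted Invertibility Theorem, replacing standard nets by random nets inside $\ker(M_{I,J})$ of much smaller cardinality, and iterates via Lemma~\ref{lem:increment_gen}. Your sketch omits this mechanism entirely.

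\textbf{Incompressible vectors: the overlap is not automatic.} You assert that the unit normal $w_k$ has essential support $T_k$ of size $(1-\eps)n$, and then intersect with the column support via condition~(2). But incompressibility at parameter $(\rho',\theta')$ only yields $|T_k|\ge \rho' n$, and by the previous paragraph you can at best take $\rho'$ a small constant $c_0\delta\ha^2$. Since condition~(2) only guarantees the $k$-th column has $\ge\delta n$ large entries, and $\rho' n+\delta n<n$, there is no reason the two sets intersect. The paper resolves this with Lemma~\ref{lem:overlap}, which uses condition~(3) of super-regularity (the edge-density lower bound on all pairs $I,J$ of size $\ge\eps n$) together with a double counting over both a left and a right almost-null vector to show that for \emph{most} rows $i$, the overlap $|\mN(i)\cap L^+(u^{(i)})|\ge\delta\theta n$ holds. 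This averaging over $i$ is essential and is where the hypothesis $\eps<c_1\delta\ha^2$ enters. Your argument never invokes condition~(3).

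Finally, the $\sqrt{\log n}$ factor in the paper does not come from the net size as you suggest; it arises from a dyadic pigeonhole on the range of $u^{(i)}$ (see \eqref{incomp:Jbound}) used to locate a subset of the essential support on which $u^{(i)}$ is flat to within a factor of~$2$.
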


Note that Theorem \ref{thm:super} allows for a mean profile $B$ of arbitrary polynomial size in operator norm, whereas in Theorem \ref{thm:broad} we only allowed $\|B\|=O(\sqrt{n})$. 
The ability to handle such large perturbations will be crucial in the proof of Theorem \ref{thm:main}, as the iterative application of the Schur complement bound discussed above will lead to perturbations of increasingly large polynomial order. 

We defer discussion of the key technical ideas for Theorem \ref{thm:broad} and Theorem \ref{thm:super} to Sections \ref{sec:comp} and \ref{sec:incomp}. 
We only mention here that our proof of Theorem \ref{thm:super} makes crucial use of a new ``entropy reduction" argument, which allows us to control the event that $\|Mu\|$ is small for some $u$ in certain portions of the sphere $S^{n-1}$ by the event that this holds for some $u$ in a random net of relatively low cardinality. 
The argument uses an improvement by Spielman and Srivastava \citep{SpSr:rit} of the classic Restricted Invertibility Theorem due to Bourgain and Tzafriri \citep{BoTz:rit} -- see Section \ref{sec:comp} for details.

\subsection{Organization of the paper}

The rest of the paper is organized as follows.
Sections \ref{sec:anti}, \ref{sec:comp} and \ref{sec:incomp} are devoted to the proofs of Theorems \ref{thm:broad} and \ref{thm:super}.
We prove these theorems in parallel as they involve many similar ideas.
In Section \ref{sec:anti} we collect some standard lemmas on anti-concentration for random walks and products of random matrices with fixed vectors, along with some facts about nets in Euclidean space.
In Section \ref{sec:comp} we show that random matrices as in Theorems \ref{thm:broad} and \ref{thm:super} are well-invertible over sets of ``compressible" vectors in the unit sphere, and in Section \ref{sec:incomp} we establish control over the complementary set of ``incompressible" vectors.
Theorem \ref{thm:main} is proved in Section \ref{sec:diag}.

\subsection{Notation}		\label{sec:notation}

In addition to the asymptotic notation defined at the beginning of the article,
we will occasionally use the notation $f=o(g)$ to mean that $f/g\rightarrow 0$ as $n\rightarrow \infty$, where the parameter $n$ will be the size of the matrix under consideration (this will only be for the sake of brevity, as all of our arguments are quantitative).

$\mM_{n,m}(\C)$ denotes the set of $n\times m$ matrices with complex entries. When $m=n$ we will write $\mM_n(\C)$.
For a matrix $A=(a_{ij})\in \mM_{n,m}(\C)$ we will sometimes use the notation $A(i,j)= a_{ij}$.
For $I\subset[n],J\subset[m]$, $A_{I,J}$ denotes the $|I|\times |J|$ submatrix with entries indexed by $I\times J$.
We abbreviate $A_{J}:= A_{J,J}$.

$\|\cdot\|$ denotes the Euclidean norm when applied to vectors, and the $\ell_2^m\to\ell_2^n$ operator norm when applied to an $n\times m$ matrix.
$\|A\|_{\HS}$ denotes the Hilbert--Schmidt (or Frobenius) norm of a matrix $A$. 
We will sometimes denote the smallest singular value of a square matrix $M$ by $s_{\min}(M)$ (in situations where $M$ is a submatrix of a larger matrix this will often be clearer than writing the dimension).

We denote the unit sphere in $\C^n$ by $S^{n-1}$. 
For $J\subset [n]$, we denote by $\C^J\subset\C^n$ (resp. $S^J\subset S^{n-1}$) the set of vectors (resp. unit vectors) in $\C^n$ supported on $J$.
Given a vector $v\in \C^n$, we denote by $v_J\in \C^n$ the projection of $v$ to the coordinate subspace $\C^J$.
For $m\in \N$, $x\in \R$, ${[m]\choose x}$ denotes the family of subsets of $[m]$ of size $\lf x\rf$.

When considering a random matrix $M$ as in Definition \ref{def:profile}, 
we use $R_i$ to denote the $i$th row of $M$, and write
\begin{equation}	\label{def:sigmaalgebra}
\mF_{I,J}:= \langle \{\xi_{ij}\}_{i\in I,j\in J}\rangle
\end{equation}
for the sigma algebra of events generated by the entries $\{\xi_{ij}\}_{i\in I,j\in J}$ of $X$.
For $I\subset[n]$ we write $\pr_I(\cdot)$ for probability conditional on $\mF_{[n]\setminus I,[n]}$.\\

\noindent{\bf Acknowledgements.}
The author thanks David Renfrew and Terence Tao for useful conversations, and also thanks David Renfrew for providing helpful comments on a preliminary version of the manuscript.

\section{Preliminaries}		\label{sec:anti}

\subsection{Partitioning and discretizing the sphere}		\label{sec:net}

For the proofs of Theorems \ref{thm:broad} and \ref{thm:super} we make heavy use of ideas and notation developed in \citep{LPRT,LPRTV2,Rudelson:inv, RuVe:ilo} and related ideas from geometric functional analysis. 
In particular, in order to lower bound
\[
s_n(M) = \inf_{u\in S^{n-1}} \|Mu\|
\]
we partition the sphere into sets of vectors of different levels of ``compressibility", which we presently define, and separately obtain control on the infimum of $\|Mu\|$ over each set.

Recall from Section \ref{sec:notation} our notation $\C^J\subset \C^m$ for the set of vectors supported on $J\subset[m]$.
For a set $T\subset \C^n$ and $\rho>0$ we write $T_\rho$ for the set of points within Euclidean distance $\rho$ of $T$.
We recall also the following definitions from \citep{RuZe}.
For $\theta,\rho\in (0,1)$, we define the set of \emph{compressible vectors}
\begin{equation}	\label{def:compr}
\Comp(\theta,\rho) := S^{m-1}\cap\bigcup_{J\in {[m]\choose  \theta m}} (\C^J)_\rho
\end{equation}
and the complementary set of \emph{incompressible vectors}
\begin{equation}	\label{def:incompr}
\Incomp(\theta,\rho) := S^{m-1}\setminus \Comp(\theta,\rho).
\end{equation}
That is, $\Comp(\theta,\rho)$ is the set of unit vectors within (Euclidean) distance $\rho$ of a vector supported on at most $\theta m$ coordinates.
On the other hand, incompressible vectors enjoy the following property which will lead to good anti-concentration properties for an associated random walk.

\begin{lemma}[Incompressible vectors are spread, cf.\ {\cite[Lemma 3.4]{RuVe:ilo}}]	\label{lem:spread}
Fix $\theta,\rho\in (0,1)$ and let $v\in \Incomp(\theta,\rho)$. 
There is a set $L^+\subset[m]$ with $|L^+|\ge \theta m$ such that $|v_j|\ge \rho/\sqrt{m}$ for all $j\in L^+$.
Moreover, for all $\lambda\ge1$ there is a set $L\subset [m]$ with $|L| \ge (1-\frac1{\lambda^2})\theta m$ such that for all $j\in L$, 
\[
\frac{\rho}{\sqrt{m}} \le |v_j| \le \frac{\lambda}{\sqrt{\theta m}}.
\]
\end{lemma}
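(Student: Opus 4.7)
The plan is to handle the two claims in sequence. First I would take the natural candidate $L^+ = \{j : |v_j| \ge \rho/\sqrt m\}$ for the set of ``large'' coordinates, and argue via incompressibility that $|L^+| \ge \theta m$. Then, given $\lambda \ge 1$, I would refine $L^+$ to $L$ by excising the coordinates where $v$ is too large, using the unit-norm constraint to bound how many are removed.

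For the first step, suppose instead that $|L^+| < \theta m$, so in particular $|L^+| \le \lfloor \theta m \rfloor$. I would enlarge $L^+$ to a set $J$ of size $\lfloor \theta m \rfloor$ and consider the truncation $v_{L^+} \in \C^J$. The definition of $L^+$ forces $|v_j|^2 < \rho^2/m$ for $j \notin L^+$, so
\[
\|v - v_{L^+}\|^2 = \sum_{j \notin L^+}|v_j|^2 < \frac{(m - |L^+|)\rho^2}{m} \le \rho^2.
\]
Hence $v$ lies within distance $\rho$ of $\C^J$, contradicting $v \in \Incomp(\theta,\rho)$.

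For the second step, I would set $L := L^+ \setminus \{j \in [m] : |v_j| > \lambda/\sqrt{\theta m}\}$. A Markov-type count using $\|v\|_2^2 = 1$ bounds the number of coordinates with $|v_j|^2 > \lambda^2/(\theta m)$ by $\theta m/\lambda^2$, so
\[
|L| \ge |L^+| - \frac{\theta m}{\lambda^2} \ge \Bigl(1 - \frac{1}{\lambda^2}\Bigr)\theta m,
\]
and both the lower and upper bounds on $|v_j|$ hold on $L$ by construction.

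There is no substantive obstacle. The lemma is essentially a direct unpacking of the definition of incompressibility combined with the elementary fact that a unit vector cannot have many coordinates appreciably larger than the equipartition value $1/\sqrt m$. The only bookkeeping point is reconciling the floor convention in the definition of $\Comp(\theta,\rho)$ with the (possibly non-integer) threshold $\theta m$ in the statement, which requires a moment's care in Step 1 but is otherwise immaterial.
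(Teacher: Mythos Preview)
Your proposal is correct and follows essentially the same route as the paper: both take $L^+=\{j:|v_j|\ge \rho/\sqrt{m}\}$, argue $|L^+|\ge \theta m$ from the definition of incompressibility, and then intersect with $L^-=\{j:|v_j|\le \lambda/\sqrt{\theta m}\}$, bounding $|(L^-)^c|\le \theta m/\lambda^2$ by Markov's inequality applied to $\|v\|_2^2=1$. Your write-up is slightly more explicit in spelling out the contradiction in the first step, but the underlying argument is identical.
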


\begin{proof}
Take $L^+=\{j:|v_j|\ge \rho/\sqrt{m}\}$ and denote $L^-= \{j:|v_j|\le \lambda/\sqrt{\theta m}\}$.
Since $v$ lies a distance at least $\rho$ from any vector supported on at most $\theta m$ coordinates we must have $|L^+|\ge \theta m$, which gives the first claim. 
On the other hand, since $v\in S^{m-1}$, by Markov's inequality we have 
$|(L^-)^c|\le \theta m/\lambda^2$, 
so taking $L=L^+\cap L^-$ we have $|L|\ge (1-\frac1{\lambda^2})\theta m$.
\end{proof}

For fixed choices of $\theta,\rho$ we informally refer to the coordinates of $v\in \Incomp(\theta,\rho)$ where $|v_j|\ge \rho/\sqrt{n}$ as the \emph{essential support of $v$}. 

Now we recall a standard fact about nets of the sphere of controlled cardinality.
For $\rho>0$, recall that a $\rho$-net of a set $T\subset \C^m$ is a finite subset $\Sigma\subset T$ such that for all $v\in T$ there exists $v'\in \Sigma$ with $\|v-v'\|\le \rho$.

\begin{lemma}[Metric entropy of the sphere]	\label{lem:net}
Let $V\subset \C^m$ be a subspace of (complex) dimension $k$, let $T\subset V\cap S^{m-1}$, and let $\rho\in(0,1)$.
Then $T$ has a $\rho$-net $\Sigma\subset T$ of cardinality $|\Sigma|\le (3/\rho)^{2k}$.
\end{lemma}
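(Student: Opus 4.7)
The plan is to use a standard volume-packing argument. I would construct $\Sigma$ as a maximal $\rho$-separated subset of $T$ (i.e.\ a set in which any two distinct points are at distance at least $\rho$, and which cannot be enlarged without violating this property). Maximality immediately yields the covering property: if some $v \in T$ had no $\Sigma$-point within distance $\rho$, I could adjoin $v$ to $\Sigma$, contradicting maximality. So it only remains to bound $|\Sigma|$.

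For the cardinality bound, I would work inside the real $2k$-dimensional Euclidean space obtained by viewing the complex subspace $V \subset \C^m$ as $V \cong \R^{2k}$ (the Hermitian inner product on $V$ restricted to its real part gives the same Euclidean norm). Let $B_V(x,r)$ denote the (closed) $r$-ball in $V$ around $x$. Since $\Sigma \subset T \subset S^{m-1}$, every $v \in \Sigma$ has $\|v\| = 1$, so
\[
\bigcup_{v \in \Sigma} B_V(v,\rho/2) \subset B_V(0, 1+\rho/2),
\]
and by $\rho$-separation the balls on the left are pairwise disjoint. Taking $2k$-dimensional Lebesgue measure on $V$ and using that $\mathrm{vol}(B_V(0,r)) = r^{2k}\,\mathrm{vol}(B_V(0,1))$, I obtain
\[
|\Sigma| \cdot (\rho/2)^{2k} \le (1+\rho/2)^{2k},
\]
so $|\Sigma| \le (1 + 2/\rho)^{2k}$. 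Since $\rho \in (0,1)$ we have $1 \le 1/\rho$, hence $1 + 2/\rho \le 3/\rho$, giving the desired $|\Sigma| \le (3/\rho)^{2k}$.

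There is no real obstacle here; the only subtlety is just keeping track that the relevant real dimension of $V$ is $2k$ rather than $k$, which is what produces the exponent $2k$ in the bound. All else is routine and self-contained.
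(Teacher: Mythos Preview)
Your proof is correct and follows essentially the same approach as the paper: take a maximal $\rho$-separated subset of $T$, use maximality for the covering property, and use a volume-packing comparison of disjoint $(\rho/2)$-balls inside the ball of radius $1+\rho/2$ in the real $2k$-dimensional space $V$ to bound $|\Sigma|$. Your write-up is in fact slightly more explicit than the paper's in tracking the real dimension $2k$ and in deducing $(1+2/\rho)^{2k}\le(3/\rho)^{2k}$ from $\rho<1$.
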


\begin{proof}
Let $\Sigma\subset T$ be a $\rho$-separated (in Euclidean distance) subset that is maximal under set inclusion.
It follows from maximality that $\Sigma$ is a $\rho$-net of $T$.
Let $\Sigma_{\rho/2}$ denote the $\rho/2$ neighborhood of $\Sigma$ in $V$.
Noting that $\Sigma_{\rho/2}$ is a disjoint union of $k$-dimensional Euclidean balls of radius $\rho/2$, we have
$$|\Sigma| c_{k} (\rho/2)^{2k}  \le \text{vol}_k(\Sigma_{\rho/2}) \le c_{k} (1+\rho/2)^{2k}$$
where $\text{vol}_k$ denotes the $k$-dimensional Lebesgue measure on $V$ and $c_{k}$ is the volume of the Euclidean unit ball in $\C^k$. 
The desired bound follows by rearranging.
\end{proof}

\subsection{Anti-concentration for scalar random walks}		\label{sec:fourier}

In this subsection we collect some standard anti-concentration estimates for scalar random walks, which are perhaps the most central tool for proving that random matrices are (well-)invertible with high probability. 

\begin{definition}[Concentration probability]
Let $\xi$ be a complex-valued random variable. 
For $v\in \C^n$ we let
\begin{equation}
S_\xi(v) = \sum_{j=1}^n \xi_jv_j
\end{equation}
where $\xi_1,\dots,\xi_n$ are iid copies of $\xi$.
For $r\ge0$ we define the \emph{concentration probability}
\begin{equation}
p_{\xi,v}(r)= \sup_{z\in \C}\pr\big( |S_\xi(v)-z|\le r\big).
\end{equation}
\end{definition}

Throughout this section we operate under the following distributional assumption on $\xi$.

\begin{definition}[Controlled second moment, cf.\ {\cite[Definition 2.2]{TaVu:circ}}]	\label{def:kappa}
Let $\kappa\ge 1$. A complex random variable $\xi$ is said to have \emph{$\kappa$-controlled second moment} if one has the upper bound
\begin{equation}	\label{cond.kappa1}
\e|\xi|^2\le \kappa
\end{equation}
(in particular, $|\e \xi|\le \kappa^{1/2}$), and the lower bound
\begin{equation}	\label{cond.kappa2}
\e [\re(z\xi-w)]^2 \un(|\xi|\le \kappa) \ge \frac1\kappa [\re(z)]^2
\end{equation}
for all $z\in \C, a\in \R$.
\end{definition}

Roughly speaking, a complex random variable $\xi$ has controlled second moment if its distribution has a one-(real-)dimensional marginal with fairly large variance on some compact set.
The following is a quantitative version of \cite[Lemma 2.4]{TaVu:circ}, and shows that by multiplying the matrices $X$ and $B$ in Definition \ref{def:profile} by a scalar phase (amounting to multiplying $M$ by a phase, which does not affect its singular values) we can assume the atom variable $\xi$ has $O(\kappa_0)$-controlled second moment in all of our proofs with no loss of generality.
The proof is 
deferred to Appendix \ref{app:anti}.

\begin{lemma}		\label{lem:wlog.kappa}
Let $\xi$ be a centered complex random variable with unit variance, and assume $\xi$ is $\kappa_0$-spread for some $\kappa_0 \ge 1$ (see Definition \ref{def:spread}).
Then there exists $\theta \in \R$ such that $e^{i\theta}\xi$ has $\kappa$-controlled second moment for some $\kappa=O(\kappa_0)$.
\end{lemma}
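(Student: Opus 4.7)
The plan is to rotate $\xi$ so that its principal axis of variance (on a suitable truncation set) aligns with the real axis. The upper bound \eqref{cond.kappa1} is trivial since $\e|e^{i\theta}\xi|^2=1\le\kappa$ for any $\theta$ and $\kappa\ge 1$. For \eqref{cond.kappa2}, since $\re(z\eta - w)=\re(z\eta)-\re(w)$ depends on $w$ only through $\re(w)\in\R$, it suffices to prove the inequality with the left-hand side minimized over $w$. Writing $z=\alpha+i\beta$ with $\alpha,\beta\in\R$ and setting $A=\{|\eta|\le\kappa\}$, $p_A=\pr(A)$, direct minimization of the resulting quadratic in $\re(w)$ gives
\[
\inf_{w\in\C} \e[\re(z\eta - w)]^2\un(|\eta|\le\kappa) \,=\, p_A\var(\alpha\re\eta-\beta\im\eta\mid A) \,=\, \mathbf{v}^\tran Q_\eta \mathbf{v},
\]
with $\mathbf{v}=(\alpha,-\beta)^\tran$ and $Q_\eta := p_A\Sigma_\eta^A$ the scaled conditional covariance matrix of $(\re\eta,\im\eta)$ given $A$. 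Since $|\eta|=|\xi|$, the set $A$ and the spectrum of $Q_\eta$ are independent of $\theta$; only the eigenvectors rotate.

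I would now take $\kappa = C\kappa_0$ for a sufficiently large absolute constant $C$. Chebyshev's inequality gives $p_A\ge 1 - 1/\kappa^2\ge 3/4$, and since $\e\xi=0$, Cauchy--Schwarz gives $|\e\xi\un_A| = |\e\xi\un_{A^c}|\le 1/\kappa$. The spread hypothesis provides $\e|\xi|^2\un_A \ge \e|\xi|^2\un_{|\xi|\le\kappa_0} \ge \var(\xi\un_{|\xi|\le\kappa_0}) \ge 1/\kappa_0$. Combining these,
\[
\tr Q_\xi \,=\, \e|\xi|^2\un_A - |\e\xi\un_A|^2/p_A \,\ge\, 1/\kappa_0 - 4/(3\kappa^2) \,\ge\, 1/(2\kappa_0)
\]
for $C$ sufficiently large, so $\lambda_1(Q_\xi)\ge\tr Q_\xi/2 \ge 1/(4\kappa_0)$. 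Choosing $\theta$ so that an eigenvector of $Q_\xi$ corresponding to $\lambda_1(Q_\xi)$ is sent by the rotation of $\R^2$ of angle $\theta$ to $(1,0)^\tran$ makes $Q_\eta = \diag(\lambda_1(Q_\xi),\lambda_2(Q_\xi))$, and then $\mathbf{v}^\tran Q_\eta\mathbf{v} = \lambda_1\alpha^2+\lambda_2\beta^2 \ge \alpha^2/(4\kappa_0)$. Taking $\kappa = \max(C,4)\kappa_0 = O(\kappa_0)$ verifies \eqref{cond.kappa2}.

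The main obstacle is purely the bookkeeping between the three closely related quantities $\e|\xi|^2\un_A$, $\var(\xi\un_A)$, and $\tr Q_\xi$, which differ by shift corrections of order $|\e\xi\un_A|^2\le 1/\kappa^2$. These corrections are negligible compared to $1/\kappa_0$ once $\kappa\gg\sqrt{\kappa_0}$, precisely because the hypothesis $\e\xi=0$ forces $\e\xi\un_A$ to be small by Cauchy--Schwarz. No deeper idea is required: conceptually, condition \eqref{cond.kappa2} reduces to a lower bound on the top eigenvalue of a $2\times 2$ conditional covariance matrix, which can always be arranged by a suitable rotation of the complex plane.
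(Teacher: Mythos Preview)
Your proof is correct and follows essentially the same approach as the paper: reduce \eqref{cond.kappa2} to a lower bound on the top eigenvalue of the $2\times 2$ conditional covariance matrix of $(\re\xi,\im\xi)$ on the truncation event, bound the trace from below using the $\kappa_0$-spread hypothesis together with the smallness of $|\e\xi\un_{A^c}|$ (via Cauchy--Schwarz and Chebyshev), and then rotate $\xi$ so that the top eigenvector aligns with the real axis. The only cosmetic differences are that the paper truncates at level $\kappa_0$ rather than $C\kappa_0$ (and implicitly uses monotonicity in the truncation level at the end), and introduces the centered variable $\eta=\xi-\widetilde{\e}\xi$ instead of minimizing explicitly over $w$; these amount to the same computation.
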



Below we give two standard bounds on the concentration function $p_{\xi,v}(r)$ when $\xi$ is a $\kappa$-controlled random variable and $v\in S^{n-1}$. 
The first gives a crude constant order bound that is uniform in $v\in S^{n-1}$:

\begin{lemma}[Crude anti-concentration, cf.\ {\cite[Corollary 6.3]{TaVu:smooth}}]	\label{lem:anti_crude}
Let $\xi$ be a complex random variable with $\kappa$-controlled second moment. 
There exists $r_0>0$ depending only on $\kappa$ such that $p_{\xi,v}(r_0) \le 1-r_0$ for all $v\in S^{n-1}$.
\end{lemma}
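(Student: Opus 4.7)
The plan is to follow the standard strategy for bounding the L\'evy concentration function of a sum of independent random variables, with the $\kappa$-controlled moment condition furnishing a uniform variance lower bound in a chosen real direction. First I would reduce to a real random walk: $p_{\xi,v}(r)$ is invariant under multiplying $v$ by a unit complex scalar (this merely translates the optimising $z$), and averaging $\sum_j(\re(e^{i\theta}v_j))^2$ over $\theta\in[0,2\pi]$ gives $\tfrac12\|v\|_2^2=\tfrac12$, so after such a rotation one may assume $\sum_j(\re v_j)^2\ge\tfrac12$. The pointwise inequality $|S_\xi(v)-z|\ge|\re S_\xi(v)-\re z|$ then reduces the problem to bounding $\sup_{w\in\R}\pr(|V-w|\le r_0)\le 1-r_0$ for the real sum $V=\sum_j\re(v_j\xi_j)$. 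Applying \eqref{cond.kappa2} with $z=v_j$ and minimising over $a\in\R$ yields $\var(\re(v_j\xi_j)\un(|\xi_j|\le\kappa))\ge(\re v_j)^2/\kappa$, so the truncated sum $\tilde V=\sum_j\re(v_j\xi_j)\un(|\xi_j|\le\kappa)$ has variance at least $1/(2\kappa)$ and summands bounded by $\kappa|v_j|$.

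I would then convert this variance lower bound into a concentration-function upper bound via characteristic functions. By Esseen's inequality,
\[
p_V(r)\lesssim r\int_{|t|\le 1/r}|\varphi_V(t)|\,dt,
\]
and the standard symmetrisation estimate, using $1-\cos x\gtrsim x^2$ on bounded intervals, gives
\[
1-|\varphi_{\re(v_j\xi_j)}(t)|^2\ge c\,t^2\,\e\bigl[(\re(v_j(\xi_j-\xi_j')))^2\un(|\xi_j|,|\xi_j'|\le\kappa)\bigr]
\]
for $|t|$ up to a $\kappa$-dependent constant, with $\xi_j'$ an independent copy of $\xi_j$. The right-hand side is controlled below by the truncated variance from the previous step, giving $|\varphi_{\re(v_j\xi_j)}(t)|\le\exp(-c t^2(\re v_j)^2/\kappa)$ on this range. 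Multiplying over $j$ yields $|\varphi_V(t)|\le\exp(-ct^2/\kappa)$, and inserting this Gaussian-type decay into Esseen produces $p_V(r_0)\le 1-r_0$ for $r_0$ a sufficiently small constant depending only on $\kappa$.

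The main obstacle is the mismatch between the hypothesis (which controls only the truncated moments of $\xi$) and the target (which concerns the untruncated sum $V$): the event $\{\forall j:|\xi_j|\le\kappa\}$ has probability $(1-O(1/\kappa))^n$, which degrades in $n$, so one cannot couple $V$ and $\tilde V$ with high probability to transfer a bound. I would bypass this entirely at the characteristic-function level, where the symmetrisation estimate above uses only the contribution from the bounded bulk of $\xi$ and hence never requires such a coupling. A mild supplementary argument handles vectors with $\|v\|_\infty$ close to $1$, where the integration range $|t|\lesssim 1/(\kappa\|v\|_\infty)$ in Esseen is too short: in that case one isolates a coordinate $j_0$ with $|v_{j_0}|\asymp 1$ and uses that the single variable $\re(v_{j_0}\xi_{j_0})\un(|\xi_{j_0}|\le\kappa)$ is bounded with variance $\gtrsim 1/\kappa$, so an elementary second-moment argument rules out concentration in any interval of $\kappa$-dependent constant length.
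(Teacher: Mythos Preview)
The paper does not supply its own proof of this lemma; it is quoted from Tao--Vu, so there is nothing to compare against directly. That said, your Esseen-plus-symmetrisation argument is correct in outline and is precisely the one-dimensional (real) counterpart of the two-dimensional Fourier-analytic machinery the paper records as Lemmas~\ref{lem:fourier} and~\ref{lem:modbound} and uses in Appendix~\ref{app:improved} to prove the sharper Lemma~\ref{lem:anti_improved}; either route gives the crude bound here with only cosmetic changes.

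One small slip: in the large-$\|v\|_\infty$ case you assert that $\re(v_{j_0}\xi_{j_0})\un(|\xi_{j_0}|\le\kappa)$ has variance $\gtrsim 1/\kappa$, but condition~\eqref{cond.kappa2} yields only a lower bound of order $(\re v_{j_0})^2/\kappa$, and your earlier rotation ensuring $\sum_j(\re v_j)^2\ge\tfrac12$ does not force this particular coordinate to have large real part (it could be purely imaginary, with the sum supplied by many other small coordinates). The fix is immediate and already implicit in your setup: since you have established rotation invariance of $p_{\xi,v}$, in the large-$\|v\|_\infty$ case discard the first rotation and instead rotate so that $v_{j_0}$ is real and positive. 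The two cases are handled by disjoint arguments, so no conflict arises.
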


Note that Lemma \ref{lem:anti_crude} is sharp for the case that $v$ is a standard basis vector.
The following gives an improved bound when $v$ has small $\ell_\infty$ norm.

\begin{lemma}[Improved anti-concentration]		\label{lem:anti_improved}
Let $\xi$ be a complex random variable that is $\kappa$-controlled for some $\kappa>0$, and let $v\in S^{n-1}$.
For all $r\ge0$,
\begin{equation}	\label{be:1d}
p_{\xi,v}(r) \ll_\kappa r + \|v\|_\infty.
\end{equation}
\end{lemma}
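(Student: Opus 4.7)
My plan is to apply the one-dimensional Esseen concentration inequality to a carefully chosen real projection of $S_\xi(v)$, then bound the resulting characteristic function in a neighborhood of the origin using the $\kappa$-controlled second moment assumption.

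First I reduce to the real line: for any phase $\phi\in[0,2\pi)$, the inclusion $\{|S_\xi(v)-z|\le r\}\subset\{|\re(e^{-i\phi}S_\xi(v))-\re(e^{-i\phi}z)|\le r\}$ lets me dominate $p_{\xi,v}(r)$ by $\sup_a\pr(|Y_\phi-a|\le r)$, where $Y_\phi:=\re(e^{-i\phi}S_\xi(v))=\sum_j\re(w_j\xi_j)$ with $w_j:=e^{-i\phi}v_j$. The identity $\sum_j(\re w_j)^2=\tfrac12+\tfrac12\re(e^{-2i\phi}\sum_j v_j^2)$ shows that some choice of $\phi$ yields $\sum_j(\re w_j)^2\ge 1/2$, while trivially $\|w\|_\infty=\|v\|_\infty$. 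Fix such a $\phi$.

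Next I bound the characteristic function $\phi_{Y_\phi}(t)=\prod_j\e\,e^{it\re(w_j\xi_j)}$ in a neighborhood of $0$. Symmetrizing each factor with an independent copy $\xi'$ and using $1-\cos x\ge c\min(x^2,1)$ gives $1-|\e\,e^{i\re(z\xi)}|^2\ge c\,\e\min\bigl((\re(z(\xi-\xi')))^2,1\bigr)$. Restricting to $\{|\xi|,|\xi'|\le\kappa\}$ and taking $|z|\le 1/(4\kappa)$ removes the truncation in the $\min$, and condition \eqref{cond.kappa2} combined with Markov's inequality (ensuring $\pr(|\xi|\le\kappa)\ge 1/2$, which we may guarantee by enlarging $\kappa$) yields $|\e\,e^{i\re(z\xi)}|^2\le\exp(-c(\re z)^2/\kappa)$. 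Taking the product over $j$ gives $|\phi_{Y_\phi}(t)|\le\exp(-ct^2/(4\kappa))$ throughout the range $|t|\le 1/(4\kappa\|v\|_\infty)$, by the choice of $\phi$.

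Finally I apply the 1D Esseen bound $\sup_a\pr(|Y_\phi-a|\le r)\ll r\int_{|t|\le 1/r}|\phi_{Y_\phi}(t)|\,dt$ and split into two cases. If $r\ge 4\kappa\|v\|_\infty$, the Gaussian estimate covers the full range of integration and yields $\sup_a\pr(|Y_\phi-a|\le r)\ll_\kappa r$. If $r<4\kappa\|v\|_\infty$, monotonicity of the concentration function in $r$ reduces to the previous case at radius $4\kappa\|v\|_\infty$, giving $\ll_\kappa\|v\|_\infty$. In either case, $p_{\xi,v}(r)\ll_\kappa r+\|v\|_\infty$.

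The main subtlety is that condition \eqref{cond.kappa2} only supplies nondegenerate variance in one real direction, whereas $S_\xi(v)$ is genuinely two-dimensional. Reducing to a scalar projection $Y_\phi$ and using the averaging argument to pick a phase $\phi$ with $\sum_j(\re w_j)^2\ge 1/2$ is precisely what converts the available one-dimensional variance into sufficient Fourier decay along the line of projection; this is the only place where the hypothesis interacts delicately with the geometry.
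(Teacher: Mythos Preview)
Your proof is correct, but it follows a genuinely different route from the paper's. The paper applies a two-dimensional Esseen-type bound (Lemma~\ref{lem:fourier}), which expresses $p_{\xi,v}(r)$ as an integral over a disk in $\C$ involving the symmetrized ``norm'' $\|\cdot\|_\xi$; it then uses Lemma~\ref{lem:modbound} to replace $\|wv_j\|_\xi$ by $|\re(wv_j)|$, and handles the phase ambiguity of the complex coefficients $v_j$ by a Jensen-plus-rotational-invariance trick that collapses the $2$D integral to a one-dimensional Gaussian integral. You instead eliminate the phase issue at the outset: the identity $\sum_j(\re w_j)^2=\tfrac12+\tfrac12\re(e^{-2i\phi}\sum_j v_j^2)$ lets you pick a single real projection with $\sum_j(\re w_j)^2\ge 1/2$, after which the standard one-dimensional Esseen inequality and a direct characteristic-function bound finish the job. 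Your approach is more elementary in that it avoids the $2$D Fourier lemma and the Jensen averaging; the paper's approach is perhaps more systematic for complex-valued settings, and its intermediate bound \eqref{be:cov} (before Jensen) retains information about all phases simultaneously, which could be useful if one wanted to exploit genuinely two-dimensional structure in $v$.
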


Lemma \ref{lem:anti_improved} can be deduced from the Berry--Ess\'een theorem (which is the approach taken in \citep{LPRT}, for instance), but this would require $\xi$ to have finite third moment, which we do not assume. 
(Generally speaking, higher moment assumptions should only be necessary to prove concentration bounds as opposed to anti-concentration.) 
Since we could not locate a proof in the literature for the case that $\xi$ 
and
the coefficients of $v$ take values in $\C$, we provide a proof in 
Appendix \ref{app:anti}. 

\subsection{Anti-concentration for the image of a fixed vector}	\label{sec:image}

In this subsection we boost the anti-concentration bounds for scalar random variables from the previous sections to anti-concentration for the image of a fixed vector under a random matrix.
The following lemma of Rudelson and Vershynin is convenient for this task.

\begin{lemma}[Tensorization, cf.\ {\cite[Lemma 2.2]{RuVe:ilo}}]	\label{lem:tensorize}
Let $\zeta_1,\dots, \zeta_n$ be independent non-negative random variables.
\begin{enumerate}[(a)]
\item Suppose that for some $\eps_0,p_0>0$ and all $j\in [n]$, $\pro{\zeta_j\le \eps_0}\le p_0$. There are $c_1,p_1\in (0,1)$ depending only on $p_0$ such that
\begin{equation}
\pr\bigg( \sum_{j=1}^n \zeta_j^2 \le c_1\eps_0^2n\bigg) \le p_1^n.
\end{equation}
\item  Suppose that for some $K,\eps_0\ge 0$ and all $j\in [n]$, $\pro{\zeta_j\le\eps} \le K\eps$ for all $\eps\ge \eps_0$.
Then for all $\eps\ge \eps_0$,
\begin{equation}
\pr\bigg( \sum_{j=1}^n \zeta_j^2 \le\eps^2n \bigg) \le (CK\eps)^n.
\end{equation}
\end{enumerate}
\end{lemma}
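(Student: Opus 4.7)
Both parts will follow from applying Markov's inequality with the exponential function $e^{-tX}$ to the sum $X = \sum_j \zeta_j^2/\eps^2$ (or $/\eps_0^2$ in part (a)), which converts the event $\{X \le an\}$ into a bound involving $e^{tan}\prod_j \e e^{-t\zeta_j^2/\eps^2}$ via independence. The whole game is then to bound the individual factor.

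For part (a), I will take $t = \log 2$ and split according to whether $\zeta_j \le \eps_0$:
\[
\e e^{-t\zeta_j^2/\eps_0^2} \le \pr(\zeta_j \le \eps_0) + e^{-t}\pr(\zeta_j > \eps_0) \le p_0 + \tfrac{1}{2}(1-p_0) = \tfrac{1+p_0}{2} < 1.
\]
Consequently $\pr(\sum_j \zeta_j^2 \le c_1\eps_0^2 n) \le (2^{c_1}\cdot \tfrac{1+p_0}{2})^n$, and any $c_1 < \log_2(2/(1+p_0))$ yields $p_1 := 2^{c_1}(1+p_0)/2 < 1$, both depending only on $p_0$.

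For part (b), take $t=1$; the task is to show $\e e^{-\zeta_j^2/\eps^2} \le CK\eps$, after which $\pr(\sum_j \zeta_j^2 \le \eps^2 n) \le e^n(CK\eps)^n = (eCK\eps)^n$. The key identity is the layer-cake/change-of-variables representation
\[
\e e^{-\zeta_j^2/\eps^2} = \int_0^\infty \pr(\zeta_j < \eps u)\, 2u e^{-u^2}\,du,
\]
obtained by substituting $u = \sqrt{\log(1/s)}$ in $\int_0^1 \pr(\zeta_j^2 < \eps^2\log(1/s))\,ds$. I will split the integral at $u_0 = \eps_0/\eps \le 1$: on $[u_0,\infty)$ the hypothesis gives $\pr(\zeta_j < \eps u) \le K\eps u$, contributing at most $K\eps \int_0^\infty 2u^2 e^{-u^2}\,du = K\eps\sqrt{\pi}/2$; on $[0,u_0)$ the monotone bound $\pr(\zeta_j < \eps u) \le \pr(\zeta_j \le \eps_0) \le K\eps_0 \le K\eps$ gives a contribution bounded by $K\eps(1-e^{-u_0^2}) \le K\eps$. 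Summing yields the desired $\e e^{-\zeta_j^2/\eps^2} \le CK\eps$.

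The only subtle point is handling the inner region $u < \eps_0/\eps$ in part (b), where the small-ball hypothesis does not apply directly; the resolution is to exploit monotonicity of distribution functions together with $\eps_0 \le \eps$ to absorb the inner contribution into a $K\eps$ bound. Beyond that, both parts are essentially one-line Chernoff computations, and the bound $(CK\eps)^n$ is only informative when $K\eps$ is small, as expected.
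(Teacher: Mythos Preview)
Your proposal is correct and follows essentially the same Chernoff-type approach as the paper: exponential Markov, then bound each factor $\e e^{-t\zeta_j^2/\eps_0^2}$ individually. For part (a) the paper uses a layer-cake representation with a free parameter $\alpha$ taken small, whereas your direct two-case split with $t=\log 2$ is a slight streamlining of the same idea; for part (b) the paper simply defers to \cite{RuVe:ilo}, and your argument is precisely the standard one given there.
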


Note that in part (a) we have given more specific dependencies on the parameters than in \citep{RuVe:ilo}. For completeness we provide the proof of this modified version in Appendix \ref{app:anti}.

Let $\M=A\circ X+B$ be as in Definition \ref{def:profile}.
Recall that we denote by $R_i$ the $i$th row of $\M$.
In the following lemmas we assume that the atom variable $\xi$ has $\kappa$-controlled second moment for some fixed $\kappa\ge1$.
For $v\in \C^m$ and $i\in [n]$ we write 
\begin{equation}	\label{def:vi}
v^i :=(v_j\sig_{ij})_{j=1}^m
\end{equation}
For $\alpha>0$ we denote 
\begin{equation}	\label{def:Ialpha}
I_\alpha(v):= \{i\in [n]: \|v^i\|\ge \alpha\}.
\end{equation}

\begin{lemma}[Crude anti-concentration for the image of a fixed vector]	\label{lem:fixed_crude}
Fix $v\in \C^m$ and let $\alpha>0$ such that $I_\alpha(v)\ne \varnothing$.
For all $I_0\subset I_\alpha(v)$, 
\begin{equation}
\sup_{w\in \C^n} \pr_{I_0}\Big( \|\M v -w\| \le c_0 \alpha|I_0|^{1/2} \Big) \le e^{-c_0|I_0|}
\end{equation}
where $c_0>0$ is a constant depending only on $\kappa$ (recall our notation $\pr_{I_0}(\,\cdot\,)$ from Section \ref{sec:notation}).
\end{lemma}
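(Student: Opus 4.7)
The plan is to decompose $\|Mv - w\|^2$ as a sum of independent scalar quantities, obtain a crude constant-order anti-concentration bound for each term via Lemma \ref{lem:anti_crude}, and then tensorize via Lemma \ref{lem:tensorize}(a).

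First I would fix $w \in \C^n$ and write, for each $i \in I_0$,
\[
(Mv)_i - w_i \;=\; z_i \;+\; S_i, \qquad z_i := \sum_{j=1}^m b_{ij}v_j - w_i, \quad S_i := \sum_{j=1}^m a_{ij}\xi_{ij}v_j = S_\xi(v^i),
\]
where $v^i$ is as in \eqref{def:vi}. Conditioning on $\mathcal{F}_{[n]\setminus I_0,[n]}$ freezes all rows outside $I_0$ (so $z_i$ is essentially deterministic for the purposes of the relevant rows), and the variables $\{S_i\}_{i\in I_0}$ remain independent since the entries $\{\xi_{ij}\}_{i\in I_0,\,j\in[m]}$ are independent and disjoint from the conditioning $\sigma$-algebra. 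Bounding the overall norm coordinate-wise,
\[
\|Mv - w\|^2 \;\ge\; \sum_{i\in I_0} |S_i + z_i|^2.
\]

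Next I would deduce a uniform one-dimensional anti-concentration bound for each term. By Lemma \ref{lem:wlog.kappa} I may assume (after multiplying $M$ by a phase) that $\xi$ has $\kappa$-controlled second moment for some $\kappa = O(\kappa_0)$, so Lemma \ref{lem:anti_crude} applies: there exists $r_0 = r_0(\kappa) > 0$ such that $p_{\xi,u}(r_0) \le 1 - r_0$ for every unit vector $u$. For $i \in I_\alpha(v)$ we have $\|v^i\| \ge \alpha$; rescaling by $\|v^i\|$ and using that $r_0/\|v^i\| \cdot \|v^i\| = r_0$, we obtain
\[
\sup_{\zeta\in\C} \pr\big(|S_i - \zeta| \le r_0\alpha\big) \;\le\; \sup_{\zeta\in\C} \pr\big(|S_\xi(v^i/\|v^i\|) - \zeta/\|v^i\|| \le r_0\big) \;\le\; 1 - r_0.
\]
Applying this with $\zeta = -z_i$ yields $\pr_{I_0}\big(|S_i + z_i| \le r_0 \alpha\big) \le 1 - r_0$ uniformly in $i \in I_0 \subset I_\alpha(v)$ and $w$.

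Finally I would invoke Lemma \ref{lem:tensorize}(a) with $\zeta_i := |S_i + z_i|$, $\eps_0 := r_0\alpha$ and $p_0 := 1 - r_0$, all of which depend only on $\kappa$. This yields constants $c_1, p_1 \in (0,1)$ (depending only on $r_0$, hence only on $\kappa$) such that
\[
\pr_{I_0}\!\left(\sum_{i\in I_0}|S_i+z_i|^2 \;\le\; c_1 r_0^2 \alpha^2 |I_0|\right) \;\le\; p_1^{|I_0|}.
\]
Combined with the coordinate-wise lower bound above, this gives the desired estimate by choosing $c_0 := \min\bigl(\tfrac{1}{2}c_1^{1/2}r_0,\, \log(1/p_1)\bigr)$, so that $c_0\alpha|I_0|^{1/2} < c_1^{1/2}r_0\alpha|I_0|^{1/2}$ and $p_1^{|I_0|} \le e^{-c_0|I_0|}$. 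The only real subtlety is tracking that the implicit deterministic shift $z_i$ (depending on $v$, $B$, and $w$) is harmless because Lemma \ref{lem:anti_crude} gives a bound uniform over all such shifts; no step poses a serious obstacle.
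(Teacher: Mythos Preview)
Your proof is correct and follows essentially the same approach as the paper: apply Lemma~\ref{lem:anti_crude} to each row $i\in I_0$ after rescaling $v^i$ to a unit vector, then tensorize via Lemma~\ref{lem:tensorize}(a) and restrict the norm to the coordinates in $I_0$. The only superfluous step is your invocation of Lemma~\ref{lem:wlog.kappa}---the standing hypothesis in Section~\ref{sec:image} already assumes $\xi$ has $\kappa$-controlled second moment, so no phase rotation is needed.
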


\begin{proof}
Fix $w\in \C^n$ arbitrarily. 
For any $i\in I_\alpha(v)$ and any $t\ge0$ we have
\begin{align*}
\pro{ |R_i\cdot v -w_i|\le t} &\;\le\; p_{\xi,v^i}(t) \,=\,p_{\xi,v^i/\|v^i\|}(t/\|v^i\|) \,\le\, p_{\xi,v^i/\|v^i\|}(t/\alpha).
\end{align*}
Taking $t=\alpha r_0$, by Lemma \ref{lem:anti_crude} we have
\begin{equation}
\pro{ |R_i\cdot v -w_i|\le \alpha r_0} \le 1-r_0
\end{equation}
where $r_0>0$ depends only on $\kappa$.

Fix $I_0\subset I_\alpha(v)$ arbitrarily. We may assume without loss of generality that $I_0$ is non-empty. 
By Lemma \ref{lem:tensorize}(a) there exists $c_1>0$ depending only on $\kappa$ such that
\begin{equation}	\label{eq:crude1}
\pr_{I_0}\bigg( \sum_{i\in I_0} |R_i\cdot v - w_i|^2 \le c_1 r_0^2\alpha^2 |I_0|\bigg) \le e^{-c_1|I_0|}.
\end{equation}
Now for any $\tau\ge 0$, 
\begin{align*}
\pr_{I_0}\Big( \|\M v -w\| \le \tau |I_0|^{1/2}\Big) &= \pr_{I_0}\bigg( \sum_{i=1}^n |R_i\cdot v -w_i|^2 \le \tau^2 |I_0|\bigg)\\
&\le \pr_{I_0}\bigg( \sum_{i\in I_0} |R_i\cdot v -w_i|^2 \le \tau^2 |I_0|\bigg)
\end{align*}
and the claim follows by taking $\tau=c_1^{1/2}r_0\alpha =: c_0\alpha$ and applying \eqref{eq:crude1}.
\end{proof}

By similar lines, using Lemmas \ref{lem:tensorize}(b) and \ref{lem:anti_improved} in place of Lemmas \ref{lem:tensorize}(a) and \ref{lem:anti_crude}, respectively, one obtains the following, which is superior to Lemma \ref{lem:fixed_crude} for vectors $v$ with small $\ell_\infty$ norm. 
The details are omitted.

\begin{lemma}[Improved anti-concentration for the image of a fixed vector]	\label{lem:fixed_improved}
Fix $v\in \C^m$. 
Let $\alpha>0$ such that $I_\alpha(v)\ne \varnothing$ and fix $I_0\subset I_\alpha(v)$ nonempty.
For all $t\ge 0$,
\begin{equation}
\sup_{w\in \C^n} \pr_{I_0}\Big( \|\M v -w\| \le t|I_0|^{1/2}\Big) \le O_\kappa\bigg(\frac{1}{\alpha}\big( t+\|v\|_\infty\big) \bigg)^{|I_0|}.
\end{equation}
\end{lemma}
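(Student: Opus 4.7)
My plan is to mirror the proof of Lemma~\ref{lem:fixed_crude}, but substitute the improved single-coordinate anti-concentration bound of Lemma~\ref{lem:anti_improved} for Lemma~\ref{lem:anti_crude}, and the second tensorization estimate, Lemma~\ref{lem:tensorize}(b), for Lemma~\ref{lem:tensorize}(a).

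First I would fix $w \in \C^n$ arbitrarily and, for each $i \in I_0$, bound $\pr_{I_0}(|R_i \cdot v - w_i| \le s)$. Under $\pr_{I_0}$, the row $R_i$ keeps its original distribution (the conditioning only fixes the rows outside $I_0$), so that $R_i \cdot v - w_i$ has the form $S_\xi(v^i) - c_i$ with $c_i$ deterministic, and the family $(R_i \cdot v - w_i)_{i \in I_0}$ remains jointly independent. Since $i \in I_0 \subset I_\alpha(v)$ gives $\|v^i\| \ge \alpha$, and since $\| v^i/\|v^i\| \|_\infty \le \|v\|_\infty/\alpha$ (using $\sig_{ij} \in [0,1]$), the scaling identity $p_{\xi,v^i}(s) = p_{\xi,v^i/\|v^i\|}(s/\|v^i\|)$ combined with Lemma~\ref{lem:anti_improved} yields
\[
\pr_{I_0}\!\left( |R_i \cdot v - w_i| \le s \right) \,\le\, p_{\xi,v^i}(s) \,\ll_\kappa\, \frac{s + \|v\|_\infty}{\alpha}
\]
for every $s \ge 0$.

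Second, I would tensorize via Lemma~\ref{lem:tensorize}(b) applied to the independent nonnegative variables $\zeta_i := |R_i\cdot v - w_i|$, $i \in I_0$. For $s \ge \|v\|_\infty$ the previous display simplifies to $\pr_{I_0}(\zeta_i \le s) \le K s$ with $K = O_\kappa(1/\alpha)$, so taking $\eps_0 = \|v\|_\infty$ and $\eps = t$ in Lemma~\ref{lem:tensorize}(b) gives, for every $t \ge \|v\|_\infty$,
\[
\pr_{I_0}\!\bigg( \sum_{i \in I_0} \zeta_i^2 \le t^2 |I_0| \bigg) \,\le\, \bigg( \frac{C_\kappa\, t}{\alpha} \bigg)^{|I_0|}.
\]
Since $\|\M v - w\|^2 \ge \sum_{i \in I_0}\zeta_i^2$, this is exactly the claimed estimate for $t \ge \|v\|_\infty$. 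For $t < \|v\|_\infty$, the event in question is contained in $\{\|\M v - w\| \le \|v\|_\infty |I_0|^{1/2}\}$, and applying the bound just established at $t = \|v\|_\infty$ returns $(C_\kappa \|v\|_\infty/\alpha)^{|I_0|}$, which is majorized by $(C_\kappa (t+\|v\|_\infty)/\alpha)^{|I_0|}$.

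There is no real obstacle here: the statement is essentially the natural second-moment sharpening of Lemma~\ref{lem:fixed_crude}. The one mild subtlety is the cutoff $\eps_0$ in Lemma~\ref{lem:tensorize}(b), which forces the case split at $t = \|v\|_\infty$; monotonicity of the probability in $t$ disposes of the small-$t$ range, and the additive term $\|v\|_\infty$ sitting inside the target bound is what makes the two cases fit together cleanly.
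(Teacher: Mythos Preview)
Your proposal is correct and follows exactly the approach the paper indicates: the paper omits the details, stating only that one proceeds ``by similar lines, using Lemmas~\ref{lem:tensorize}(b) and \ref{lem:anti_improved} in place of Lemmas~\ref{lem:tensorize}(a) and \ref{lem:anti_crude}.'' Your handling of the threshold $\eps_0=\|v\|_\infty$ in Lemma~\ref{lem:tensorize}(b) via monotonicity is the natural way to fill in the only detail not already present in the proof of Lemma~\ref{lem:fixed_crude}.
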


\section{Invertibility from connectivity: Compressible vectors}	\label{sec:comp}

In this section we combine the anti-concentration estimates from Section \ref{sec:anti} with union bounds over $\eps$-nets (as obtained for instance from Lemma \ref{lem:net}) to prove that with high probability, a random matrix $M$ as in Theorem \ref{thm:broad} or Theorem \ref{thm:super} is well-invertible on the set of compressible vectors $\Comp(\theta,\rho)$ (as defined in \eqref{def:compr}) for appropriate choices of $\theta,\rho$.
Hence, there will be a competition between the quality of the anti-concentration estimates and the cardinality of the $\eps$-nets.
For small values of $\theta$ we can use $\eps$-nets of small cardinality, but only have poor anti-concentration bounds (namely, Lemma \ref{lem:fixed_crude}), while for large $\theta$ the nets are very large, but we have access to the improved anti-concentration of Lemma \ref{lem:fixed_improved}.

In both cases we start with a crude result, Lemma \ref{lem:high}, giving control for the vectors in $\Comp(\theta_0,\rho_0)$ for some small value of $\theta_0$ (possibly depending on $n$).
We then use an iterative argument argument to obtain control on $\Comp(\theta,\rho)$ for larger values of $\theta$ while lowering the parameter $\rho$.
For Theorem \ref{thm:broad} we want to take $\theta$ close to 1, while for Theorem \ref{thm:super} a constant order value of $\theta$ will suffice.

It turns out that that while the standard $\eps$-net from Lemma \ref{lem:net} suffices to prove Lemma \ref{lem:high}, it is insufficient to obtain control on $\Comp(\theta,\rho)$ for the desired values of $\theta$.
For the broadly connected case this is essentially due to working in $\C^n$ rather than $\R^n$, which causes a factor $2$ increase in metric entropies (this difficulty was not present in the proof of Theorem \ref{thm:ruze} in \citep{RuZe} as they worked in $\R^n$).
The situation is worse for the case of Theorem \ref{thm:super}, the main source of difficulty being that $\|B\|$ can be of arbitrary polynomial order. As a consequence, the starting point $\theta_0$ for our iterative argument will be of size $o(1)$.
This prevents us from using the third condition of the super-regularity hypothesis (see Definition \ref{def:super}), which only ``sees" vectors that are essentially supported on more than $\eps n$ coordinates.

We deal with this by \emph{reducing the entropy cost} of the nets over which we take union bounds.
In Section \ref{sec:entropy} we prove Lemma \ref{lem:entropy} which shows, roughly speaking, that if we have already established control on vectors in $\Comp(\theta,\rho)$ for some $\theta,\rho$, then we can control the vectors in $\Comp(\theta+\Delta,\rho')$ for some small $\Delta, \rho'$ using a \emph{random} net of significantly smaller cardinality than the net provided by Lemma \ref{lem:net}.
We can then increment $\theta$ from $\theta_0$ up to size $\gg 1$, taking steps of size $\Delta$.
For the broadly connected case we can continue and take $\theta$ as close to $1$ as desired.
The entropy reduction argument for Lemma \ref{lem:entropy} makes use of a strong version of the well-known Restricted Invertibility Theorem due to Spielman and Srivastava -- see Theorem \ref{thm:rit}.

We now state the main results of this section.
For $K\ge1$ we denote the \emph{boundedness event}
\begin{equation}	\label{event:bdd}
\mB(K):= \big\{\|\M\|\le K\sqrt{n}\,\big\}.
\end{equation}
With a fixed choice of $K$ we write
\begin{equation}	\label{def:eventcomp}
\event(\theta,\rho):= \mB(K) \wedge \big\{\, \exists u\in \Comp(\theta,\rho): \|\M u\|\le \rho K \sqrt{n}\,\big\}.
\end{equation}

\begin{proposition}[Compressible vectors: broadly connected profile]	\label{prop:slight}
Let $\M=A\circ X+B$ be as in Definition \ref{def:profile} with $n/2\le m\le 2n$, and assume that $\xi$ has $\kappa$-controlled second moment for some $\kappa\ge1$ (see Definition \ref{def:kappa}).
Let $K\ge1$ and $\ha,\delta,\nu\in (0,1)$.
There exist $\theta_0(\kappa,\ha,\delta,K)>0$ and $\rho(\kappa,\ha,\delta,\nu,K)>0$ such that the following holds. 
Assume
\begin{enumerate}[(1)]
\item $|\mN_{A(\ha)^\tran}(j)|\ge \delta n$ for all $j\in [m]$; \vspace{.2cm}
\item $|\mN_{A(\ha)^\tran}^{(\delta)}(J)| \ge \min((1+\nu)|J|,n)$ for all $J\subset[m]$ with $|J|\ge \theta_0m$.
\end{enumerate}
Then for any $0<\theta\le (1-\frac\delta4)\min(\frac{n}m,1)$, 
\begin{equation}
\pro{\event(\theta,\rho)}\ll_{\kappa,\ha,\delta,\nu,K} \expo{-c_\kappa\delta\ha^2n}
\end{equation}
where $c_\kappa>0$ depends only on $\kappa$.
\end{proposition}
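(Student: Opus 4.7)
The plan is a two-stage argument. First I would establish the bound for $\Comp(\theta_0,\rho_0)$ at some small starting threshold $\theta_0$ by a direct union-bound argument. Then I would bootstrap up to the target regime $\theta\le(1-\delta/4)\min(n/m,1)$ by iterating an entropy-reduction step a bounded number of times, losing only a constant factor in the scale $\rho$ at each step.

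\textbf{Base case.} For $J\subset[m]$ with $|J|\le\theta_0 m$ and a unit vector $u\in\C^J$, pick $j^\star\in J$ with $|u_{j^\star}|=\|u\|_\infty\ge|J|^{-1/2}$. By hypothesis (1), $I_0:=\mN_{A(\ha)^\tran}(j^\star)$ satisfies $|I_0|\ge\delta n$ and every $i\in I_0$ has $\|u^i\|\ge \ha|u_{j^\star}|\ge \ha/\sqrt{\theta_0 m}$. Lemma \ref{lem:fixed_crude} then yields
\[
\sup_{w\in\C^n}\pr_{I_0}\bigl(\|Mu-w\|\le c_0\ha\sqrt{\delta n/(\theta_0 m)}\bigr)\le e^{-c_0\delta n}.
\]
Taking a $\rho_0$-net on $S^J$ of cardinality $(3/\rho_0)^{2|J|}$ (Lemma \ref{lem:net}), union-bounding over the $\binom{m}{\lf\theta_0 m\rf}$ choices of $J$, and approximating on $\mB(K)$, yields
\[
\pr(\event(\theta_0,\rho_0))\ll \exp\bigl(O(\theta_0 m\log(1/(\theta_0\rho_0)))-c_0\delta n\bigr).
\]
Taking $\theta_0=\theta_0(\kappa,\ha,\delta,K)$ small and $\rho_0$ a suitable constant multiple of $\ha\sqrt{\delta/\theta_0}$ makes this at most $\exp(-c_\kappa\delta\ha^2 n)$.

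\textbf{Bootstrap.} The step asks: given control on $\Comp(\theta,\rho)$, deduce control on $\Comp(\theta+\Delta,c\rho)$ for a fixed $\Delta>0$ and $c\in(0,1)$. A vector $u\in\Comp(\theta+\Delta,c\rho)\setminus\Comp(\theta,\rho)$ is essentially supported on some set $J$ with $|J|\in[\theta m,(\theta+\Delta)m]$. Since $|J|\ge\theta_0 m$, hypothesis (2) supplies at least $\nu|J|$ rows in $\mN^{(\delta)}_{A(\ha)^\tran}(J)$ beyond those already exploited at level $\theta$, and on these extra rows one can activate Lemma \ref{lem:fixed_improved} (using that $u$ restricted to the non-essential-support directions has $\ell_\infty$ norm of order $n^{-1/2}$) to obtain an anti-concentration gain polynomial, rather than merely constant, in $c\rho$. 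A naive net on $\Comp(\theta+\Delta,c\rho)$ would have entropy of order $\theta m\log(1/c\rho)$, which dominates the anti-concentration gain as $\theta\to 1$; I would therefore invoke the entropy-reduction principle of Section \ref{sec:entropy}, which via the Spielman--Srivastava restricted invertibility theorem constructs a \emph{random} net adapted to a $\Delta m$-dimensional family of coordinate subspaces whose entropy cost is dominated by the anti-concentration gain. Iterating $O(1/\Delta)=O_{\delta,\nu}(1)$ times pushes $\theta$ from $\theta_0$ up to $(1-\delta/4)\min(n/m,1)$, the $\delta/4$ buffer ensuring that hypothesis (2) continues to supply a nontrivial surplus of broadly connected rows at every intermediate scale.

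\textbf{Main obstacle.} The delicate step is the entropy reduction itself: $\Delta$, the contraction ratio $c$, and the Spielman--Srivastava parameters must be calibrated so that the net cardinality at each scale is strictly less than the reciprocal of the anti-concentration gain $(O_\kappa(c\rho))^{\nu|J|}$, while $c\rho$ stays bounded below by an absolute constant (depending only on $\kappa,\ha,\delta,\nu,K$) across the $O_{\delta,\nu}(1)$ iterations. Modulo this technical heart, the base case is a routine net argument and the final probability bound $\exp(-c_\kappa\delta\ha^2 n)$ is preserved under the union bound over the bounded number of bootstrap steps.
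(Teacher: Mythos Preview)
Your two-stage plan matches the paper's approach exactly: a crude base case (Lemma~\ref{lem:high}) followed by iterated entropy-reduction increments (Lemma~\ref{lem:increment_broad}) driven by the restricted invertibility theorem. The gap is in the increment parametrization. With an additive step $\theta\mapsto\theta+\Delta$, the random net $\Sigma_{I,J}$ from Lemma~\ref{lem:entropy} has log-cardinality $\approx 2(|J|-|I|)\log(1/\rho'')$ with $|J|-|I|\approx(\Delta+2\beta\theta)m$, while broad connectivity applied to the flat set $L$ of size $\approx\theta m$ (coming from $u\notin\Comp(\theta,\rho)$ via Lemma~\ref{lem:spread}) supplies only $|I_\alpha(u_L)|\gtrsim(1+\nu/2)\theta m$, hence $|I_0|\approx(\nu/2+2\beta)\theta m$ fresh rows disjoint from $I$ for the anti-concentration of Lemma~\ref{lem:fixed_improved}. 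The balance $|I_0|>2(|J|-|I|)$ then forces $\Delta\lesssim\nu\theta$: the increment must scale with the current level. At $\theta\approx\theta_0(\kappa,\ha,\delta,K)$, which can be made arbitrarily small by taking $K$ large, a fixed $\Delta=\Delta(\delta,\nu)$ is already too large and the very first bootstrap step fails.

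The paper resolves this with a \emph{multiplicative} step $\theta\mapsto(1+\nu/10)\theta$, so the number of iterations is $O(\nu^{-1}\log(1/\theta_0))=O_{\kappa,\ha,\delta,\nu,K}(1)$ rather than $O_{\delta,\nu}(1)$; the final $\rho$ then correctly depends on all five parameters, as the statement permits. Your base-case variant (anchoring on the column $j^\star$ of maximal coordinate rather than the column-$\ell_2$ averaging of Lemma~\ref{lem:goodrows0}) is a legitimate alternative, but note that your stated scale $\rho_0\sim\ha\sqrt{\delta/\theta_0}$ exceeds $1$ once $\theta_0$ is small; the paper's version yields the cleaner $\rho_0\sim\delta\ha^2/K$ and $\theta_0\sim\delta\ha^2/\log(K/\delta\ha^2)$.
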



The following gives control of compressible vectors for more general profiles than in Proposition \ref{prop:slight} (essentially removing the condition (2)). 
However, we have to take the parameter $\rho$ much smaller, and we only cover vectors that are essentially supported on a small (linear) proportion of the coordinates, rather than a proportion close to one. 

\begin{proposition}[Compressible vectors: general profile with large perturbation]	\label{prop:comp}
Let $\M=A\circ X+B$ be as in Definition \ref{def:profile} with $n/2\le m\le 2n$.
Assume $\xi$ has $\kappa$-controlled second moment for some $\kappa\ge1$, and that for some $\sig_0>0$ we have
\begin{equation}	\label{LB:columnvar}
\sum_{i=1}^n \sig_{ij}^2\ge \sig_0^2n\quad\text{ for all }j\in [m].
\end{equation}
Fix $\gamma\ge 1/2$ and let $1\le K=O(n^{\gamma-1/2})$.
Then for some $\rho= \rho(\gamma,\sig_0,\kappa,n) \gg_{\gamma,\sig_0,\kappa} n^{-O(\gamma^2)}$ and a sufficiently small constant $c_0>0$ we have
\begin{equation}
\pro{\event(c_0\sig_0^2,\rho)}\ll_{\gamma,\sig_0,\kappa}\expo{-c_\kappa\sig_0^2n}
\end{equation}
where $c_\kappa>0$ depends only on $\kappa$.
\end{proposition}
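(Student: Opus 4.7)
The plan is a two-phase net argument. First, a crude \emph{seeding} bound establishes control on $\Comp(\theta_0,\rho_0)$ for an essential-support parameter $\theta_0$ of order $1/\log n$; then an iterative entropy-reduction argument climbs the parameter $\theta$ up to the target constant $c_0\sig_0^2$.

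\textbf{Seeding phase.} For any $u\in S^{m-1}$, the column-variance hypothesis \eqref{LB:columnvar} yields
\begin{equation*}
\sum_{i=1}^n \|u^i\|^2 \;=\; \sum_{j=1}^m |u_j|^2 \sum_{i=1}^n \sig_{ij}^2 \;\ge\; \sig_0^2 n,
\end{equation*}
while $\|u^i\|^2\le 1$ since $\sig_{ij}\in[0,1]$. Choosing $\alpha=\sig_0/\sqrt2$, the set $I_\alpha(u)$ from \eqref{def:Ialpha} has size at least $\sig_0^2 n/2$, so Lemma \ref{lem:fixed_crude} gives
\begin{equation*}
\pr\bigl(\,\|Mu\|\le c_\kappa \sig_0^2\sqrt n\,\bigr)\;\le\;\exp(-c_\kappa\sig_0^2 n)
\end{equation*}
for every fixed $u\in S^{m-1}$. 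Combined with a $(\rho_0/2)$-net of $\Comp(\theta_0,\rho_0)$ (Lemma \ref{lem:net} plus $\binom{m}{\theta_0 m}\le (e/\theta_0)^{\theta_0 m}$), the net cost $(e/\theta_0)^{\theta_0 m}(CK/\sig_0^2)^{2\theta_0 m}$ is beaten by the exponential failure probability as soon as $\rho_0\asymp\sig_0^2/K$ and $\theta_0\ll_\kappa\sig_0^2/(\gamma\log n)$. This is the content I would extract from Lemma \ref{lem:high}, applied here to dominate $\event(\theta_0,\rho_0)$.

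\textbf{Ladder phase.} From the seed $(\theta_0,\rho_0)$ with $\rho_0$ of order $n^{-(\gamma-1/2)}$, I would iteratively apply Lemma \ref{lem:entropy} to pass from $\Comp(\theta_j,\rho_j)$ to $\Comp(\theta_j+\Delta,\rho_{j+1})$ for a constant increment $\Delta=\Delta(\sig_0,\kappa)>0$, with $\rho_{j+1}$ only polynomially smaller than $\rho_j$ in $n$. After a bounded number of iterations --- enough to push $\theta$ above $c_0\sig_0^2$ --- the cumulative polynomial shrinkage leaves $\rho\gg_{\gamma,\sig_0,\kappa} n^{-O(\gamma^2)}$, matching the conclusion of the proposition. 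Each step contributes a failure event of probability $\exp(-c_\kappa \sig_0^2 n)$, and the bounded number of steps preserves this decay under a final union bound.

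\textbf{Main obstacle.} The whole reason for needing Lemma \ref{lem:entropy} is the polynomial magnitude $K=O(n^{\gamma-1/2})$ of the admissible perturbation. To keep anti-concentration meaningful at the scale $\rho K\sqrt n$ one must take $\rho$ polynomially small in $n$; but the naive $\epsilon$-net of $\Comp(\theta,\rho)$ then carries entropy $(CK/\rho)^{2\theta m}=n^{O(\gamma\theta m)}$, which overwhelms the available $e^{-c\sig_0^2 n}$ anti-concentration as soon as $\theta$ is a fixed positive constant. The Spielman--Srivastava-based entropy-reduction of Lemma \ref{lem:entropy} replaces the naive net by a random net of much smaller effective cardinality, which is what enables the induction to push $\theta$ from the logarithmically small seed all the way to $c_0\sig_0^2$ without exhausting the union-bound budget.
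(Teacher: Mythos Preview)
Your outline matches the paper's architecture: seed via Lemma~\ref{lem:high}, then climb $\theta$ by iterating an increment step built on the entropy-reduction net of Lemma~\ref{lem:entropy}. Two points need sharpening.

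First, the increment $\Delta$ cannot be taken independent of $\gamma$. In the single-step argument (Lemma~\ref{lem:increment_gen}) the random net has cardinality $(C/\rho'')^{2(|J|-|I|)}$ with $|J|-|I| \asymp \Delta m$, and $1/\rho''$ carries a factor $K^2 = n^{2\gamma-1}$; for this entropy to be absorbed by the anti-concentration factor $\eps^{c\sig_0^2 n}$ with $\eps \asymp n^{-1/2}$ one is forced to take $\Delta \asymp \sig_0^2/\gamma$. The ladder therefore has $O(\gamma)$ rungs, each costing a factor $n^{-O(\gamma)}$ in $\rho$, and that is precisely where the exponent $O(\gamma^2)$ in the statement comes from. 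Your claim of $\Delta=\Delta(\sig_0,\kappa)$ with a bounded number of steps would instead produce $n^{-O(\gamma)}$.

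Second, and this is the real gap: you have not specified the anti-concentration estimate applied to elements of the random net, and the one implicit in your sketch does not close. At each rung the paper conditions on \emph{both} $\event(\theta_j,\rho_j)^c$ (to build the small net via Lemma~\ref{lem:entropy}) and the seed event $\event(\theta_0,\rho_0)^c$ (so that every net element lies in $\Incomp(\theta_0,\rho_0)$). Isolating a flat portion of such a vector as in the broadly-connected increment (Lemma~\ref{lem:increment_broad}) only yields anti-concentration valid at scales $t \gtrsim 1/\sqrt{\theta_0 m}$; with $\theta_0 \sim 1/\log n$ this lower bound on $\rho''$ is incompatible with the entropy budget as soon as $K$ is any positive power of $n$. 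The paper's fix is Lemma~\ref{lem:anti_incomp}: a dyadic pigeonholing of the essential support of $u$ locates a level set on which $|u_j|$ varies by at most a factor $2$, pushing the usable scale down to $t \gtrsim \rho_0/\sqrt{m}$ at the cost of a harmless $\sqrt{\log n}$. That refinement is what makes Lemma~\ref{lem:increment_gen} close; without it your ladder step fails.
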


\subsection{Highly compressible vectors}		\label{sec:high}

In this subsection we establish the following crude version of Proposition \ref{prop:comp}, giving control on vectors in $\Comp(\theta_0,\rho_0)$ with $\theta_0$ sufficiently small depending on $\sig_0$ and $K$.

\begin{lemma}[Highly compressible vectors]	\label{lem:high}
Let $\M=A\circ X+B$ be as in Definition \ref{def:profile} with $m\le 2n$.
Assume that $\xi$ has $\kappa$-controlled second moment for some $\kappa\ge1$.
Suppose also that there is a constant $\sig_0>0$ such that for all $j\in [m]$, $\sum_{i=1}^n \sig_{ij}^2 \ge \sig_0^2n$.
Let $K\ge 1$.
Then with notation as in \eqref{def:eventcomp} we have
\begin{equation}
\pro{ \event(\theta_0,\rho_0)} \le e^{-c_\kappa\sig_0^2n}
\end{equation}
where $\theta_0= c_\kappa\sig_0^2 /\log(K/\sig_0^2)$ and $\rho_0=c_\kappa\sig_0^2/K$ for a sufficiently small $c_\kappa>0$ depending only on $\kappa$.
\end{lemma}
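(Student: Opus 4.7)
The plan is the standard net-plus-anti-concentration strategy, carefully calibrated so that the crude anti-concentration of Lemma~\ref{lem:fixed_crude} dominates the metric entropy of the compressible set.

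\textbf{Step 1 (Net for $\Comp(\theta_0,\rho_0)$).} I would cover $\Comp(\theta_0,\rho_0)$ by a union of nets on coordinate subspaces. For each $J\in\binom{[m]}{\theta_0 m}$, Lemma~\ref{lem:net} supplies a $\rho_0$-net $\Sigma_J\subset S^{m-1}\cap\C^J$ of size at most $(3/\rho_0)^{2\theta_0 m}$. Setting $\Sigma=\bigcup_J\Sigma_J$, a brief triangle-inequality computation (project $u$ onto the distinguished $\C^J$, renormalize, then approximate by $\Sigma_J$) shows that every $u\in\Comp(\theta_0,\rho_0)$ admits $v\in\Sigma$ with $\|u-v\|\le 3\rho_0$.

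\textbf{Step 2 (Many good rows for each fixed $v\in\Sigma$).} Fix $v\in S^J$ and recall $v^i=(a_{ij}v_j)_j$. Since $a_{ij}\in[0,1]$ we have $\|v^i\|^2\le\|v\|^2=1$, while the column-norm hypothesis yields
\[
\sum_{i=1}^n\|v^i\|^2 \;=\; \sum_{j}|v_j|^2\sum_{i}a_{ij}^2 \;\ge\;\sigma_0^2 n.
\]
A one-line pigeonhole then forces $|I_\alpha(v)|\ge\tfrac12\sigma_0^2 n$ for $\alpha=\sigma_0/\sqrt2$, so Lemma~\ref{lem:fixed_crude} (applied with $I_0=I_\alpha(v)$, $w=0$) gives
\[
\pr\bigl(\|Mv\|\le c_\kappa\sigma_0^2\sqrt n\bigr)\;\le\;\exp\bigl(-c_\kappa\sigma_0^2 n\bigr)
\]
for a constant $c_\kappa>0$ depending only on $\kappa$.

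\textbf{Step 3 (Union bound and calibration of $\theta_0,\rho_0$).} The net has cardinality at most $\binom{m}{\theta_0 m}(3/\rho_0)^{2\theta_0 m}\le (C/(\theta_0\rho_0^2))^{\theta_0 m}$. Using $m\le 2n$ and $\rho_0=c_\kappa\sigma_0^2/K$, the log-cardinality is bounded by $O(\theta_0 n\log(K/\sigma_0^2))$, which is absorbed by the exponent $c_\kappa\sigma_0^2 n$ from Step~2 exactly when $\theta_0\le c_\kappa'\sigma_0^2/\log(K/\sigma_0^2)$. This is the source of the prescribed form of $\theta_0$ and $\rho_0$ in the statement, and explains why one cannot do better with a crude (i.e.\ non-random) net.

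\textbf{Step 4 (Transfer from $\Sigma$ to $\Comp(\theta_0,\rho_0)$).} On the event $\event(\theta_0,\rho_0)$, pick $u\in\Comp(\theta_0,\rho_0)$ with $\|Mu\|\le\rho_0 K\sqrt n$ and choose $v\in\Sigma$ with $\|u-v\|\le 3\rho_0$. Then $\|Mv\|\le\|Mu\|+\|M\|\cdot\|u-v\|\le 4\rho_0 K\sqrt n$, and by the choice $\rho_0=c_\kappa\sigma_0^2/K$ (with a sufficiently small constant) this is at most the threshold $c_\kappa\sigma_0^2\sqrt n$ from Step~2. A union bound over $\Sigma$ using Step~2 and the cardinality estimate from Step~3 completes the proof. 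The only delicate point is the balancing in Step~3, which fixes the logarithmic factor in $\theta_0$; the rest is bookkeeping.
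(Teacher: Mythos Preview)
Your proposal is correct and follows essentially the same approach as the paper: a net over coordinate subspaces, the crude anti-concentration Lemma~\ref{lem:fixed_crude} combined with the ``many good rows'' pigeonhole (which the paper isolates as Lemma~\ref{lem:goodrows0}), a union bound, and the calibration of $\theta_0,\rho_0$ so that the entropy is beaten by the exponential gain. The only differences are cosmetic (you prove the good-rows bound inline and use slightly different net-resolution constants).
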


We will need the following lemma, which ensures that the set $I_\alpha(v)$ from \eqref{def:Ialpha} is reasonably large when the columns of $A$ have large $\ell_2$ norm.
A similar argument has been used in \citep{LiRi} and \citep{RuZe}.

\begin{lemma}[Many good rows]	\label{lem:goodrows0}
Let $A$ be an $n\times m$ matrix as in Definition \ref{def:profile}, and assume that for some $\sig_0>0$ we have $\sum_{i=1}^n \sig_{ij}^2\ge \sig_0^2 n$ for all $j\in [m]$.
Then for any $v\in S^{m-1}$ we have $|I_{\sig_0/2}(v)|\ge \frac12\sig_0^2n$.
\end{lemma}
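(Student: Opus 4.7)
The plan is a straightforward first-moment argument. First I would compute the total squared norm $\sum_{i=1}^n \|v^i\|^2$ by expanding the definition and swapping the order of summation:
\[
\sum_{i=1}^n \|v^i\|^2 \;=\; \sum_{i=1}^n \sum_{j=1}^m \sig_{ij}^2 |v_j|^2 \;=\; \sum_{j=1}^m |v_j|^2 \sum_{i=1}^n \sig_{ij}^2 \;\ge\; \sig_0^2 n \sum_{j=1}^m |v_j|^2 \;=\; \sig_0^2 n,
\]
where the inequality uses the column-norm hypothesis $\sum_i \sig_{ij}^2 \ge \sig_0^2 n$ for each $j$, and the last step uses $v \in S^{m-1}$.

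Next I would combine this lower bound with the pointwise upper bound $\|v^i\|^2 \le 1$, valid for every $i$ since $\sig_{ij} \in [0,1]$ gives $\|v^i\|^2 \le \sum_j |v_j|^2 = 1$. Splitting the sum according to whether $i \in I_{\sig_0/2}(v)$ or not, rows outside $I_{\sig_0/2}(v)$ contribute less than $\sig_0^2/4$ each, so they contribute at most $(\sig_0^2/4) n$ in total, while rows in $I_{\sig_0/2}(v)$ contribute at most $1$ each. This gives
\[
\sig_0^2 n \;\le\; |I_{\sig_0/2}(v)| + \tfrac14 \sig_0^2 n,
\]
hence $|I_{\sig_0/2}(v)| \ge \tfrac{3}{4}\sig_0^2 n \ge \tfrac{1}{2}\sig_0^2 n$, as required.

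There is no real obstacle here: the argument is a clean application of the probabilistic/averaging principle ``a non-negative quantity bounded above pointwise and below in average must be large on a set of positive density.'' The only minor point worth noting is that the factor $\tfrac12$ in the conclusion leaves a comfortable margin over the $\tfrac34$ that the calculation actually yields, so one need not be careful with constants.
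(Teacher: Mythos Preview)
Your proof is correct and follows essentially the same averaging argument as the paper: compute the total $\sum_i \|v^i\|^2 \ge \sig_0^2 n$, bound each term in $I_{\sig_0/2}(v)$ by $1$ and each term outside by the squared threshold, then rearrange. The paper happens to split at the slightly larger threshold $\sig_0/\sqrt{2}$ (yielding the constant $\tfrac12$ exactly rather than your $\tfrac34$), but the idea and execution are the same.
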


\begin{proof}
Writing $\alpha=\sig_0/\sqrt{2}$, we have
\begin{align*}
\sig_0^2 n &\le \sum_{i=1}^n\sum_{j=1}^m |v_j|^2 \sig_{ij}^2\\
&=\sum_{i\in I_{\alpha}(v)}\sum_{j=1}^m |v_j|^2 \sig_{ij}^2 + \sum_{i\notin I_{\alpha}(v)}\sum_{j=1}^m |v_j|^2 \sig_{ij}^2\\
&\le \sum_{i\in I_{\alpha}(v)}\sum_{j=1}^m |v_j|^2 + \sum_{i\notin I_{\alpha}(v)} \frac{1}2\sig_0^2 \\
&\le |I_{\alpha}(v)| + \frac12 \sig_0^2n
\end{align*}
and rearranging gives the claim.
\end{proof}

\begin{proof}[Proof of Lemma \ref{lem:high}]
Fix $J\subset [m]$ of size $\lf \theta_0m\rf$ and let $v\in S^J$ be arbitrary. 
Writing $\alpha=\sig_0/\sqrt{2}$, by Lemma \ref{lem:fixed_crude} and our choice of $\rho_0$ (with $c_\kappa>0$ sufficiently small depending on $\kappa$),
\[
\pro{ \|\M v\|\le \rho_0K \sqrt{n}} \le \pro{ \|\M v\|\le c_\kappa\sig_0|I_{\alpha}(v)|^{1/2}}
\le e^{-c_\kappa|I_{\alpha}(v)|}.
\]
Applying Lemma \ref{lem:goodrows0}, we obtain
\begin{equation}	\label{high:forallv}
\pro{ \|\M v\|\le \rho_0K\sqrt{n}} \le e^{-c_\kappa\sig_0^2 n}\qquad \forall v\in S^J
\end{equation}
(adjusting $c_\kappa$). 
By Lemma \ref{lem:net} we may fix $\Sigma_J\subset S^J$ a $\rho_0/4$-net for $S^J$ such that $|\Sigma_J|\le (12/\rho_0)^{2k}$.
Suppose that $\|M\|\le K\sqrt{n}$ and that $\|Mu\|\le \rho_0 K\sqrt{n}$ for some $u\in S^{m-1}\cap (\C^J)_{\rho_0/4}$.
Let $u'\in \C^J$ with $\|u-u'\|\le \rho_0/4$, and let $u''\in \Sigma_J$ with $\|u''-\frac{u'}{\|u'\|}\|\le \rho_0/4$. 
By the triangle inequality,
\[
\| u-u''\| \le \|u-u'\| + \left\| u'-\frac{u'}{\|u'\|}\right\| + \left\| \frac{u'}{\|u'\|} - u''\right\| \le 3\rho_0/4
\]
where the bound on the middle term follows from $|\|u'\|-1|\le \rho_0/4$ (also by the triangle inequality).
We have
\[
\|Mu''\|\le \|Mu\| + \|M(u-u'')\| \le \rho_0K\sqrt{n} + K\sqrt{n}\cdot (3\rho_0/4) \le 2\rho_0 K\sqrt{n}.
\]
Applying the union bound and \eqref{high:forallv} (adjusting $c_\kappa$ to replace $\rho_0$ by $2\rho_0$),
\begin{align*}
&\pro{ \exists u\in S^{m-1}\cap (\C^J)_{\rho_0/8}: \|Mu\|\le \rho_0K\sqrt{n}} \\
&\qquad\qquad\qquad\qquad\qquad\qquad\le \pro{ \exists u''\in \Sigma_J: \|Mu''\|\le 2\rho_0K\sqrt{n}}\\
&\qquad\qquad\qquad\qquad\qquad\qquad\le O(1/\rho_0)^{2\theta_0 m} e^{-c_\kappa a_0^2n}
\end{align*}
From \eqref{def:compr} and applying the union bound over all choice of $J\in {[m]\choose  \theta_0 m}$, 
\begin{align*}
\pro{ \event(\theta_0,\rho_0/4)} &\le O(1/\theta_0)^{\theta_0m} O(1/\rho_0)^{2\theta_0m} e^{-c_\kappa\sig_0^2n}\le O\left(\frac{1}{\theta_0\rho_0^2}\right)^{2\theta_0 n} e^{-c_\kappa\sig_0^2n},
\end{align*}
where we used our assumption $m\le 2n$.
The desired bound now follows from substituting our choices of $\theta_0,\rho_0$, and again adjusting the constant $c_\kappa$ to replace $\rho_0/4$ by $\rho_0$ in the above.
\end{proof}

\subsection{An entropy reduction lemma}		\label{sec:entropy}

The aim of this subsection is to establish the following:

\begin{lemma}[Control by a random net of small cardinality]	\label{lem:entropy}
For every $I\subset[n], J\subset [m]$, $\eps>0$ there is a random finite set $\Sigma_{I,J}(\eps)\subset S^J$, measurable with respect to $\mF_{I.J} =\langle \{\xi_{ij}\}_{i\in I,j\in J}\rangle$, such that the following holds.
Let $\rho\in (0,1)$, $K>0$ and $0<\theta< \frac nm$.
On $\mB(K)\wedge \event(\theta,\rho)^c$, for all $J\subset [m]$ with $|J|>\theta m$ and all $\beta,\rho'\in (0,1)$, 
putting
\begin{equation}	\label{def:rhodub}
\rho''=\frac{6\rho'}{\beta \rho}\left(\frac{n}{\lf \theta m\rf}\right)^{1/2}
\end{equation}
there exists $I\subset[n]$ with $|I|=\lf (1-\beta)^2\lf \theta m\rf\rf$ such that 
\begin{enumerate}[(1)]
\item $|\Sigma_{I,J}(\rho'')| \le (C/\rho'')^{2(|J|-|I|)}$ for an absolute constant $C>0$, and
\item for any $u\in S^{m-1}\cap (\C^J)_{\rho'}$ such that $\|\M u\|\le \rho'K\sqrt{n}$, we have $\dist(u,\Sigma_{I,J}(\rho'')) \le 3\rho''$.
\end{enumerate}
Furthermore, writing
\begin{equation}
\good_{I,J}(\rho'') := \bigg\{ \big| \Sigma_{I,J}(\rho'')\big|\le \bigg(\frac{C}{\rho''}\bigg)^{2(|J|-|I|)} \bigg\}
\end{equation}
we have that for any $\theta'\in (\theta,1]$,
\begin{align}
&\event(\theta,\rho)^c \wedge \event(\theta',\rho') \subset \notag\\
&\quad\quad
\bigvee_{J\in {[m]\choose  \theta' m}} \bigvee_{I\in {[n]\choose  (1-\beta)^2\lf \theta m\rf}} \bigg(  \good_{I,J}(\rho'')
\wedge\Big\{ \exists u \in \Sigma_{I,J}(\rho''): \|\M u\|\le 4\rho''K\sqrt{n} \Big\}\bigg).\label{cont:entropy}
\end{align}
\end{lemma}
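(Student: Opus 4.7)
The approach is to apply the Spielman--Srivastava Restricted Invertibility Theorem (``SS RIT'') to extract, from the invertibility information encoded in $\event(\theta,\rho)^c \wedge \mB(K)$, a subset of rows $I$ such that the kernel of $\M_{I,J}$ is a low-dimensional subspace of $\C^J$ trapping every dangerous vector. A $\rho''$-net of the unit sphere of this kernel, of the cardinality advertised in (1), will then furnish $\Sigma_{I,J}(\rho'')$.

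First I would translate the hypothesis into a concrete submatrix statement: on $\event(\theta,\rho)^c \wedge \mB(K)$, since $S^{J'} \subset \Comp(\theta,\rho)$ for every $J' \subset [m]$ with $|J'| \leq \lf\theta m\rf =: k$, we have $s_{\min}(\M_{[n],J'}) > \rho K\sqrt{n}$ together with $\|\M_{[n],J'}\| \leq K\sqrt{n}$. Fix $J$ with $|J|>\theta m$ and pick $J^* \subset J$ with $|J^*|=k$, and let $r_1,\dots,r_n \in \C^k$ be the rows of $\M_{[n],J^*}$, so $\sum_i r_ir_i^* \succeq \rho^2 K^2 n\,\id_k$ and $\|\sum_i r_ir_i^*\| \leq K^2 n$. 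Applying SS RIT to the $r_i$'s with parameter $\beta$ produces $I \subset [n]$ of size $\lf(1-\beta)^2 k\rf$ with
\[ s_{\min}(\M_{I,J^*}) \;\gs\; \beta\rho K\sqrt{k}. \]
Since $\M_{I,J^*}$ is obtained from $\M_{I,J}$ by deleting columns, Cauchy interlacing yields $s_{|I|}(\M_{I,J}) \geq s_{\min}(\M_{I,J^*}) \gs \beta\rho K\sqrt{k}$, so $\M_{I,J}$ has full row rank and $V_{I,J} := \ker(\M_{I,J}) \subset \C^J$ has complex dimension exactly $|J|-|I|$.

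For each pair $(I,J)$ I would define $\Sigma_{I,J}(\eps) \subset S^J$ to be a maximal $\eps$-separated subset of $\ker(\M_{I,J}) \cap S^J$; it is $\mF_{I,J}$-measurable, serves as an $\eps$-net of $\ker(\M_{I,J}) \cap S^J$, and by the argument of Lemma \ref{lem:net} has cardinality at most $(3/\eps)^{2\dim_{\C}\ker(\M_{I,J})}$, so the bound in (1) holds precisely on $\good_{I,J}(\rho'')$, which the previous paragraph forces for the selected $I$. For (2), given $u \in S^{m-1} \cap (\C^J)_{\rho'}$ with $\|\M u\|\leq \rho' K\sqrt{n}$, let $u_J$ be the coordinate projection of $u$ to $\C^J$, so $\|u-u_J\|\leq\rho'$ and $\|\M u_J\| \leq 2\rho'K\sqrt{n}$; writing $u_J = Pu_J + (u_J-Pu_J)$ where $P$ is the orthogonal projection onto $V_{I,J}$,
\[ \|u_J - Pu_J\| \;\leq\; \frac{\|\M_{I,J} u_J\|}{s_{|I|}(\M_{I,J})} \;\ls\; \frac{\rho'}{\beta\rho}\sqrt{\frac{n}{k}} \;\leq\; \tfrac12\rho'' \]
by the definition of $\rho''$ in \eqref{def:rhodub} (up to a constant absorbed into the RIT bound). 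Normalising $Pu_J$ to a unit vector of $V_{I,J}\cap S^J$ (at cost $|\|Pu_J\|-1| \leq \rho'+\tfrac12\rho''$) and approximating by an element of $\Sigma_{I,J}(\rho'')$ (at cost $\rho''$), the triangle inequality gives $\dist(u,\Sigma_{I,J}(\rho'')) \leq 3\rho''$.

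The containment \eqref{cont:entropy} is then immediate: on $\event(\theta',\rho') \wedge \event(\theta,\rho)^c$, a witnessing $u \in \Comp(\theta',\rho')$ lies in $(\C^J)_{\rho'}$ for some $J$ with $|J|=\lf\theta' m\rf > \theta m$, and the argument above produces $I$ (of the required cardinality) and $v \in \Sigma_{I,J}(\rho'')$ with $\|u-v\|\leq 3\rho''$, so $\|\M v\| \leq \rho'K\sqrt{n} + 3\rho''K\sqrt{n} \leq 4\rho''K\sqrt{n}$ (using $\rho' \leq \rho''$), with $\good_{I,J}(\rho'')$ holding by construction. The main obstacle is the RIT step: we need a form of SS RIT strong enough to select a row subset of size approaching the \emph{rank} of $\M_{[n],J^*}$ (rather than merely its stable rank $\rho^2 k$) while delivering a quantitative singular-value bound scaling linearly in both $\beta$ and $\rho$. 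The precise shape of \eqref{def:rhodub}---in particular the factor $\frac{1}{\beta\rho}\sqrt{n/\lf\theta m\rf}$---is exactly what this quantitative version of RIT buys us; without the ``improved'' SS RIT advertised in the introduction, neither the cardinality bound on $\Sigma_{I,J}(\rho'')$ nor the scaling of $\rho''$ with $\beta$ and $\rho$ would be attainable.
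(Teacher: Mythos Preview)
Your proposal is correct and follows essentially the same approach as the paper: define $\Sigma_{I,J}(\eps)$ as a net of $\ker(\M_{I,J})\cap S^J$, use $\event(\theta,\rho)^c$ to get $s_{\min}(\M_{[n],J^*})\ge \rho K\sqrt{n}$ for a $k$-element subset $J^*\subset J$, apply the Spielman--Srivastava RIT to extract $I$ of size $\lf(1-\beta)^2k\rf$ with $s_{|I|}(\M_{I,J^*})\ge \beta\rho K\sqrt{k}$, extend by Cauchy interlacing, and push almost-null vectors into the kernel net. The ``obstacle'' you flag is exactly what the paper's Corollary~\ref{cor:rit} resolves: one first passes to the SVD $\M_{[n],J^*}=U\Sigma$ and applies Theorem~\ref{thm:rit} to the (isotropic) rows of $U$, which delivers a subset of size $\lf(1-\beta)^2k\rf$ (close to the rank $k$, not merely the stable rank) with the clean lower bound $\beta\rho K\sqrt{k}$---no implicit constant, so the $\gtrsim$ in your display can be replaced by $\ge$.
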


\begin{remark}
We obtain the random set $\Sigma_{I,J}(\eps)$ as the intersection of the sphere $S^J$ with an $\eps$-net of the kernel of the submatrix $\M_{I, J}$. 
However, for our purposes it only matters that it is fixed by conditioning on the rows $\{R_i\}_{i\in I}$, has small cardinality, and serves as a net for almost-null vectors of $\M$ that are supported on $J$.
\end{remark}

To prove Lemma \ref{lem:entropy} we use the following version of the Restricted Invertibility Theorem \citep{SpSr:rit} (the version below is taken from \cite[Theorem 3.1]{MSS:icm}).

\begin{theorem}[Restricted Invertibility Theorem]	
\label{thm:rit}
Suppose $v_1,\dots, v_n\in \C^m$ are such that $\sum_{i=1}^n v_iv_i^* = I_m$. 
For any $\beta\in (0,1)$, there is a subset $I\subset [n]$ of size $|I|= \lf (1-\beta)^2m\rf$ for which
\begin{equation}
\lambda_{|I|}\bigg(\sum_{i\in I} v_iv_i^*\bigg) \ge \beta^2m/n
\end{equation}
where $\lambda_k(A)$ denotes the $k$th largest eigenvalue of a Hermitian matrix $A$.
\end{theorem}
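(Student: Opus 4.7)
The plan is to give a deterministic greedy construction via the ``barrier method'' of Batson--Spielman--Srivastava, as developed by Spielman and Srivastava in their elementary proof of the Restricted Invertibility Theorem. The central object is the \emph{lower barrier potential}
\[
\Phi_\ell(A) := \mathrm{tr}\bigl((A - \ell I_m)^{-1}\bigr),
\]
defined for Hermitian $A$ with $A - \ell I_m \succ 0$; boundedness of $\Phi_\ell(A)$ quantitatively certifies that the spectrum of $A$ lies strictly above $\ell$. I would build the index set $I$ greedily in $k := \lfloor (1-\beta)^2 m\rfloor$ stages. Initialize $A_0 = 0$ and a starting barrier $\ell_0 < 0$ to be optimized at the end (the right scaling turns out to be $\ell_0 \approx -n/\beta$, giving initial potential $\Phi_{\ell_0}(0) = -m/\ell_0 \approx m\beta/n$). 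At step $j = 1, \ldots, k$, shift the barrier $\ell_{j-1} \mapsto \ell_j := \ell_{j-1} + \delta_j$ for an increment $\delta_j > 0$, and adjoin some index $i_j \in [n] \setminus \{i_1, \ldots, i_{j-1}\}$ to form $A_j := A_{j-1} + v_{i_j} v_{i_j}^*$. The monotone invariant I would preserve is $\Phi_{\ell_j}(A_j) \le \Phi_{\ell_{j-1}}(A_{j-1})$, so the potential stays bounded by its starting value throughout.

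By the Sherman--Morrison identity, adjoining $vv^*$ and shifting $\ell \mapsto \ell' = \ell + \delta$ changes the potential by
\[
\Phi_{\ell'}(A + vv^*) - \Phi_\ell(A) = \bigl[\Phi_{\ell'}(A) - \Phi_\ell(A)\bigr] - \frac{v^*(A - \ell' I_m)^{-2} v}{1 + v^*(A - \ell' I_m)^{-1} v},
\]
so the invariant reduces to finding $v = v_{i_j}$ satisfying
\[
\frac{v^*(A - \ell' I_m)^{-2} v}{1 + v^*(A - \ell' I_m)^{-1} v} \;\ge\; \Phi_{\ell'}(A) - \Phi_\ell(A).
\]
To produce such an index I would invoke the tight-frame hypothesis $\sum_i v_i v_i^* = I_m$: summing numerator and denominator on the left over $i \in [n]$ and using the cyclic property of the trace,
\[
\sum_i v_i^*(A - \ell' I_m)^{-2} v_i = \mathrm{tr}\bigl((A - \ell' I_m)^{-2}\bigr), \qquad \sum_i v_i^*(A - \ell' I_m)^{-1} v_i = \Phi_{\ell'}(A).
\]
By the ``mediant'' inequality (for positive $a_i, b_i$, some $a_i/b_i \ge (\sum a_i)/(\sum b_i)$), at least one $i \in [n]$ achieves
\[
\frac{v_i^*(A - \ell' I_m)^{-2} v_i}{1 + v_i^*(A - \ell' I_m)^{-1} v_i} \;\ge\; \frac{\mathrm{tr}((A - \ell' I_m)^{-2})}{n + \Phi_{\ell'}(A)}.
\]
Combining this with the resolvent identity $\Phi_{\ell'}(A) - \Phi_\ell(A) = \delta\, \mathrm{tr}(R_\ell R_{\ell'})$ (where $R_s := (A-sI_m)^{-1}$) and the positivity bound $\mathrm{tr}(R_\ell R_{\ell'}) \le \mathrm{tr}(R_{\ell'}^2)$, the invariant is preserved so long as $\delta_j \lesssim 1/(n + \Phi_{\ell_j}(A_{j-1}))$; the restriction $i_j \notin \{i_1, \ldots, i_{j-1}\}$ costs at most a factor $1 - O(k/n)$ in the mediant bound since only $j - 1 < n$ indices have been removed, which is harmless.

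Summing the (adaptive) increments $\delta_j$ across the $k$ stages and optimizing the initial barrier $\ell_0$ drives the final barrier to $\ell_k \ge \beta^2 m/n$. Since $A_k = \sum_{i \in I} v_i v_i^*$ has rank at most $|I| = k$, the lower barrier $\ell_k$ should be interpreted on the $k$-dimensional image of $A_k$; equivalently, pass to the Gram matrix $V_I^* V_I \in \mM_k(\C)$ (whose eigenvalues coincide with the nonzero eigenvalues of $V_I V_I^* = A_k$) to obtain $\lambda_{|I|}(A_k) = \lambda_{\min}(V_I^* V_I) \ge \ell_k \ge \beta^2 m/n$, as required. The main obstacle is extracting the \emph{sharp} quadratic tradeoff --- simultaneously $(1-\beta)^2 m$ vectors and $\beta^2 m/n$ smallest nonzero eigenvalue. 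A constant step size $\delta$ only recovers the weaker Bourgain--Tzafriri bound \citep{BoTz:rit}; achieving the stated constants requires an adaptive $\delta_j$ tuned to the current potential, together with tight accounting of the ratio $\mathrm{tr}(R_\ell R_{\ell'})/\mathrm{tr}(R_{\ell'}^2)$ (bounded by $1$ but typically much smaller, which is what allows $\delta_j$ to grow as the barrier approaches the boundary $\lambda_{\min}^+(A_{j-1})$). This is the improvement due to \citep{SpSr:rit}.
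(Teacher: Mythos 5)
The paper itself does not prove Theorem~\ref{thm:rit}: it is imported from Spielman and Srivastava \citep{SpSr:rit}, in the form stated in \cite[Theorem 3.1]{MSS:icm} (where it is derived by the method of expected characteristic polynomials and interlacing families), so there is no internal proof to compare yours against; the question is whether your sketch stands on its own, and it has a genuine gap at precisely the delicate point of the barrier argument. Your invariant is that $\Phi_{\ell_j}(A_j)=\tr\big((A_j-\ell_j I_m)^{-1}\big)$ is finite and non-increasing, with $A_j-\ell_j I_m\succ 0$. But $A_j$ has rank at most $j\le k=\lfloor(1-\beta)^2m\rfloor<m$, so as an $m\times m$ matrix it always has the eigenvalue $0$, and the positivity constraint forces $\ell_j<0$ at every stage. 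Consequently the barrier can never be ``driven to $\ell_k\ge\beta^2m/n$'': the quantity you need to bound is the smallest \emph{nonzero} eigenvalue of $A_k$, and a full-space lower barrier that must sit below the zero eigenvalues cannot certify any positive bound for it. The closing sentence, that $\ell_k$ ``should be interpreted on the $k$-dimensional image of $A_k$,'' is not a fix but a restatement of the missing work: the image changes at every step, the trace identities you use in the averaging step (via $\sum_i v_iv_i^*=I_m$) are identities on all of $\C^m$ rather than on the current image, and nothing in your invariant guarantees either that the selected vector $v_{i_j}$ actually increases the rank (indices may well be selected whose vectors lie in the current image, in which case $|I|$-many chosen vectors need not give $\lambda_{|I|}>0$ at all) or that the new nonzero eigenvalue created at each rank increase lands above the advanced barrier.

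This is exactly what the actual argument of \citep{SpSr:rit} is organized around: their one-step lemma is formulated for the least nonzero eigenvalue of the partial sums and certifies the existence of an index whose addition both raises the rank and keeps every nonzero eigenvalue above the shifted barrier, and making the averaging/Sherman--Morrison computation work in that setting (rather than for the full-space resolvent trace) is the substance of the proof, not an afterthought. Two smaller points: the resolvent manipulations you do write down (the Sherman--Morrison identity for $\Phi$, the mediant inequality, the identity $\Phi_{\ell'}(A)-\Phi_\ell(A)=\delta\,\tr(R_\ell R_{\ell'})$) are correct in the regime where both resolvents are positive definite, so the machinery is the right one; but your concluding remark that a constant step size only yields the Bourgain--Tzafriri constants is misleading --- in \citep{SpSr:rit} the sharp bound $(1-\sqrt{k/m})^2\,m/n$ comes out of a uniform step size, and the true difficulty is the rank/nonzero-eigenvalue bookkeeping described above, which your sketch does not address.
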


This has the following consequence, which can be seen as a robust quantitative version of the basic fact from linear algebra that the row rank of a matrix is equal to its column rank.

\begin{corollary}	\label{cor:rit}
Let $M$ be an $n\times m$ matrix with $n\ge m$, and assume $s_m(M)\ge \eps_0\sqrt{n}$ for some $\eps_0>0$. 
For any $\beta\in (0,1)$ there exists $I\subset [n]$ with $|I|=\lf (1-\beta)^2 m\rf$ such that
\[
s_{|I|}(M_{I, [m]}) \ge \beta \eps_0\sqrt{m}.
\]
\end{corollary}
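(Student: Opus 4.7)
My plan is to reduce the corollary directly to Theorem \ref{thm:rit} by ``whitening'' the rows of $M$ using the matrix $S^{-1/2}$, where $S = M^*M$, and then translating the conclusion of the theorem back to a lower bound on $s_{|I|}(M_{I,[m]})$ via a min--max argument.

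In more detail, the hypothesis $s_m(M) \ge \eps_0\sqrt{n}$ is equivalent to $S := M^*M \succeq \eps_0^2 n\,I_m$, so in particular $S$ is invertible. Writing $R_1,\dots,R_n \in \C^m$ for the rows of $M$ viewed as column vectors, we have $\sum_{i=1}^n R_iR_i^* = S$. Setting $v_i := S^{-1/2}R_i \in \C^m$, the identity $\sum_{i=1}^n v_iv_i^* = S^{-1/2}S\,S^{-1/2} = I_m$ holds, so Theorem \ref{thm:rit} applies and yields a subset $I\subset[n]$ with $|I| = \lfloor(1-\beta)^2 m\rfloor$ and
\[
\lambda_{|I|}\Bigl(\sum_{i\in I} v_iv_i^*\Bigr) = \lambda_{|I|}\bigl(S^{-1/2}\,M_{I,[m]}^*M_{I,[m]}\,S^{-1/2}\bigr) \;\ge\; \beta^2 m/n.
\]

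The remaining step is to convert this lower bound on $\lambda_{|I|}$ of the conjugated matrix into a lower bound on $\lambda_{|I|}(M_{I,[m]}^*M_{I,[m]}) = s_{|I|}(M_{I,[m]})^2$. I would do this via the Courant--Fischer min--max formula: pick an $|I|$-dimensional subspace $V\subset\C^m$ achieving (or nearly achieving) the min--max value for $S^{-1/2}M_{I,[m]}^*M_{I,[m]}S^{-1/2}$, and transport it to $W := S^{-1/2}V$, still of dimension $|I|$. For $y = S^{-1/2}x \in W$ one has
\[
\frac{y^*M_{I,[m]}^*M_{I,[m]}y}{\|y\|^2} \;=\; \frac{x^*\bigl(S^{-1/2}M_{I,[m]}^*M_{I,[m]}S^{-1/2}\bigr)x}{x^*S^{-1}x} \;\ge\; \frac{\beta^2 m/n}{\|S^{-1}\|} \;\ge\; \beta^2\eps_0^2 m,
\]
using $\|S^{-1}\| \le (\eps_0^2 n)^{-1}$. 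Taking the min--max over $|I|$-dimensional subspaces containing $W$ then yields $\lambda_{|I|}(M_{I,[m]}^*M_{I,[m]}) \ge \beta^2\eps_0^2 m$, equivalently $s_{|I|}(M_{I,[m]}) \ge \beta\eps_0\sqrt{m}$.

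The main obstacle, if any, is just keeping straight which eigenvalue of the conjugated matrix corresponds to which of $M_{I,[m]}^*M_{I,[m]}$; but since $S^{-1/2}$ is an invertible change of basis, the rank of $\sum_{i\in I}v_iv_i^*$ equals the rank of $M_{I,[m]}$, both of which are $|I|$ once the lower bound $\beta^2 m/n > 0$ forces full row rank. The change-of-variables calculation above shows the two $|I|$-th eigenvalues differ by at most the factor $\|S^{-1}\| \le (\eps_0^2 n)^{-1}$, which is exactly what allows us to absorb the $\eps_0^2$ into the final bound.
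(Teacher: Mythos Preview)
Your proof is correct and is essentially the paper's argument in different clothing. The paper reduces via SVD to $M=U\Sigma$ with $U^*U=I_m$, applies Theorem~\ref{thm:rit} to the rows of $U$, and concludes using $s_{|I|}(U_{I,[m]}\Sigma)\ge s_{|I|}(U_{I,[m]})\,s_m(\Sigma)$; your whitening $v_i=S^{-1/2}R_i$ produces (up to a unitary) the same vectors, and your Courant--Fischer step is exactly the Rayleigh-quotient version of that product inequality.
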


\begin{remark}
The original Restricted Invertibility Theorem of Bourgain and Tzafriri \citep{BoTz:rit} only gives $|I|\ge cm$ and $s_{|I|}(M_{I, [m]}) \ge c \eps_0\sqrt{m}$ for some (small) absolute constant $c>0$, while it will be important for our purposes to be able to take $I$ of size close to $m$.
\end{remark}

\begin{proof}[Proof of Corollary \ref{cor:rit}]
By the singular value decomposition it suffices to consider $M$ of the form
$
M=U\Sigma
$
where $U$ is an $n\times m$ matrix with orthonormal columns and $\Sigma$ is an $m\times m$ diagonal matrix with entries bounded below by $\eps_0\sqrt{n}$.
Fix $\alpha \in (0,1)$.
Letting $v_1^*,\dots, v_n^*\in \C^m$ denote the rows of $U$, it follows from orthonormality that
\[
I_m = U^*U = \sum_{i=1}^n v_iv_i^*.
\]
Hence, we can apply Theorem \ref{thm:rit} to obtain a subset $I\subset[n]$ with $|I|= \lf (1-\beta)^2m\rf$
such that 
\[
s_{|I|}(U_{I, [m]})^2 = \lambda_{|I|}\bigg( \sum_{i\in I} v_iv_i^*\bigg) \ge \beta^2m/n.
\]
Now we have
\[
s_{|I|}(M_{I, m}) \ge s_{|I|}(U_{I, m}) s_m(\Sigma) \ge \beta \sqrt{\frac{m}{n}} \eps_0\sqrt{n} = \beta\eps_0\sqrt{m}.\qedhere
\]
\end{proof}

\begin{proof}[Proof of Lemma \ref{lem:entropy}]
Let $I\subset [n], J\subset[m]$, and write $V_{I,J} = \C^J\cap \ker(\M_{I,J})$.
Conditional on $\mF_{I,J}$, for $\eps>0$ we let $\Sigma_{I,J}(\eps)$ be an $\eps$-net of $S^{m-1}\cap V_{I,J}$. 
By Lemma \ref{lem:net} we may take
\begin{equation}	\label{sigmacard1}
|\Sigma_{I,J}(\eps)| = O(1/\eps)^{2\dim(V_{I,J})}.
\end{equation}

Let $\rho,\rho'\in (0,1)$, $K>0$ and $0<\theta< \frac nm$.
Fix $\beta\in (0,1)$ and $J\subset [m]$ with $|J|>\theta m$.
On $\event(\theta,\rho)^c$, for all $J_0\subset J$ with $|J_0|=\lf \theta m\rf$ we have
\[
s_{\lf \theta m\rf}(\M_{[n], J_0})\ge \rho K\sqrt{n}.
\]
By Corollary \ref{cor:rit} there exists $I\subset [n]$ with $|I|=\lf (1-\beta)^2 \lf \theta m\rf\rf$ such that 
\[
s_{|I|}(\M_{I, J_0})\ge \beta \rho K\sqrt{\lf \theta m\rf}.
\]
By the Cauchy interlacing law,
\begin{equation}	\label{sklb}
s_{|I|}(\M_{I, J}) \ge \beta \rho K\sqrt{\lf \theta m\rf}.
\end{equation}
In particular, the submatrix $(y_{ij})_{i\in I,j\in J}$ has full row-rank, which implies $\dim(V_{I,J}) = |J| - |I|$.
From \eqref{sigmacard1} we conclude
\begin{equation}	\label{sigmacard2}
|\Sigma_{I,J}(\eps)| = O(1/\eps)^{2(|J|-|I|)}
\end{equation}
for any $\eps>0$.

Now suppose there exists $u\in S^{m-1}\cap (\C^J)_{\rho'}$ such that
\begin{equation}	
\|\M u\|\le \rho'K\sqrt{n}.
\end{equation}
Letting $v'\in \C^J$ such that $\|u-v'\|\le \rho'$, and putting $v:=v'/\|v'\|\in S^J$, by the triangle inequality we have $\|u-v\|\le 2\rho'$ and
\begin{equation}	\label{Muub}
\|\M v\|\le \|\M u\|+\|\M \|\|u-v\| \le 3\rho'K\sqrt{n}.
\end{equation}
On the other hand,
\[
\|\M  v\| \ge \|\M_{I, [m]}v\| = \|\M_{I, [m]}(\id -P_{V_{I,J}})v\|
\]
where $P_{V_{I,J}}$ is the matrix for orthogonal projection to the subspace $V_{I,J}$. 
Applying \eqref{sklb}, 
\[
\|\M  v\| \ge \|(\id - P_{V_{I,J}})v\| \beta \rho K\sqrt{\lf \theta m\rf}.
\]
Together with \eqref{Muub} this implies that $v$ lies within distance
\begin{equation}
\frac{3\rho' \sqrt{n}}{\beta \rho \sqrt{\lf \theta m\rf}}  =\rho''/2
\end{equation}
of the subspace $V_{I,J}$. Since $v$ is a unit vector we have $\dist(v,S^{m-1}\cap V_{I,J})\le \rho''$ by the triangle inequality, and
\begin{align*}
\dist(u,\Sigma_{I,J}(\rho'')) &\le \|u-v\|+\rho''+ \dist(v, S^{m-1}\cap V_{I,J}) \le 2\rho' + 2\rho'' \le 3\rho''
\end{align*}
as desired (that $2\rho'\le \rho''$ follows from inspection of \eqref{def:rhodub}).

Now to prove \eqref{cont:entropy}, let $\theta'\in (\theta,1]$.
Intersecting with $\event(\theta,\rho)^c$ and applying the first part of the lemma,
\begin{align}
&\event(\theta,\rho)^c\wedge\event(\theta',\rho') \notag\\
&\quad = \mB(K)\wedge\event(\theta,\rho)^c\wedge\bigvee_{J\in {[m]\choose  \theta' m}} \Big\{\exists v\in (S^J)_{\rho'}: \|\M v\|\le \rho' K\sqrt{n}\Big\}\notag\\
&\quad\subset
\bigvee_{J\in {[m]\choose \theta' m}} \bigvee_{I\in {[n]\choose  (1-\beta)^2\lf \theta m\rf}} \bigg(  \good_{I,J}(\rho'')
\wedge\Big\{ \exists u \in \Sigma_{I,J}(\rho''): \|\M u\|\le 4\rho''K\sqrt{n} \Big\}\bigg)	
\end{align}
where in the last line we noted that for $v\in (S^J)_{\rho'}, u\in \Sigma_{I,J}(\rho'')$ such that $\|u-v\|\le 3\rho''$, we have 
\[
\|\M u\|\le \|\M v\|+3\rho'' K\sqrt{n} \le (\rho'+3\rho'')K\sqrt{n}\le 4\rho'' K\sqrt{n}.\qedhere
\]
\end{proof}

\subsection{Broadly connected profile: Proof of Proposition \ref{prop:slight}}

We will obtain Proposition \ref{prop:slight} from an iterative application of the following lemma:

\begin{lemma}[Incrementing compressibility: broadly connected profile]	\label{lem:increment_broad}
Let $\M=A\circ X+B$ be as in Definition \ref{def:profile} with $m\ge n/2$.
Assume $\xi$ has $\kappa$-controlled second moment for some $\kappa\ge1$, and that for some $\ha,\delta,\nu,\theta_1\in (0,1)$ we have
\begin{enumerate}[(1)]
\item $|\mN_{A(\ha)}(j)|\ge \delta n$ for all $j\in [m]$;
\item $|\mN_{A(\ha)}^{(\delta)}(J)|\ge \min((1+\nu) |J|,n)$ for all $J\subset[m]$ with $|J|\ge (\theta_1/2)m$.
\end{enumerate}
Let $K\ge1$, $\rho\in (0,1)$, and $\theta\in [\theta_1,1)$ such that $(1+\frac\nu2)\theta m<n$.
There exists $\rho'=\rho'(\kappa,\ha,\delta,\nu,\rho,\theta,K)>0$ such that
\begin{equation}
\pro{ \event(\theta,\rho)^c \wedge \event\Big( \Big(1+\frac{\nu}{10}\Big)\theta,\rho'\Big)} 
=O_{\kappa,\ha,\delta,\nu,\rho,\theta,K}(e^{-n}).
\end{equation}
\end{lemma}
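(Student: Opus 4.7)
The plan is to apply the entropy-reduction Lemma~\ref{lem:entropy} with $\theta_{\mathrm{old}}=\theta$ and $\theta_{\mathrm{new}}=(1+\nu/10)\theta$, reducing the target event to the union over pairs $(I,J)$ of the event that some $u$ in the random net $\Sigma_{I,J}(\rho'')$ satisfies $\|Mu\|\le 4\rho''K\sqrt n$. For each such $u$, I will estimate $\pr(\|Mu\|\le 4\rho''K\sqrt n)$ via improved scalar anti-concentration (Lemma~\ref{lem:anti_improved}) and tensorization (Lemma~\ref{lem:tensorize}(b)), feeding in a linear-sized set of independent ``good rows'' produced by broad-connectivity condition~(2) applied to a suitable subset of the essential support of $u$.

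Concretely, fix $\lambda=\sqrt{4(1+\nu)/\nu}$, let $\beta$ be a small constant multiple of $\nu$ (pinned below), choose $\rho''\in(0,\rho/4)$ a small constant to be determined, and let $\rho'$ be defined by \eqref{def:rhodub}. The union bound over pairs $(I,J)$ of the prescribed sizes costs only $\binom{m}{|J|}\binom{n}{|I|}=e^{O(n)}$, so it suffices to show, for each such fixed $(I,J)$, that
\[
\pr\Bigl(\good_{I,J}(\rho'')\wedge \exists u\in\Sigma_{I,J}(\rho''):\|Mu\|\le 4\rho''K\sqrt n\Bigr)\ll e^{-2n}.
\]
On $\good_{I,J}(\rho'')$ the net has size $\le(C/\rho'')^{2(|J|-|I|)}$, so we union-bound over $u$; fix such a $u$, which is measurable with respect to the rows of $M$ indexed by $I$.

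Since $4\rho''<\rho$, inside $\event(\theta,\rho)^c$ any bad $u$ must lie in $\Incomp(\theta,\rho)$, so Lemma~\ref{lem:spread} produces $L=L(u)\subset J$ with $|L|\ge(1-1/\lambda^2)\theta m\ge\theta_1 m/2$ and $|u_j|\in[\rho/\sqrt m,\lambda/\sqrt{\theta m}]$ on $L$. The choice $\lambda^2=4(1+\nu)/\nu$ ensures $(1+\nu)(1-1/\lambda^2)\ge 1+3\nu/4$, so condition~(2) applied to $L$ gives $|\mN_{A(\ha)^\tran}^{(\delta)}(L)|\ge \min((1+3\nu/4)\theta m,n)$; combined with the hypothesis $(1+\nu/2)\theta m<n$, the set $I_0:=\mN_{A(\ha)^\tran}^{(\delta)}(L)\setminus I$ has size at least $(\nu/2)\theta m$. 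For each $i\in I_0$, the restricted vector $u^i|_L:=(u_j a_{ij}\un_{j\in L})_{j}$ satisfies $\|u^i|_L\|\ge\ha\rho\sqrt{\delta|L|/m}=:\alpha>0$ and $\|u^i|_L\|_\infty\le\lambda/\sqrt{\theta m}$. Exploiting independence I use $p_{\xi,u^i}(r)\le p_{\xi,u^i|_L}(r)$, combine with Lemma~\ref{lem:anti_improved} to get $p_{\xi,u^i}(r)\ll_\kappa \alpha^{-1}(r+\lambda/\sqrt{\theta m})$, and then apply Lemma~\ref{lem:tensorize}(b) over $i\in I_0\subset[n]\setminus I$ (so the randomness of these rows is independent of $u$) to obtain
\[
\pr\bigl(\|Mu\|\le 4\rho''K\sqrt n\bigr)\le(C_{\kappa,K}\rho''/\alpha)^{|I_0|}
\]
whenever $\rho''$ is at least of order $\lambda/\sqrt m$.

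Multiplying by the entropy factor $(C/\rho'')^{2(|J|-|I|)}$ and using $|J|-|I|\le(\nu/10+2\beta)\theta m$ against $|I_0|\ge(\nu/2)\theta m$, the choice $\beta=\nu/100$ yields $|I_0|-2(|J|-|I|)\ge(\nu/20)\theta m$, so taking $\rho''$ a sufficiently small constant (depending on $\kappa,K,\ha,\delta,\nu,\rho,\theta$) makes the product decay like $e^{-cn}$, and the $e^{O(n)}$ union bound over $(I,J)$ is absorbed. The hard part is that $\|u\|_\infty$ can be of constant order even for incompressible $u$, so a direct application of Lemma~\ref{lem:fixed_improved} with $v=u$ would give only a useless constant-order anti-concentration per row, insufficient to beat the entropy cost $(C/\rho'')^{2(|J|-|I|)}$ of the net. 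The trick of passing from $u^i$ to $u^i|_L$ (valid by independence of the entries) simultaneously replaces the global sup-norm by the small quantity $\lambda/\sqrt{\theta m}$ and, via broad connectivity~(2) applied to $L$ rather than $J$, retains a macroscopic set of independent good rows on which to tensorize.
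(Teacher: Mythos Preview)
Your proposal is correct and follows essentially the same approach as the paper's proof: apply the entropy-reduction Lemma~\ref{lem:entropy}, restrict attention to $u\in\Sigma_{I,J}(\rho'')\setminus\Comp(\theta,\rho)$, use Lemma~\ref{lem:spread} to locate a level set $L$, invoke broad connectivity~(2) on $L$ to produce $\gg\nu\theta m$ good rows disjoint from $I$, and then balance the tensorized anti-concentration bound (Lemmas~\ref{lem:anti_improved} and~\ref{lem:tensorize}(b), equivalently Lemma~\ref{lem:fixed_improved} applied to $u_L$) against the net cardinality. The only cosmetic differences are your choices $\beta=\nu/100$ and $I_0=\mN_{A(\ha)^\tran}^{(\delta)}(L)\setminus I$ versus the paper's $\beta=\nu/10$ and a fixed-size $I_0\subset I_\alpha(u_L)\setminus I$; neither affects the argument.
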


\begin{proof}
We may assume $n$ is sufficiently large depending on $\kappa,\ha,\delta,\nu,\rho,\theta,K$.
Write $\theta'=\big(1+\frac{\nu}{10}\big)\theta$ and take $\beta=\frac{\nu}{10}$.
Let $\rho'>0$ to be taken sufficiently small depending on $\kappa,\ha,\delta,\nu,\rho,\theta,K$, and let $\rho''$ be as in \eqref{def:rhodub}.
Intersecting the right hand side of \eqref{cont:entropy} with $\event(\theta,\rho)^c$, we have
\begin{align}
&\event(\theta,\rho)^c \wedge \event(\theta',\rho') \subset \notag\\
&\quad
\bigvee_{J\in {[m]\choose  \theta' m }} \bigvee_{I\in {[n]\choose (1-\beta)^2\lf \theta m\rf}}   \good_{I,J}(\rho'')
\wedge\event(\theta,\rho)^c\wedge\Big\{ \exists u \in \Sigma_{I,J}(\rho''): \|\M u\|\le 4\rho''K\sqrt{n} \Big\}\notag\\
&
\subset\bigvee_{J\in {[m]\choose  \theta' m}} \bigvee_{I\in {[n]\choose (1-\beta)^2\lf \theta m\rf}}   \good_{I,J}(\rho'')
\wedge\Big\{ \exists u \in \Sigma_{I,J}(\rho'')\setminus \Comp(\theta,\rho): \|\M u\|\le 4\rho''K\sqrt{n} \Big\}\label{unionIJ}
\end{align}
where the second line follows by taking $\rho'$ small enough that $4\rho''<\rho$.

Fix $J\subset [m]$ and $I\subset[n]$ of sizes $\lf \theta'm\rf,\lf(1-\beta)^2\lf \theta m\rf\rf$, respectively, and condition on $\mF_{I,[n]}$ (recall the notation \eqref{def:sigmaalgebra}) to fix $\Sigma_{I,J}(\rho'')$.
Consider an arbitrary element $u\in \Sigma_{I,J}(\rho'')\setminus \Comp(\theta,\rho)$.
By Lemma \ref{lem:spread}, there is a set $L\subset[m]$ with $|L|\ge (1-\frac{\nu}{C_0^2})\theta m$
and 
\begin{equation}	\label{flat:broad}
\frac{\rho}{\sqrt{m}}\le |u_j|\le \frac{C_0}{\sqrt{\nu\theta m}}
\end{equation}
for all $j\in L$, where $C_0>0$ is an absolute constant to be taken sufficiently large.
For any $i\in \mN^{(\delta)}(L)$, we have
\begin{equation}
\|(u_L)^i\|^2 \ge \sum_{i\in L:\sig_{ij}\ge \ha} |u_j|^2\sig_{ij}^2\ge \frac{\rho^2}{m}\ha^2\delta |L| \ge \frac12\rho^2\ha^2\delta\theta=:\alpha^2
\end{equation}
where in the last inequality we took $C_0$ sufficiently large.
Hence,
\begin{equation}
|I_\alpha(u_L)|\ge |\mN^{(\delta)}(L)| \ge \min\big(n,(1+\nu)(1-\nu/C_0^2)\theta m\big) \ge \Big(1+\frac{\nu}2\Big)\theta m
\end{equation}
taking $C_0$ larger if necessary, where in the second inequality we used our assumption $\theta\ge \theta_1$, and in the third inequality we used our assumption $(1+\frac\nu2)\theta m<n$.

Fix $I_0\subset I_\alpha(u_L)\setminus I$ of size $n_0:=\lf (1+\frac\nu2)\theta m\rf -|I|$.
In particular,
\begin{align}	
\frac{\nu}{2}\theta m\le n_0 &\le \Big(1+\frac\nu2\Big)\theta m -(1-2\beta)\theta m \le \nu\theta m		\label{broad:i0}
\end{align}
and
\begin{align}
n_0+2|I|-2|J| &\ge \Big(1+\frac\nu2\Big)\theta m + (1-2\beta)\theta m- 2\Big(1+\frac{\nu}{10}\Big)\theta m -O(1) \notag\\
&=\frac1{10}\nu\theta m-O(1).	\label{i0i2j2}
\end{align}
by our choice of $\beta$.
By Lemma \ref{lem:fixed_improved},
\begin{equation}	\label{broad:fixedbd}
\pr_{I_0}\big( \|\M u\| \le 4\rho''K\sqrt{n}\big) \le O_\kappa\bigg(\frac1\alpha\bigg( \frac{\rho''K\sqrt{n}}{\sqrt{|I_0|}} + \frac{1}{\sqrt{\nu\theta m}}\bigg)\bigg)^{n_0}\le O_\kappa\bigg(\frac{\rho''K}{\alpha\theta^{1/2}}\bigg)^{n_0}
\end{equation}
where in the second inequality we applied the assumption $m\ge n/2$ and assumed that $n$ is sufficiently large that $\rho''\gg 1/K\sqrt{n}$ (it follows from \eqref{def:rhodub} and our assumption that $\rho'$ is independent of $n$ that $\rho''$ is bounded below independent of $n$).

Suppose that $\good_{I,J}(\rho'')$ holds.
Since the bound \eqref{broad:fixedbd} is uniform in the choice of $I_0$, we can undo the conditioning and apply the union bound over elements of $\Sigma_{I,J}(\rho'')\setminus \Comp(\theta,\rho)$ to find
\begin{align*}
&\pr\Big( \exists u\in \Sigma_{I,J}(\rho'')\setminus \Comp(\theta,\rho): \|\M u\|\le 4\rho''K\sqrt{n}\Big) \\
&\qquad\qquad\qquad\qquad\qquad\le O\bigg(\frac{1}{\rho''}\bigg)^{2(|J|-|I|)} O_\kappa\bigg(\frac{\rho''K}{\alpha\theta^{1/2}}\bigg)^{n_0}\\
&\qquad\qquad\qquad\qquad\qquad=O_\kappa\bigg(\frac{K}{\alpha\theta^{1/2}}\bigg)^{n_0}O(\rho'')^{n_0+2|I|-2|J|}\\
&\qquad\qquad\qquad\qquad\qquad=O_\kappa\bigg(\frac{K}{\alpha\theta^{1/2}}\bigg)^{\nu \theta m}O(\rho'')^{\frac1{10}\nu\theta m - O(1)}
\end{align*}
where in the last line we applied the bounds \eqref{broad:i0} and \eqref{i0i2j2}.
Since this is uniform in $I,J$, we can undo the conditioning on $\mF_{I,[n]}$ and apply \eqref{unionIJ} with another union bound over the choices of $I,J$ to obtain
\begin{equation}	\label{incr:pen}
\pro{\event(\theta,\rho)^c\wedge\event(\theta',\rho')} 
 \le 2^{m+n} O_\kappa\bigg(\frac{K}{\alpha\theta^{1/2}}\bigg)^{\nu \theta m}O\bigg(\frac{\rho'}{\nu\rho\theta^{1/2}}\bigg)^{\frac1{10}\nu\theta m - O(1)}
\end{equation}
where we have substituted the definition of $\rho''$.
The result now follows by taking $\rho'$ sufficiently small.
\end{proof}

Now we conclude the proof of Proposition \ref{prop:slight}.
From our assumptions it follows that for all $j\in[m]$ we have $\sum_{i=1}^n \sig_{ij}^2 \ge \delta\ha^2n$.
Together with our assumption $m\le 2n$, this means we can apply Lemma \ref{lem:high} to find that 
\begin{equation}	\label{fromhigh}
\pr(\event(\theta_0,\rho_0)) \le e^{-c_\kappa\delta\ha^2n}
\end{equation}
where $\theta_0= c_\kappa\delta\ha^2/\log(K/\delta\ha^2)$ and $\rho_0=c_\kappa\delta\ha^2/K$.

We may assume without loss of generality that $\nu\le \delta/2$.
For $l\ge1$ set $\theta_l=(1+\frac{\nu}{10})^l\theta_0$, and let $k$ be the smallest $l$ such that $\theta_l\ge \theta$.
We have
\[
\Big(1+\frac{\nu}{2}\Big)\theta_{k-1}m \le \Big(1+\frac{\nu}{2}\Big)\theta m \le \Big(1-\frac{\delta^2}{16}\Big)\min(m,n)<n.
\]
In particular, $(1+\nu/10)^k\theta_0\le (1+\nu/10)\theta\le 1$, so
\begin{equation}
k\le \frac{\log\frac1{\theta_0}}{\log\big(1+\frac{\nu}{10}\big)} \ll_{\kappa,\ha,\delta,\nu,K}1.
\end{equation}
Applying Lemma \ref{lem:increment_broad} inductively, we have that for every $1\le l\le k$ there is $\rho_l>0$ depending only on $\kappa,\ha,\delta,\nu$ and $K$ such that
\begin{equation}
\pro{ \event(\theta_l,\rho_l)\setminus \event(\theta_{l-1},\rho_{l-1})} = O_{\kappa,\ha,\delta,\nu,K}(e^{-n}).
\end{equation}
Together with \eqref{fromhigh} and the union bound,
\begin{align*}
\pro{ \event(\theta,\rho)} &\le \pro{\event(\theta_0,\rho_0)} + \sum_{l=1}^k \pro{\event(\theta_l,\rho_l)\setminus \event(\theta_{l-1},\rho_{l-1})}\\
&\le e^{-c_\kappa\delta\ha^2n} + O_{\kappa,\ha,\delta,\nu,K}(e^{-n}) = O_{\kappa,\ha,\delta,\nu,K}(e^{-c_\kappa\delta\ha^2n}).
\end{align*}

\subsection{General profile: Proof of Proposition \ref{prop:comp}}

For technical reasons (essentially due to the fact that we want to allow the operator norm to have arbitrary polynomial size) the anti-concentration argument from the previous section will not suffice here, and we will need the following substitute.
Roughly speaking, while previously we argued by isolating a large set of coordinates on which the vector $u$ is ``flat" (see \eqref{flat:broad}), here we will need to locate a set on which $u$ is \emph{very flat}, only fluctuating by a constant factor.
This is done by a simple dyadic decomposition of the range of $u$, which is responsible for the loss of a logarithmic factor in the probability bound.
A similar argument will be used in Section \ref{sec:super}.

\begin{lemma}[Anti-concentration for the image of an incompressible vector]	\label{lem:anti_incomp}
Let $M$ be as in Proposition \ref{prop:comp}.
Let $v\in \Incomp(\theta,\rho)$ for some $\theta,\rho\in (0,1)$, and fix $I_0\subset [n]$ with $|I_0|\le \frac14\sig_0^2n$.
Then for all $t\ge a_0\rho/\sqrt{m}$,
\begin{equation}
\sup_{w\in \C^n} \pr_{[n]\setminus I_0} \Big( \|Mv-w\| \le t \sqrt{n} \Big) = O_\kappa\left(\frac{t\log^{1/2}(\frac{\sqrt{m}}{\rho})}{\sig_0^2\rho \theta^{1/2}}\right)^{\frac14\sig_0^2n}.
\end{equation}
\end{lemma}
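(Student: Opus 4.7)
The plan is to locate a subset $J^*\subset[m]$ on which the coordinates of $v$ are comparable in modulus (so $v_{J^*}$ has small sup-norm relative to its Euclidean norm), and then combine the scalar anti-concentration bound (Lemma~\ref{lem:anti_improved}) with tensorization (Lemma~\ref{lem:tensorize}(b)). By Lemma~\ref{lem:spread} with $\lambda=\sqrt{2}$, there is $L\subset[m]$ with $|L|\ge \theta m/2$ on which $\rho/\sqrt{m}\le |v_j|\le \sqrt{2/(\theta m)}$. Splitting $L$ by the dyadic range of $|v_j|$ (there are only $O(\log(\sqrt{m}/\rho))$ such ranges) and applying pigeonhole produces a subset $J^*\subset L$ at some dyadic level $[\beta,2\beta]$ with $N:=|J^*|\gg \theta m/\log(\sqrt{m}/\rho)$; in particular $\|v_{J^*}\|_\infty\le 2\beta$ and $\|v_{J^*}\|\ge \beta\sqrt{N}\gg \rho\theta^{1/2}\log^{-1/2}(\sqrt{m}/\rho)$. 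Applying Lemma~\ref{lem:goodrows0} to the unit vector $v_{J^*}/\|v_{J^*}\|$ (which uses only the column-variance hypothesis~\eqref{LB:columnvar}) produces at least $\sig_0^2 n/2$ rows $i$ with $\|v_{J^*}^i\|\ge (\sig_0/\sqrt{2})\|v_{J^*}\|$; since $|I_0|\le \sig_0^2 n/4$, we may select $I^*\subset[n]\setminus I_0$ of cardinality at least $\sig_0^2 n/4$ consisting entirely of such rows.

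The key step is to condition on \emph{all} entries of $X$ outside the rectangle $I^*\times J^*$. This refines the sigma-algebra $\mF_{I_0,[m]}$ defining $\pr_{[n]\setminus I_0}$, so any uniform bound under the finer conditioning transfers back via the tower property. Under this conditioning, $R_i\cdot v = c_i+S_\xi(v_{J^*}^i)$ for $i\in I^*$, with $c_i$ deterministic and the variables $\{S_\xi(v_{J^*}^i)\}_{i\in I^*}$ jointly independent; Lemma~\ref{lem:anti_improved} then gives, for every $i\in I^*$,
\[
\pr(|R_i\cdot v-w_i|\le\eps)\ll_\kappa \frac{\eps+2\beta}{\sig_0\beta\sqrt{N}}.
\]
Tensorizing via Lemma~\ref{lem:tensorize}(b) (with $K\asymp 1/(\sig_0\beta\sqrt{N})$ and $\eps_0\asymp\beta$), applied to $\zeta_i=|R_i\cdot v-w_i|$ with $\eps=\max(t\sqrt{n/|I^*|},2\beta)$, produces a per-row factor of order
\[
O_\kappa\!\left(\frac{t}{\sig_0^2\beta\sqrt{N}}+\frac{1}{\sig_0\sqrt{N}}\right).
\]
Substituting the lower bound $\beta\sqrt{N}\gg \rho\theta^{1/2}\log^{-1/2}(\sqrt{m}/\rho)$ controls the first term by $O_\kappa(t\log^{1/2}(\sqrt{m}/\rho)/(\sig_0^2\rho\theta^{1/2}))$, while the hypothesis $t\ge a_0\rho/\sqrt{m}$ is precisely what makes the second term dominated by the first (a short calculation shows $1/(\sig_0\sqrt{N})\lesssim t\log^{1/2}(\sqrt{m}/\rho)/(\sig_0^2\rho\theta^{1/2})$ iff $t\gtrsim \sig_0\rho/\sqrt{m}$). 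Raising to the power $|I^*|\ge \sig_0^2 n/4$ then recovers the claim.

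The main obstacle is that Lemma~\ref{lem:anti_improved} contains an additive $\|v\|_\infty$ term, and an incompressible vector may well have $\|v\|_\infty$ of order~$1$. The dyadic pigeonhole above is precisely the device that extracts a subvector $v_{J^*}$ with controlled sup-norm while retaining enough Euclidean mass for the column-variance argument (Lemma~\ref{lem:goodrows0}) to locate a linear number of ``good'' rows on which to tensorize; the logarithmic factor in the statement is the unavoidable cost of this pigeonhole.
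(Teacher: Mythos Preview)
Your proof is correct and follows essentially the same approach as the paper: dyadic pigeonhole to find a level set $J^*$ on which $|v_j|$ varies by at most a factor of two, then Lemma~\ref{lem:goodrows0} to locate $\gg\sig_0^2 n$ good rows, then Lemma~\ref{lem:anti_improved} plus tensorization. The one minor redundancy is your preliminary application of Lemma~\ref{lem:spread} with $\lambda=\sqrt2$ to impose the upper bound $|v_j|\le\sqrt{2/(\theta m)}$ before pigeonholing; the paper simply pigeonholes directly over $L^+=\{j:|v_j|\ge\rho/\sqrt{m}\}$ (which already has $|L^+|\ge\theta m$), since the dyadic level itself supplies the two-sided control and the range $[\rho/\sqrt m,1]$ still contains only $O(\log(\sqrt m/\rho))$ levels.
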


\begin{remark}
Proceeding as in the proof of Lemma \ref{lem:increment_broad} would yield 
\begin{equation}
\sup_{w\in \C^n} \pr_{[n]\setminus I_0} \Big( \|Mv-w\| \le t \sqrt{n} \Big) = O_\kappa\left(\frac{t}{\sig_0^2\rho \theta^{1/2}}\right)^{\frac14\sig_0^2n} \quad\quad \text{for all }\; t\ge \frac{a_0}{\sqrt{\theta m}}.
\end{equation}
The ability to take $t$ down to the scale $\sim \rho/\sqrt{m}$ will be crucial in the proof of Lemma \ref{lem:increment_gen} below.
\end{remark}

\begin{proof}
We begin by finding a set of indices on which $v$ varies by at most a factor of 2. 
For $k\ge 0$ let $L_k = \{j\in [m]: 2^{-(k+1)}<|v_j|\le 2^{-k}\}$. 
Since $v\in \Incomp(\theta,\rho)$, we have
\[
|L^+| := |\{j\in [m]: |v_j|\ge \rho/\sqrt{m}\}| \ge \theta m.
\]
Indeed, were this not the case then $v$ would be within distance $\rho$ of the vector $v_{L^+}$ whose support is smaller than $\theta m$, implying $v\in \Comp(\theta,\rho)$. 
Thus,
$
L^+\subset \bigcup_{k=0}^\ell L_k
$
for some $\ell \ll \log(\frac{\sqrt{m}}{\rho})$.
By the pigeonhole principle there exists $k^*\le \ell$ such that $L^*:= L_{k^*}$ satisfies
\begin{equation}	\label{LB:Lstar}
|L^*|\ge \frac{\theta n}{\ell} \gg \frac{\theta m}{ \log(\frac{\sqrt{m}}{\rho})}.
\end{equation}
Denote $I^*:= I_{\frac{a_0}{2}\|v_{L^*}\|}(v_{L^*})$.
By Lemma \ref{lem:goodrows0}, 
\begin{equation}	\label{LB:istar}
|I^*|\ge \frac12\sig_0^2 n.
\end{equation}

Fix $i\in I^*$.
By definition of $I^*$, 
\begin{equation}	\label{vi:2}
\| (v^i)_{L^*}\| \ge \frac{1}{2}a_0 \|v_{L^*}\| 
\end{equation}
and since $|v_j|\gg \rho/\sqrt{m}$ on $L^*$, 
\begin{equation}	\label{vstar:2}
\|v_{L^*}\| \gg \frac{\rho}{\sqrt{m}}|L^*|^{1/2}.
\end{equation}
Furthermore, since $a_{ij}\le 1$ for all $j\in [m]$ and $v$ varies by a factor at most 2 on $L^*$, 
\begin{equation}	\label{vi:infty}
\|(v^i)_{L^*}\|_\infty \le \|v_{L^*}\|_{\infty} \le 2\frac{\|v_{L^*}\|}{|L^*|^{1/2}}.
\end{equation}

Fix $w\in \C^n$ arbitrarily, and recall that $R_i$ denotes the $i$th row of $M$.
By Lemma \ref{lem:anti_improved} and the above estimates,
for all $t\ge 0$ we have 
\begin{align*}
\pr(|R_i\cdot v-w_i|\le t)
&\ll_\kappa \frac{t+ \|(v^i)_{L^*}\|_\infty }{\|(v^i)_{L^*}\|}\\
&\ll \frac{1}{a_0} \left( \frac{t}{\|v_{L^*}\|} + \frac{\|(v^i)_{L^*}\|_\infty}{\|v_{L^*}\|}\right)\\
&\ll \frac1{a_0} \left( \frac{t}{\rho} \left(\frac{m}{|L^*|}\right)^{1/2} + \frac1{|L^*|^{1/2}}\right)\\
&= \frac1{a_0} \left( \frac{m}{|L^*|}\right)^{1/2} \left( \frac{t}{\rho} + \frac1{\sqrt{m}}\right).
\end{align*}
By Lemma \ref{lem:tensorize}, 
\begin{align*}
\pr_{I^*\setminus I_0} \Big( \|Mv-w\| \le t |I^*\setminus I_0|^{1/2}\Big) 
&\le \pr_{I^*\setminus I_0} \Big( \sum_{i\in I^*\setminus I_0}|R_i\cdot v - w_i|^2  \le t^2 |I^*\setminus I_0|\Big) \\
&= O_\kappa\left(\frac{t\sqrt{m}}{a_0\rho|L^*|^{1/2}}\right)^{|I^*\setminus I_0|}
\end{align*}
for all $t\ge \rho/\sqrt{m}$.
Substituting the lower bounds \eqref{LB:Lstar}, \eqref{LB:istar} on $|L^*|$ and $|I^*|$ and our assumption $|I_0| \le \frac14\sig_0^2n$, 
\[
\pr_{I^*\setminus I_0} \bigg( \|Mv-w\| \le  \frac12ta_0\sqrt{n} \bigg) = O_\kappa\left(\frac{t\log^{1/2}(\frac{\sqrt{m}}{\rho})}{a_0\rho \theta^{1/2}}\right)^{\frac14\sig_0^2n}
\]
for all $t\ge \rho/\sqrt{m}$.
The result now follows by replacing $t$ with $2t/a_0$ as undoing the conditioning on the remaining rows in $[n]\setminus I_0$.
\end{proof}

Now we are ready to prove the analogue of Lemma \ref{lem:increment_broad} for general profiles.
Whereas in the broadly connected case we obtained control on vectors in $\Comp((1+\beta)\theta,\rho')$ after restricting to the event that we have control on $\Comp(\theta,\rho)$, for small $\beta>0$, here we will also need to assume control on $\Comp(\theta_0,\rho_0)$ for a fixed small $\theta_0$ at each step. 
The control on $\Comp(\theta,\rho)$ will be used to obtain a net of low cardinality using Lemma \ref{lem:entropy}, while the control on $\Comp(\theta_0,\rho_0)$ will be used to obtain good anti-concentration estimates using Lemma \ref{lem:anti_incomp}. 
(In the broadly connected case the control on $\Comp(\theta,\rho)$ was sufficient for both purposes.)

\begin{lemma}[Incrementing compressibility: general profile]	\label{lem:increment_gen}
Let $\M$ be as in Proposition \ref{prop:comp}, fix $\gamma>1/2$ and put $K=n^{\gamma-1/2}$. Let $\theta_0,\rho_0$ be as in Lemma \ref{lem:high}, and fix $\theta\in [\theta_0,c_0\sig_0^2]$, where $c_0$ is a sufficiently small constant (we may assume the constant $c$ in Lemma \ref{lem:high} is sufficiently small so that this interval is non-empty).
We have
\begin{equation}	\label{bound:increment_gen}
\pro{ \event(\theta_0,\rho_0)^c\wedge\event(\theta,\rho)^c \wedge \event( \theta+\beta a_0^2,\rho'\Big)} 
=O_{\gamma,\sig_0,\kappa}(e^{-n})
\end{equation}
for some $\rho' \gg_{\gamma,\sig_0,\kappa}  n^{-O(\gamma)}\rho$,
where we set
\begin{equation}	\label{set:beta}
\beta= c_1\min\left(1,\frac1{\gamma-1/2}\right)
\end{equation}
for a sufficiently small constant $c_1>0$.
\end{lemma}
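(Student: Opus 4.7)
The plan is to mirror Lemma \ref{lem:increment_broad}: first apply the entropy-reduction Lemma \ref{lem:entropy} to replace the sphere by a random net of low cardinality, then apply a sharp anti-concentration bound to each fixed vector in the net, then take a union bound. The key substitution will be to invoke Lemma \ref{lem:anti_incomp} in place of the scalar anti-concentration used in Lemma \ref{lem:increment_broad}; the role of the extra hypothesis event $\event(\theta_0,\rho_0)^c$ is precisely to certify that the relevant net vectors are incompressible at the coarse scale $(\theta_0,\rho_0)$, so that Lemma \ref{lem:anti_incomp} applies.

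Concretely, I will take $\beta$ as in the statement and apply Lemma \ref{lem:entropy} with $\theta'=\theta+\beta\sig_0^2$. On $\event(\theta,\rho)^c\wedge\event(\theta+\beta\sig_0^2,\rho')$ this produces $I\subset[n]$ and $J\subset[m]$ of sizes $\lfloor(1-\beta)^2\lfloor\theta m\rfloor\rfloor$ and $\lfloor(\theta+\beta\sig_0^2)m\rfloor$, together with an $\mathcal{F}_{I,J}$-measurable random net $\Sigma_{I,J}(\rho'')\subset S^J$ of cardinality at most $(C/\rho'')^{2(|J|-|I|)}$ which $3\rho''$-approximates every near-null $u\in S^{m-1}\cap(\C^J)_{\rho'}$, where $\rho''=(6\rho'/\beta\rho)\sqrt{n/\lfloor\theta m\rfloor}$. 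I will pick $\rho'$ small enough that $4\rho''<\rho_0$.

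Next, after fixing $I,J$ and conditioning on $\mathcal{F}_{I,[n]}$ to freeze the net, the condition $\event(\theta_0,\rho_0)^c$ will force every net element $u$ with $\|\M u\|\le 4\rho''K\sqrt n$ to lie in $\Incomp(\theta_0,\rho_0)$. Taking $c_0\le 1/8$ gives $|I|\le\theta m\le 2c_0\sig_0^2 n\le\tfrac14\sig_0^2 n$, so Lemma \ref{lem:anti_incomp} applied with $I_0=I$ will yield, uniformly in such $u$,
\begin{equation*}
\pr_{[n]\setminus I}\bigl(\|\M u\|\le 4\rho''K\sqrt n\bigr)\le O_\kappa\!\left(\frac{\max(4\rho''K,\,\sig_0\rho_0/\sqrt m)\,\log^{1/2}(\sqrt m/\rho_0)}{\sig_0^2\rho_0\theta_0^{1/2}}\right)^{\sig_0^2 n/4},
\end{equation*}
the $\max$ absorbing the admissibility restriction $t\ge \sig_0\rho_0/\sqrt m$ in Lemma \ref{lem:anti_incomp}. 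A union bound over the at most $(C/\rho'')^{2(|J|-|I|)}$ net elements and the $2^{m+n}$ choices of $(I,J)$, using $|J|-|I|\le \beta(\sig_0^2+2\theta)m\le 4\beta\sig_0^2 n$, will then produce a total bound that factors as $e^{O(n)}(1/\rho'')^{8\beta\sig_0^2 n}$ times the displayed anti-concentration factor.

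The hard part will be the final balancing, which splits into two regimes because $K=n^{\gamma-1/2}$ may be of arbitrary polynomial size. When $4\rho''K\ge\sig_0\rho_0/\sqrt m$, the factor $(\rho''K)^{\sig_0^2 n/4}$ forces $\rho''\asymp n^{-O(\gamma)}$, and any constant $\beta\le 1/32$ will suffice. When $4\rho''K<\sig_0\rho_0/\sqrt m$, however, the anti-concentration saturates at order $m^{-\sig_0^2 n/8+o(n)}$, while the entropy at the admissibility threshold $\rho''\asymp n^{-\gamma}$ contributes $n^{O(\beta\gamma n)}$; forcing the product to decay imposes $\beta\lesssim 1/\gamma$, which is exactly the choice $\beta=c_1\min(1,1/(\gamma-1/2))$ in the statement. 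In either regime $\rho''\gg_{\gamma,\sig_0,\kappa}n^{-O(\gamma)}$ will be admissible, yielding $\rho'\gg_{\gamma,\sig_0,\kappa}n^{-O(\gamma)}\rho$ via the defining formula for $\rho''$.
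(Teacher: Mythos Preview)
Your approach is essentially the paper's: apply Lemma~\ref{lem:entropy} to pass to the random net $\Sigma_{I,J}(\rho'')$, then hit each surviving net vector with Lemma~\ref{lem:anti_incomp} (using $I_0=I$ and $|I|\le\theta m\le\tfrac14\sig_0^2 n$), then union bound. The setup, the conditioning, and the bound $|J|-|I|=O(\beta\sig_0^2 n)$ are all as in the paper.

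The final balancing, however, is both over-complicated and contains an error. You write the anti-concentration base as ``$(\rho''K)^{\sig_0^2 n/4}$'', but after substituting $\rho_0=c_\kappa\sig_0^2/K$ and $\theta_0\asymp\sig_0^2/\log K$ the base is $\asymp\rho''K^2\log n$, not $\rho''K$; the extra factor of $K$ comes from the $1/\rho_0$ in the denominator. Consequently the admissibility threshold is $\rho''\asymp n^{-(2\gamma-1/2)}$ (equivalently $\eps:=\rho''K^2\asymp n^{-1/2}$), not $n^{-\gamma}$. More importantly, your regime-1 claim that ``any constant $\beta\le 1/32$ will suffice'' is false: even above the threshold the entropy contributes $n^{O(\gamma)\cdot O(\beta\sig_0^2 n)}$ while the anti-concentration is at best $n^{-\sig_0^2 n/8}$, so the balancing still forces $\beta\ll 1/\gamma$. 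There is no regime in which a constant $\beta$ works.

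The paper avoids the case split entirely: it simply sets $\rho''$ at the threshold (taking $\eps=\rho''K^2=\sig_0^3/\sqrt{n}$), substitutes to get a bound of the shape $O_{\kappa,\sig_0}(\log n)^n\, n^{O(\gamma-1/2)(|J|-|I|)}\,\eps^{\frac14\sig_0^2 n-2(|J|-|I|)}$, and then observes that $(\gamma-1/2)(|J|-|I|)\le(\gamma-1/2)\cdot O(\beta\sig_0^2 n)=O(c_1\sig_0^2 n)$ by the choice \eqref{set:beta}, so the $\gamma$-dependence in the exponent cancels. Your regime-2 argument lands on the same conclusion; the regime-1 discussion should be dropped.
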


\begin{proof}
Let $\rho'>0$ to be taken sufficiently small, and let $\rho''$ be as in \eqref{def:rhodub}.
We denote $\theta'=\theta +\beta a_0^2$.
Intersecting both sides of \eqref{cont:entropy} with $\event(\theta_0,\rho_0)^c$, we have
\begin{align}
&\event(\theta_0,\rho_0)^c\wedge\event(\theta,\rho)^c \wedge \event(\theta',\rho') \subset \notag\\
&
\bigvee_{J\in {[m]\choose  \theta' m }} \bigvee_{I\in {[n]\choose (1-\beta)^2\lf \theta m\rf}}  \good_{I,J}(\rho'')
\wedge\Big\{ \exists u \in \Sigma_{I,J}(\rho'')\setminus \Comp(\theta_0,\rho_0): \|\M u\|\le 4\rho''K\sqrt{n} \Big\}	\label{unionIJ:gen}
\end{align}
where we have assumed $\rho'$ is small enough that $4\rho''<\rho_0$.

Fix $J\subset[m]$ and $I\subset[n]$ of size $\lf \theta'm\rf$, $\lf (1-\beta)^2\lf \theta m\rf\rf$, respectively, and condition on $\mF_{I,[n]}$ to fix $\Sigma_{I,J}(\rho'')$.
Fix an arbitrary $u\in \Sigma_{I,J}(\rho'')\setminus \Comp(\theta_0,\rho_0)$. 
From Lemma \ref{lem:anti_incomp} we have
\begin{equation}
\pr_{[n]\setminus I} \Big( \|Mu\| \le 4\rho'' K\sqrt{n} \Big) = O_\kappa\left(\frac{\rho'' K\log^{1/2}(\frac{\sqrt{n}}{\rho_0})}{\sig_0^2\rho_0 \theta_0^{1/2}}\right)^{\frac14\sig_0^2n}
\end{equation}
provided
\begin{equation}	\label{assume:rhodub}
\rho'' \ge \frac{c a_0\rho_0}{K\sqrt{n}}
\end{equation}
for some small constant $c>0$ (note that we used our assumption $n/2\le m\le 2n$). 

Applying the union bound over the choices of $u\in \Sigma_{I,J}(\rho'')\setminus \Comp(\theta_0,\rho_0)$, on the event $\good_{I,J}(\rho'')$ we have
\begin{align*}
&\pr\Big( \exists u\in \Sigma_{I,J}(\rho'')\setminus \Comp(\theta_0,\rho_0): \, \|Mu\|\le 4\rho'' K\sqrt{n} \Big) \\
&\qquad\qquad\qquad \le O\left(\frac1{\rho''}\right)^{2(|J|-|I|)} O_\kappa\left(\frac{\rho'' K\log^{1/2}(\frac{\sqrt{n}}{\rho_0})}{\sig_0^2\rho_0 \theta_0^{1/2}}\right)^{\frac14\sig_0^2n}\\
&\qquad\qquad\qquad= O\left(\frac1{\rho''}\right)^{2(|J|-|I|)} O_{\kappa,a_0}\left(\rho''K^2 \log(K\sqrt{n}) \right)^{\frac14\sig_0^2n}
\end{align*}
where in the second line we substituted the expressions for $\rho_0,\theta_0$ from Lemma \ref{lem:high}. 
Denoting $\eps = \rho'' K^2$, the above bound rearranges to 
\begin{equation}	\label{comp:rearrange}
O_{\kappa,a_0}(\log n)^n n^{O(\gamma)} n^{O(\gamma-1/2)(|J|-|I|)} \eps^{\frac14\sig_0^2 n - 2(|J| - |I|)}.
\end{equation}
We can bound
\begin{align*}
|J|-|I| 
&= \theta m + \beta \sig_0^2m - (1-\beta)^2 \theta m + O(1) \le \beta \sig_0^2m +2\beta \theta m + O(1)\\
& = O(\beta \sig_0^2 m) + O(1)
\end{align*}
where we used our assumption that $\theta\le c_0\sig_0^2$. 
In particular, $|J|-|I| \le \frac18 \sig_0^2 n+O(1)$ if the constant $c_1$ in \eqref{set:beta} is sufficiently small, and \eqref{comp:rearrange} is bounded by
\begin{equation}
O_{\kappa,a_0}(\log n)^n n^{O(\gamma)}n^{O(\gamma-1/2) \beta a_0^2 m} \eps^{\frac18\sig_0^2n - O(1) }.
\end{equation}
Applying the union bound over the choices of $I,J$ in \eqref{unionIJ:gen}, which incurs a harmless factor of $2^{m+n} = O(1)^n$, and substituting the expression \eqref{set:beta} for $\beta$ we have
\begin{equation}
\pro{ \event(\theta_0,\rho_0)^c\wedge\event(\theta,\rho)^c \wedge \event(\theta+\beta a_0^2,\rho'\Big)} = O_{\kappa,a_0}(\log n)^n n^{O(\gamma)}\eps^{-O(1)}(n^{O(c_1)} \eps^{1/8})^{a_0^2n}.
\end{equation}
It only remains to check that we can take $\eps$ sufficiently small to obtain \eqref{bound:increment_gen}.
From \eqref{assume:rhodub} we are constrained to take
\[
\eps = \rho'' K^2 \ge \frac{c \sig_0\rho_0K}{\sqrt{n}} = \frac{c' a_0^3 }{\sqrt{n}}
\]
for some constant $c'\in (0,1)$ sufficiently small.
Taking $\eps = a_0^3/\sqrt{n}$ and $c_1$ sufficiently small we have
\begin{equation}
\pro{ \event(\theta_0,\rho_0)^c\wedge\event(\theta,\rho)^c \wedge \event(\theta+\beta a_0^2,\rho'\Big)} \le O_{\kappa,\sig_0}(1)^n n^{O(\gamma)} n^{-.01 \sig_0^2 n}
\end{equation}
which yields \eqref{bound:increment_gen} as desired.
With this choice of $\eps$,
\[
\rho' \gg \rho'' \beta \rho \theta\ge \rho'' \beta \rho \theta_0 \gg_{\kappa,\sig_0,\gamma} \rho n^{-2\gamma + 1/2-o(1)}
\]
as desired (recall that $\theta_0\gg_\kappa \sig_0^2/\log(K/a_0) \gg_{\gamma,\sig_0,\kappa} 1/\log n$). 
\end{proof}

Now we conclude the proof of Proposition \ref{prop:comp}. 
Since the event $\mB(K)$ is monotone under increasing $K$, by perturbing $\gamma$ and assuming $n$ is sufficiently large we may take $K=n^{\gamma-1/2}$ with $\gamma>1/2$.
Let $\rho_0,\theta_0$ be as in Lemma \ref{lem:high}, and for $l\ge 1$ we let $\theta_l = \theta_0+ l\beta a_0^2$ with $\beta=\beta(\gamma)$ as in \eqref{set:beta}. 
By Lemma \ref{lem:increment_gen} we can inductively define a sequence $\rho_l$ such that for each $l\ge 1$ such that $\theta_l \le c_0\sig_0^2$, 
\[
\rho_l \gg_{\gamma,\sig_0,\kappa} n^{-O(\gamma)}\rho_{l-1}
\]
and
\[
\pro{ \event(\theta_0,\rho_0)^c\wedge \event(\theta_{l-1},\rho_{l-1})^c\wedge \event(\theta_l,\rho_l)} = O_{\gamma,\sig_0,\kappa}(e^{-n}).
\]
Applying the union bound, for some $k=O(\gamma)$ we have
\begin{align*}
\pro{ \event(c_0\sig_0^2,\rho) } 
&\le \pro{ \event(\theta_0,\rho_0)} + \sum_{l=1}^{k} 
\pro{ \event(\theta_0,\rho_0)^c\wedge \event(\theta_{l-1},\rho_{l-1})^c\wedge \event(\theta_l,\rho_l)}\\
& \le e^{-c_\kappa\sig_0^2n} + O_{\gamma,\sig_0,\kappa} (e^{-n})\\
& = O_{\gamma,\sig_0,\kappa} (e^{-c_\kappa\sig_0^2n})
\end{align*}
and 
$
\rho \gg_{\gamma,\sig_0,\kappa} n^{-O(\gamma^2)}.
$
This concludes the proof of Proposition \ref{prop:comp}.

\section{Invertibility from connectivity: Incompressible vectors}		\label{sec:incomp}

In this section we conclude the proofs of Theorems \ref{thm:broad} and \ref{thm:super} by bounding the event that $\|Mu\|$ is small for some incompressible vector $u$ (recall the terminology from Section \ref{sec:net}).
We follow the (by now standard) approach of reducing to the event that a fixed row $R_i$ of $M$ lies close to the span of the remaining rows, an idea which goes back to the work of Koml\'os on the singularity probability for Bernoulli matrices \citep{Komlos67,Komlos68,Komlos77}. 
This can in turn be controlled by the event that a random walk $R_i\cdot v$ concentrates near a particular point, where $v$ is a fixed unit vector in the orthocomplement of the remaining rows. 
Independence of the rows allows us to condition on $v$, and our results from the previous section allow us to argue that $v$ is incompressible. 

For the case that the entries of $R_i$ have variances uniformly bounded below, we could then complete the proof by applying the anti-concentration estimate of Lemma \ref{lem:anti_improved}.
In the present setting, however, a proportion $1-\delta$ of the entries of $R_i$ may have zero variance. 
For the case of broadly connected profile we follow the argument of Rudelson and Zeitouni \citep{RuZe} and use Proposition \ref{prop:slight} to show $v$ has essential support of size $(1-\delta/2)n$, and hence has non-trivial overlap with the support of $R_i$.

For the case of a super-regular profile, Proposition \ref{prop:comp} only gives that $v$ has essential support of size $\gg \delta \ha^2$. 
In Lemma \ref{lem:overlap} we make use of a double counting argument to show that if we choose the row $R_i$ at random, on average it will have good overlap with the corresponding normal vector $v^{(i)}$ (which also depends on $i$). 
Here is where we make crucial use of the super-regularity hypothesis on $A$.
Lemma \ref{lem:overlap} is a natural extension of a double counting argument used by Koml\'os in his work on the singularity probability for Bernoulli matrices, and which was applied to bound the smallest singular value of iid matrices by Rudelson and Vershynin in \citep{RuVe:ilo}.
We were also inspired by
a similar refinement of the double counting argument from the recent paper \citep{LLTTY} on the singularity probability for adjacency matrices of random regular digraphs.

\subsection{Proof of Theorem \ref{thm:broad}}	\label{sec:broad}

By Lemma \ref{lem:wlog.kappa} and multiplying $X$ and $B$ by a phase (which does not affect our hypotheses) we may assume that $\xi$ has $O(\kappa_0)$-controlled second moment.
Fix $K\ge 1$, and let $\rho=\rho(\kappa,\ha,\delta,\nu,K)$ be as in Proposition \ref{prop:slight}. 
We may assume $n$ is sufficiently large depending on $\kappa,\ha,\delta,\nu,K$.
For the remainder of the proof we restrict to the event $\mB(K) = \{\|M\|\le K\sqrt{n}\}$.

For $j\in [n]$ let $M^{(i)}$ denote the $n-1\times n$ matrix obtained by removing the $i$th row from $M$.
Define the good event
\begin{equation}
\good = \Big\{ \forall i\in [n], \forall u\in \Comp(1-\delta/2,\rho), \; \|u^*M\|,\|M^{(i)}u\| >\rho K\sqrt{n}\Big\}.
\end{equation}
Applying Proposition \ref{prop:slight} to $M^*$ and $M^{(i)}$ for each $i\in[n]$ (using our restriction to $\mB(K)$) and the union bound we have
\begin{equation}
\pr(\good) = 1- O_{\kappa,\ha,\delta,\nu,K}(ne^{-c_\kappa\delta \ha^2 n}) = 1- O_{\ha,\delta,\nu,K}(e^{-c_\kappa\delta \ha^2 n})
\end{equation}
adjusting $c_\kappa$ slightly. 
Let $t\le 1$, and define the event
\begin{equation}
\event(t) = \good\wedge \big\{ \exists u\in \Incomp(1/10,\rho): \|u^*M\|\le t/\sqrt{n}\big\}.
\end{equation}
For $n$ sufficiently large (larger than $1/\rho K$) it suffices to show 
\begin{equation}	\label{goal:broad1}
\pr(\event(t)) \ll_{\kappa,\ha,\delta,\nu,K} t + n^{-1/2}.
\end{equation}

Recalling that $R_i$ denotes the $i$th row of $M$, we denote
\begin{equation}
R_{-i} = \Span(R_j: j\in [n]\setminus \{i\})
\end{equation}
and let \begin{equation}
\event_i(t) = \good \wedge \{\dist(R_i,R_{-i}) \le t/\rho\}.
\end{equation}
We now use a double counting argument of Rudelson and Vershynin from \citep{RuVe:ilo} to control $\event(t)$ in terms of the events $\event_i(t)$. 
Suppose that $\event(t)$ holds, and let $u\in \Incomp(1/10, \rho)$ such that $\|u^*M\|\le t/\sqrt{n}$. 
Then we must have $|u_i| \ge \rho/\sqrt{n}$ for at least $n/10$ elements $i\in[n]$. 
For each such $i$ we have
\[
\frac{t}{\sqrt{n}} \ge \|u^*M\| = \bigg\| \sum_{j=1}^n \overline{u_j} R_j\bigg\| 
\ge \bigg\| P_{R_{-i}^\perp}  \sum_{j=1}^n \overline{u_j} R_j\bigg\| 
= |u_i| \left\| P_{R_{-i}^\perp} R_i\right\| 
\ge \frac{\rho}{\sqrt{n}} \dist(R_i,R_{-i})
\]
where we denote by $P_W$ the orthogonal projection to a subspace $W$.
Thus, on $\event(t)$ we have that $\event_i(t)$ holds for at least $n/10$ values of $i\in[n]$, so by double counting,
\begin{equation}
\pro{ \event(t)} \le \frac{10}{n} \sum_{i=1}^n \pro{\event_i(t)}.
\end{equation}
Now it suffices to show that for arbitrary fixed $i\in [n]$,
\begin{equation}	\label{goal:broad2}
\pr(\event_i(t)) \ll_{\kappa,\ha,\delta,\nu,K} t + n^{-1/2}.
\end{equation}

Fix $i\in [n]$ and condition on $\{R_j: j\in [n]\setminus \{i\}\}$.
Draw a unit vector $u\in R_{-i}^\perp$ independent of $R_i$, according to Haar measure (say). 
Since $\dist(R_i,R_{-i}) \le |R_i\cdot u|$, it suffices to show
\begin{equation}	\label{goal:broad3}
\pro{|R_i\cdot u|\le t/\rho } \ll_{\kappa,\ha,\delta,\nu,K} t + n^{-1/2}.
\end{equation}
Since $u\in \ker(M^{(i)})$, on $\good$ we have that $u\in \Incomp(1-\frac\delta2,\rho)$.
By Lemma \ref{lem:spread} there exists $L\subset [n]$ of size $|L| \ge (1-\frac34\delta)n$ such that
\[
\frac{\rho}{\sqrt{n}} \le |u_j| \le \frac{10}{\sqrt{\delta n}}
\]
for all $j\in L$.
By assumption we have $|\mN_{A(\ha)}(i)| = |\{ j\in [n]: \sig_{ij} \ge \ha\}| \ge \delta n$, 
so letting $J=\mN_{A(\ha)}(i)\cap L$ we have $|J|\ge \delta n/4$. 
Denoting $v= (u^i)_J = (\sig_{ij} u_j 1_{j\in J})_j$, we have
\[
\|v\|^2 = \sum_{j\in J} \sig_{ij}^2 |u_j|^2 \ge |J|\ha^2 \rho^2/n \ge \delta\ha^2\rho^2/4
\]
and
\[
\|v\|_\infty \le \|u_J\|_{\infty} \le \frac{10}{\sqrt{\delta n}}
\]
(recall that $a_{ij}\le 1$ for all $i,j\in [n]$).
Conditioning on $u$ and $\{\xi_{ij}\}_{j\notin J}$, we apply Lemma \ref{lem:anti_improved} to conclude
\begin{align*}
\pro{ |R_i\cdot u| \le t/\rho} \ll_\kappa \frac{1}{\|v\|}\left( \frac{t}{\rho} + \|v\|_\infty\right) \ll \frac{1}{\rho\ha\delta^{1/2}} \left(\frac{t}\rho + \frac{1}{\sqrt{\delta n}}\right)
\end{align*}
which gives \eqref{goal:broad3} as desired.

\subsection{Proof of Theorem \ref{thm:super}}	\label{sec:super}

By Lemma \ref{lem:wlog.kappa} and multiplying $X$ and $B$ by a phase (which does not affect our hypotheses) we may assume that $\xi$ has $\kappa=O(\kappa_0)$-controlled second moment.
Fix $\gamma\ge1/2$ and let $K=O(n^{\gamma-1/2})$. 
We will show that for all $\tau\ge 0$,
\begin{equation}	\label{bound:super2}
\pro{ s_n(M) \le \frac{\tau}{\sqrt{n}}\,, \; \|M\|\le K\sqrt{n}} \ll_{\gamma,\ha,\delta,\kappa} n^{O(\gamma^2)}\tau + \sqrt{\frac{\log n}{n}}.
\end{equation}
For the remainder of the proof we restrict to the boundedness event
\begin{equation}
\mB(K) = \{\|M\|\le K\sqrt{n}\}.
\end{equation}

By the assumption that $A(\ha)$ is $(\delta,\eps)$-super-regular we have
\[
\sum_{i=1}^n \sig_{ij}^2 \ge \delta \ha^2n
\]
for all $j\in [n]$. 
Let $\sig_0 = \delta^{1/2}\ha$, and let $\rho=\rho(\gamma,\sig_0,\kappa n)$ and $c_0$ be as in Proposition \ref{prop:comp}.
In particular,
\begin{equation}	\label{incomp:rholb}
\rho \gg_{\gamma,\delta,\ha}n^{-O(\gamma^2)}.
\end{equation}
Denoting $\theta=c_0\delta\ha^2$, for $\tau>0$ we define the good event
\begin{equation}
\good(\tau) = \Big\{ \forall u\in \Comp(\theta,\rho), \, \|Mu\|,\|u^*M\| > \tau/\sqrt{n}\Big\}.
\end{equation}
Applying Proposition \ref{prop:comp} to $M$ and $M^*$, along with the union bound, we have
\begin{equation}	\label{LB:goodtau}
\pro{\good(\tau)} = 1-O_{\gamma,\delta,\ha,\kappa}(e^{-c_\kappa\delta\ha^2n})
\end{equation}
as long as $\tau\le \rho K n$. 

Let $0<\tau\le 1$ to be chosen later.
Recalling our notation $M^{(i)}$ from Section \ref{sec:broad}, we define the sets
\begin{equation}
S_i(\tau) = \left\{ u\in S^{n-1}: \|M^{(i)}u\|\le \frac{\tau}{\sqrt{n}}\right\}.
\end{equation}
Informally, for small $\tau$ this is the set of unit almost-normal vectors to the subspace $R_{-i}$ spanned by the rows of $M^{(i)}$. 
In Lemma \ref{lem:overlap} below we reduce our task to bounding the probability that a row $R_i$ is nearly orthogonal to a vector $u^{(i)}\in S_i(\tau)$ that is independent of $R_i$, and also has many large coordinates in the support of $R_i$.
The reduction uses the super-regularity hypothesis together with a careful averaging argument.
It turns out that for this argument to work it is important to consider almost-normal vectors rather than normal vectors (as in the proof of Theorem \ref{thm:broad}).

Writing $\mN(i)= \mN_{A(\ha)}(i)$, we define the \emph{good overlap events}
\begin{equation}
\mO_i(\tau) = \big\{ \exists u\in S_i(\tau): |\mN(i) \cap L^+(u,\rho)|\ge \delta \theta n\big\}
\end{equation}
where 
\begin{equation}	\label{super:Lplus}
L^+(u)= \{j\in [n]: |u_j|\ge \rho/\sqrt{n}\}.
\end{equation}
On $\mO_i(\tau)$ we fix a vector $u^{(i)} = u^{(i)}(M^{(i)},\tau)\in S_i(\tau)$, chosen measurably with respect to $M^{(i)}$, satisfying $|\mN(i)\cap L^+(u,\rho)|\ge \delta \theta n$.

\begin{lemma}[Good overlap on average]	\label{lem:overlap}
Recall the parameter $\eps$ from our super-regularity hypothesis (cf.\ Definition \ref{def:super}), and assume $\eps\le \theta/2$. 
Then
\begin{equation}
\pro{\good(\tau)\wedge\Big\{ s_n(M)\le \frac{\tau}{\sqrt{n}}\Big\}} \le \frac{2}{\theta n} \sum_{i=1}^n \pro{ \mO_i(\tau)\wedge \bigg\{ |R_i\cdot u^{(i)}|\le \frac{2\tau}{\rho}\bigg\}}.
\end{equation}
\end{lemma}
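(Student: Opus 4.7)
I would prove the bound by a double-counting argument that combines the right and left near-null witnesses to $s_n(M)$, patterned on the proof of Theorem~\ref{thm:broad}. First, I would specify the measurable selection $u^{(i)}$ to be a canonical unit vector spanning the null space $\ker(M^{(i)})=R_{-i}^\perp$ (in the generic case when this is one-dimensional, and a fixed canonical choice otherwise). This $u^{(i)}$ is $\sigma(M^{(i)})$-measurable and lies in $S_i(\tau)$, since $M^{(i)}u^{(i)}=0$.

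On the event $\event:=\good(\tau)\wedge\{s_n(M)\le \tau/\sqrt n\}$, I would fix a right witness $u\in S^{n-1}$ with $\|Mu\|\le \tau/\sqrt n$; by $\good(\tau)$, $u\in\Incomp(\theta,\rho)$, and hence $|L^+(u)|\ge \theta n$ by Lemma~\ref{lem:spread}. Applying the super-regularity condition~(3) of Definition~\ref{def:super} to $J=L^+(u)$ (using $|J|\ge \theta n\ge 2\eps n$), a contradiction argument shows that $G_1:=\{i:|\mN(i)\cap L^+(u)|\ge \delta\theta n\}$ satisfies $|G_1|\ge (1-\eps)n\ge (1-\theta/2)n$: otherwise $\sum_{i\in G_1^c}|\mN(i)\cap L^+(u)|<|G_1^c|\delta\theta n\le \delta|G_1^c|\,|J|$, violating super-regularity. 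For $i\in G_1$, $u\in S_i(\tau)$ witnesses $\mO_i(\tau)$.

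By symmetry ($s_n(M^*)=s_n(M)$) I would also fix a left witness $w\in\Incomp(\theta,\rho)$ with $\|w^*M\|\le\tau/\sqrt n$, so that $|L^+(w)|\ge \theta n$. For $i\in G_2:=L^+(w)$, the Rudelson--Vershynin-type identity from the proof of Theorem~\ref{thm:broad} gives
\[
\frac{\tau}{\sqrt n}\ge \|w^*M\|\ge |w_i|\,\dist(R_i,R_{-i})\ge \frac{\rho}{\sqrt n}\,\dist(R_i,R_{-i}),
\]
so $\dist(R_i,R_{-i})\le \tau/\rho$. Since $u^{(i)}$ spans $R_{-i}^\perp$, this reads $|R_i\cdot u^{(i)}|=\dist(R_i,R_{-i})\le \tau/\rho\le 2\tau/\rho$. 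Combining, $|G_1\cap G_2|\ge (1-\theta/2)n+\theta n-n=\theta n/2$, and for each $i$ in the intersection both $\mO_i(\tau)$ and $\{|R_i\cdot u^{(i)}|\le 2\tau/\rho\}$ hold on $\event$. Averaging the indicator of $\event$ across these indices yields
\[
\pr(\event)\cdot\frac{\theta n}{2}\le \e\bigg[\mathbf{1}_\event \sum_{i=1}^n \mathbf{1}_{\mO_i(\tau)\wedge\{|R_i\cdot u^{(i)}|\le 2\tau/\rho\}}\bigg]\le \sum_{i=1}^n \pr\big(\mO_i(\tau)\wedge\{|R_i\cdot u^{(i)}|\le 2\tau/\rho\}\big),
\]
which is the claim after dividing by $\theta n/2$.

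The main obstacle is that the measurable choice $u^{(i)}$ specified above may fail the overlap condition $|\mN(i)\cap L^+(u^{(i)})|\ge \delta\theta n$ that is demanded to witness $\mO_i(\tau)$: the witness $u$ from $\event$ is not $\sigma(M^{(i)})$-measurable, so its overlap with $\mN(i)$ need not transfer to $u^{(i)}$, and applying super-regularity separately for each $u^{(i)}$ is not available since the good set varies with $i$. Resolving this likely demands an additional auxiliary good event --- for instance, applying Proposition~\ref{prop:comp} to each $M^{(i)}$ to enforce a singular value gap forcing $u^{(i)}$ to align closely with $u$ (so that $u^{(i)}$ inherits the overlap structure), or else refining the measurable selection so that, on $\mO_i(\tau)$, $u^{(i)}$ is drawn from the set of vectors in $S_i(\tau)$ compatible with both the overlap condition and the Rudelson--Vershynin bound of Step~2. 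With such a compatibility in place, the double counting above closes the argument.
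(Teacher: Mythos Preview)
You have correctly identified the double-counting skeleton and the super-regularity step that produces the set $G_1$, and you have also correctly spotted the obstacle: your choice of $u^{(i)}$ as a unit normal to $R_{-i}$ gives the projection identity $|R_i\cdot u^{(i)}|=\dist(R_i,R_{-i})$, but need not satisfy the overlap condition required to witness $\mO_i(\tau)$; conversely, any measurable selection of $u^{(i)}\in S_i(\tau)$ that \emph{does} satisfy the overlap condition is only an almost-null vector of $M^{(i)}$, and for such a vector the projection identity fails. Your proposed fixes (forcing alignment with $u$ via a singular value gap, or engineering a compatible selection) are unnecessary.

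The paper sidesteps this tension entirely by replacing the projection identity with a bilinear estimate. With $u^{(i)}$ defined as \emph{any} $M^{(i)}$-measurable element of $S_i(\tau)$ satisfying the overlap condition (which exists on $\mO_i(\tau)$ by definition), and with $v$ the left witness satisfying $\|v^*M\|\le\tau/\sqrt n$, one writes
\[
\frac{\tau}{\sqrt n}\;\ge\;\|v^*M\|\;\ge\;|v^*Mu^{(i)}|\;\ge\;|v_i|\,|R_i\cdot u^{(i)}|\;-\;\Big|\sum_{j\ne i}\overline{v_j}\,R_j\cdot u^{(i)}\Big|.
\]
The last term is at most $\|M^{(i)}u^{(i)}\|\le\tau/\sqrt n$ by Cauchy--Schwarz and the definition of $S_i(\tau)$, while for $i\in L^+(v)$ one has $|v_i|\ge\rho/\sqrt n$. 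Rearranging gives $|R_i\cdot u^{(i)}|\le 2\tau/\rho$ directly, with no need for $u^{(i)}$ to be an exact normal vector. This is precisely why the paper works with the almost-null set $S_i(\tau)$ rather than with $\ker(M^{(i)})$: it decouples the overlap requirement from the inner-product bound, and both are satisfied simultaneously by the same $u^{(i)}$. The rest of your averaging argument then goes through verbatim.
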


\begin{proof}
Suppose $\good(\tau)\wedge\{s_n(M)\le \tau/\sqrt{n}\}$ holds. 
Then there exist $u,v\in S^{n-1}$ such that $\|Mu\|, \|M^*v\|\le \tau/\sqrt{n}$.
By our restriction to $\good(\tau)$ we must have $u,v\in \Incomp(\theta,\rho)$.
With notation as in \eqref{super:Lplus} we have $|L^+(u)|, |L^+(v)|\ge \theta n$. 
In particular, $|L^+(u)|\ge \eps n$, so
\begin{equation}
|\mN(i) \cap L^+(u)| \ge \delta |L^+(u)| \ge \delta \theta n
\end{equation}
for at least $(1-\eps)n$ elements $i\in [n]$.
Indeed, otherwise we would have
\[
e_{A(\ha)}(I,L^+(u)) = \sum_{i\in I} |\mN(i)\cap L^+(u)| <\delta |I||L^+(u)|
\]
for some $I\subset[n]$ with $|I|>\eps n$, which contradicts our assumption that $A(\ha)$ is $(\delta,\eps)$-super-regular. 
Since 
$
\|M^{(i)}u\|\le \|Mu\|\le \frac{\tau}{\sqrt{n}}
$
for all $i\in [n]$, we have that $u\in S_i(\tau)$ for all $i\in [n]$. 
Thus, 
\begin{equation}	\label{super:goodvi}
\left| \big\{ i\in L^+(v): \mO_i(\tau) \mbox{ holds}\big\}\right| \ge \theta n -\eps n\ge \theta n/2.
\end{equation}

Fix $i\in L^+(v)$ such that $\mO_i(\tau)$ holds. 
We have
\begin{align*}
\frac{\tau}{\sqrt{n}} \ge \|v^*M\|\ge |v^*Mu^{(i)}| \ge |v_i| |R_i\cdot u^{(i)}| - \bigg|\sum_{j\ne i} \overline{v_j} R_j\cdot u^{(i)}\bigg|.
\end{align*}
The first term on the right hand side is bounded below by $\frac{\rho}{\sqrt{n}}|R_i\cdot u^{(i)}|$ since $i\in L^+(v)$.
By Cauchy--Schwarz the second term is bounded above by $\|M^{(i)} u^{(i)}\|\le \tau/\sqrt{n}$, since $u^{(i)}\in S_i(\tau)$.
Rearranging we conclude
$
|R_i\cdot u^{(i)}|\le 2\tau/\rho
$
for all $i\in L^+(v)$ such that $\mO_i(\tau)$ holds.
Letting $\mE_i(t)=\{|R_i\cdot u^{(i)}|\le t\}$,
we have shown that on the event $\good(\tau)\wedge\{s_n(M)\le \tau/\sqrt{n}\}$, the event
$\mO_i(\tau)\wedge \mE_i(2\tau/\rho)$ holds for at least $\theta n/2$ values of $i\in [n]$ (from \eqref{super:goodvi}). It follows that
\[
\sum_{i=1}^n \un(\mO_i(\tau)\wedge \mE_i(2\tau/\rho)) \ge \frac{\theta n}{2} \un(\good(\tau)\wedge\{s_n(M)\le \tau/\sqrt{n}\}).
\]
Taking expectations on each side and rearranging yields the claim.
\end{proof}

Fix $i\in [n]$ arbitrarily, and suppose that $\mO_i(\tau)$ holds.
We condition on the rows $\{R_{j}\}_{j\in [n]\setminus \{i\}}$ to fix $u^{(i)}$. 
We begin by finding a large set on which $u^{(i)}$ is flat, following a similar dyadic pigeonholing argument as in the proof of Lemma \ref{lem:anti_incomp}.
Letting $L_k = \{j\in [n]: 2^{-(k+1)} < |u^{(i)}_j| \le 2^{-k}$, since
\[
\delta \theta n \le |\mN(i) \cap L^+(u^{(i)})| \le  \bigg| \bigcup_{k=0}^{\ell} \mN(i)\cap L_k \bigg|
\]
for some $\ell \ll \log(\sqrt{n}/\rho)$, by the pigeonhole principle there exists $k^*\le \ell$ such that $J:= \mN(i)\cap L_{k^*}$ satisfies 
\begin{equation}	\label{incomp:Jbound}
|J| \ge \delta \theta n/\ell \gg \frac{\delta \theta n}{\log(\sqrt{n}/\rho)}.
\end{equation}
Let us denote $v= (\sig_{ij} u^{(i)}_j 1_{j\in J})_j$. 
Since $\sig_{ij} \ge \ha$ for $j\in \mN(i)$ and $|u_j^{(i)}|\gg \rho/\sqrt{n}$ for $j\in L_{k^*}$, 
\begin{equation}	\label{v:2}
\|v\| \ge \ha \|(u^{(i)})_J\| \gg \ha \rho (|J|/n)^{1/2}
\end{equation}
and since $u^{(i)}$ varies by at most a factor of $2$ on $J$, 
\begin{equation}	\label{v:infty}
\|v\|_\infty \le \|u^{(i)}1_{J}\|_\infty \le 2\|u^{(i)}\|/|J|^{1/2}.
\end{equation}
By further conditioning on the variables $\{\xi_{ij}\}_{j\notin J}$ and applying Lemma \ref{lem:anti_improved} along with the estimates \eqref{v:2}, \eqref{v:infty} we have
\begin{align*}
\pro{ |R_i\cdot u^{(i)}|\le 2\tau/\rho} 
&\ll_\kappa \frac{\tau/\rho + \|v\|_{\infty}}{\|v\|}\\
&\ll \frac{1}{\ha}\left(\frac{\tau/\rho}{\rho(|J|/n)^{1/2}} + \frac{1}{|J|^{1/2}}\right)\\
&= \frac{1}{\ha} \left(\frac{n}{|J|}\right)^{1/2} \left(\frac{\tau}{\rho^2} + \frac1{\sqrt{n}}\right). 
\end{align*}
Inserting the bound \eqref{incomp:Jbound} and undoing all of the conditioning, we have shown
\[
\pro{ \mO_i(\tau)\wedge \bigg\{ |R_i\cdot u^{(i)}|\le \frac{2\tau}{\rho}\bigg\}}
\ll_\kappa \frac{1}{\ha\sqrt{\delta \theta}} \left(\frac{\tau}{\rho^2} + \frac1{\sqrt{n}}\right) \log^{1/2}(\sqrt{n}/\rho).
\]
Since the right hand side is uniform in $i$, applying Lemma \ref{lem:overlap} (taking $c_1=c_0/2$) and substituting the expression for $\theta$ we have
\begin{equation}
\pro{\good(\tau)\wedge\Big\{ s_n(M)\le \frac{\tau}{\sqrt{n}}\Big\}} 
\ll_\kappa \frac{1}{\ha^4\delta^2} \left(\frac{\tau}{\rho^2} + \frac1{\sqrt{n}}\right) \log^{1/2}(\sqrt{n}/\rho)
\end{equation}
for all $\tau\ge0$ (note that this bound is only nontrivial when $\tau\le \rho^2$, in which case our constraint $\tau\le \rho Kn$ from \eqref{LB:goodtau} holds).
The bound \eqref{bound:super2} now follows by substituting the lower bound \eqref{incomp:rholb} on $\rho$ and the bound \eqref{LB:goodtau} on $\good(\tau)^c$ (which is dominated by the $O(n^{-1/2}\log^{1/2}n)$ term).
This concludes the proof of Theorem \ref{thm:super}.

\section{Invertibility under diagonal perturbation: Proof of main theorem}	\label{sec:diag}

In this final section we prove Theorem \ref{thm:main}. See Section \ref{sec:ideas} for a high level discussion of the main ideas. 
In Sections \ref{sec:tools} and \ref{sec:op} we collect the main tools of the proof: the regularity lemma, the Schur complement bound, and bounds on the operator norm of random matrices.
In Section \ref{sec:decomp} we apply the regularity lemma to decompose the standard deviation profile $A$ into a bounded number of submatrices enjoying various properties. 
In Section \ref{sec:highlevel} we apply the decomposition to prove Theorem \ref{thm:main}, on two technical lemmas, and in the final sections we prove these lemmas.

\subsection{Preliminary Tools}		\label{sec:tools}

We begin by stating a version of the regularity lemma suitable for our purposes.
Recall that in Theorem \ref{thm:broad} we associated the standard deviation profile $A$ with a bipartite graph. 
Here it will be more convenient to associate $A$ with a directed graph.
That is, to a non-negative square matrix $A=(\sig_{ij})_{1\le i,j\le n}$ we associate a directed graph $\Gamma_A$ on vertex set $[n]$ having an edge $i\rightarrow j$ when $\sig_{ij}>0$ (note that we allow $\Gamma_A$ to have self-loops, though the diagonal of $A$ will have a negligible effect on our arguments).
The notation \eqref{def:nbhd}--\eqref{def:edges} extends to this setting.
Additionally, we denote the \emph{density} of the pair $(I,J)$
\[
\rho_A(I,J):= \frac{e_A(I,J)}{|I||J|}.
\]

\begin{definition}[Regular pair]	\label{def:regular.pair}
Let $A$ be an $n\times n$ matrix with non-negative entries. 
For $\eps>0$, we say that a pair of vertex subsets $I,J\subset [n]$ is \emph{$\eps$-regular for $A$} if for every $I'\subset I, J'\subset J$ satisfying 
\[
|I'|> \eps |I|, \quad |J'|> \eps|J|
\]
we have
\[
|\rho_A(I',J')-\rho_A(I,J)|<\eps.
\]
\end{definition}

The following is a version of the regularity lemma for directed graphs which follows quickly from a stronger result of Alon and Shapira \cite[Lemma 3.1]{AlSh:testing}.
Note that \cite[Lemma 3.1]{AlSh:testing} is stated for directed graphs without loops, which in the present setting means that it only applies to matrices $A$ with diagonal entries equal to zero. 
However, Lemma \ref{lem:regularity} follows from applying \cite[Lemma 3.1]{AlSh:testing} to the matrix $A'$ formed be setting the diagonal entries of $A$ to zero, and noting that the diagonal has a negligible impact on the edge densities $\rho_A(I,J)$ when $|I|,|J|\gg n$. 

\begin{lemma}[Regularity Lemma] 	\label{lem:regularity}
Let $\eps>0$.
There exists $m_0\in \N$ with $\eps^{-1}\le m_0\ll_{\eps}1$ such that for all $n$ sufficiently large depending on $\eps$, for every $n\times n$ non-negative matrix $A$ there is a partition of $[n]$ into $m_0+1$ sets $I_0,I_1,\dots, I_{m_0}$ with the following properties:
\begin{enumerate}[(1)]
\item $|I_0|<\eps n$;
\item $|I_1|=|I_2|=\cdots =|I_{m_0}|$;
\item all but at most $\eps m_0^2$ of the pairs $(I_k,I_l)$ are $\eps$-regular for $A$. 
\end{enumerate}
\end{lemma}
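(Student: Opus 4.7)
The plan is to deduce this directly from the directed-graph regularity lemma of Alon and Shapira, \cite[Lemma 3.1]{AlSh:testing}, treating the diagonal entries of $A$ as a negligible perturbation. Let $A'$ denote the matrix obtained from $A$ by zeroing its diagonal; then the associated digraph $\Gamma_{A'}$ has no self-loops, so the Alon--Shapira result applies directly. I would invoke it with auxiliary parameter $\eps' := \eps/2$, producing a partition $[n] = I_0 \sqcup I_1 \sqcup \cdots \sqcup I_{m_0}$ with $(\eps')^{-1} \le m_0 \ll_\eps 1$, $|I_0| < \eps' n$, equal part sizes $|I_1| = \cdots = |I_{m_0}|$, and all but at most $\eps' m_0^2$ of the pairs $(I_k, I_l)$ being $\eps'$-regular for $A'$.

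It remains to transfer $\eps'$-regularity from $A'$ to $\eps$-regularity for $A$. The edge counts satisfy $|e_A(I',J') - e_{A'}(I',J')| \le |I' \cap J'| \le \min(|I'|,|J'|)$ for any $I', J' \subset [n]$, so the corresponding densities differ by at most $1/\max(|I'|,|J'|)$. Now suppose $(I_k, I_l)$ is $\eps'$-regular for $A'$ (with $k,l \ge 1$), and pick any $I' \subset I_k$, $J' \subset I_l$ with $|I'| > \eps|I_k|$ and $|J'| > \eps|I_l|$. Since $\eps > \eps'$, these sets also satisfy the $\eps'$-regularity thresholds, whence $|\rho_{A'}(I',J') - \rho_{A'}(I_k, I_l)| < \eps'$. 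Noting that $|I'|, |J'| \ge \eps|I_k| \gs \eps n / m_0$, the density discrepancy between $A$ and $A'$ on both $(I',J')$ and $(I_k,I_l)$ is at most $O(m_0/(\eps n))$, which tends to $0$ as $n \to \infty$ with $\eps$ and $m_0$ fixed. For $n$ sufficiently large (depending only on $\eps$, since $m_0 = m_0(\eps)$), this perturbation is below $\eps'/2 = \eps/4$, and the triangle inequality yields $|\rho_A(I',J') - \rho_A(I_k, I_l)| < \eps' + \eps/2 = \eps$. Thus $(I_k, I_l)$ is $\eps$-regular for $A$, and the same counting bound $\eps' m_0^2 < \eps m_0^2$ governs the number of non-regular pairs.

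Combining these observations, the partition satisfies $|I_0| < \eps' n < \eps n$, equal positive part sizes, at most $\eps m_0^2$ pairs failing $\eps$-regularity for $A$, and $\eps^{-1} < (\eps')^{-1} \le m_0 \ll_\eps 1$, which is the conclusion. The only nontrivial input is the Alon--Shapira theorem itself; the diagonal cleanup reduces to the elementary density-perturbation estimate above, so I do not anticipate a genuine obstacle. The tower-type dependence $m_0 \ll_\eps 1$ produced by the regularity lemma is of course extremely poor, and is the source of the tower-exponential dependence of $\beta$ on parameters flagged in Remark~\ref{rmk:rdep}.
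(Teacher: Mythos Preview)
Your proposal is correct and follows exactly the approach sketched in the paper: apply the Alon--Shapira directed regularity lemma to the matrix $A'$ with zeroed diagonal, then transfer regularity back to $A$ by noting that the diagonal contributes only $O(m_0/(\eps n))=o(1)$ to each relevant edge density. You have simply fleshed out the details (the $\eps'=\eps/2$ choice and the triangle inequality bookkeeping) that the paper leaves implicit in its one-sentence justification preceding the lemma.
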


\begin{remark}
The dependence on $\eps$ of the bound $m_0\le O_{\eps}(1)$ is very bad: a tower of exponentials of height $O(\eps^{-C})$. 
Indeed, as in Szemer\'edi's proof for the setting of bipartite graphs \citep{Szemeredi:lemma}, the proof in \citep{AlSh:testing} gives such a bound with $C=5$.
It was shown by Gowers that for undirected graphs one cannot do better than $C=1/16$ in general \citep{Gowers:towers}. 
As remarked in \citep{AlSh:testing}, his argument carries over to give a similar result for directed graphs. 
\end{remark}

We will apply this in Section \ref{sec:decomp} to partition the standard deviation profile into a bounded number of manageable submatrices. 
The following elementary fact from linear algebra will be used to lift the invertibility properties obtained for these submatrices back to the whole matrix.

\begin{lemma}[Schur complement bound]		\label{lem:schur}
Let $M\in \mM_{N+n}(\C)$, which we write in block form as
$$M=\begin{pmatrix} A & B\\ C&D\end{pmatrix}$$
for $A\in \mM_{N}(\C), B\in \mM_{N,n}(\C), C\in \mM_{n,N}(\C), D\in \mM_n(\C)$.
Assume that $D$ is invertible.
Then
\begin{equation}	\label{bound:schur}
s_{N+n}(M) \ge \bigg(1+ \frac{\|B\|}{s_n(D)}\bigg)^{-1} \bigg(1+ \frac{\|C\|}{s_n(D)}\bigg)^{-1}  \min\Big( s_n(D), \;s_N(A-BD^{-1}C)\Big) .
\end{equation}
\end{lemma}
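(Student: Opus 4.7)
The plan is to exploit the block-LDU factorization
\[
M = \begin{pmatrix} I & BD^{-1} \\ 0 & I \end{pmatrix} \begin{pmatrix} A - BD^{-1}C & 0 \\ 0 & D \end{pmatrix} \begin{pmatrix} I & 0 \\ D^{-1}C & I \end{pmatrix} =: USV,
\]
which is well-defined by the invertibility of $D$ and verified by a direct multiplication. Once $M$ is written in this form, the smallest singular value factors through the three pieces via the submultiplicative chain
\[
s_{N+n}(M) = \inf_{\|x\|=1}\|USVx\| \ge s_{\min}(U)\,s_{\min}(S)\,s_{\min}(V) = \frac{s_{\min}(S)}{\|U^{-1}\|\,\|V^{-1}\|},
\]
using that $U,V$ are square and invertible so that $s_{\min}(U) = 1/\|U^{-1}\|$, and likewise for $V$.

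Next I would evaluate each of the three factors. Because $S$ is block-diagonal, $s_{\min}(S) = \min(s_N(A - BD^{-1}C),\, s_n(D))$, which is exactly the minimum appearing on the right-hand side of \eqref{bound:schur}. The inverses are themselves block-unipotent, with $U^{-1}$ the upper-triangular matrix whose off-diagonal block is $-BD^{-1}$ and $V^{-1}$ the lower-triangular matrix whose off-diagonal block is $-D^{-1}C$.

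The one estimate requiring a moment's thought is the operator-norm bound
\[
\left\|\begin{pmatrix} I & X \\ 0 & I \end{pmatrix}\right\| \le 1 + \|X\|
\]
(and its lower-triangular analogue), which I would obtain by writing the matrix as $I + N$ with $N$ the nilpotent block of operator norm $\|X\|$ and invoking the triangle inequality. Applying this with $X = -BD^{-1}$ and $X = -D^{-1}C$ respectively, and noting $\|BD^{-1}\| \le \|B\|\,\|D^{-1}\| = \|B\|/s_n(D)$ together with the analogous bound $\|D^{-1}C\| \le \|C\|/s_n(D)$, produces the two correction factors $1 + \|B\|/s_n(D)$ and $1 + \|C\|/s_n(D)$ that appear in \eqref{bound:schur}. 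Combining these ingredients gives the claim; there is no substantive obstacle, as the proof is mechanical once the Schur factorization is in hand.
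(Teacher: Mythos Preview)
Your proof is correct and is essentially the same as the paper's: both use the block-LDU factorization $M=USV$ and the triangle-inequality bound $\|I+N\|\le 1+\|N\|$ on the unipotent factors. The only cosmetic difference is that the paper inverts the factorization and bounds $\|M^{-1}\|$ from above, whereas you bound $s_{N+n}(M)$ from below via $s_{\min}(USV)\ge s_{\min}(U)\,s_{\min}(S)\,s_{\min}(V)$; since $s_{\min}(U)=1/\|U^{-1}\|$ for invertible $U$, these are the same computation.
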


\begin{proof}
From the identity
\[
\begin{pmatrix} A & B\\ C&D\end{pmatrix}= 
\begin{pmatrix} I_N & BD^{-1} \\ 0&I_n\end{pmatrix}
\begin{pmatrix} A-BD^{-1}C & 0\\ 0&D\end{pmatrix}
\begin{pmatrix} I_N & 0\\ D^{-1}C&I_n\end{pmatrix}
\]
we have
\begin{align*}
\begin{pmatrix} A & B\\ C&D\end{pmatrix}^{-1} &= 
\begin{pmatrix} I_N & 0\\ -D^{-1}C&I_n\end{pmatrix}
\begin{pmatrix} (A-BD^{-1}C)^{-1} & 0\\ 0&D^{-1}\end{pmatrix}
\begin{pmatrix} I_N & -BD^{-1} \\ 0&I_n\end{pmatrix}.
\end{align*}
We can use the triangle inequality to bound the operator norm of the first and third matrices on the right hand side by $1+\|BD^{-1}\|$ and $1+\|CD^{-1}\|$, respectively. 
Now by sub-multiplicativity of the operator norm,
\begin{align*}
\|M^{-1}\| &\le (1+\|BD^{-1}\|)(1+\|D^{-1}C\|) \max(\|(A-BD^{-1}C)^{-1}\|,\|D^{-1}\|) \\
&\le \bigg( 1+ \frac{\|B\|}{s_n(D)}\bigg)\bigg( 1+ \frac{\|C\|}{s_n(D)}\bigg) \max(\|(A-BD^{-1}C)^{-1}\|,\|D^{-1}\|).
\end{align*}
The bound \eqref{bound:schur} follows after taking reciprocals. 
\end{proof}

\subsection{Control on the operator norm}		\label{sec:op}

The following lemma summarizes the control we will need on the operator norm of submatrices and products of submatrices of $M$.

\begin{lemma}[Control on the operator norm]	\label{lem:opcontrol}
Let $\xi\in \C$ be a centered random variable with $\e|\xi|^{4+\eta}\le 1$ for some $\eta\in (0,1)$. 
Let $\asp \in (0,1)$. 
Then the following hold for all $n\ge 1$:
\begin{enumerate}[(a)]
\item (Control for sparse matrices) 
If $A\in \mM_n([0,1])$ is a fixed matrix and $X=(\xi_{ij})$ is an $n\times n$ matrix of iid copies of $\xi$, then 
\begin{equation}
\|A \circ X\| \ll \tau \sqrt{n}
\end{equation}
except with probability $O_\tau(n^{-\eta/8})$, where $\tau=\tau(A)\in [0,1]$ is any number such that
\begin{equation}
\sum_{k=1}^n a_{ik}^2, \, \sum_{k=1}^n a_{kj}^2 \le \tau^2 n
\end{equation}
for all $i,j\in [n]$, and 
\begin{equation}	\label{4th.mom.bd}
\sum_{i,j=1}^n a_{ij}^4 \le \tau^4 n^2.
\end{equation}

\item (Control for matrix products) 
Let $m\in [\asp n,n]$.
If $A\in \mM_{n,m}([0,1])$ and $D\in \mM_{m, n}(\C)$ are fixed matrices with $\|D\|\le 1$, and $X=(\xi_{ij})$ is an $n\times m$ matrix of iid copies of $\xi$, then 
\begin{equation}	\label{mbp:2}
\|D(A\circ X)\| \ll_\eta 
\sqrt{m}
\end{equation}
except with probability $O_{\asp }(n^{-\eta/8})$.
\end{enumerate}

\end{lemma}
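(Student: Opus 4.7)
The plan is to handle both parts by appeal to known operator norm bounds for random matrices, combined with a truncation step to accommodate the finite-moment hypothesis on $\xi$.

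For part (a), the natural tool is Lata\l a's inequality: for a matrix $Y=(y_{ij})$ of independent centered entries,
\[
\e\|Y\|\ll \max_i \Big(\sum_j \e|y_{ij}|^2\Big)^{1/2} + \max_j \Big(\sum_i \e|y_{ij}|^2\Big)^{1/2} + \Big(\sum_{i,j}\e|y_{ij}|^4\Big)^{1/4}.
\]
Applied to $Y = A\circ X$, the three terms are $\max_i(\sum_j a_{ij}^2)^{1/2}(\e|\xi|^2)^{1/2}\le \tau\sqrt n$, the analogous column bound $\le\tau\sqrt n$, and $(\sum_{i,j}a_{ij}^4\,\e|\xi|^4)^{1/4}\le\tau\sqrt n$; the last bound uses the hypothesis \eqref{4th.mom.bd} together with $\e|\xi|^4\le(\e|\xi|^{4+\eta})^{4/(4+\eta)}\le 1$. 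To upgrade from expectation to the desired polynomial tail $O(n^{-\eta/8})$, I would truncate the entries at level $T = n^{1/2}$, splitting $\xi_{ij} = \xi_{ij}\un(|\xi_{ij}|\le T) + \xi_{ij}\un(|\xi_{ij}|>T)$. The tail contribution vanishes identically except on an event of probability at most $n^2\pr(|\xi|>T)\le n^2 T^{-(4+\eta)}= n^{-\eta/2}\ll n^{-\eta/8}$. For the truncated, recentered part (the mean shift is of size $\le n\cdot T^{-(3+\eta)}=o(1)$, easily absorbed into an operator norm error), the entries are now bounded by $T$, and the bound just obtained can be converted to a tail estimate via either a higher-moment version of Lata\l a's inequality or a Talagrand concentration inequality for operator norms of matrices with bounded independent entries.

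For part (b), the key tool is Vershynin's theorem (Theorem \ref{thm:vershynin}) on the operator norm of products of random matrices. The matrix $A\circ X$ has independent entries that are mean zero and satisfy $\e|a_{ij}\xi_{ij}|^{4+\eta}\le \e|\xi|^{4+\eta}\le 1$ since $a_{ij}\in[0,1]$; in other words, $A\circ X$ is an $n\times m$ matrix with independent entries of uniformly bounded $(4+\eta)$-th moment. Plugging this, together with the deterministic $m\times n$ matrix $D$ satisfying $\|D\|\le 1$, into Vershynin's product bound immediately yields $\|D(A\circ X)\|\ll_\eta\sqrt m$ off an event of the claimed small probability.

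The main obstacle I expect is in part (a): converting Lata\l a's expectation bound into a polynomial tail bound with the specific exponent $n^{-\eta/8}$. The exponent has enough slack that a standard moment bound combined with truncation at scale $T = n^{1/2}$ should suffice, but some careful bookkeeping is required to match the constants and to handle the asymmetry between the $\eta$-loss at truncation versus in the concentration step. A secondary subtlety is whether Vershynin's theorem as cited applies directly to matrices with merely independent (rather than iid) entries; if it is stated only in the iid case, one would need a short extension based on the observation that the entries of $A\circ X$ are independent mean-zero variables whose $(4+\eta)$-th moments are uniformly bounded by those of $\xi$.
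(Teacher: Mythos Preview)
Your overall strategy matches the paper's: Lata\l a for part (a), Vershynin for part (b), truncation of the entries, and Talagrand's concentration inequality to pass from an expectation bound to a tail bound. However, there are two concrete issues in your execution.

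First, in part (a) your truncation level $T=n^{1/2}$ is exactly the threshold at which Talagrand's inequality yields nothing. The operator norm is a convex $1$-Lipschitz function of the entries (in the Hilbert--Schmidt metric), so for entries bounded by $T$ Talagrand gives
\[
\pr\big(\|A\circ X'\|-\e\|A\circ X'\|\ge \tau\sqrt n\big)\ll \exp\big(-c\tau^2 n/T^2\big),
\]
which with $T=n^{1/2}$ is $\exp(-c\tau^2)$, a constant. The paper truncates instead at $T=n^{1/2-\eta_0}$ with $\eta_0=\min(1/4,\eta/32)$: this still gives $\pr\big(\exists\,i,j:|\xi_{ij}|>T\big)\le n^2\cdot n^{-(4+\eta)(1/2-\eta_0)}\le n^{-\eta/8}$ by union bound, while the Talagrand term becomes $\exp(-c\tau^2 n^{2\eta_0})$, which is overwhelmingly small. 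Your remark about ``asymmetry between the $\eta$-loss at truncation versus in the concentration step'' suggests you sensed this trade-off, but the specific choice $T=n^{1/2}$ does not work.

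Second, in part (b) you assert that Vershynin's theorem ``immediately yields'' the bound \emph{with the claimed small probability}, but as stated in the paper (Theorem~\ref{thm:vershynin}) it is purely an expectation bound: $\e\|DY\|\ll_\eta \sqrt n+\sqrt m$. You therefore need the same truncation-plus-Talagrand argument here as well; the paper carries this out with truncation at $(n\sqrt m)^{1/3-\eta_1}$ for a suitable $\eta_1>0$, again balancing the union bound against the concentration scale. Your worry about independent versus iid entries is a non-issue: Theorem~\ref{thm:vershynin} is stated for independent centered entries with uniformly bounded $(4+\eta)$-th moments, which covers $A\circ X$ directly (after splitting into real and imaginary parts).
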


\begin{remark}
The probability bounds in the above lemma can be improved under higher moment assumptions on $\xi$, and improve to exponential bounds under the assumption that $\xi$ is sub-Gaussian (see \eqref{subgaussian}).
\end{remark}

We will use standard truncation arguments to deduce Lemma \ref{lem:opcontrol} from the following bounds on the expected operator norm of random matrices due to Lata\l a and Vershynin.

\begin{theorem}[Lata\l a \citep{Latala}]	\label{thm:latala}
Let $n,m$ be sufficiently large and let $Y$ be an $n\times m$ random matrix with independent, centered entries $Y_{ij}\in \R$ having finite fourth moment. Then
\begin{equation}
\e \|Y\| \ll \max_{i\in [n]} \Bigg( \sum_{j=1}^m \e Y_{ij}^2 \Bigg)^{1/2} + \max_{j\in [m]} \Bigg(\sum_{i=1}^n \e Y_{ij}^2 \Bigg)^{1/2} + \Bigg( \sum_{i=1}^n\sum_{j=1}^m \e Y_{ij}^4\Bigg)^{1/4}.
\end{equation}

\end{theorem}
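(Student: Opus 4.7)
The plan is to follow Latała's original approach via the trace moment method combined with truncation. Write $\sig_{ij}^2 = \e Y_{ij}^2$ and $b_{ij}^4 = \e Y_{ij}^4$, and denote
\[
R = \max_{i} \Big(\sum_{j} \sig_{ij}^2\Big)^{1/2}, \quad C = \max_{j}\Big(\sum_{i}\sig_{ij}^2\Big)^{1/2}, \quad M = \Big(\sum_{i,j} b_{ij}^4\Big)^{1/4}.
\]
By a standard symmetrization argument (replacing $Y$ with $Y - Y'$ for an independent copy $Y'$, at the cost of an absolute constant factor, then Rademacher-reducing), I may assume each $Y_{ij}$ is symmetric. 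Then I truncate: fix a threshold $t$ to be optimized and write $Y_{ij} = Y_{ij}^\leq + Y_{ij}^>$ where $Y_{ij}^\leq = Y_{ij}\un(|Y_{ij}| \le t)$ and $Y_{ij}^>$ is the complement (symmetry preserves centering). For the large part, the crude bound $\|Y^>\| \le \|Y^>\|_{\HS}$ with Jensen gives $\e\|Y^>\| \le (\sum_{ij}\e|Y_{ij}^>|^2)^{1/2} \le (\sum_{ij} b_{ij}^4/t^2)^{1/2} = M^2/t$.

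For the truncated part, I apply the moment method. For any integer $p \ge 1$,
\[
\e\|Y^\leq\|^{2p} \le \e\tr\big((Y^\leq (Y^\leq)^\tran)^p\big) = \sum \e \prod_{k=1}^{p} Y^\leq_{i_k j_k} Y^\leq_{i_{k+1} j_k},
\]
where the sum runs over closed walks $(i_1, j_1, i_2, j_2, \dots, i_{p+1}=i_1)$ of length $2p$ in the bipartite graph on $[n]\sqcup [m]$. By independence and centering, each walk's contribution vanishes unless every bipartite edge $\{i,j\}$ used by the walk is traversed an even number of times (at least twice). I would then classify walks by their underlying bipartite multigraph shape, encoded (following Latała) as a rooted tree-like structure together with ``excess'' data recording any extra edge repetitions creating cycles.

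For a tree-like walk visiting $\nu$ vertices and $\nu-1$ distinct edges (each used exactly twice), the expectation factors into a product of $\sig_{ij}^2$'s along the edges, and summing over labelings telescopes neighborhood by neighborhood to produce a bound that alternates between row sums $\le R^2$ and column sums $\le C^2$, giving total mass $\ll (C_1 p)^{p}(R+C)^{2p}$ after accounting for the number of shapes. For walks with $c$ excess edge repetitions, one uses the truncation bound $|Y^\leq_{ij}| \le t$ to absorb the extra factors, paying $t^{2c}$ but saving entropy from the reduced number of free vertices; the combinatorics show the geometric series in $t^2/(RC\cdot p)$ converges when $t \asymp R+C$. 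Taking $2p$-th roots, choosing $t = R+C$ and $p \asymp \log(n+m)$, and combining with the tail bound $M^2/t = M^2/(R+C)$ yields $\e\|Y\| \ll R + C + M$ after noting that either $M \le R+C$ (and we are done) or $M > R+C$ (in which case one re-optimizes the truncation at $t = M$ and the tail term contributes $M$ directly).

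The main obstacle is the combinatorial analysis of walk shapes: controlling simultaneously the number of distinct shapes of a given ``excess'' and the telescoping of variance products along tree-like portions. Latała's clever encoding assigns each closed walk to a canonical plane tree decorated with pairing data, and bounds the number of such encodings by an explicit recursion. Carrying out this counting rigorously — in particular, ensuring the constants in the $(C_1 p)^p$ shape-count do not spoil the $2p$-th root — is the delicate technical heart of the proof, and is where one recovers the sharp dependence $R+C+M$ rather than a weaker logarithmic-loss bound.
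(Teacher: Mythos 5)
This statement is not proved in the paper at all: it is Lata\l a's theorem, quoted verbatim from \citep{Latala} and used purely as a black box in the proof of Lemma \ref{lem:opcontrol}(a). So there is no internal proof to compare against; what can be assessed is whether your sketch would actually deliver the theorem, and as written it does not. The overall plan (symmetrization, truncation, trace moment method, classification of closed walks by even-multiplicity bipartite multigraphs, tree-like shapes giving alternating row/column variance sums, excess traversals absorbed by the truncation level) is the right family of ideas, but the decisive step --- counting shapes of a given excess and verifying that the entropy gain beats the factor $t^{2c}$ \emph{uniformly} in the relative sizes of $R$, $C$ and $M$ --- is exactly what you defer to ``Lata\l a's clever encoding,'' and that is where the theorem lives; without it the argument is an outline, not a proof.

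Beyond the deferral, two of your quantitative claims are wrong or unjustified as stated. First, the asserted convergence of the geometric series in $t^{2}/(RC\,p)$ with $t\asymp R+C$ fails when $R$ and $C$ are of very different orders: if $R\gg C$ then $(R+C)^{2}/(RCp)\approx R/(Cp)$, which need not be small, so the excess-edge contributions are not controlled by this argument in general. Second, the fallback ``re-optimize at $t=M$ when $M>R+C$'' only fixes the tail term $\e\|Y^{>}\|\le M^{2}/t$; it simultaneously inflates the truncated part, since each excess traversal now costs $t^{2}=M^{2}$, and nothing in the sketch shows the walk-count savings absorb $M^{2c}$ --- yet $M>R+C$ is precisely the regime in which the fourth-moment term is the point of the theorem. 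Until the shape-counting recursion is carried out and these two regimes are handled with explicit constants surviving the $2p$-th root at $p\asymp\log(n+m)$, the bound $\e\|Y\|\ll R+C+M$ has not been established.
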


\begin{theorem}[Vershynin \citep{Vershynin:product}]	\label{thm:vershynin}
Let $\eta\in (0,1)$ and $n,m,N$ sufficiently large natural numbers.
Let $D\in \mM_{m,N}(\R)$ be a deterministic matrix satisfying $\|D\|\le 1$ and $Y\in \mM_{N,n}(\R)$ be a random matrix with independent centered entries $Y_{ij}$ satisfying $\e |Y_{ij}|^{4+\eta}\le 1$.
Then
\begin{equation}
\e \|DY\| \ll_\eta \sqrt{n} + \sqrt{m}.
\end{equation}
\end{theorem}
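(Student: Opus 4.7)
The plan is to combine truncation with an $\eps$-net argument for the small, bounded part of $Y$, and to use a Frobenius-norm computation that exploits $\rank(D)\le m$ for the heavy-tailed part. First I would pass to the duality formulation $\|DY\|=\sup_{u\in S^{n-1},\,v\in S^{m-1}}\langle Yu,\,D^{\tran}v\rangle$. Since $\|D\|\le 1$, the set $\mathcal{E}:=D^{\tran}(S^{m-1})$ is an ellipsoid lying in the unit ball of an at-most-$m$-dimensional subspace of $\R^N$; the real analogue of Lemma \ref{lem:net} furnishes a $1/4$-net $\Sigma_{\mathcal{E}}$ of $\mathcal{E}$ with $|\Sigma_{\mathcal{E}}|\le C^m$, and $S^{n-1}$ has a $1/4$-net $\Sigma_u$ with $|\Sigma_u|\le C^n$. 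A standard approximation argument reduces the task to controlling $\max_{u\in\Sigma_u,\,w\in\Sigma_{\mathcal{E}}}|\langle Yu,w\rangle|$.

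Next I would truncate at a threshold $t_0$ to be optimized, decomposing $Y=Y^s+\wY$ with
\[
Y^s_{ij}:=Y_{ij}\mathbf{1}_{|Y_{ij}|\le t_0}-\e\bigl[Y_{ij}\mathbf{1}_{|Y_{ij}|\le t_0}\bigr],\qquad \wY_{ij}:=Y_{ij}\mathbf{1}_{|Y_{ij}|>t_0}-\e\bigl[Y_{ij}\mathbf{1}_{|Y_{ij}|>t_0}\bigr];
\]
both pieces inherit independence and are centered, and their sum equals $Y$ since $\e Y_{ij}=0$. The bounded part $Y^s$ satisfies $|Y^s_{ij}|\le 2t_0$ and $\var(Y^s_{ij})\le 1$. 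For each fixed $(u,w)\in\Sigma_u\times\Sigma_{\mathcal{E}}$, Bernstein's inequality yields $\pr(|\langle Y^s u,w\rangle|\ge s)\le 2\exp(-c\min(s^2,s/t_0))$, so a union bound over the $\le C^{n+m}$ net pairs with $s=C'\sqrt{n+m}$ produces $\|DY^s\|\ll\sqrt{n+m}$ except on an event of probability $e^{-c(n+m)}$, provided $t_0\le c\sqrt{n+m}$; integrating the tail then gives $\e\|DY^s\|\ll\sqrt{n+m}$.

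The main obstacle is controlling $\|D\wY\|$ without picking up an unwanted factor $\sqrt{N}$. The trick is a Frobenius-norm identity that uses $\rank(D)\le m$:
\[
\e\|D\wY\|^2\;\le\;\e\|D\wY\|_{\HS}^2\;=\;\tr\!\bigl(D\,\e[\wY\wY^{\tran}]\,D^{\tran}\bigr)\;\le\;n\,t_0^{-(2+\eta)}\,\tr(DD^{\tran})\;\le\;n\,m\,t_0^{-(2+\eta)},
\]
where independence and centering of the $\wY_{ij}$ make $\e[\wY\wY^{\tran}]$ diagonal with entries at most $n\,t_0^{-(2+\eta)}$ (using $\e|Y_{ij}|^2\mathbf{1}_{|Y_{ij}|>t_0}\le t_0^{-(2+\eta)}$), and $\tr(DD^{\tran})\le m\|D\|^2\le m$. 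Choosing $t_0=(nm/(n+m))^{1/(2+\eta)}$ makes the right-hand side equal to $n+m$, so $\e\|D\wY\|\ll\sqrt{n+m}$; this choice is compatible with the constraint $t_0\le c\sqrt{n+m}$ from the previous paragraph since $nm\le(n+m)^2$ gives $(nm/(n+m))^{1/(2+\eta)}\le(n+m)^{1/(2+\eta)}\le\sqrt{n+m}$ once $n+m$ is sufficiently large. Combining the two contributions via the triangle inequality yields $\e\|DY\|\ll_\eta\sqrt{n}+\sqrt{m}$, as claimed.
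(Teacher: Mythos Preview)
The paper does not prove this theorem---it is quoted from \citep{Vershynin:product} and used as a black box---so there is no in-paper argument to compare against. That said, your attempt has a genuine gap.

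The Bernstein step for $Y^s$ does not deliver what you claim. For fixed $(u,w)$ the sum $\langle Y^su,w\rangle=\sum_{i,j}w_iu_jY^s_{ij}$ has total variance at most $1$, but each summand is only bounded by $2t_0\|w\|_\infty\|u\|_\infty$, which in the worst case over the net is of order $t_0$. Bernstein therefore gives a tail of the form $\exp\bigl(-c\min(s^2,\,s/t_0)\bigr)$, and at $s=C'\sqrt{n+m}$ the second argument of the minimum is $C'\sqrt{n+m}/t_0$. For the union bound over the $C^{n+m}$ net points to succeed you need this to be $\gg n+m$, which forces $t_0\ll 1/\sqrt{n+m}$---not merely $t_0\le c\sqrt{n+m}$ as you wrote (under that constraint $s/t_0=O(1)$ and the union bound fails outright). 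But your Frobenius estimate for $D\wY$, which is correct, requires $t_0=(nm/(n+m))^{1/(2+\eta)}\ge c\min(n,m)^{1/(2+\eta)}$, so the two constraints are incompatible whenever both $n$ and $m$ grow. Optimizing over $t_0$ in your framework gives at best $\e\|DY\|\ll\sqrt{nm}$ (take $t_0$ of constant order and use Hoeffding for $Y^s$), not $\sqrt{n}+\sqrt{m}$.

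This is not an oversight that a small tweak repairs. The obstacle you hit---that a union bound over $C^{n+m}$ points effectively demands $O(1)$-sub-Gaussian behavior of the entries, which a $(4+\eta)$-th moment hypothesis does not provide---is precisely what makes the result non-trivial, and Vershynin's argument in \citep{Vershynin:product} is considerably more involved than a single-scale truncation plus a crude net.
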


\begin{proof}[Proof of Lemma \ref{lem:opcontrol}]
We begin with (a). 
By splitting $X$ into real and imaginary parts and applying the triangle inequality we may assume $\xi$ is a real-valued random variable.
Set $\eta_0= \min(1/4,\eta/32)$ and define the product event
\begin{equation}	\label{op1:eij}
\event = \bigwedge_{i,j=1}^n \event_{ij}; \quad\quad \event_{ij} = \big\{ |\xi_{ij}| \le n^{1/2-\eta_0}\big\}.
\end{equation}
By Markov's inequality,
\begin{equation}	\label{op1:pij}
\pr( \event_{ij}^c) \le n^{-(4+\eta)(1/2-\eta_0)} \le n^{-1}
\end{equation}
for all $i,j\in [n]$.
By the union bound,
\begin{equation}	\label{op1:event}
\pr(\event^c) \le n^2 n^{-(4+\eta)(1/2-\eta_0)} \le n^{-\eta/8}.
\end{equation}
We denote
\[
X' = (\xi_{ij}')  = (\xi_{ij} - \e\xi_{ij}\un_{\event_{ij}}) = X - \e (X\un_{\event}).
\]

First we show 
\begin{equation}	\label{op1:AeX}
\|A\circ \e(X\un_{\event})\| \ll \tau \sqrt{n}.
\end{equation}
Since the variables $\xi_{ij}$ are centered, 
$
|\e(\xi_{ij}\un_{\event_{ij}})|  =  | \e (\xi_{ij} \un_{\event_{ij}^c})|.
$
By two applications of H\"older's inequality and \eqref{op1:pij}, 
\[
|\e (\xi_{ij} \un_{\event_{ij}^c})| \le (\e |\xi_{ij}|^4)^{1/4} \pr(\event_{ij}^c)^{3/4} \le n^{-3/4}.
\]
Thus,
\begin{equation}
\|A\circ \e(X\un_{\event})\| \le \|A\circ \e(X\un_{\event})\|_{\HS} \le n^{-3/4} \|A\|_{\HS} \le \tau n^{1/4}
\end{equation}
which yields \eqref{op1:AeX} with room to spare.

Now from \eqref{op1:event}, \eqref{op1:AeX} and the triangle inequality it is enough to show
\begin{equation}	\label{op1:goal1}
\pro{\event \wedge\big\{ \|A\circ X'\| \ge C\tau \sqrt{n} \big\}} = O_\tau(n^{-\eta/8})
\end{equation}
for a sufficiently large constant $C>0$ (we will actually show an exponential bound).
First note that the variables $\xi_{ij}'\un_{\event_{ij}}$ are centered and satisfy $\e |\xi_{ij}'\un_{\event_{ij}}|^4 = O(1)$. 
It follows from Theorem \ref{thm:latala} that
\begin{align*}
\e\un_\event  \|A\circ X'\|
&\ll 
\max_{i\in [n]} \Bigg( \sum_{j=1}^n  a_{ij}^2 \Bigg)^{1/2} + \max_{j\in [n]} \Bigg(\sum_{i=1}^n  a_{ij}^2 \Bigg)^{1/2} + \Bigg( \sum_{i,j=1}^n  a_{ij}^4\Bigg)^{1/4}\\
&\ll \tau \sqrt{n}.
\end{align*}
Thus, \eqref{op1:goal1} will follow if we can show
\begin{equation}	\label{op1:goal2}
\pro{ \|A\circ X'\|\un_\event  - \e \|A\circ X'\| \un_\event\ge \tau \sqrt{n} } = O_\tau(n^{-\eta/8}).
\end{equation}
This in turn follows in a routine manner from Talagrand's inequality \cite[Theorem 6.6]{Talagrand:newlook} (see also \cite[Corollary 4.4.11]{AGZ:book}): 
Observe that $X\mapsto \|A\circ X\|$ is a convex and 1-Lipschitz function on the space $\mM_n(\R)$ equipped with the (Euclidean) Hilbert--Schmidt metric. 
Since the matrix $X'\un_\event$ has centered entries that are bounded by $O(n^{1/2-\eta_0})$, Talagrand's inequality gives that the left hand side of \eqref{op1:goal2} is bounded by 
\begin{equation}
O\big(\exp(-c\tau^2 n/(n^{1/2-\eta_0})^2)\big) = O\big( \exp(-c \tau^2 n^{2\eta_0})\big) 
\end{equation}
which gives \eqref{op1:goal2} with plenty of room.

Now we turn to part (b). 
The proof follows a very similar truncation argument to the one in part (a), so we only indicate the necessary modifications.
As before, by splitting $D$ and $X$ into real and imaginary parts and applying the triangle inequality we may assume $D$ and $X$ are real matrices. 
We define $\event$ as in \eqref{op1:eij}, with 
$
\event_{ij}= \big\{ |\xi_{ij}|\le (n\sqrt{m})^{1/3-\eta_1}\big\}
$
and 
\begin{equation}
\eta_1= \frac{1}{4}\frac{\eta}{4+\eta}.
\end{equation}
With this choice of $\eta_1$, Markov's inequality and the union bound give $\pr(\event^c) = O_{\asp} (n^{-\eta/8})$.
Taking $X'= X-\e(X\un_\event)$ as before, we can bound
$
\|D(A\circ \e(X\un_\event))\| \le \|A\circ \e(X\un_\event)\|
$
by submultiplicativity of the operator norm, and the same argument as before gives
\begin{equation}	\label{op2:det}
\|A\circ \e(X\un_\event)\| \le  nm(n\sqrt{m})^{-\frac34(4+\eta)(1/3-\eta_1)} = m^{1/2-\eta/32} = o(\sqrt{m}).
\end{equation}
Since $X'\un_{\event}$ has centered entries with finite moments of order $4+\eta$, by Theorem \ref{thm:vershynin} we have
\begin{equation}	\label{op2:e}
\e \|D(A\circ X'\un_\event)\| \ll_\eta \sqrt{m}.
\end{equation}
The mapping $X\mapsto \|D(A\circ X)\|$ is convex and 1-Lipschitz with respect to the Hilbert--Schmidt metric on $\mM_{n}(\R)$ (since $\|D\|\le 1$) so using Talagrand's inequality as in part (a) we find that 
\begin{align*}
\pro{ \|D(A\circ X'\un_\event)\| - \e \|D(A\circ X'\un_\event)\| \ge \sqrt{m}} 
&\ll \expo{ -c  m / (n\sqrt{m})^{2/3-2\eta_1}}\\
&\le \expo{-c'(\asp)  n^{c\eta } }
\end{align*}
for some constant $c>0$ and $c'(\asp)>0$ sufficiently small depending on $\asp$. 
As the last line is bounded by $O_{\asp}(n^{-\eta/8})$, the result follows from the above, \eqref{op2:det}, \eqref{op2:e} and the triangle inequality by the same argument as for part (a). 
\end{proof}

\subsection{Decomposition of the standard deviation profile}	\label{sec:decomp}

We now begin the proof of Theorem \ref{thm:main}, which occupies the remainder of the paper. 
In the present subsection we prove Lemma \ref{lem:decomp} below, which shows that the standard deviation profile $A$ can be partitioned into a bounded collection of submatrices with certain nice properties.
For the motivation behind this lemma (and the notation $J_{\free},J_{\cyc}$) see Section \ref{sec:ideas}.

\begin{lemma}	\label{lem:decomp}
Let $A$ be an $n\times n$ matrix with entries $a_{ij}\in [0,1]$.
Let $\eps,\delta ,\ha\in (0,1)$, and assume $\eps$ is sufficiently small depending on $\delta $.
There exists $0\le \me \ll_\eps1$, a partition 
\begin{align}
[n]&=J_{\badd}\cup J_{\free}\cup J_{\cyc}	\notag\\
&= J_{\badd}\cup J_{\free}\cup J_1\cup\cdots \cup J_{\me }	\label{decomp:bf1}
\end{align}
and a set $F\subset[n]^2$ satisfying the following properties:
\begin{enumerate}[(1)]
\item\label{decomp:1} $\eps n\ll |J_{\badd}|\ll \delta^{1/2} n$.
\item\label{decomp:F} $|F| \ll \delta n^2$, and for all $i\in J_{\free}$,
\begin{equation}	\label{decomp:Frows}
|\{j\in J_{\free}: (i,j) \in F\}|, \; |\{j\in J_{\free}: (j,i) \in F\}| \le \delta^{1/2} n.
\end{equation}
\item\label{decomp:2} If $J_{\free}\ne\varnothing$ then there is a permutation $\tau:J_{\free}\to J_{\free}$ such that for all $(i,j)\in J_{\free}\times J_{\free} \setminus F$ with $\tau(i)\ge \tau(j)$, $\sig_{ij} <\ha$.
\item\label{decomp:3} If $\me \ge 1$ then
\begin{equation}
|J_1|=\cdots = |J_{\me }| \gg_\eps n
\end{equation}
and there is a permutation $\pi:[\me ]\to [\me ]$ such that for all $1\le k\le \me $, $A(\ha)_{J_k,J_{\pi(k)}}$ is $(2\delta,2\eps)$-super-regular 
(see Definition \ref{def:super}).
\end{enumerate}
\end{lemma}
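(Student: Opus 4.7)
The plan is to apply the regularity lemma to partition $[n]$ according to the density structure of $A$, read off a combinatorial decomposition of the resulting reduced digraph into a cycle cover plus an acyclic remainder, and then perform several cleaning steps to enforce super-regularity, the topological-order condition on $J_{\free}$, and the sparsity bounds on $F$ simultaneously.

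First I would invoke Lemma \ref{lem:regularity} with parameter $\eps$ (sufficiently small depending on $\delta$) to produce a partition $[n] = I_0 \cup I_1 \cup \cdots \cup I_{m_0}$ with $|I_0| < \eps n$, $|I_1| = \cdots = |I_{m_0}| =: q$, and all but $\eps m_0^2$ of the ordered pairs $(I_k, I_l)$ being $\eps$-regular for $A$. Define the \emph{reduced digraph} $\mR$ on $[m_0]$ to have an edge $k \to l$ precisely when $(I_k, I_l)$ is $\eps$-regular and $\rho_{A(\ha)}(I_k, I_l) \ge 3\delta$ (with self-loops permitted). I would then greedily extract a maximal vertex-disjoint family of directed cycles in $\mR$, producing a subset $U_{\cyc} \subset [m_0]$ together with a cyclic permutation $\pi$ of $U_{\cyc}$ sending each $k$ to its successor. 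By maximality, the subdigraph induced on $U_{\free} := [m_0] \setminus U_{\cyc}$ is acyclic and thus admits a topological ordering, which I would lift to the required permutation $\tau$ on $J_{\free}$ by ordering blocks according to topological order and breaking ties within each block arbitrarily.

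Next I would clean. For each $k \in U_{\cyc}$, $\eps$-regularity of $(I_k, I_{\pi(k)})$ and $(I_{\pi^{-1}(k)}, I_k)$ guarantees that removing the $\le \eps q$ vertices $i \in I_k$ with $|\mN_{A(\ha)}(i) \cap I_{\pi(k)}| < (3\delta - \eps) q$ and symmetrically the $\le \eps q$ column-exceptional vertices, then trimming to a common size $N \ge (1-2\eps) q$, yields blocks $J_1, \ldots, J_{\me}$ for which $(J_k, J_{\pi(k)})$ is $(2\delta, 2\eps)$-super-regular (condition (3) of Definition \ref{def:super} follows from regularity applied to subsets of size $\ge \eps q$, and conditions (1)--(2) from the cleaning). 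For each $k \in U_{\free}$, the averaging bound $\sum_{l \in U_{\free}} |\{i \in I_k : |\mN_{A(\ha)}(i) \cap I_l| > (3\delta + \eps)|I_l|\}| \le \eps m_0 q$ together with Markov's inequality lets me remove at most $\sqrt{\eps} q$ vertices from $I_k$ so that each remaining vertex is exceptional for at most $\sqrt{\eps} m_0$ low-density regular pairs; I would do the analogous column cleaning, and discard wholesale the $\le 2\sqrt{\eps} m_0$ blocks involved in more than $\sqrt{\eps} m_0$ irregular pairs.

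Finally, I set $J_{\badd}$ to include $I_0$ and all vertices removed above, and let $F$ consist of all $(i,j) \in J_{\free}^2$ with $\sig_{ij} \ge \ha$ whose containing block-pair $(I_k, I_l)$ is either irregular or a regular pair with $A(\ha)$-density $< 3\delta$ (including the diagonal case $k = l$). Property \eqref{decomp:1} reduces to $|J_{\badd}| = O(\sqrt{\eps}\, n + \eps n) \ll \delta^{1/2} n$; $|F| \le \eps n^2 + 3\delta n^2 + O(\eps n^2) \ll \delta n^2$; the per-vertex bounds \eqref{decomp:Frows} follow from the cleaning, since each surviving vertex contributes at most $\sqrt{\eps}\, n$ from irregular pairs and $\le (3\delta + \eps + \sqrt{\eps})\, n$ from low-density regular pairs; \eqref{decomp:2} is built directly into the definition of $\tau$; and \eqref{decomp:3} is the super-regularity conclusion above. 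The main obstacle I expect is bookkeeping: making the error terms from the various cleaning steps (irregular pairs, low-density pairs, cyclic super-regularity cleaning, and trimming to common size) all fit simultaneously under the $\delta$-type thresholds requires a careful quantitative choice of $\eps$ as a sufficiently small function of $\delta$, for example $\eps \ll \delta^2$.
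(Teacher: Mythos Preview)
Your approach is essentially the same as the paper's: regularity lemma, extract a maximal vertex-disjoint cycle cover in the reduced digraph, topologically order the acyclic remainder, and clean. Two points of comparison are worth noting.

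First, your cleaning of the free part is more elaborate than necessary. The paper takes $F$ to be the union of \emph{all} pairs in non-edges of the reduced digraph, i.e.\ $F=\bigcup_{(k,l)\notin E} I_k\times I_l$, without restricting to entries with $a_{ij}\ge\ha$. Then $|F|\le(\eps+5\delta)n^2\ll\delta n^2$ directly from the counts of irregular and low-density pairs, and a single global Markov argument shows that at most $O(\delta^{1/2}n)$ vertices of $I_{\free}$ have $F$-row or $F$-column larger than $\delta^{1/2}n$; removing these gives $J_{\free}$. This avoids your per-block exceptional-vertex removal and your wholesale discarding of blocks involved in many irregular pairs.

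Second, you have omitted the lower bound $|J_{\badd}|\gg\eps n$ in property~(1). This does not come for free: if the partition is nearly perfect there may be almost nothing forced into $J_{\badd}$. The paper handles this by artificially padding the removed sets to a prescribed size rather than removing only the bad vertices: for each $k\le m$ it takes $I_k^*\supset I_k'\cup I_k''$ of size exactly $\lfloor 2\eps|I_k|\rfloor$, and takes $I_{\free}^*$ of size exactly $\min(|I_{\free}|,\lfloor 12\delta^{1/2}n\rfloor)$. Since at least one of $I_{\free}$, $\bigcup_{k\le m}I_k$ has size $\ge n/4$, this guarantees $|J_{\badd}|\ge\min\big(|I_{\free}^*|,\sum_{k\le m}|I_k^*|\big)\gg\eps n$. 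Your construction needs a similar padding step; note that you must do this carefully so as not to destroy the equal-size condition $|J_1|=\cdots=|J_m|$.
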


\begin{proof}
We begin by applying Lemma \ref{lem:regularity} to $A(\ha)$ to obtain $m_0\in \N$ with $\eps^{-1}\le m_0=O_\eps(1)$ and a partition $[n]=I_0\cup\cdots \cup I_{m_0}$ satisfying the properties in that lemma.

The partition $I_0,\dots,I_{m_0}$ is almost what we need.
In the remainder of the proof we perform a ``cleaning" procedure (as it is commonly referred to in the extremal combinatorics literature) to obtain a partition $J_0,\dots, J_{m_0}$ with improved properties, where $J_k\subset I_k$ for each $1\le k\le m_0$, and $J_0\supset I_0$ collects the leftover elements. 

We start by forming a \emph{reduced digraph} $\mR=([m_0],E)$ on the vertex set $[m_0]$ with directed edge set
\begin{equation}
E:= \Big\{ (k,l)\in [m_0]^2: (I_k,I_l) \mbox{ is $\eps$-regular and } \rho_{A(\ha)}(I_k,I_l) >5\delta\Big\}.
\end{equation}
Next we find a (possibly empty) set $T\subset [m_0]$ such that the induced subgraph $\mR(T)$ is covered by vertex-disjoint directed cycles, and the induced subgraph $\mR([m_0]\setminus T)$ is cycle-free.
Such a set can be obtained by greedily removing cycles and the associated vertices from $\mR$ until the remaining graph has no more directed cycles.
By relabeling $I_1,\dots, I_{m_0}$ we may take $T=[\me]$, where $\me\in [0, m_0]$. 

Assuming $\me \ne 0$, the fact that $\mR([\me ])$ is 
covered by vertex-disjoint cycles
is equivalent to the existence of a permutation $\pi:[\me ]\to [\me ]$ such that $(k,\pi(k))\in E$ for all $1\le k\le \me $.
Now we will obtain the sets $J_1,\dots, J_{\me }$ obeying the properties in part (\ref{decomp:3}) of the lemma.
Let $1\le k\le \me $. 
We have that $(I_k,I_{\pi(k)})$ is $\eps$-regular with density $\rho_k:= \rho_{A(\ha)} (I_k,I_{\pi(k)})>5\delta$, so if we assume $\eps\le \delta$ then for every $I\subset I_k, J\subset I_{\pi(k)}$ with $|I|,|J|\ge \eps |I_k|$, 
\begin{equation}	\label{decomp:edge}
e_{A(\ha)}(I,J) \ge (\rho_k-\eps)|I||J| \ge 4\delta |I||J|.
\end{equation}
It remains to ensure that conditions (1) and (2) from Definition \ref{def:super} also hold, which we will do by removing a small number of rows and columns.
Letting
\[
I_k' = \big\{ i\in I_k: |\mN_{A(\ha)}(i)\cap I_{\pi(k)}|<4\delta|I_k|\big\}
\]
we have $e_{A(\ha)}(I_k',I_{\pi(k)})<4\delta|I_k'||I_{\pi(k)}|$, and it follows that $|I_k'|\le \eps |I_k|$.
Similarly, letting 
\[
I_k'' = \big\{ i\in I_k: |\mN_{A(\ha)^\tran}(i)\cap I_{\pi^{-1}(k)}|<4\delta|I_k|\big\}
\]
we have $|I_k''|\le \eps |I_k|$.
Letting $I_k^*\subset I_k$ be a set of size $\lf 2\eps|I_k|\rf$ containing $I_k'\cup I_k''$, we take
\begin{equation}	\label{decomp:badcyc}
J_k= I_k\setminus I_k^*.
\end{equation}
With this definition we have $|J_1|=\cdots |J_{\me }|$, and for each $1\le k\le \me , i\in J_k$,
\begin{equation}
|\mN_{A(\ha)}(i)\cup J_{\pi(k)}|, \, |\mN_{A(\ha)^\tran}(i)\cap J_{\pi^{-1}(k)}|\ge (4\delta-2\eps)|I_k|\ge 2\delta|J_k|.
\end{equation}
Furthermore, for each $1\le k\le \me $ and $I\subset J_k, J\subset J_{\pi(k)}$ with $|I|,|J|\ge 2\eps |J_k|$,
if we assume $\eps\le 1/4$ then $|I|,|J|\ge \eps |I_k|$, so by \eqref{decomp:edge}
\begin{equation}
e_{A(\ha)}(I,J) \ge 4\delta|I||J|.
\end{equation}
It follows that for every $1\le k\le \me $ the submatrix $A(\ha)_{J_k,J_{\pi(k)}}$ is $(2\delta,2\eps)$-super-regular, which concludes the proof of part (\ref{decomp:3}) of the lemma.

Now we prove parts (\ref{decomp:F}) and (\ref{decomp:2}). 
We will obtain $J_{\free}$ by removing a small number of bad elements from $I_{\free}:=\bigcup_{k=\me +1}^{m_0}I_k$. 
Since the induced subgraph $\mR([\me +1,m_0])$ is cycle-free we may relabel $I_{\me +1},\dots, I_{m_0}$ so that 
\begin{equation}	\label{decomp:relabel}
(k,l) \notin E\; \mbox{ for all $\me < l\le k\le m_0$}.
\end{equation}
We take
\begin{equation}
F= \big\{ (i,j)\in [n]^2: (i,j)\in I_k\times I_l  \mbox{ for some $(k,l)\notin E$}\big\}.
\end{equation}
The contribution to $F$ from irregular pairs $(I_k,I_l)$ is at most $\eps n^2$ by the regularity of the partition $I_0,\dots, I_{m_0}$, and the contribution from pairs $(I_k,I_l)$ with density less than $5\delta$ is at most $5\delta n^2$.
Hence,
\begin{equation}	\label{decomp:Fbound}
|F| \le \eps n^2+5\delta n^2 \le 6\delta n^2
\end{equation}
giving the first estimate in (\ref{decomp:F}) (recall that we assumed $\eps\le \delta$).
Setting
\begin{equation}	\label{decomp:Frows1}
I_{\free}'= \left\{ i\in I_{\free}: \max\big( |\{j\in [n]: (i,j)\in F\}|, |\{j\in [n]: (j,i)\in F\}|\big) \ge \delta^{1/2}n\right\}
\end{equation}
it follows from \eqref{decomp:Fbound} that
\begin{equation}
|I_{\free}'| \le 12\delta^{1/2}n.
\end{equation}
Let $I_{\free}^*\subset I_{\free}$ be any set containing $I_{\free}'$ of size $\min(|I_{\free}|, \lf 12 \delta^{1/2}n\rf)$ and take $J_{\free}= I_{\free}\setminus I_{\free}^*$.
The bounds \eqref{decomp:Frows} now follow immediately from \eqref{decomp:Frows1}. 
For part (\ref{decomp:2}), from \eqref{decomp:relabel} we may take for $\tau$ any ordering of the elements of $J_{\free}$ that respects the order of the sets $J_k:=I_k\setminus I_{\free}^*$, i.e. so that $\tau(j)\ge \tau(i)$ for all $i\in J_k, j\in J_l$ and all $\me <l\le k\le m_0$.

Finally, taking
\begin{equation}
J_{\badd} = I_0 \cup I_{\free}^* \cup \bigcup_{k=1}^{\me } I_k^*.
\end{equation}
we have 
\[
|J_{\badd}| \le \eps n + 12\delta^{1/2}n + 2\eps n \le 15 \delta^{1/2}n
\]
giving the upper bound in part (\ref{decomp:1}).
Now recalling that we took 
$|I_{\free}^*|= \min(|I_{\free}|, \lf 12 \delta^{1/2}n\rf)$
and $|I_k^*| = \lf 2\eps |I_k|\rf$ for all $1\le k\le \me $, we also have the lower bound
\begin{align*}
|J_{\badd}|
&\ge \min\bigg(|I_{\free}^*|, \; \bigg|\bigcup_{k=1}^{\me } I_k^*\bigg| \bigg)  \\
&\ge \min\bigg(  \lf 12\delta^{1/2}n\rf ,\, |I_{\free}|,\, 2\eps \bigg| \bigcup_{k=1}^{\me } I_k \bigg| - \me \bigg)\\
&= \min \bigg( \lf 12 \delta^{1/2}n\rf, \, \bigg| \bigcup_{k=\me +1}^{m_0} I_k \bigg| , \,
2\eps \bigg| \bigcup_{k=1}^{\me } I_k \bigg| - \me \bigg) \\
&\gg \eps n
\end{align*}
where we used that at least one of the sets $I_{\free} = \bigcup_{k=\me +1}^{m_0} I_k$, $I_{\cyc} = \bigcup_{k=1}^{\me } I_k$ must be of size at least $n/4$, say.
This gives the lower bound in part (\ref{decomp:1}) and completes the proof.
\end{proof}

\subsection{High level proof of Theorem \ref{thm:main}}	\label{sec:highlevel}

In this subsection we prove Theorem \ref{thm:main} on two lemmas (Lemmas \ref{lem:nil} and \ref{lem:cyc}) which give control on the smallest singular values of the submatrices $M_{J_{\free}}$ and (perturbations of) $M_{J_{\cyc}}$, with $J_{\free}, J_{\cyc}$ as in Lemma \ref{lem:decomp}.
The proofs of these lemmas are deferred to the remaining subsections.

By our moment assumptions on $\xi$ it follows that $\xi$ is $\kappa_0$-spread for some $\kappa_0=O(\mu_{4+\eta}^2)$ (see Remark \ref{rmk:kappap}).
By Lemma \ref{lem:wlog.kappa} and multiplying $X$ and $B$ by a phase we may assume $\xi$ has $O(\mu_{4+\eta}^2)$-controlled second moment.
Without loss of generality we may assume $\eta<1$.
We introduce parameters $\ha,\delta,\eps\in (0,1)$ to be chosen sufficiently small depending on $r_0,\eta$, and $\mu_{4+\eta}$; specifically we will have the following dependencies:
\begin{equation}	\label{depends}
\ha=\ha(r_0,\mu_{4+\eta}), \quad \delta=\delta(r_0,\eta,\mu_{4+\eta}), \quad \eps=\eps(\ha,\delta).
\end{equation}
For the remainder of the proof we assume that $n$ is sufficiently large depending on all parameters (which will only depend on $r_0,K_0, \eta$ and $\mu_{4+\eta}$). 

We begin by summarizing the control we have on the operator norm of submatrices of $A\circ X$.
%
From Lemma \ref{lem:opcontrol}(a) we have that for any fixed $B=(b_{ij})\in \mM_n([0,1])$ and any $I,J\subset[n]$ with $|I|\le |J|$,
\begin{equation}	\label{opIJ:improved}
\pro{ \|(B\circ X)_{I,J}\| \le \tau K \sqrt{|J|}} = 1-O_\tau(|J|^{-\eta/8})
\end{equation}
for some $K=O(\mu_{4+\eta})$, and any $\tau\le 1$ satisfying 
\begin{equation}
\tau \ge \frac1{|J|^{1/2}}\max\left( \max_{i \in I}\left( \sum_{j\in J} b_{ij}^2\right)^{1/2}, \;\max_{j\in J}\left( \sum_{i\in I} b_{ij}^2 \right)^{1/2}, \left( \sum_{i,j=1}^n b_{ij}^4\right)^{1/4} \right),
\end{equation}
and similarly with $|J|$ replaced by $|I|$ if $|J|\le |I|$.
In particular, taking $\tau=1$ and $B=A$ we have
\begin{align}
\|(A\circ X)_{I,J}\| &\ll_{\mu_{4+\eta}} \sqrt{\max(|I|,|J|)}	\notag \\
&\qquad \text{with probability } 1-O(\max(|I|,|J|)^{-\eta/8}).	\label{opIJ}
\end{align}
(We state \eqref{opIJ:improved} for general $B\in \mM_n([0,1])$ as at one point we will apply this to a residual matrix obtained by subtracting off a collection of ``bad" entries from $A$.)


We now apply Lemma \ref{lem:decomp} (assuming $\eps$ is sufficiently small depending on $\delta$) to obtain a partition $[n] = J_{\badd}\cup J_{\free} \cup J_{\cyc}$ and a set $F\subset[n]^2$ satisfying the properties (1)--(4) in the lemma. 
In the following we abbreviate $M_{\free}:= M_{J_{\free}}$ and $M_{\cyc}:= M_{J_{\cyc}}$.

\begin{lemma}		\label{lem:nil}
Assume $n_1:=|J_{\free}|\ge \delta^{1/2}n$.
If $\ha,\delta$ are sufficiently small depending on $r_0$ and 
$\mu_{4+\eta}$,
then
\begin{equation}
s_{n_1}(M_{\free}) \gg_{\mu_{4+\eta},r_0} \sqrt{n}
\end{equation}
except with probability $O_{\mu_{4+\eta},r_0,\delta}(n^{-\eta/9})$.
\end{lemma}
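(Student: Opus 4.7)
The plan is to exploit the block upper triangular structure of $M_{\free}$ given by Lemma~\ref{lem:decomp} together with an $O_\eps(1)$-depth iteration of the block inverse formula. Relabel the parts $J_k := I_k \cap J_{\free}$ (for $k$ ranging over the $m := m_0 - \me = O_\eps(1)$ indices in $J_{\free}$'s partition) in the topological order of the reduced digraph restricted to $I_{\free}$, so that every edge $(k, l) \in E$ has $k < l$. Consequently, for every pair $(i, j) \in J_k \times J_l$ with $k \ge l$ (in particular all block-diagonal and block-lower entries) we have $(k, l) \notin E$ and hence $(i, j) \in F$. Writing $M_{\free} = T + E$ with $T$ the block upper-triangular part (including block diagonal) and $E$ the strictly block-lower part, every nonzero entry of $E$ lies in $F \cap J_{\free}^2$, whose row and column degrees are bounded by $\delta^{1/2} n$ by property~(\ref{decomp:F}) of Lemma~\ref{lem:decomp}.

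Applying the sparse matrix bound Lemma~\ref{lem:opcontrol}(a) with parameter $\asymp \delta^{1/4}$ gives $\|E\| \ll_{\mu_{4+\eta}} \delta^{1/4} \sqrt{n}$ off an event of probability $O(n^{-\eta/8})$. Separately, a tail bound on the $n_1$ iid diagonal entries $\xi_{ii}$ using $\mu_{4+\eta} < \infty$ yields $\max_{i \in J_{\free}} |\xi_{ii}| \le n^{1/4} = o(\sqrt{n})$ off an event of probability $O(n^{-\eta/4})$. On this event the diagonal entries $M_{ii} = z_i\sqrt{n} + a_{ii}\xi_{ii}$ satisfy $|M_{ii}| \ge r_0\sqrt{n}/2$ uniformly.

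The main step is to bound $s_{\min}(T) \gg_{r_0, \mu_{4+\eta}} \sqrt{n}$ by induction on the $m$ blocks. For each diagonal block $T_{J_k, J_k} = Z_{J_k}\sqrt{n} + (A \circ X)_{J_k, J_k}$: since within-block entries lie in $F$, Lemma~\ref{lem:opcontrol}(a) gives $\|(A \circ X)_{J_k, J_k} - \mathrm{diag}\| \ll_{\mu_{4+\eta}} \sqrt{|J_k|} \le \sqrt{n/m_0}$; choosing $\eps$ small enough (hence $m_0$ large enough) depending on $r_0$ and $\mu_{4+\eta}$ makes this at most $r_0\sqrt{n}/4$, giving $s_{\min}(T_{J_k, J_k}) \ge r_0\sqrt{n}/4$. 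For the inductive step, partition
\[
T = \begin{pmatrix} T_{J_1, J_1} & U \\ 0 & T' \end{pmatrix},
\]
where $T'$ is the $(m-1)$-block upper-triangular submatrix on $J_2 \cup \cdots \cup J_m$ and $U$ is the first block-row strip. Lemma~\ref{lem:opcontrol}(a) gives $\|U\| \le C_{\mu_{4+\eta}} \sqrt{n}$; combined with $\|T_{J_1, J_1}^{-1}\| \le 4/(r_0\sqrt{n})$ and the inductive bound $\|(T')^{-1}\| \le 1/(c_{m-1}\sqrt{n})$, the block inverse formula yields
\[
\|T^{-1}\| \le \|T_{J_1, J_1}^{-1}\|\bigl(1 + \|U\| \|(T')^{-1}\|\bigr) + \|(T')^{-1}\| \le \frac{O(c_{m-1} + C_{\mu_{4+\eta}})}{r_0\, c_{m-1}\, \sqrt{n}},
\]
whence $s_{\min}(T) \ge c_m \sqrt{n}$ with $c_m \gtrsim (r_0/C_{\mu_{4+\eta}})\, c_{m-1}$. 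Combining, $s_{n_1}(M_{\free}) \ge s_{\min}(T) - \|E\| \gg_{r_0, \mu_{4+\eta}} \sqrt{n}$ provided $\delta$ is sufficiently small.

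The main obstacle is the geometric decay of $c_m$ in the Schur complement induction: after $m$ steps $c_m \gtrsim (r_0/C_{\mu_{4+\eta}})^m$, and since $m_0$ from the regularity lemma is tower-exponential in $1/\eps$ (with $\eps$ depending on $r_0$ and $\mu_{4+\eta}$), the final lower bound is tower-exponentially small in $r_0^{-1}$ and $\mu_{4+\eta}$, matching the paper's remark on the bad dependence of $\beta$ on parameters. A union bound over the $O_\eps(1)$ applications of Lemma~\ref{lem:opcontrol}(a) together with the diagonal tail estimate produces the claimed probability bound $O(n^{-\eta/9})$.
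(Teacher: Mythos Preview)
Your approach is broadly correct but takes a different (and quantitatively weaker) route from the paper's.  The paper does \emph{not} iterate the Schur complement over the $m_0-\me$ regularity blocks.  Instead it strips off \emph{all} entries that lie in $F$ together with all entries below the threshold $\ha$ (bounding the resulting error in operator norm by $O_{\mu_{4+\eta}}(\delta^{1/4}+\ha)\sqrt{n}$ via Lemma~\ref{lem:opcontrol}(a)), and then uses property~(\ref{decomp:2}) of Lemma~\ref{lem:decomp} (the permutation $\tau$) to reduce to a matrix whose profile is \emph{entry-wise} strictly upper triangular, not merely block upper triangular.  Having done so, the paper forgets the regularity partition entirely and applies the Schur complement along an \emph{arbitrary dyadic filtration} of $J_{\free}$, stopping at depth $p_0=O(\log(\mu_{4+\eta}/r_0))$ where the leaf blocks are small enough that the diagonal shift dominates.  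This yields an implied constant of size $\exp(-O(\mu_{4+\eta}/r_0)^{O(1)})$, independent of $\eps$---a point the paper explicitly highlights, since the main proof structure in Section~\ref{sec:highlevel} wants to defer fixing $\eps$ until Step~3.

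Your approach instead constrains $\eps$ in Step~1 (you need $\eps\lesssim r_0^2/C_{\mu_{4+\eta}}^2$ to make the diagonal blocks small), and then your Schur iteration has depth $m\le m_0$, which is tower-exponential in $1/\eps$.  The resulting lower bound constant $(r_0/C)^{m_0}$ is therefore tower-exponentially worse than the paper's, and formally depends on $\eps$.  This still proves the lemma as stated (since $\eps$ is ultimately determined by $r_0,\mu_{4+\eta}$), but it sacrifices the $\eps$-independence the paper flags.  A minor technical point: applying Lemma~\ref{lem:opcontrol}(a) block-by-block gives a useless probability bound when $|J_k|$ is small (which can happen since $I_{\free}^*$ may wipe out individual blocks); you should instead apply it once to the full $n_1\times n_1$ matrix with the appropriate sparse support, which gives the bound you want with probability $1-O(n_1^{-\eta/8})$.
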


(Note that while the definition of $M_{\free}$ depends on $\eps$, the bounds in the above lemma are independent of $\eps$.)

\begin{lemma}		\label{lem:cyc}
Assume $n_2:=|J_{\cyc}| \ge \delta^{1/2}n$. 
Fix $\gamma\ge1$ and let $W\in \mM_{n_2}(\C)$ be a deterministic matrix with $\|W\|\le n^{\gamma}$. 
There exists $\beta= \beta(\gamma,\ha,\delta)$ such that if $\eps=\eps(\ha,\delta)$ is sufficiently small,
\begin{equation}
\pro{ s_{n_2}(M_{\cyc} + W) \le n^{-\beta}}  \ll_{K_0,\gamma,\delta,\ha,\mu_{4+\eta}}  \sqrt{\frac{\log n}{n}}.
\end{equation}
\end{lemma}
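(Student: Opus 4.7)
The strategy is to exploit the super-regular block structure of $A(\ha)_{J_{\cyc}}$ guaranteed by Lemma \ref{lem:decomp}(\ref{decomp:3}) through an iterative application of the Schur complement bound. First I would apply the column permutation that moves the super-regular blocks $A(\ha)_{J_k, J_{\pi(k)}}$ onto the block diagonal; since singular values are invariant under this operation, it suffices to bound $s_{n_2}$ of the resulting matrix $N$. Viewed as an $\me \times \me$ block matrix with blocks of common size $n_0 \gg_\eps n$, the $(k,k)$ diagonal block has the form $M_{J_k, J_{\pi(k)}}$ plus a deterministic perturbation (coming from $B$ and the corresponding block of $W$) of operator norm at most $K_0\sqrt{n} + n^\gamma \le n^\gamma$, and its standard deviation profile satisfies the $(2\delta, 2\eps)$-super-regularity hypothesis of Theorem \ref{thm:super} after thresholding by $\ha$.

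I would then peel off one diagonal block at a time via Lemma \ref{lem:schur}, running a sequence of $\me - 1$ Schur complement reductions. At stage $k$, I condition on all entries of $M$ in rows $J_1 \cup \cdots \cup J_{k-1}$ or columns $J_{\pi(1)} \cup \cdots \cup J_{\pi(k-1)}$. Under this conditioning the top-left diagonal block of the current Schur complement equals $M_{J_k, J_{\pi(k)}} + W^{(k)}$ with $W^{(k)}$ deterministic (built from the conditioned-on entries via matrix products and inverses), while $M_{J_k, J_{\pi(k)}}$ itself involves only the entries $\xi_{ij}$ with $i \in J_k$ and $j \in J_{\pi(k)}$ and is therefore independent of $W^{(k)}$. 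Lemma \ref{lem:opcontrol}(a) bounds the off-diagonal block norms by $O(\sqrt{n_0}) + n^\gamma$, and combined with the inverse bound $\le n^{\beta_{k-1}}$ inherited from the previous stage this gives $\|W^{(k)}\| \le n^{\gamma_k}$, where the exponents satisfy the recurrence $\gamma_0 = \gamma$, $\gamma_{k+1} = 2\gamma_k + \beta_k$ with $\beta_k = O(\gamma_k^2)$ furnished by Theorem \ref{thm:super}. Applying Theorem \ref{thm:super} conditionally then yields $s_{n_0} \ge n^{-\beta_k}$ at stage $k$, except on an event of probability $O(\sqrt{\log n/n})$.

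Chaining the $\me - 1$ invocations of Lemma \ref{lem:schur} with the $\me$ applications of Theorem \ref{thm:super} and the accompanying operator-norm bounds from Lemma \ref{lem:opcontrol}, and taking a union bound over the $\me = O_\eps(1)$ stages, I obtain $s_{n_2}(M_{\cyc} + W) \ge n^{-\beta}$ with probability $1 - O(\sqrt{\log n/n})$, where $\beta = \beta(\gamma, \ha, \delta)$ is determined by the bounded-length recurrence for the $\gamma_k$. The main technical point is the verification of the conditional independence structure used at each stage (which relies crucially on the fact that the Schur complement modification at stage $k$ depends only on rows and columns disjoint from those of the current diagonal block); once this is in place, the remainder is straightforward bookkeeping of the rapidly but finitely growing perturbation exponents.
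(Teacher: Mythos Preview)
Your approach is essentially the same as the paper's: permute columns to align the super-regular blocks $A(\ha)_{J_k,J_{\pi(k)}}$ onto the block diagonal, then iterate the Schur complement bound (Lemma~\ref{lem:schur}) with Theorem~\ref{thm:super} applied at each stage, tracking the growth of the perturbation exponent through a bounded-length recurrence $\gamma_{k+1}\le 2\gamma_k+\beta_k$, $\beta_k=O(\gamma_k^2)$. The paper builds up inductively (controlling $s_{\min}(M_{\le k}+W_{\le k})$ from $k=1$ to $k=\me$), whereas you peel blocks off one at a time; these are equivalent bookkeepings, and your verification that the accumulated perturbation $W^{(k)}$ depends only on entries $\xi_{ij}$ with $(i,j)\notin J_k\times J_{\pi(k)}$ is correct.

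One point does need adjustment. You invoke Lemma~\ref{lem:opcontrol}(a) for the off-diagonal operator norm bounds, but that lemma carries failure probability $O(n^{-\eta/8})$, which for $\eta<4$ exceeds the target $\sqrt{\log n/n}$ and so does not yield the lemma as stated. The paper instead uses the crude second-moment bound $\pr(\|(A\circ X)_{I,J}\|\ge n^2)\le n^{-2}$ from Markov's inequality on the Hilbert--Schmidt norm (equation~\eqref{crude.op}); since the deterministic perturbation is already of polynomial size $n^{\gamma_k}$, an $n^2$ bound on the random part is harmless for the recurrence, and the $n^{-2}$ failure probability is absorbed by $\sqrt{\log n/n}$. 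This is precisely why Remark~\ref{rmk:relaxmom} notes that the proof of this lemma avoids the $4+\eta$ moment hypothesis. With that substitution your argument goes through.
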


\begin{remark}	\label{rmk:relaxmom}
We note that in the proof of Lemma \ref{lem:cyc} we do not make use of the fact that the atom variable $\xi$ has more than two finite moments (the dependence on $\mu_{4+\eta}$ is only through the parameter $\kappa_0=O(\mu_{4+\eta}^2)$).
In particular, we can remove the extra moment hypotheses in Theorem \ref{thm:main} under the additional assumption that the standard deviation profile $A$ contains a generalized diagonal of block submatrices which are super-regular and of dimension linear in $n$ (that is, if we can take $J_{\badd}=J_{\free}=\varnothing$ in \eqref{decomp:bf1}). 
\end{remark}

We defer the proofs of Lemmas \ref{lem:nil} and \ref{lem:cyc} to subsequent sections, and conclude the proof of Theorem \ref{thm:main}. 
Note that at this stage (before we have applied Lemma \ref{lem:nil} or \ref{lem:cyc}) the only constraint we have put on the parameters in \eqref{depends} is to assume $\eps$ is sufficiently small depending on $\delta$ for the application of Lemma \ref{lem:decomp}.
We proceed in the following steps:\\

\begin{itemize}
\item []
\begin{itemize}
\item [ {\bf Step 1:}] Bound the smallest singular value of $M_{\free}$ using Lemma \ref{lem:nil}. In this step we fix $\sigma(r_0,\mu_{4+\eta})$, while $\delta$ is assumed to be sufficiently small depending on $r_0,\mu_{4+\eta}$ but is otherwise left free.

\item [ {\bf Step 2:}] Bound the smallest singular value of 
\begin{equation}	\label{decompM1}
M_1 := M_{J_{\free}\cup J_{\badd},\,J_{\free}\cup J_{\badd}} = \begin{pmatrix} M_{\free} & B_1 \\ C_1 & M_0 \end{pmatrix}.
\end{equation}
using the result of Step 1, the Schur complement bound of Lemma \ref{lem:schur}, 
\eqref{opIJ:improved} and Lemma \ref{lem:opcontrol}(b).
%
In this step we fix $\delta(r_0,\eta,\mu_{4+\eta})$.

\item [ {\bf Step 3:}] Bound the smallest singular value of 
\begin{equation}	\label{decompM}
M= \begin{pmatrix} M_{\cyc} & B_2\\ C_2 & M_1\end{pmatrix}.
\end{equation}
using the result of Step 2, the Schur complement bound of Lemma \ref{lem:schur}, and Lemma \ref{lem:cyc}.
In this step we fix $\eps(\ha,\delta)$.
\end{itemize}
\end{itemize}

The case that one of $J_{\free}$ or $J_{\cyc}$ is small (or empty) can be handled essentially by skipping either Step 1 or Step 3. 
We will begin by assuming
\begin{equation}	\label{LB:nilcyc}
|J_{\free}|,\, |J_{\cyc}| \ge \delta^{1/2}n
\end{equation}
and address the case that this does not hold at the end.

\subsubsection*{Step 1}

By Lemma \ref{lem:nil} and the assumption \eqref{LB:nilcyc}, we can take $\ha$ and $\delta$ sufficiently small depending on $r_0$ and 
$\mu_{4+\eta}$
such that 
\begin{equation}	\label{LB:Mnil}
s_{\min}(M_{\free}) \gg_{\mu_{4+\eta},r_0} \sqrt{n}
\end{equation}
except with probability $O_{\mu_{4+\eta},r_0,\delta}(n^{-\eta/9})$.
We now fix $\ha=\ha(r_0,\mu_{4+\eta})$ once and for all, but leave $\delta$ free to be taken smaller if necessary.
By independence of the entries of $M$ we may now condition on a realization of $M_{\free}$ such that \eqref{LB:Mnil} holds.

\subsubsection*{Step 2}

By \eqref{opIJ} and \eqref{LB:nilcyc} we have $\|C_1\|=O_{\mu_{4+\eta}}(\sqrt{n})$ except with probability $O_\delta(n^{-\eta/8})$. We henceforth condition on a realization of $C_1$ satisfying this bound.
Together with \eqref{LB:Mnil} this gives
\begin{equation}
\|C_1M_{\free}^{-1}\| \le \frac{\|C_1\|}{s_{\min}(M_{\free})} \ll_{\mu_{4+\eta},r_0}1.
\end{equation}
Since $B_1$ is independent of $C_1$ and $M_{\free}$ we can apply 
Lemma \ref{lem:opcontrol}(b)
to conclude
\begin{equation}	\label{C1MB1}
\|C_1M_{\free}^{-1}B_1\| \ll_{\eta,\mu_{4+\eta}}\|C_1M_{\free}^{-1}\| |J_{\badd}|^{1/2} \ll_{\eta,\mu_{4+\eta},r_0} |J_{\badd}|^{1/2}
\end{equation}
except with probability 
$O_{\eps}(n_1^{-\eta/8}) = O_{\delta,\eps}(n^{-\eta/9})$, 
where we have used the lower bound $|J_{\badd}|\gg \eps n$ from Lemma \ref{lem:decomp}(1).
On the other hand, by the triangle inequality and \eqref{opIJ},
\begin{equation}	\label{step2:M0}
s_{\min}(M_0) = s_{\min}(Z_{J_{\badd}} \sqrt{n} + (A\circ X)_{J_{\badd}}) \ge r_0\sqrt{n} - O_{\mu_{4+\eta}}(|J_{\badd}|^{1/2})
\end{equation}
except with probability $O(|J_{\badd}|^{-\eta/8}) = O_\eps(n^{-\eta/9})$. 
Again by the triangle inequality and the previous two displays, 
\begin{equation}
s_{\min}(M_0-C_1M_{\free}^{-1} B_1) \ge r_0\sqrt{n} - O_{\eta,\mu_{4+\eta},r_0}(|J_{\badd}|^{1/2})
\end{equation}
except with probability $O_{\delta,\eps}(n^{-\eta/9})$. 	
Since $|J_{\badd}|\ll \delta^{1/2}n$ we can take $\delta$ smaller, if necessary, depending on $r_0,\eta,\mu_{4+\eta}$ to conclude that
\begin{equation}	\label{step2:condition}
s_{\min}(M_0-C_1M_{\free}^{-1} B_1)\ge (r_0/2) \sqrt{n}
\end{equation}
except with probability $O_{\delta,\eps}(n^{-\eta/9})$. 
We may henceforth condition on the event that \eqref{step2:condition} holds. 
Of an event with probability $O_\delta(n^{-\eta/8})$ we may also assume $\|B_1\|=O_{\mu_{4+\eta}}(\sqrt{n})$.
From Lemma \ref{lem:schur} and the preceding estimates we have
\begin{align}
s_{\min}(M_1) 
&\gg \left( 1+ \frac{O_{\mu_{4+\eta}}(\sqrt{n})}{s_{\min}(M_{\free})}\right)^{-2} 
\min\big[ s_{\min}(M_{\free}), s_{\min}(M_0- C_1 M_{\free}^{-1} B_1)\big]	\notag\\
&\gg_{\mu_{4+\eta},r_0}\min\big[ \sqrt{n}, s_{\min}(M_0-C_1M_{\free}^{-1}B_1)\big]	\notag\\
&\gg_{\mu_{4+\eta},r_0} \sqrt{n}.	\label{step2:final}
\end{align}
At this point we fix $\delta=\delta(r_0, \eta,\mu_{4+\eta})$.

\subsubsection*{Step 3} 
Condition on a realization of $M_1$ such that \eqref{step2:final} holds.
By \eqref{opIJ} we may also condition on realizations of the matrices $B_2,C_2$ in \eqref{decompM} such that $\|B_2\|,\|C_2\| \ll_{\mu_{4+\eta}}\sqrt{n}$.
Applying Lemma \ref{lem:schur}, 
\begin{align}
s_n(M) 
&\gg \left( 1+ \frac{O_{\mu_{4+\eta}}(\sqrt{n})}{s_{\min}(M_1)}\right)^{-2} \min \big[ s_{\min}(M_1), \, 
s_{\min}(M_{\cyc} - B_2 M_1^{-1} C_2) \big] 	\notag\\
& \gg_{\mu_{4+\eta},r_0} \min \big[ \sqrt{n}, s_{\min}(M_{\cyc} - B_2 M_1^{-1} C_2) \big].	\label{step3:start}
\end{align}
By our estimates on $\|B_2\|,\|C_2\|$ and $s_{\min}(M_1)$ we have
\begin{equation}
\|B_2M_1^{-1}C_2\| \ll_{\mu_{4+\eta}} \frac{ n}{s_{\min}(M_1)} \ll_{\mu_{4+\eta},r_0} \sqrt{n}
\end{equation}
(unlike in Step 2, here we did not need the stronger control on matrix products provided by \eqref{mbp:2}). 
Now since $M_2$ is independent of $M_1,B_2,C_2$, we can apply Lemma \ref{lem:cyc} with $\gamma=0.51$ (say), fixing $\eps$ sufficiently small depending on $\ha(r_0,\mu_{4+\eta})$ and $\delta(r_0,\eta,\mu_{4+\eta})$, to obtain
\begin{equation}
\pro{ s_{\min}(M_{\cyc} - B_2 M_1^{-1} C_2) \le n^{-\beta}} \ll_{K_0, r_0,\eta,\mu_{4+\eta}} \sqrt{\frac{\log n}{n}}
\end{equation}
for some $\beta = \beta(r_0,\eta,\mu_{4+\eta})>0$.
The result now follows from the above and \eqref{step3:start}, taking $\alpha=\min(\eta/9,1/4)$, say.

It only remains to address the case that the assumption \eqref{LB:nilcyc} fails. 
We may assume that $\delta$ is small enough that only one of these bounds fails. 
In this case we simply redefine $J_{\badd}$ to include the smaller of $J_{\cyc}, J_{\free}$. 
Note that we still have $|J_{\badd}|= O(\delta^{1/2}n)$.
If $|J_{\cyc}|<\delta^{1/2}n$, then with this new definition of $J_{\badd}$ we have $M=M_1$, and the desired bound on $s_n(M)$ follows from \eqref{step2:final} (with plenty of room). 
If $|J_{\free}|< \delta^{1/2}n$ then we skip Step 2, proceeding with Step 3 using $M_0$ in place of $M_1$. The bound \eqref{step2:final} in this case follows from \eqref{step2:M0} and the bound $|J_{\badd}|\ll \delta^{1/2}n$, taking $\delta$ sufficiently small depending on $\mu_{4+\eta},r_0$.
This concludes the proof of Theorem \ref{thm:main}.

\subsection{Proof of Lemma \ref{lem:nil}}		\label{sec:nil}

We denote
\begin{equation}
A_{F}= (\sig_{ij} 1_{(i,j)\in F}).
\end{equation}
By the estimates on $F$ in Lemma \ref{lem:decomp} we can apply \eqref{opIJ:improved} with $\tau=O(\delta^{1/4})$ to obtain
\begin{equation}
\|(A_{F}(\ha)\circ X)_{J_{\free}}\| \ll_{\mu_{4+\eta}} \delta^{1/4}\sqrt{n}
\end{equation}
except with probability at most $O_{\delta}(n_1^{-\eta/8}) = O_{\delta}(n^{-\eta/9})$.
By another application of \eqref{opIJ:improved} with $\tau=1$, 
\begin{equation}
\big\|\big((A-A(\ha))\circ X\big)_{J_{\free}}\big\| \ll_{\mu_{4+\eta}} \ha\sqrt{n}
\end{equation}
except with probability at most $O_{\delta}(n^{-\eta/9})$. 
Let
\begin{equation}
\tM_{\free} := (\tA\circ X)_{J_{\free}} + Z_{J_{\free}}\sqrt{n}, \quad\quad \tA := A(\ha)-A_{F}(\ha).
\end{equation}
By the above estimates and the triangle inequality,
\begin{align}
s_{\min}(M_{\free}) &\ge s_{\min}(\tM_{\free}) - \|((A-\tA)\circ X)_{J_{\free}}\| \notag\\
&\ge s_{\min}(\tM_{\free})- O_{\mu_{4+\eta}}(\delta^{1/4}+ \ha)\sqrt{n}	\label{nil:redux}
\end{align}
except with probability $O_{\delta}(n^{-\eta/9})$. 
Thus, it suffices to show 
\begin{equation}	\label{nil:goal1}
s_{\min}(\tM_{\free})\gg_{\mu_{4+\eta},r_0}\sqrt{n}.
\end{equation}
except with probability $O_{\mu_{4+\eta},r_0,\delta}(n^{-\eta/9})$ -- the result will then follow from \eqref{nil:goal1} and \eqref{nil:redux} by taking $\delta,\ha$ sufficiently small depending on $\mu_{4+\eta},r_0$.
Furthermore, by Lemma \ref{lem:decomp}(3) and conjugating $M_{\free}$ by a permutation matrix we may assume that $\tA$ is (strictly) upper triangular. 
Now it suffices to prove the following:

\begin{lemma}
Let $M= A\circ X + B$ be an $n\times n$ matrix as in Definition \ref{def:profile}, and further assume that for some $r_0>0, K\ge 1, \alpha>0$,
\begin{itemize}
\item $A$ is upper triangular;
\item $B=Z\sqrt{n} = \diag(z_i\sqrt{n})_{i=1}^n$ with $|z_i|\ge r_0$ for all $1\le i\le n$;
\item $\xi$ is such that for all $n'\ge 1$ and any fixed $A'\in \mM_{n'}([0,1])$, $\|A'\circ X'\|\le K\sqrt{n'}$  except with probability $O((n')^{-\alpha})$.
\end{itemize}
Then $s_n(M) \gg_{K,r_0}\sqrt{n}$ except with probability $O_{K,r_0}(1)^\alpha n^{-\alpha}$.
\end{lemma}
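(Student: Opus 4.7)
The plan is to reduce to two nested Neumann expansions via a block decomposition. I would choose $L=\lceil 4K^2/r_0^2\rceil$, partition $[n]$ into consecutive intervals $I_1<\cdots<I_L$ of sizes as equal as possible, and condition on the $L+1$ operator-norm events $\|(A\circ X)_{I_l,I_l}\|\le K\sqrt{n/L}$ (one per block) together with $\|A''\circ X\|\le K\sqrt n$, where $A''\in\mM_n([0,1])$ is obtained from $A$ by zeroing all entries within the blocks $I_l\times I_l$. The third hypothesis applies to each of these fixed sub-profiles, so the failure probabilities are $O((n/L)^{-\alpha})$ and $O(n^{-\alpha})$ respectively; a union bound over the $O_{K,r_0}(1)$ events gives total failure probability $O_{K,r_0}(1)^{\alpha}n^{-\alpha}$, matching the required bound.

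On the good event, the first step would control each diagonal block $M_{I_l}=Z_{I_l}\sqrt n+(A\circ X)_{I_l,I_l}$. Factoring as $M_{I_l}=Z_{I_l}\sqrt n\,(I+(Z_{I_l}\sqrt n)^{-1}(A\circ X)_{I_l,I_l})$, the assumption $|z_i|\ge r_0$ gives $\|(Z_{I_l}\sqrt n)^{-1}\|\le 1/(r_0\sqrt n)$, and therefore
\[
\|(Z_{I_l}\sqrt n)^{-1}(A\circ X)_{I_l,I_l}\|\le \frac{K}{r_0\sqrt L}\le \tfrac12
\]
by the choice of $L$. A standard geometric Neumann series then yields $\|M_{I_l}^{-1}\|\le 2/(r_0\sqrt n)$, i.e.\ $s_{\min}(M_{I_l})\ge r_0\sqrt n/2$. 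Crucially, this bound treats the whole of $(A\circ X)_{I_l,I_l}$---including its possibly large diagonal entries $a_{ii}\xi_{ii}$---as a single perturbation of the purely deterministic $Z_{I_l}\sqrt n$, so no anti-concentration or moment control on individual $\xi_{ii}$ is ever invoked.

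The second step would globalise this bound. I would decompose $M=D_L+T$ where $D_L$ is block-diagonal with blocks $M_{I_l}$ and $T=A''\circ X$ is the off-block-diagonal remainder. Because $A$ is upper triangular and the $I_l$'s are consecutive, $T$ is strictly block-upper-triangular for the $L$-piece partition and hence $T^L=0$. The Neumann series
\[
M^{-1}=\sum_{k=0}^{L-1}(-D_L^{-1}T)^k D_L^{-1}
\]
then terminates after $L$ terms, and combining $\|D_L^{-1}\|\le 2/(r_0\sqrt n)$ with $\|T\|\le K\sqrt n$ gives $\|M^{-1}\|\le (2/r_0\sqrt n)\sum_{k=0}^{L-1}(2K/r_0)^k=O_{K,r_0}(1)/\sqrt n$, so $s_n(M)\gg_{K,r_0}\sqrt n$. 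The only real pitfall, worth flagging, is the temptation to decompose $M$ into its actual (random) diagonal plus a strictly triangular remainder; that would require a lower bound on $|z_i\sqrt n+a_{ii}\xi_{ii}|$ which the operator-norm hypothesis cannot supply in the absence of anti-concentration for $\xi$. Splitting off the purely deterministic $Z\sqrt n$ instead, both globally and within each block, avoids the issue entirely.
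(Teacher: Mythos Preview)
Your proof is correct and takes a genuinely different, arguably more direct, route than the paper's. The paper proceeds by a dyadic filtration of $[n]$: at the finest level $p_0\approx 2\log(2K/r_0)$ it uses the triangle inequality $s_{\min}(M_s)\ge r_0\sqrt n-\|A_s\circ X_s\|\ge (r_0/2)\sqrt n$ on each dyadic block, and then climbs back up to the full matrix by $p_0$ successive applications of the Schur complement bound (Lemma~\ref{lem:schur}). Your argument replaces this recursion by a single-level partition into $L\approx (2K/r_0)^2$ consecutive blocks together with a terminating Neumann series exploiting the block-nilpotency of the off-diagonal part $T$; this avoids the Schur machinery entirely. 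Both arguments share the same base step (make the blocks small enough that the deterministic diagonal $Z\sqrt n$ dominates the random block $(A\circ X)_{I_l}$), both condition on $O_{K,r_0}(1)$ operator-norm events to get the same probability bound, and both yield an implied constant of order $\exp(-(K/r_0)^{O(1)})$ in the lower bound on $s_n(M)$. The paper's choice has the virtue of reusing Lemma~\ref{lem:schur}, which is the workhorse of Section~\ref{sec:diag}; your version is more self-contained. Your closing remark about not splitting off the random diagonal entries $a_{ii}\xi_{ii}$ is exactly right, and the paper's proof handles this the same way, treating the full $A_s\circ X_s$ as the perturbation of $Z_s\sqrt n$.
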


\begin{remark}
The proof gives an implied constant of order $\exp(-O(K/r_0)^{O(1)})$ in the lower bound on $s_n(M)$. 
\end{remark}

To deduce Lemma \ref{lem:nil} we apply the above lemma with $M=\tM_{\free}$, $\alpha=\eta/8$, $K=O(\mu_{4+\eta})$ (by \eqref{opIJ}) and $n_1\gg_{\delta}n$ in place of $n$, which gives that \eqref{nil:goal1} holds with probability 
\begin{equation}
1-O_{\mu_{4+\eta},r_0}(n_1^{-\eta/8}) = 1-O_{\mu_{4+\eta},r_0,\delta}(n^{-\eta/9})
\end{equation}
where in the first bound we applied our assumption that $\eta<1$. 

\begin{proof}
First we note that we may take $n$ to be a dyadic integer, i.e. $n=2^q$ for some $q\in \N$.
Indeed, if this is not the case, then letting $2^q$ be the smallest dyadic integer larger than $n$ we can increase the dimension of $M$ to $2^q$ by padding $A$ out with rows and columns of zeros, adding additional rows and columns of iid copies of $\xi$ to $X$, and extending the diagonal of $Z$ with entries $z_i\equiv r_0$ for $n<i\le 2^q$.
The hypotheses on $A$ and $Z$ in the lemma are still satisfied, and the smallest singular value of the new matrix is a lower bound for that of the original matrix (since the original matrix is a submatrix of the new matrix).

Now fix an arbitrary dyadic filtration $\mF= \bigcup_{p\ge 0}\{J_s: s\in \{0,1\}^p\}$ of $[n]$, where we view $\{0,1\}^0$ as labeling the trivial partition of $[n]$, consisting only of the empty string $\varnothing$, so that $J_\varnothing = [n]$. 
Thus, for every $0\le p< q$ and every binary string $s\in \{0,1\}^{p}$, $J_s$ has cardinality $n2^{-p}$ and is evenly partitioned by $J_{s0},J_{s1}$. 
For a binary string $s$ we abbreviate $M_s:= M_{J_s}$ and similarly define $A_s,X_s,Z_s$. 
We also write $B_s=M_{J_{s0},J_{s1}}$, so that we have the block decomposition
\begin{equation}	\label{Ms:block}
M_s= \begin{pmatrix} M_{s0} & B_s\\ 0 & M_{s1} \end{pmatrix}.
\end{equation}

For $p\ge1$ define the boundedness event 
\begin{equation}
\mB^*(p) = \big\{ \|A\circ X\| \le K\sqrt{n}\} \wedge \big\{ \forall s\in \{0,1\}^{p}, \; \|A_s\circ X_s\| \le K\sqrt{n 2^{-p}}\big\}.
\end{equation}
By our assumption on $\xi$ we have 
\begin{equation}	\label{mBstar:lb}
\pr(\mB^*(p))\ge 1- O(n^{-\alpha}) - 2^{p} O((n2^{-p})^{-\alpha}) = 1- O(2^{(1+\alpha)p}n^{-\alpha}).
\end{equation}
For arbitrary $s\in \{0,1\}^{p}$, by the triangle inequality we have that on $\mB^*(p)$,
\begin{align*}
s_{\min}(M_s) &\ge s_{\min}(Z_s) - \|A_s\circ X_s\| \ge (r_0 - K2^{-p/2})\sqrt{n}.
\end{align*}
Setting $p_0= \lf 2\log (2K/r_0)\rf +1$ we have that on $\mB^*(p_0)$,
\begin{equation}	\label{lambdap:00}
s_{\min}(M_s) \ge (r_0/2)\sqrt{n}
\end{equation}
for all $s\in \{0,1\}^{p_0}$.
For the remainder of the proof we restrict the sample space to the event $\mB^*(p_0)$ and will use the Schur complement bound (Lemma \ref{lem:schur}) to show that the desired lower bound on $s_{\min}(M)$ holds deterministically (note that by \eqref{mBstar:lb} and our choice of $p_0$, $\mB^*(p_0)$ holds with probability 
$1-O_{K,r_0}(n^{-\alpha})$).

For $0\le p\le p_0$ let
\begin{equation}
\lambda_p= \min_{s\in \{0,1\}^p} \frac{1}{\sqrt{n}} s_{\min}(M_s).
\end{equation}
From \eqref{lambdap:00} we have
\begin{equation}	 \label{lambdap:0}
\lambda_{p_0}\ge r_0/2
\end{equation}
Now let $1\le p\le p_0$ and $s\in \{0,1\}^{p-1}$. By the block decomposition \eqref{Ms:block} and Lemma \ref{lem:schur},
\begin{align*}
s_{\min}(M_s) 
&\gg \left(1+ \frac{\|B_s\|}{s_{\min}(M_{s0})}\right)^{-1} \min\big( s_{\min}(M_{s0}), s_{\min}(M_{s1})\big)\\
&\ge (1+K/\lambda_p)^{-1}\lambda_p\sqrt{n}
\end{align*}
so $\lambda_{p-1} \gg (1+K/\lambda_p)^{-1}\lambda_p\sqrt{n}$ for all $0\le p\le p_0$. 
Applying this iteratively along with \eqref{lambdap:0} we conclude $\lambda_0\gg_{K,r_0} 1$, i.e. 
\begin{equation}
s_{\min}(M) \gg_{K,r_0} \sqrt{n}
\end{equation}
as desired.
\end{proof}

\subsection{Proof of Lemma \ref{lem:cyc}}		\label{sec:cyc}
We may assume throughout that $n$ is sufficiently large depending on the parameters $K_0,\gamma,\delta,\ha$, and $\mu_{4+\eta}$. Note we may also assume $\gamma> 2$ without loss of generality.
We will apply only the following crude control on the operator norm of submatrices: 
\begin{equation}	\label{crude.op}
\pr(\|(A\circ X)_{I,J}\|\ge n^2 ) \le n^{-2}	\quad \forall I,J\subset[n].
\end{equation}
Indeed, for any $I,J\subset[n]$,
\begin{align*}
\pr(\|(A\circ X)_{I,J}\|\ge n^2 ) \le  \pr(\|A\circ X\|_{\HS}\ge n^2) .
\end{align*}
Furthermore, $\e\|A\circ X\|_{\HS}^2 \le \e \|X\|_{\HS}^2 = n^2$, 
and \eqref{crude.op} follows from the above display and Markov's inequality.

By multiplying $M_{\cyc}$ by a permutation matrix we may assume that $A_k:= A_{J_k}$ is $(2\delta,2\eps)$-super-regular for $1\le k\le \me $ (unlike in the proof of Lemma \ref{lem:nil} the diagonal matrix $Z\sqrt{n}$ plays no special role here).
We denote $J_{\le k}= J_1\cup\cdots\cup J_k$, and for any matrix $W$ of dimension at least $|J_{\le k}|$ we abbreviate
\begin{equation}
W_k=W_{J_k},\quad W_{\le k} = W_{J_{\le k}},\quad W_{\le k-1, k} = W_{J_{\le k-1}, J_{k}}, \quad W_{k,\le k-1}= W_{J_{k}, J_{\le k-1}}
\end{equation}
so that for $2\le k\le \me $ we have the block decomposition
\begin{equation}
W_{\le k} = \begin{pmatrix} W_{\le k-1} & W_{\le k-1,k}\\ W_{k,\le k-1} & W_{k}\end{pmatrix}.
\end{equation}
Let us denote 
\begin{equation}	\label{nprime.eps}
n'= |J_1|=\cdots=|J_{\me }| \gg_\eps n.
\end{equation}
For $1\le k\le \me -1$, $\beta>0$ and a fixed $kn'\times kn'$ matrix $W$, we denote the event 
\begin{equation}
\event_k(\beta,W) := \big\{ s_{kn'}(M_{\le k} + W) > n^{-\beta}\big\}.
\end{equation}

Let $\gamma>2$ and fix an arbitrary matrix $W\in \mM_{n',n'}(\C)$ with $\|W\|\le n^\gamma$.
By \eqref{crude.op} we have
\begin{equation}
\|M_1+W\| \le K_0\sqrt{n} + n^2 + n^\gamma \le 2n^\gamma
\end{equation}
with probability $1-O(n^{-2})$ if $n$ is sufficiently large depending on $K_0$ and $\gamma$. 
By Theorem \ref{thm:super} there exists $\beta_1(\gamma) = O(\gamma^2)$ such that if $\eps$ is sufficiently small depending on $\ha,\delta$, then 
\begin{align}
&\pr\big(\event_1(\beta_1,W)^c\big) \notag\\
&\le \pro{ \|M_1+W\|>2n^\gamma} + \pro{ \event_1(\beta_1,W)^c\wedge \{ \|M_1+W\|\le 2n^\gamma\}}\notag\\
&\ll_{\gamma,\delta,\ha,\eps,\mu_{4+\eta}} \sqrt{\frac{\log n}{n}},		\label{cyc:event1}
\end{align}
where we have used \eqref{nprime.eps} to write $n$ in $n^{-\beta_1}$ rather than $n'$, and the fact that the atom variable is $O(\mu_{4+\eta}^2)$-spread. 

Now let $2\le k\le \me $, and suppose we have found a function $\beta_{k-1}(\gamma)$ such that for any $\gamma>2$ and any fixed $(k-1)n'\times (k-1)n'$ matrix $W$ with $\|W\|\le n^\gamma$, 
\begin{equation}
\pro{ \event_{k-1}(\beta_{k-1}(\gamma),W)^c} \ll_{\gamma,\delta,\ha,\eps,\mu_{4+\eta}} \sqrt{\frac{\log n}{n}}.
\end{equation}
Fix a $kn'\times kn'$ matrix $W$ with $\|W\|\le n^\gamma$.
By Lemma \ref{lem:schur} we have
\begin{align}
s_{kn'}( M_{\le k} + W) 
&\gg \left( 1+ \frac{\|(M+W)_{\le k-1, k}\|}{s_{(k-1)n'}(M_{\le k-1}+W_{\le k-1})}\right)^{-1}
\left( 1+ \frac{\|(M+W)_{k,\le k-1}\|}{s_{(k-1)n'}(M_{\le k-1}+W_{\le k-1})}\right)^{-1}	\notag\\
&\quad\quad\quad\quad \times \min\Big[ s_{(k-1)n'}(M_{\le k-1}+W_{\le k-1}) , s_{n'}\big(M_k + B_k\big)\Big]	\label{Mk:schur}
\end{align}
where we have abbreviated
\begin{equation}
B_k:=W_k -  (M+W)_{k,\le k-1}( M_{\le k-1} + W_{\le k-1})^{-1} (M+W)_{\le k-1,k} .
\end{equation}

Suppose that the event $\event_{k-1}(\beta_{k-1}(\gamma),W_{\le k-1})$ holds.
We condition on a realization of the submatrix $M_{\le k-1}$ satisfying
\begin{equation}
s_{(k-1)n'}(M_{\le k-1}+W_{\le k-1}) \ge n^{-\beta_{k-1}(\gamma)}.
\end{equation}
Moreover, from \eqref{crude.op} we have
\begin{equation}
\|(M+W)_{\le k-1, k}\|, \|(M+W)_{k,\le k-1}\| \le K_0\sqrt{n}+n^2+n^\gamma\le 2n^\gamma
\end{equation}
with probability $1-O(n^{-2})$. Conditioning on the event that the above holds, from the previous two displays we have
$
\|B_k\| \le n^\gamma+ 4n^{\gamma + \beta_{k-1}(\gamma)}.
$
Again by \eqref{crude.op},
\begin{equation}
\|M_k+ B_k\|\le K_0\sqrt{n}+n^2 + 4n^{\gamma+\beta_{k-1}(\gamma)} \le 5n^{\gamma+\beta_{k-1}(\gamma)}
\end{equation}
with probability $1-O(n^{-2})$ in the randomness of $M_k$. 
By Theorem \ref{thm:super} and independence of $M_k$ from $M_{\le k-1},M_{k,\le k-1},M_{k,\le k-1}$, there exists $\beta_k' = O(\gamma^2 + \beta_{k-1}(\gamma)^2)$ such that 
\begin{equation}	\label{MkBk:bound}
\pro{s_{n'}(M_k+B_k) \le n^{-\beta_k'}} \ll_{\gamma,\delta,\ha,\eps,\mu_{4+\eta}} \sqrt{\frac{\log n}{n}}.
\end{equation}
Restricting further to the event that $s_{n'}(M_k+B_k) > n^{-\beta_k'}$ and substituting the above estimates into \eqref{Mk:schur}, we have
\begin{equation}
s_{kn'}(M_{\le k}+W) \gg n^{-2\gamma-2\beta_{k-1}(\gamma)}\min(n^{-\beta_{k-1}(\gamma)}, n^{-\beta_k'}) \ge n^{-\beta_k(\gamma)}
\end{equation}
for some $\beta_k(\gamma) = O(\gamma^2 + \beta_{k-1}(\gamma)^2)$. 
With this choice of $\beta_k(\gamma)$ we have shown
\begin{equation}
\pro{ \event_k(\beta_{k}(\gamma), W_{\le k})^c \wedge \event_{k-1}(\beta_{k-1}(\gamma),W_{\le k-1})}  \ll_{\gamma,\delta,\ha,\eps,\mu_{4+\eta}} \sqrt{\frac{\log n}{n}}.
\end{equation}
Applying this bound for all $2\le k'\le k$ together with \eqref{cyc:event1} and Bayes' rule we conclude that for any fixed $k$ and any square matrix $W$ of dimension at least $kn'$ and operator norm at most $n^\gamma$,
\begin{equation}
\pro{ \event_k(\beta_{k}(\gamma), W_{\le k})^c }  \ll_{\gamma,\delta,\ha,\eps,\mu_{4+\eta}} k \sqrt{\frac{\log n}{n}}.
\end{equation}
The result now follows by taking $k=\me $ and recalling that $\me =O_\eps(1)$.

\appendix

\section{Invertibility for perturbed non-Hermitian band matrices}	\label{app:band}

In this appendix we prove Corollary \ref{cor:band}.

By conditioning on the entries $\xi_{ij}$ with $\min(|i-j|,n-|i-j|)> \eps n$ and absorbing the corresponding entries of $A\circ X$ into $B$ we may assume the entries of $A(\ha)$ are zero outside the band.
By Theorem \ref{thm:broad} it suffices to show that $A(\ha)$ is $(\delta,\nu)$-broadly connected for $\delta,\nu\in (0,1)$ sufficiently small depending on $\eps$. 
Throughout the proof we may assume that $n$ is sufficiently large depending on $\eps$, i.e.\ $n\ge n_0$ for any $n_0(\eps)\in \N$.

Let $\delta,\nu\in (0,1)$ to be chosen sufficiently small depending on $\eps$. 
For all $i\in [n]$ we have $|\mN_{A(\ha)}(i)|,|\mN_{A^\tran(\ha)}(i)|\ge 2\eps n$, so taking $\delta<2\eps$, it only remains to verify the third condition in Definition \ref{def:broad}.
Note that if $|J|>(1-\eps)n$ we trivially have $|J(i)| \ge |\mN_{A(\ha)}(i)| - \eps n \ge \eps n$ for every $i\in [n]$, and the condition holds in this case.

Fix a set $J\subset[n]$ with $1\le |J|\le (1-\eps)n$. For the remainder of the proof we abbreviate $J(i):= J\cap \mN_{A(\ha)}(i)$ and 
\[
I_\delta:= \mN_{A^\tran(\ha)}^{(\delta)}(J) = \{i: |J(i)| \ge \delta |J|\}.
\]

It will be convenient to view $i\mapsto |J(i)|$ as a function on the torus $\Z/n\Z$ (which we identify with $[n]$ in the natural way). From double counting we have
\begin{equation}	\label{ex:doublect}
\sum_{i\in \Z/n\Z} |J(i)| = (1+\lf 2\eps n\rf)|J| \ge 2\eps |J|.
\end{equation}
On the other hand, we have the discrete derivative bound
\begin{equation}	\label{discderiv}
||J(i)|-|J(i-1)||\le 1	\quad \forall i\in \Z/n\Z.
\end{equation}

Suppose towards a contradiction that
\begin{equation}	\label{Ji:suppose}
|I_\delta| <(1+\nu) |J|.
\end{equation}
Since we took $\delta<2\eps$, from \eqref{ex:doublect} and the pigeonhole principle it follows that $|I_\delta|\ge 1$.
We decompose $I_\delta=\cup_{l\in L} I_l$ as a disjoint union of interval subsets $I_l=[a_l,b_l]\subset\Z/n\Z$ that are pairwise separated by a distance at least 2. 
We further split $L=L_>\cup L_\le$, where $L_>= \{l\in L: |I_l|\ge 4\eps n\}$ and $L_\le = L\setminus L_>$.
Note that for each $l\in L$ we have
\begin{equation}	\label{endpoints}
|J(a_l)|=|J(b_l)| = \lf \delta |J|\rf + 1.
\end{equation}
From the bound \eqref{discderiv} and the endpoint conditions \eqref{endpoints} we see that within $I_l$, 
\begin{equation}	\label{Ji.pointwise}
|J(i)|\le \min\big[ \lf \delta |J|\rf + 1+ \min(i-a_l,b_l-i) , \, 2\eps n+1\big],
\end{equation}
where the second argument in the outer minimum comes from the bound $|J(i)| \le \mN_{A(\ha)}(i)\le 2\eps n+1$. 
For $l\in L_\le$ we ignore the second argument in the outer minimum (which only increases the bound), and sum to obtain
\[
\sum_{i\in I_l} |J(i)| \le (\delta|J| +1) |I_l| + \frac14 |I_l|^2 \le (1+\delta |J| + \eps n)|I_l|,\quad l\in L_\le.
\]
For $l\in L_>$ we have
\begin{align*}
\sum_{i\in I_l} |J(i)|  
&=\sum_{i\in I_l: \min(i-a_l,b_l-i)\le 2\eps n} \lf\delta |J|\rf + 1+ \min(i-a_l,b_l-i)\\
&\qquad+ (2\eps n+1) |\{i\in I_l: i-a_l, b_l-i\ge 2\eps n+1\}|\\
&\le 4\eps n (\lf \delta |J|\rf + 1) + 4\eps^2 n^2 + (2\eps n+1) (|I_l| -4\eps n)\\
&\le (2\eps n+1)|I_l| + 4\eps n\delta |J| - 4\eps^2n^2.
\end{align*}
From the previous two displays we obtain
\begin{align*}
\sum_{i\in \Z/n\Z} |J(i)| 
&\le \delta |J| n + \sum_{i\in I_\delta} |J(i)|\\
&\le \delta |J| n + \sum_{l\in L_\le} (1+\delta |J| + \eps n)|I_l| \\
&\qquad \qquad+ \sum_{l\in L_>} \Big[(2\eps n+1)|I_l| + 4\eps n\delta |J| - 4\eps^2 n^2\Big]\\
&= \delta|J|n + 4\eps n(\delta |J|-\eps n)|L_>|\\
&\qquad\qquad+(1+\delta|J|+ \eps n) \sum_{l\in L_\le }|I_l| + (2\eps n+1)\sum_{l\in L_>} |I_l|.
\end{align*}
If $|L_>|=0$ then
\begin{align*}
\sum_{i\in \Z/n\Z} |J(i)| \le \delta |J| n + (1+\delta |J|+\eps n) |I_\delta|.
\end{align*}
Combining with \eqref{ex:doublect} and rearranging we obtain 
\[
|I_\delta| \ge \frac{(2\eps - \delta)|J| n}{1+ \eps n + \delta |J|} \ge \frac{2\eps-\delta}{\eps + \delta}|J|,
\]
and we contradict \eqref{Ji:suppose} taking $\nu<1/2$, say, and $\delta<c\eps$ for a sufficiently small constant $c>0$. 
If $|L_>|\ge 1$, from our assumption $\delta<2\eps$ we have
\begin{align*}
\sum_{i\in \Z/n\Z} |J(i)| 
&\le \delta |J| n -2\eps n|L_>| + (2\eps n+1) \sum_{i\in L} |I_l|\\
&\le \delta |J| n - 2\eps^2 n^2+ (2\eps n+1) |I_\delta|.
\end{align*}
Together with \eqref{ex:doublect} this gives
\[
|I_\delta| \ge \frac{2\eps n}{2\eps n+1} |J| + \frac{2\eps^2n^2-\delta n|J|}{2\eps n+1} \ge \frac{2\eps n}{2\eps n+1} |J| + \frac14\eps n
\]
where in the last bound we took $\delta<\eps^2$ and assumed $n\ge 1/\eps$. 
Taking $\nu<\eps/8$, say, we contradict \eqref{Ji:suppose} if $n$ is sufficiently large.
The claim follows.

\section{Proofs of anti-concentration lemmas}
\label{app:anti}

In this appendix we prove Lemmas \ref{lem:wlog.kappa}, \ref{lem:anti_improved} and \ref{lem:tensorize}.
All three are established by modification of existing arguments from the literature.

\subsection{Proof of Lemma \ref{lem:wlog.kappa}}	\label{app:kappa}




\eqref{cond.kappa1} is immediate by our assumptions. 
It remains to show 
\begin{equation}	\label{kappa.goal}
\e | \re(z\xi - w)|^2\un(|\xi|\le {\kappa_0}) \gg \frac1{\kappa_0} |\re(z)|^2
\end{equation}
for all $z,w\in \C$ after rotating $\xi$ by a phase if necessary.
We may assume $\kappa_0$ is larger than any fixed constant.
Let $\event$ denote the event $\{|\xi|\le \kappa_0\}$. 
By Chebyshev's inequality,
\begin{equation}	\label{kappa.cheb}
\pr(\event) \ge 1-\frac1{\kappa_0^2}.
\end{equation}

Fix $z,w\in \C$.
Write $\widetilde{\e}:= \e(\cdot|\event)$.
By \eqref{kappa.cheb} and assuming $\kappa_0$ is sufficiently large we have that the left hand side of \eqref{kappa.goal} is $\gg \widetilde{\e}| \re(z\xi - w)|^2$, so it suffices to show
\begin{equation}
\widetilde{\e}| \re(z\xi - w)|^2 \gg \frac{1}{\kappa_0}|\re(z)|^2
\end{equation}
after rotating $\xi$ by a phase.
Denoting $\eta:= \xi -\widetilde{\e}\xi$, we have
\[
\widetilde{\e}| \re(z\xi - w)|^2 = \widetilde{\e}| \re(z\eta + (\widetilde{\e} \xi - w))|^2 
= \widetilde{\e}| \re(z\eta)|^2 + |\widetilde{\e} \xi - w|^2 
\]
so it suffices to show that after rotating $\xi$ by a phase,
\begin{equation}
\widetilde{\e}| \re(z\eta)|^2 \gg \frac{1}{\kappa_0} |\re(z)|^2.
\end{equation}

We first estimate the conditional variance of $\eta$. 
We have
\begin{align*}
\widetilde{\e}|\eta|^2 
& = \widetilde{\e}|\xi|^2 - |\widetilde{\e}\xi|^2 \\
&= \frac1{\pr(\event)} \e |\xi|^2 \un_{\event} - \frac1{\pr(\event)^2} |\e \xi \un_{\event}|^2\\
&= \frac1{\pr(\event)^2} \var(\xi\un_{\event}) + \frac1{\pr(\event)}\left(1-\frac1{\pr(\event)}\right) \e |\xi|^2\un_\event\\
&= \frac{1}{\pr(\event)^2} \left( \var(\xi\un_\event) - \pr(\event^c)\e |\xi|^2\un_{\event}\right)\\
&\gg \var(\xi\un_\event) - O(1/\kappa_0^2)
\end{align*}
where in the final line we applied \eqref{kappa.cheb}, the assumption $\e|\xi|^2=1$, and assumed $\kappa_0$ is sufficiently large. 
Now by our assumption that $\xi$ is $\kappa_0$-spread we have $\var(\xi\un_\event) \gg 1/\kappa_0$, so
\begin{equation}
\widetilde{\e}|\eta|^2  \gg 1/\kappa_0
\end{equation}
taking $\kappa_0$ larger if necessary.

Now consider the covariance matrix
\begin{equation}
\Sigma_{\kappa_0} := \begin{pmatrix} \widetilde{\e} |\re(\eta)|^2 & \widetilde{\e} (\re(\eta)\im(\eta))\\
\widetilde{\e} (\re(\eta)\im(\eta)) & \widetilde{\e} |\im(\eta)|^2 
\end{pmatrix}.
\end{equation}
Writing $z=a-ib$ and letting $x=(a\quad b)^\tran$ be the associated column vector, we have
\begin{equation}
\widetilde{\e}| \re(z\eta)|^2 = \widetilde{\e} |a\re(\eta) + b \im(\eta)|^2 = x^\tran \Sigma_{\kappa_0} x.
\end{equation}
Since $\Sigma_{\kappa_0}$ has two non-negative eigenvalues $\sigma^2_1\ge \sigma^2_2\ge 0$ summing to $\widetilde{\e}|\eta|^2  \gg 1/{\kappa_0}$, it follows that $\sigma_1^2 \gg 1/{\kappa_0}$.
We may rotate $\xi$ by an appropriate phase to assume the corresponding eigenspace is spanned by $(1\quad 0)^\tran$. 
This gives 
\[\widetilde{\e}| \re(z\eta)|^2 \gg \sigma_1^2 |\re(z)|^2 \gg \frac1{{\kappa_0}} |\re(z)|^2\] as desired.

\subsection{Proof of Lemma \ref{lem:anti_improved}}	\label{app:improved}



We first need to recall a couple of lemmas from \citep{TaVu:smooth, TaVu:circ}.

\begin{lemma}[Fourier-analytic bound, cf.\ {\cite[Lemma 6.1]{TaVu:smooth}}]	\label{lem:fourier}
Let $\xi$ be a complex-valued random variable.
For all $r> 0$ and any $v\in S^{n-1}$ we have
\begin{equation}
p_{\xi,v}(r) \ll r^2\int_{w\in \C: |w|\le 1/r} \exp\bigg( -c\sum_{j=1}^n \|wv_j\|_\xi^2\bigg) \dd w
\end{equation}
where 
\begin{equation}
\|z\|_\xi^2:= \e \|\re(z(\xi-\xi'))\|_{\R/\Z}^2,
\end{equation}
$\xi'$ is an independent copy of $\xi$, and $\|x\|_{\R/\Z}$ denotes the distance from $x$ to the nearest integer. 
\end{lemma}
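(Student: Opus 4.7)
The plan is to proceed via the Esseen--Halász Fourier-analytic method, adapted to the complex-valued setting, and this is essentially the same strategy as in \cite[Lemma 6.1]{TaVu:smooth}. The starting point is a two-dimensional Esseen-type inequality: for any complex random variable $W$ and any $r>0$,
\[
\sup_{z\in \C} \pr(|W-z|\le r) \ll r^2 \int_{|w|\le 1/r} |\phi_W(w)|\,\dd w,
\]
where $\phi_W(w):=\e e^{i\re(w\overline{W})}$ (the precise normalization is unimportant and will be absorbed into the definition of $\|\cdot\|_\xi$). This is obtained in a routine manner by smoothing the indicator of the disk of radius $r$ against a nonnegative bump whose (two-dimensional) Fourier transform is supported in $\{|w|\le 1/r\}$, and then applying Fubini; it is the natural complex analogue of the one-dimensional Esseen inequality used in \citep{TaVu:smooth}.

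The next step is to exploit independence. Taking $W=S_\xi(v)=\sum_{j=1}^n \xi_j v_j$, the characteristic function factorizes as $\phi_W(w)=\prod_{j=1}^n \phi_\xi(w v_j)$, so the problem reduces to a pointwise bound on $|\phi_\xi(t)|$ for each $t\in \C$ in terms of the quantity $\|t\|_\xi$ appearing in the statement. For this I would symmetrize: letting $\xi'$ be an independent copy of $\xi$, one has
\[
|\phi_\xi(t)|^2 = \e \cos\bigl(\re(t(\xi-\xi'))\bigr).
\]

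Finally, the elementary inequality $\cos(2\pi y)\le 1-c\|y\|_{\R/\Z}^2$ (valid with a universal $c>0$, with the $2\pi$-factor absorbed into the normalization of $\|\cdot\|_\xi$) combined with $1-x\le e^{-x}$ yields $|\phi_\xi(t)|^2\le \exp\bigl(-c \e\|\re(t(\xi-\xi'))\|_{\R/\Z}^2\bigr)= \exp(-c\|t\|_\xi^2)$. Inserting this into the product formula and then into the complex Esseen inequality gives
\[
p_{\xi,v}(r) \ll r^2 \int_{|w|\le 1/r} \exp\Bigl(-\tfrac{c}{2}\sum_{j=1}^n \|w v_j\|_\xi^2\Bigr)\dd w
\]
after relabeling constants. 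The only mildly nonstandard feature is working over $\C$ rather than $\R$, which merely replaces the 1D Esseen inequality by its 2D variant and affects the bookkeeping of normalizations; no genuine obstacle arises.
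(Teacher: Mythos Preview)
Your sketch is correct and follows the standard Esseen--symmetrization approach used in \cite[Lemma 6.1]{TaVu:smooth}. Note that the paper does not actually prove this lemma: it is stated with a citation to \citep{TaVu:smooth} and used as a black box in the proof of Lemma~\ref{lem:anti_improved}, so there is no ``paper's own proof'' to compare against beyond the original Tao--Vu argument you have reproduced.
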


The next lemma gives an important property enjoyed by the ``norm" $\|\cdot\|_\xi$ from Lemma \ref{lem:fourier} under the assumption that $\xi$ has $\kappa$-controlled second moment.

\begin{lemma}[cf.\ {\cite[Lemma 5.3]{TaVu:circ}}]	\label{lem:modbound}
For any $\kappa>0$ there are constants $c_1,c_2>0$ such that if $\xi$ is $\kappa$-controlled, then $\|z\|_\xi \ge c_1|\re(z)|$ whenever $|z|\le c_2$.
\end{lemma}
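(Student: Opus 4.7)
The plan is to exploit the fact that $\|x\|_{\R/\Z}=|x|$ whenever $|x|\le 1/2$, combined with the lower variance bound in Definition \ref{def:kappa}, conditioning on $\xi'$. First I would set $c_2 := 1/(4\kappa)$, so that on the event $\event := \{|\xi|\le \kappa, |\xi'|\le \kappa\}$ and for $|z|\le c_2$ we have $|\re(z(\xi-\xi'))|\le |z|(|\xi|+|\xi'|)\le 2\kappa|z|\le 1/2$. This lets me drop the $\R/\Z$-norm in favor of the usual absolute value on $\event$, giving
\[
\|z\|_\xi^2 \;\ge\; \e\big[|\re(z(\xi-\xi'))|^2\,\un_{\event}\big].
\]

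Next I would condition on $\xi'$ and use the $\kappa$-controlled hypothesis \eqref{cond.kappa2} with $w=z\xi'$ (noting that the right-hand side of \eqref{cond.kappa2} only depends on $\re(w)$, so it extends painlessly from $w\in\R$ to $w\in\C$). Since $\re(z\xi - z\xi')=\re(z(\xi-\xi'))$, the bound \eqref{cond.kappa2} gives for every realization of $\xi'$:
\[
\e\big[|\re(z(\xi-\xi'))|^2\un(|\xi|\le \kappa)\,\big|\,\xi'\big]\;\ge\;\frac{1}{\kappa}|\re(z)|^2.
\]
Multiplying by $\un(|\xi'|\le \kappa)$ and taking expectations in $\xi'$ (using independence),
\[
\e\big[|\re(z(\xi-\xi'))|^2\,\un_{\event}\big] \;\ge\; \frac{1}{\kappa}|\re(z)|^2\,\pr(|\xi'|\le \kappa).
\]

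Finally, by Chebyshev's inequality and the bound $\e|\xi|^2\le\kappa$ from \eqref{cond.kappa1}, we have $\pr(|\xi'|> \kappa)\le 1/\kappa$. Without loss of generality we can assume $\kappa\ge 2$ (since $\kappa$-controlled trivially implies $\kappa'$-controlled for any $\kappa'\ge \kappa$), so $\pr(|\xi'|\le\kappa)\ge 1/2$. Combining the displays yields $\|z\|_\xi^2 \ge \frac{1}{2\kappa}|\re(z)|^2$ for all $|z|\le c_2$, giving the lemma with $c_1 = 1/\sqrt{2\kappa}$.

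I do not anticipate any serious obstacle: the only subtle point is making sure Definition \ref{def:kappa} can be applied with the complex shift $w=z\xi'$, which is immediate since $\re(z\xi-w)$ depends only on $\re(w)$; everything else is a straightforward truncation and Chebyshev estimate.
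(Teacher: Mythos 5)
Your proof is correct: the truncation to $\{|\xi|,|\xi'|\le\kappa\}$ so that $\|\cdot\|_{\R/\Z}$ becomes the absolute value, followed by conditioning on $\xi'$ and applying \eqref{cond.kappa2} with shift $\re(z\xi')$, is exactly the standard argument, and the reduction to $\kappa\ge 2$ is legitimate since $\kappa$-controlled implies $\kappa'$-controlled for any $\kappa'\ge\kappa$. Note the paper does not prove this lemma itself but cites \cite[Lemma 5.3]{TaVu:circ}, whose proof proceeds along essentially the same lines as yours.
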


\begin{proof}[Proof of Lemma \ref{lem:anti_improved}]
Let $r\ge0$.
We may assume $r\ge C_0\|v\|_\infty$ for any fixed constant $C_0>0$ depending only on $\kappa$.
From Lemma \ref{lem:fourier}, 
\[
p_{\xi,v}(r) \ll r^2\int_{|w|\le 1/r} \exp\bigg(-c\sum_{j=1}^n \|wv_j\|_\xi^2\bigg)\dd w.
\]
If $C_0$ is sufficiently large depending on $\kappa$, it follows from Lemma \ref{lem:modbound} that whenever $|w|\le 1/r$, $\|wv_j\|_\xi \ge c_1|\re(wv_j)|$, giving
\[
p_{\xi,v}(r) \ll r^2\int_{|w|\le 1/r} \exp\bigg(-c'\sum_{j=1}^n (\re(wv_j))^2\bigg)\dd w
\]
where $c'$ depends only on $\kappa$.
By change of variable,
\begin{equation}	\label{be:cov}
p_{\xi,v}(r) \ll \int_{|w|\le 1} \exp\bigg(-\frac{c'}{r^2}\sum_{j=1}^n (\re(wv_j))^2\bigg)\dd w.
\end{equation}
Write $v_j=r_je^{i\theta_j}$ for each $j\in [n]$.
Since $v\in S^{n-1}$ we have $\sum_{j=1}^n r_j^2=1$. 
By Jensen's inequality,
\begin{align*}
p_{\xi,v}(r) &\ll \int_{|w|\le 1} \exp\bigg( -\frac{c'}{r^2} \sum_{j=1}^n r_j^2\big( \re(we^{i\theta_j})\big)^2\bigg)\dd w \\
&\le \int_{|w|\le 1}\sum_{j=1}^n  r_j^2 \exp\bigg( -\frac{c'}{r^2} \big(\re(we^{i\theta_j})\big)^2\bigg)\dd w.
\end{align*}
By rotational invariance the last expression is equal to
\[
\sum_{j=1}^n  r_j^2\int_{|w|\le 1} \exp\bigg( -\frac{c'}{r^2} (\re(w))^2\bigg)\dd w = \int_{|w|\le 1} \exp\bigg( -\frac{c'}{r^2} (\re(w))^2\bigg)\dd w
\]
which by direct computation is seen to be of size $O(r)$ (with implied constant depending on $\kappa$). 
Together with our assumption that $r\ge C_0\|v\|_\infty$ this gives \eqref{be:1d}.
\end{proof}

\subsection{Proof of Lemma \ref{lem:tensorize}}

We only prove part (a) as part (b) is given in \cite[Lemma 2.2]{RuVe:ilo}.

Let $c_1>0$ to be taken sufficiently small depending on $p_0$, and let $\alpha>0$ a sufficiently small constant to be chosen later.
We have
\begin{align}
\pro{ \sum_{j=1}^n |\zeta_j|^2 \le c_1\eps_0^2 n} 
&= \pro{ n- \frac1{c_1\eps_0^2} \sum_{j=1}^n |\zeta_j|^2 \ge 0}	\notag\\
&\le \e \expo{ c_1\alpha n - \frac{\alpha}{\eps_0^2} \sum_{j=1}^n |\zeta_j|^2 } \notag\\
&= e^{c_1\alpha n} \prod_{j=1}^n \e \expo{ -\alpha |\zeta_j|^2/\eps_0^2}.	\label{tensor:above}
\end{align}
For arbitrary $j\in [n]$ we have
\begin{align*}
\e \expo{ -\alpha |\zeta_j|^2/\eps_0^2}
&= \int_0^1 \pro{ \expo{-\alpha|\zeta_j|^2/\eps_0^2 } \ge u} \dd u\\
&= \int_0^\infty \pro{ |\zeta_j| \le s\eps_0/\sqrt{\alpha}} \dd (e^{-s^2})\\
&\le p_0\int_0^{\sqrt{\alpha}} \dd(e^{-s^2}) + \int_{\sqrt{\alpha}}^\infty \dd(e^{-s^2})\\
&= p_0(1-e^{-\alpha}) + e^{-\alpha}\\
&= 1- (1-p_0)(1-e^{-\alpha}).
\end{align*}
Inserting this in \eqref{tensor:above}, we obtain
\begin{align*}
\pro{ \sum_{j=1}^n |\zeta_j|^2 \le c_1\eps_0^2 n} 
&\le e^{c_1\alpha n} \big[ 1- (1-p_0)(1-e^{-\alpha}) \big]^n\\
&\le \expo{ n\big(c_1\alpha - (1-p_0)(1-e^{-\alpha})\big)}.
\end{align*}
The claim now follows by setting $c_1=(1-p_0)/2$ (for instance) and taking $\alpha$ a sufficiently small constant.

\bibliographystyle{imsart-number}
\bibliography{nonid-ssv}

\end{document}